\newtheorem{maintheorem}{Theorem}	
\newtheorem*{kirchhoff}{Kirchhoff--Prym formula}
\newtheorem{theorem}{Theorem}[section]
\newtheorem{proposition}[theorem]{Proposition}
\newtheorem{corollary}[theorem]{Corollary} 
\newtheorem{conjecture}[theorem]{Conjecture} 
\theoremstyle{definition}
\newtheorem{definition}[theorem]{Definition}
\newtheorem{example}[theorem]{Example}
\newtheorem{mainconstruction}{Construction}	
\newtheorem{remark}[theorem]{Remark}
\newtheorem*{remark*}{Remark}
\newtheorem{question}[theorem]{Question}
\newcommand{\ZZ}{\mathbb{Z}}
\newcommand{\QQ}{\mathbb{Q}}
\newcommand{\RR}{\mathbb{R}}
\newcommand{\te}{\widetilde{e}}
\newcommand{\tf}{\widetilde{f}}
\newcommand{\tg}{\widetilde{g}}
\newcommand{\thh}{\widetilde{h}}
\newcommand{\tp}{\widetilde{p}}
\newcommand{\tu}{\widetilde{u}}
\newcommand{\tv}{\widetilde{v}}
\newcommand{\tC}{\widetilde{C}}
\newcommand{\tD}{\widetilde{D}}
\newcommand{\tF}{\widetilde{F}}
\newcommand{\tG}{\widetilde{G}}
\newcommand{\tM}{\widetilde{M}}
\newcommand{\tP}{\widetilde{P}}
\newcommand{\tQ}{\widetilde{Q}}
\newcommand{\tT}{\widetilde{T}}
\newcommand{\tZ}{\widetilde{Z}}
\newcommand{\tGa}{\widetilde{\Gamma}}
\newcommand{\tga}{\widetilde{\gamma}}
\newcommand{\tcalB}{\widetilde{\calB}}
\newcommand{\ga}{\gamma}
\newcommand{\Ga}{\Gamma}
\newcommand{\la}{\lambda}
\newcommand{\La}{\Lambda}
\newcommand{\de}{\delta}
\newcommand{\si}{\sigma}
\newcommand{\Si}{\Sigma}
\newcommand{\om}{\omega}
\newcommand{\Om}{\Omega}
\renewcommand{\oe}{\overline{e}}
\DeclareMathOperator{\Ker}{Ker}
\DeclareMathOperator{\Coker}{Coker}
\DeclareMathOperator{\Hom}{Hom}
\DeclareMathOperator{\Sym}{Sym}
\DeclareMathOperator{\Vol}{Vol}
\let\Im\relax
\DeclareMathOperator{\Im}{Im}
\newcommand{\calB}{\mathcal{B}}
\newcommand{\frakp}{\mathfrak{p}}
\DeclareMathOperator{\Pic}{Pic}
\DeclareMathOperator{\Alb}{Alb}
\DeclareMathOperator{\Prin}{Prin}
\DeclareMathOperator{\val}{val}
\DeclareMathOperator{\Id}{Id}
\DeclareMathOperator{\Div}{Div}
\DeclareMathOperator{\Nm}{Nm}
\DeclareMathOperator{\Prym}{Prym}
\DeclareMathOperator{\Gram}{Gram}
\let\ddiv\relax
\DeclareMathOperator{\ddiv}{div}
\DeclareMathOperator{\Jac}{Jac}
\newcommand\crule[3][black]{\textcolor{#1}{\rule{#2}{#3}}}
\definecolor{14}{RGB}{121,138,171}
\definecolor{15}{RGB}{147,192,157}
\definecolor{16}{RGB}{192,128,147}
\definecolor{17}{RGB}{196,209,121}
\definecolor{34}{RGB}{197,114,112}
\definecolor{35}{RGB}{109,109,204}
\definecolor{36}{RGB}{112,112,112}
\definecolor{37}{RGB}{174,174,174}
\definecolor{45}{RGB}{191,114,180}
\definecolor{46}{RGB}{130,207,115}
\definecolor{47}{RGB}{119,156,113}
\definecolor{56}{RGB}{134,134,134}
\definecolor{57}{RGB}{134,208,203}
\title[Kirchhoff's theorem for Prym varieties]{Kirchhoff's theorem for Prym varieties}
 \author{Yoav Len}
 \address{Mathematical Institute, University of St Andrews, St Andrews KY16 9SS, UK}
 \email{\href{mailto:yoav.len@st-andrews.ac.uk}{yoav.len@st-andrews.ac.uk}}
   \author{Dmitry Zakharov}
  \address{Department of Mathematics, Central Michigan University, Mount Pleasant, MI 48859, USA}
 \email{\href{mailto:dvzakharov@gmail.com}{dvzakharov@gmail.com}}
\subjclass[2010]{14T05; 14H40}
\begin{document}

\begin{abstract} 
We prove an analogue of Kirchhoff's matrix tree theorem for computing the volume of the tropical Prym variety for double covers of metric graphs. We interpret the formula in terms of a semi-canonical decomposition of the tropical Prym variety, via a careful study of the tropical Abel--Prym map. In particular, we show that the map is harmonic, determine its degree at every cell of the decomposition, and prove that its global degree is  $2^{g-1}$. Along the way, we use the Ihara zeta function to provide a new proof of the analogous result for finite graphs. As a counterpart, the appendix by Sebastian Casalaina-Martin shows that the degree of the algebraic Abel--Prym map is $2^{g-1}$ as well.

\end{abstract}

\maketitle

\setcounter{tocdepth}{1}
\tableofcontents


\section{Introduction}

Kirchhoff's celebrated matrix tree theorem states that the number of spanning trees of a connected finite graph $G$, also known as the \emph{complexity} of $G$, is equal to the absolute value of the determinant of the reduced Laplacian matrix of $G$. From a tropical viewpoint, this number is also equal to the order of the Jacobian group $\Jac(G)$ of $G$. 

In~\cite{ABKS_Canonical}, Kirchhoff's theorem was generalized to metric graphs and given a geometric interpretation. The Jacobian variety $\Jac(\Ga)$ of a metric graph $\Ga$ of genus $g$ is a real torus of dimension $g$, and its volume can be computed as a weighted sum over all spanning trees of $\Ga$. Given a set $F\subset E(\Ga)$ of $g$ edges of $\Ga$ (with respect to a choice of model), denote by $w(F)$ the product of the lengths of the edges in $F$. Then (see Theorem 1.5 in~\cite{ABKS_Canonical})
\begin{equation}
\Vol^2(\Jac(\Ga))=\sum_{F}w(F),
\label{eq:introABKS}
\end{equation}
where the sum is taken over those subsets $F$ such that $\Ga\backslash F$ is a spanning tree of $\Ga$.

The weighted matrix-tree theorem can be proved by a direct application of the Cauchy--Binet formula (see Remark 5.7 in~\cite{ABKS_Canonical}), but the authors give a geometric proof in terms of a canonical representability result for tropical divisor classes, that we briefly recall. Let $\Phi:\Sym^g(\Ga)\to \Pic^g(\Ga)$ be the tropical Abel--Jacobi map, sending an effective degree $g$ divisor $D$ to its linear equivalence class. A divisor $D=P_1+\cdots+P_g$ is a called a \emph{break divisor} if each $P_i$ is supported on an edge $e_i$ in such a way that $\{e_1,\ldots,e_g\}$ is the complement of a spanning tree of $\Ga$. By a result of Mikhalkin and Zharkov~\cite{MikhalkinZharkov}, the map $\Phi$ has a canonical continuous section, whose image is the set of break divisors in $\Sym^g(\Ga)$. Hence $\Pic^g(\Ga)$ (and, by translation, $\Jac(\Ga)$) has a canonical cellular decomposition coming from the cells of $\Sym^g(\Ga)$ parametrized by the spanning trees of $\Ga$. Computing the volume of $\Jac(\Ga)$ in terms of this decomposition gives Equation~\eqref{eq:introABKS}, where the terms in the right hand side correspond to the volumes of the individual cells. We note that the results of~\cite{ABKS_Canonical} can be reinterpreted as saying that the Abel--Jacobi map $\Phi$ is a \emph{harmonic morphism of polyhedral spaces of degree one} (see Remark~\ref{rem:ABKSdeg1}).

The purpose of this paper is to prove analogous results for the tropical Prym variety associated to a free double cover of metric graphs. Given an \'etale double cover $f:\widetilde{C}\to C$ of smooth algebraic curves of genera $2g-1$ and $g$ respectively, the kernel of the norm map $\Nm:\Jac(\widetilde{C})\to \Jac(C)$ has two connected components, and the even component  is an abelian variety of dimension $g-1$, known as the \emph{Prym variety}
$\Prym(\widetilde{C}/C)$ of the double cover. Prym varieties have been extensively studied following Mumford's seminal paper~\cite{Mumford_Prym}, as they are one of only few instances of abelian varieties that can be  described explicitly. Furthermore, they play a key role in rationality questions for threefolds \cite{ClemensGriffiths_3folds} and in constructing compact hyper-K\"ahler manifolds \cite{LSV_Kahler}. 

The notion of an \'etale cover of algebraic curves has two natural analogues in tropical geometry. One can consider \emph{free covers} $\pi:\tGa\to \Ga$, which are covering spaces in the topological sense: the map $\pi$ is a local homeomorphism at each point, and an isometry if the graphs are metric. It is often necessary to consider the more general \emph{unramified covers}, which are finite harmonic morphisms of metric graphs satisfying a numerical Riemann--Hurwitz condition. This notion does not have an analogue for finite graphs. The tropicalization of an \'etale cover of algebraic curves is an unramified cover of metric graphs, but not necessarily free. 

The tropical Prym variety $\Prym(\tGa/\Ga)$ associated to an unramified double cover $\pi:\tGa\to\Ga$ of metric graphs is defined in analogy with its algebraic counterpart~\cite[Definition 6.2]{JensenLen_thetachars}. Specifically, $\Prym(\tGa/\Ga)$ is the connected component of the identity of the kernel of the tropical norm map $\Nm:\Jac(\tGa)\to \Jac(\Ga)$ (note that in the tropical case, the kernel has two connected components if $\pi$ is free, and one if $\pi$ is unramified but not free). As shown in \cite[Theorem B]{Len_Ulirsch_Skeletons}, this construction commutes with tropicalization. Namely, if $\pi$ is the tropicalization of an \'etale double cover $f:\widetilde{C}\to C$ of algebraic curves, then the tropical abelian variety $\Prym(\tGa/\Ga)$ is the skeleton of the Berkovich analytification of $\Prym(\widetilde{C}/C)$, and the corresponding Abel--Prym maps commute (the corresponding result for Jacobians was proved in~\cite{BakerRabinoff_skelJac=Jacskel}). This observation has recently led to new results concerning the dimensions of Brill--Noether loci in Prym varieties \cite[Corollary B]{CLRW_PBN}.

In the current paper, we consider only free double covers of finite and metric graphs. We first compute the order of the \emph{Prym group} $\Prym(\tG/G)$ of a free double cover $p:\tG\to G$ of a finite graph $G$ of genus $g$. The finite group $\Prym(\tG/G)$ is a canonically defined index two subgroup of the kernel of the norm map $\Nm:\Jac(\tG)\to \Jac(G)$. In the spirit of Kirchhoff's formula, the order of $\Prym(\tG/G)$ is a weighted sum over certain $(g-1)$-element subsets of $E(G)$: given a subset $F\subset E(G)$ of $g-1$ edges of $G$, we say that $F$ is an \emph{odd genus one decomposition of rank $r$} if $G\backslash F$ consists of $r$  connected components of genus one, each having connected preimage in $\tG$.

\begin{kirchhoff} [Proposition~\ref{thm:discretePrym}] The order of the Prym group $\Prym(\tG/G)$ of a free double cover $p:\tG\to G$ of finite graphs is equal to
$$
|\Prym(\tG/G)|=\frac{1}{2}|\Ker \Nm|=\sum_{r=1}^g 4^{r-1}C_r,
$$
where $C_r$ is the number of odd genus one decompositions of $G$ of rank $r$.
\end{kirchhoff}

This formula has already been obtained by Zaslavsky in the seminal paper~\cite{zaslavsky1982signed} as the determinant of the \emph{signed Laplacian matrix} of the graph $G$ (see Theorem~8A.4 in \emph{loc.~cit.}), and was later explicitly interpreted as the order of the kernel of the norm map by Reiner and Tseng (see Proposition 9.9 in \cite{reiner2014critical}). We give an alternative proof, by comparing the Ihara zeta functions $\zeta(s,\tG)$ and $\zeta(s,G)$ of the graphs $\tG$ and $G$. By the work of Stark and Terras~\cite{stark1996zeta,stark2000zeta}, the quotient $\zeta(s,\tG)/\zeta(s,G)$ for a free double cover $p:\tG\to G$ is the $L$-function of the cover evaluated at the nontrivial representation of the Galois group $\ZZ/2\ZZ$, and we use the $L$-function to compute the order of the Prym group. To the best of our knowledge, this is the first application of the Ihara zeta function to tropical geometry. 

We then derive a weighted version of the Kirchhoff--Prym formula for the volume of the Prym variety of a free double cover of metric graphs, in the same way that Equation~\eqref{eq:introABKS} generalizes Kirchhoff's theorem.

\begin{maintheorem}[Theorem~\ref{thm:Prymvolume}] The volume of the tropical Prym variety $\Prym(\tGa/\Ga)$ of a free double cover $\pi:\tGa\to \Ga$ of metric graphs is given by
$$
\Vol^2(\Prym(\tGa/\Ga))=\sum_{F\subset E(\Ga)} 4^{r(F)-1}w(F),
$$
where the sum is taken over all odd genus one decompositions $F$ of $\Ga$, and where $w(F)$ is the product of the lengths of the edges in $F$.
\label{thm:B}
\end{maintheorem}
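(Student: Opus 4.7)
The plan is to generalize the Cauchy--Binet strategy that proves the weighted matrix-tree theorem for Jacobians in~\cite{ABKS_Canonical}, together with the geometric interpretation coming from the tropical Abel--Prym map. The proof will have two main ingredients: an expansion of the squared volume as a sum over $(g-1)$-element edge subsets of $\tGa$, and a combinatorial identification of the nonzero terms with odd genus one decompositions of $\Ga$.

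First, I would realize $\Prym(\tGa/\Ga)=V/\Lambda$ as a polarized tropical abelian variety of dimension $g-1$, with $\Lambda$ the antiinvariant sublattice of $H_1(\tGa,\ZZ)$ and inner product induced by the edge-length pairing on $C_1(\tGa,\RR)$. By definition $\Vol^{2}(\Prym(\tGa/\Ga))=\det B$, where $B$ is the Gram matrix of this pairing in a $\ZZ$-basis of $\Lambda$. Writing $B=M^{T}DM$ with $D=\mathrm{diag}(\ell(\te))_{\te\in E(\tGa)}$ and $M$ expressing the basis of $\Lambda$ in terms of oriented edges of $\tGa$, the Cauchy--Binet formula gives
$$
\det B \;=\; \sum_{\widetilde{F}\subset E(\tGa),\,|\widetilde{F}|=g-1} w(\widetilde{F})\,\det(M_{\widetilde{F}})^{2}.
$$
Since $\pi$ is a local isometry, edges of $\tGa$ above a common edge of $\Ga$ share their length, so I would reindex this sum by $(g-1)$-element subsets $F\subset E(\Ga)$, with the $2^{g-1}$ lifts of $F$ contributing a combined factor $N(F)=\sum_{\widetilde{F}\text{ lifts }F}\det(M_{\widetilde{F}})^{2}$.

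The theorem reduces to the combinatorial identity that $N(F)=4^{r-1}$ when $F$ is an odd genus one decomposition of rank $r$, and $N(F)=0$ otherwise. I would establish this through the geometric lens of the Abel--Prym map: each such $F$ indexes a top-dimensional cell in a semi-canonical decomposition of $\Prym(\tGa/\Ga)$, on which the Abel--Prym map is affine with local degree $2^{r-1}$; the factor $4^{r-1}=(2^{r-1})^{2}$ then emerges as the square of this degree combining with the squared side-lengths $w(F)$ of the corresponding cube. This approach simultaneously yields the global degree $2^{g-1}$ of the Abel--Prym map promised in the abstract.

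The main obstacle will be the combinatorial bookkeeping in the last step: verifying that $\det(M_{\widetilde{F}})^{2}$ survives only when $\Ga\setminus F$ consists of $r$ genus-one components each lifting connectedly to $\tGa$, and that the local degree of the Abel--Prym map on the corresponding cell is exactly $2^{r-1}$. A useful consistency check is that specializing Theorem~\ref{thm:B} to unit edge lengths must recover Proposition~\ref{thm:discretePrym}; indeed, at $\ell\equiv 1$ the formula yields $\sum_r 4^{r-1}C_r=|\Prym(\tG/G)|$, which is automatic once $N(F)$ is shown to be as claimed.
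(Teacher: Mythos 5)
Your route is genuinely different from the paper's. The paper never expands $\Vol^2(\Prym(\tGa/\Ga))$ by Cauchy--Binet in the metric setting: it first proves the unweighted identity $|\Prym(\tG/G)|=\sum_r 4^{r-1}C_r$ for finite graphs (Theorem~\ref{thm:discretePrym}, via the Artin--Ihara $L$-function and Northshield's class number formula, with Cauchy--Binet appearing only for the signed Laplacian of the \emph{base} graph $G$), then proves the volume ratio $\Vol^2(\Prym(\tGa/\Ga))=\Vol^2(\Jac(\tGa))/2\Vol^2(\Jac(\Ga))$ (Proposition~\ref{prop:3volumes}), and finally gets the weighted formula by noting that both sides are homogeneous of degree $g-1$ in the edge lengths, so it suffices to check them at unit lengths, where everything collapses to the discrete statement. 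Your plan --- Cauchy--Binet applied upstairs on $\tGa$ to the Gram matrix of a basis of $\Ker\pi_*$, with the $(g-1)\times(g-1)$ minors identified as local degrees of the Abel--Prym map --- is the Prym analogue of Remark 5.7 of~\cite{ABKS_Canonical} and is viable. The input you need, namely that the minor indexed by $\tF$ equals $\pm 2^{r-1}$ when $p(\tF)$ is an odd genus one decomposition of rank $r$ and vanishes otherwise, is exactly Theorem~\ref{thm:APlocal}(2) together with Corollary~\ref{cor:degreeformula}, proved via the triangular basis of Construction~\ref{con:C}; those results are logically independent of Theorem~\ref{thm:Prymvolume}, so there is no circularity. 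What your route buys is a single self-contained computation that makes the factor $4^{r(F)-1}=(2^{r(F)-1})^2$ conceptually transparent; what it costs is that all the combinatorial weight lands on the minor computation, which is the longest argument in Section 4, and you lose the zeta-function proof of the discrete case as a byproduct.

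Two caveats. First, your normalization is inconsistent as stated. With $M$ the integer matrix of a $\ZZ$-basis of $\Ker\pi_*$ in edge coordinates and $D=\mathrm{diag}(\ell(\te))$, the product $M^{T}DM$ is the Gram matrix of the polarization \emph{restricted} from $\Jac(\tGa)$, whereas $\Vol^2(\Prym(\tGa/\Ga))$ is computed with the principal polarization, which is half of that (Equation~\eqref{eq:Prymproduct}); so $\det(M^TDM)=2^{g-1}\Vol^2(\Prym(\tGa/\Ga))$. Correspondingly, each of the $2^{g-1}$ lifts $\tF$ of an odd genus one decomposition $F$ of rank $r$ contributes $\det(M_{\tF})^2=4^{r-1}$, so your combined factor is $N(F)=2^{g-1}\cdot 4^{r-1}$, not $4^{r-1}$. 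The two factors of $2^{g-1}$ cancel and the final formula survives, but as written your intermediate identities do not match. Second, the claim that this ``simultaneously yields the global degree $2^{g-1}$'' overreaches: knowing the local degrees and the volume identity does not by itself show that the degrees sum to the same constant over every fiber. That requires the harmonicity of $\Psi$ at codimension-one cells (Proposition~\ref{prop:localharmonicity}), which is a separate and substantial case analysis, and only then does the comparison with Theorem~\ref{thm:Prymvolume} pin down the global degree as in Theorem~\ref{thm:APharmonic}.
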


In the second part of our paper, we derive a geometric interpretation for the volume formula for the tropical Prym variety, in the spirit of~\cite{ABKS_Canonical}. Let $\pi:\tGa\to \Ga$ be a free double cover of metric graphs, and let $\iota:\tGa\to \tGa$ be the associated involution. Consider the \emph{Abel--Prym map} $\Psi$ associated to $\pi$
$$
\Psi:\Sym^{g-1}(\tGa)\to \Prym^{[g-1]}(\tGa/\Ga),\quad \Psi(D)=D-\iota(D),
$$
where $\Prym^{[g-1]}(\tGa/\Ga)$ denotes the component of $\Ker \Nm$ of the same parity as $g-1$.

Our principal result states that $\Psi$ is a \emph{harmonic morphism of polyhedral spaces of degree $2^{g-1}$} (as in Definition~\ref{def:harmonic}). The space $\Sym^{g-1}(\tGa)$ has a natural polyhedral decomposition, with the top-dimensional cells $C(\tF)$ indexed by multisets $\tF\subset E(\tGa)$ of $g-1$ edges of $\tGa$. We define the \emph{degree} of a top-dimensional cell to be $\deg_{\Psi}(\tF)=2^{r(\tF)-1}$ if $p(\tF)$ consists of distinct edges and is an odd genus one decomposition of rank $r(\tF)$, and zero otherwise. Then the Abel--Prym map $\Psi$ contracts the cell $C(\tF)$ if and only if $\deg_{\Psi}(\tF)=0$. Furthermore $\Psi$ is harmonic with respect to the degree, meaning that it satisfies a balancing condition around every codimension one cell of $\Sym^{g-1}(\tGa)$. This implies that we can extend the degree function to all of $\Sym^{g-1}(\tGa)$ in such a way that the sum of the degrees in each fiber of $\Psi$ is a finite constant, called the global degree of $\Psi$. To compute the global degree, we first observe that the harmonicity of the Abel--Prym map allows us to express  the volume of $\Prym(\tGa/\Ga)$ in terms of its degree. Comparing the result with Theorem~\ref{thm:B}, we find that the global degree is in fact $2^{g-1}$. The factors $4^{r(F)-1}$ in the weighted Kirchhoff--Prym formula represent squares of the local degrees of $\Psi$.

Summarizing, we obtain a semi-canonical representability result for tropical Prym divisors:

\begin{maintheorem} [Theorem~\ref{thm:APharmonic}] The Abel--Prym map  $\Psi:\Sym^{g-1}(\tGa)\to \Prym^{[g-1]}(\tGa/\Ga)$ associated to a free double cover $\pi:\tGa\to \Ga$ of metric graphs is a harmonic morphism of polyhedral spaces of degree $2^{g-1}$. In particular, there is a degree map $\deg_{\Psi}:\Sym^{g-1}(\tGa)\to \ZZ_{\geq 0}$ such that any element of $\Prym^{[g-1]}(\tGa/\Ga)$ has exactly $2^{g-1}$ representatives of the form $\tD-\iota(\tD)$ counted with multiplicity $\deg_{\Psi} (\tD)$,  where $\tD$ is an effective divisor of degree $g-1$.

\label{thm:C}
\end{maintheorem}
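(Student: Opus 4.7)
The plan is to extend the strategy of~\cite{ABKS_Canonical} to the Prym setting; the key new feature is that the local degree of the Abel--Prym map is nontrivial and varies between cells. The symmetric product $\Sym^{g-1}(\tGa)$ carries a natural polyhedral decomposition whose top-dimensional cells $C(\tF)$ are indexed by multisets $\tF=\{\te_1,\ldots,\te_{g-1}\}$ of edges of $\tGa$: points of $C(\tF)$ are effective divisors $\tD=\sum \tp_i$ with $\tp_i\in\te_i$. Because $\Psi(\tD)=\sum_i(\tp_i-\iota(\tp_i))$, on $C(\tF)$ the map $\Psi$ is affine, with linear part sending the unit tangent along $\te_i$ to the class $[\te_i-\iota(\te_i)]$ in the tangent space $H_1(\tGa,\RR)^-$ of $\Prym^{[g-1]}$; in particular $\Psi$ is piecewise affine.

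The first task is to compute the local degree on top cells. A short cycle-theoretic argument shows that the vectors $[\te_i-\iota(\te_i)]$ are linearly independent exactly when $p(\tF)$ consists of $g-1$ distinct edges forming an odd genus-one decomposition of $\Ga$ of some rank $r=r(\tF)$; otherwise $\Psi$ contracts $C(\tF)$. In the non-degenerate case I would choose, for each of the $r$ odd genus-one components of $\Ga\setminus p(\tF)$, a representative cycle that lifts to a non-trivial odd cycle $\tga_j\subset\tGa$; the classes $[\tga_j]$ form a $\QQ$-basis of $H_1(\tGa,\RR)^-$. Expressing each $[\te_i-\iota(\te_i)]$ in a $\ZZ$-basis of $H_1(\tGa,\ZZ)^-$ produces a $(g-1)\times(g-1)$ integer matrix whose absolute determinant equals $\deg_\Psi(\tF)=2^{r(\tF)-1}$; the factor $2^{r-1}$ reflects the index of the span of $\{2[\tga_j]\}$ inside $H_1(\tGa,\ZZ)^-$ once the norm constraint defining the Prym lattice is imposed.

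The main technical step is harmonicity at each codimension-one cell $C'$ of $\Sym^{g-1}(\tGa)$. Such a cell is parametrized by a divisor with exactly one point at a vertex $\tv\in\tGa$, and the top cells containing $C'$ in their closure correspond to sliding $\tv$ onto each edge $\te$ incident to $\tv$. The balancing condition at $C'$ reduces to the vanishing of the signed sum
$$
\sum_{\te\,\text{incident to}\,\tv}\deg_\Psi\bigl(\tF\cup\{\te\}\bigr)\,[\te-\iota(\te)]
$$
in $H_1(\tGa,\RR)^-$. Because the degrees depend only on $p$-images, this descends to a combinatorial identity at each vertex $v=p(\tv)$ of $\Ga$, relating the values $2^{r(F\cup\{e\})-1}$ as $e$ ranges over edges at $v$. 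I would prove the identity by case analysis on how appending $e$ alters the odd genus-one structure --- merging two components, splitting one, forming a loop, or attaching a bridge --- with the parity of the corresponding lifts under $\pi$ producing the required cancellations in each case.

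Once harmonicity is established, the local degrees extend to a function $\deg_\Psi:\Sym^{g-1}(\tGa)\to\ZZ_{\geq 0}$ and $\Psi$ acquires a well-defined global degree $d$. To pin down $d$, I would apply the volume identity for harmonic morphisms to tropical tori of the same dimension,
$$
d\cdot\Vol^2\bigl(\Prym^{[g-1]}(\tGa/\Ga)\bigr)\;=\;\sum_{\tF}\deg_\Psi(\tF)^2\,w(\tF),
$$
a Cauchy--Binet-type expansion of the Gram determinant of the Prym lattice as realized through the period map. Each odd genus-one decomposition $F$ of $\Ga$ admits exactly $2^{g-1}$ lifts $\tF\subset E(\tGa)$ with $p(\tF)=F$ (one per edge), each contributing $4^{r(F)-1}w(F)$, so the right-hand side equals $2^{g-1}\sum_F 4^{r(F)-1}w(F)=2^{g-1}\Vol^2(\Prym)$ by Theorem~\ref{thm:B}, giving $d=2^{g-1}$. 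I expect the main obstacle to lie in the vertex-balancing identity of the third step, whose case analysis is delicate because of the interplay between the odd-cycle structure on $\tGa$ and the insertion or deletion of an edge at a vertex of $\Ga$.
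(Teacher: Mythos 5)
Your architecture coincides with the paper's: local degrees $2^{r-1}$ on top cells via the matrix of $\Psi$ in a basis of $\Ker\pi_*$, harmonicity at codimension-one cells, and then the global degree extracted from the identity $d\cdot\Vol^2(\Prym(\tGa/\Ga))=\sum_{\tF}\deg_\Psi(\tF)^2\,w(\tF)$ compared against Theorem~\ref{thm:B} — your final step, including the count of $2^{g-1}$ lifts $\tF$ per odd genus one decomposition $F$, is exactly the paper's argument.

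The gap is in the harmonicity step, which is where all the real work of the theorem lives. You assert that because the degrees $\deg_\Psi(\tF\cup\{\te\})$ depend only on the images $p(\tF\cup\{\te\})$, the balancing condition "descends to a combinatorial identity at each vertex $v=p(\tv)$ of $\Ga$" relating the numbers $2^{r(F\cup\{e\})-1}$. The degrees do descend, but harmonicity is not a statement about degrees alone: the wall $\Psi(\tC)$ separates two half-spaces $M^{\pm}$ of $\Prym^{[g-1]}(\tGa/\Ga)$, and the condition is that the cells landing in $M^+$ and those landing in $M^-$ contribute equal totals. Which side a cell $C(\tF\cup\{\te\})$ lands on is governed by the sign of $\det\Psi(\tF\cup\{\te\})$, and that sign is computed from intersection numbers of the lift $\te$ with cycles in $\Ker\pi_*$ — it is not determined by $p(\te)$. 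The paper's Proposition~\ref{prop:localharmonicity} makes this concrete: in its Case (2) the same downstairs configuration (a vertex $v$ off the cycle of $G_0$ with $v<u$) splits into two sub-cases according to whether $t(\tf_{g-1})$ lies on $\tT_0^+$ or $\tT_0^-$, and the local pictures differ — in one sub-case a cell of degree $2^{r}$ appears on the far side of the wall while another is contracted, in the other sub-case the degree-$2^{r-1}$ cells redistribute; similarly Cases (4a) and (4b) have two versus four non-contracted cells over the same vertex of $\Ga$. So no identity formulated purely at a vertex of $\Ga$ can encode the balancing; the case analysis must be carried out on $\tGa$ with an explicit basis adapted to the cell (the paper's Construction~\ref{con:C}), tracking both which decompositions $F\cup\{e\}$ are \emph{odd} (a property of the cover, not of $G$) and the signs of the resulting determinants. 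Your closing remark about "the parity of the corresponding lifts" gestures at the right repair, but as written the proposed reduction would fail.
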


We note that a divisor in $\Prym^{[g-1]}(\tGa/\Ga)$ may have infinitely many representatives of the form $\tD-\iota(\tD)$ with $\deg_{\Psi}(\tD)=0$, but a generic divisor in $\Prym^{[g-1]}(\tGa/\Ga)$ only has representatives $\tD-\iota(\tD)$ with $\deg_{\Psi}(\tD)>0$, and hence finitely many in total.

The canonical representability result of~\cite{ABKS_Canonical} also holds in the integral setting, after fixing a generic element $\lambda\in\Jac(\Gamma)$: given a  model $G$ of $\Gamma$, any class in $\Pic^g(G)$ is represented by a unique break divisor $D\in \Sym^g(G)$.
Shifting the break divisor by $\lambda$, we obtain a divisor supported on the complement of a spanning tree of $G$. 
 See \cite[Remark 4.26]{ABKS_Canonical} and \cite[Example 1.3.4]{BBY_bijections} for more detail. 
An analogous correspondence  result does not hold for Prym groups. In fact, the discrete Abel--Prym map $\Sym^{g-1}(\tG)\to \Prym^{[g-1]}(\tG/G)$ associated to a free double cover $p:\tG\to G$ of finite graphs is not even surjective in general (see Example~\ref{ex:doublecover1}). 

We believe that suitable generalizations of Theorems~\ref{thm:B} and~\ref{thm:C} hold for unramified double covers of metric graphs, which is the more general framework considered in~\cite{JensenLen_thetachars} and~\cite{Len_Ulirsch_Skeletons}. To derive and prove them using the methods of our paper, it would first be necessary to develop a theory of $L$-functions of unramified Galois covers of graphs, extending the theory for free covers developed in~\cite{stark1996zeta} and~\cite{stark2000zeta}. Such a theory should be a part of a more general theory of Ihara zeta functions of graphs of groups. This first step in this direction is the paper~\cite{2020Zakharov} by the second author. It would also be interesting to determine whether the Prym construction generalizes to other tropical abelian covers (see~\cite{LenUlirschZakharov_AbelianCovers}).

\subsection{The algebraic Abel--Prym map and its tropicalization} Let $C$ be a smooth algebraic curve of genus $g$, and let $\Phi^d:\Sym^d(C)\to \Pic^d(C)$ be the degree $d$ Abel--Jacobi map. It is a classical result that $\Phi^d$ has degree 1 when $d\leq g$, and is birational when $d=g$ \cite[Chapter 1.3]{ACGHI}. 
The degree $d$ Abel--Prym map $\Psi^d:\Sym^d(\widetilde{C})\to \Prym^{[d]}(\widetilde{C}/C)$ corresponding to an unramified double cover $\pi:\widetilde{C}\to C$ of smooth algebraic curves is defined by $\Psi^d(\tD)=\tD-\iota(\tD)$. Unlike the Abel--Jacobi map, the degree of $\Psi^d$ depends non-trivially on the Brill--Noether type of $C$. For example, if $d=1$ then the degree is equal to $2$ if $C$ is hyperelliptic and $1$ otherwise. However, the degree of the Abel--Prym map when $d=g-1$ is always $2^{g-1}$. We are very grateful to Sebastian Casalaina-Martin for a proof of this result (and a number of others) about the Abel--Prym map, which we have included as an Appendix to this paper. 

Given that the algebraic Abel--Prym map $\Psi^{g-1}$ has degree $2^{g-1}$, it is tempting to derive Theorem~\ref{thm:C} from the corresponding algebraic statement by a tropicalization argument (the same argument would also give an alternative proof of one of the principal results of~\cite{ABKS_Canonical}, namely the existence of a canonical section of the Abel--Jacobi map). It is well known that the tropicalization of a degree $d$ map of algebraic curves is a harmonic morphism of metric graphs of the same degree $d$. However, we are unaware of a suitable generalization of this result to higher dimension, and the derivation of such a result is beyond the scope of this paper. 

Motivated by this similarity, and by the results of the Appendix, we propose the following conjecture:

\begin{conjecture} Let $f:\widetilde{C}\to C$ be an \'etale double cover of algebraic curves tropicalizing to a free double cover $\pi:\tGa\to\Ga$ of metric graphs. Then the degrees of the algebraic and tropical Abel--Prym maps $\Psi^d$ for $d\leq g-2$ associated to $f$ and $\pi$ coincide. In particular, the degree of $\Psi^d$ is bounded by $2^d$.
\end{conjecture}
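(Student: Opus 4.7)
The plan is to establish the conjecture by developing a tropicalization principle for finite-to-one maps to higher-dimensional targets, and then applying it to the Abel--Prym setup. The case $d = g-1$ is already settled: the algebraic degree is $2^{g-1}$ by the Appendix of Casalaina-Martin, and the tropical degree is $2^{g-1}$ by Theorem~\ref{thm:C}, with $2^{g-1} = 2^d$ realizing the conjectured bound. Thus the new content is for $d \leq g-2$, where neither Abel--Prym map is surjective onto the $(g-1)$-dimensional Prym, and ``degree'' must be understood as degree onto the image.

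First, I would extend the harmonic-morphism framework of Theorem~\ref{thm:C} to the non-surjective setting. The image $\Psi^d(\Sym^d(\tGa))$ should inherit a polyhedral structure whose top-dimensional cells are indexed by a combinatorial datum generalizing odd genus one decompositions, now with total edge count $d$ in place of $g-1$. For each top cell one computes the local degree of $\Psi^d$ by examining a generic neighborhood of a preimage point, and then verifies a balancing condition at each codimension-one wall; this would yield a well-defined global tropical degree, and in parallel give a Kirchhoff--Prym-type formula for the volume of the image subvariety, in the spirit of Theorem~\ref{thm:B}.

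Second, I would compare the algebraic and tropical degrees via tropicalization of $\Psi^d_{\mathrm{alg}}$. The symmetric product $\Sym^d(\widetilde{C})$ admits a skeleton naturally identified with $\Sym^d(\tGa)$, and the tropicalization of $\Psi^d_{\mathrm{alg}}$ should be compatible with the skeleton identification of the Prym from~\cite{Len_Ulirsch_Skeletons}. Choose a generic point $p$ of the tropical image and lift it to a generic point $\widetilde{p}$ of the algebraic image; then show that the reductions of the finitely many points of $(\Psi^d_{\mathrm{alg}})^{-1}(\widetilde{p})$ land exactly at the tropical preimages of $p$, with algebraic multiplicities matching tropical local degrees. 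Summation then gives the equality of degrees, and the bound $\leq 2^d$ can be read off from whichever side is more amenable.

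The main obstacle is precisely this last fiberwise matching. For maps of algebraic curves, the equality of algebraic and tropical degrees is essentially built into the definition of a harmonic morphism of metric graphs, but for maps whose target is a higher-dimensional tropical abelian variety no such equality is available off the shelf. One would need either a targeted specialization argument in the Abel--Prym setting, through a careful analysis of semistable models of $\widetilde{C}$ and of $\Prym(\widetilde{C}/C)$, or a general tropicalization theorem for generically finite morphisms between skeletons of abelian varieties. As an initial sanity check I would verify the conjecture for low genus examples and for trivial covers of simple graphs such as cycles of loops and dumbbells, where both degrees can be computed explicitly; such computations should also suggest the correct closed form for the tropical degree in terms of the combinatorics of the cover, and thereby indicate where the bound $2^d$ comes from intrinsically.
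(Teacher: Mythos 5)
The statement you are addressing is a \emph{conjecture}: the paper offers no proof of it, and explicitly says that a tropicalization theorem for generically finite morphisms to higher-dimensional targets ``is beyond the scope of this paper.'' So there is no proof in the paper to compare against, and your proposal must be judged as a stand-alone argument. As such, it is a reasonable research outline but not a proof, and you say so yourself: the ``main obstacle'' you name in your final paragraph --- matching algebraic fibers of $(\Psi^d_{\mathrm{alg}})^{-1}(\widetilde{p})$ with tropical preimages, multiplicity by multiplicity --- is exactly the missing theorem, and nothing in your proposal supplies it. For maps of curves this matching is the content of the specialization theory of harmonic morphisms, but for a map $\Sym^d(\widetilde{C})\to \Prym(\widetilde{C}/C)$ with higher-dimensional target no such result exists, and your proposal gives no indication of how to prove one.

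Two further gaps are worth naming concretely. First, the tropical side of the conjectured equality is not yet defined for $d\leq g-2$: the paper establishes harmonicity, and hence a well-defined global degree, only for $d=g-1$ (Theorem~\ref{thm:APharmonic}), where source and target are equidimensional. For smaller $d$ the image is a proper polyhedral subset of $\Prym^{[d]}(\tGa/\Ga)$, and your ``first step'' of extending the harmonic framework --- identifying the image cells, proving balancing along codimension-one walls of the image, and showing the resulting degree is independent of the fiber --- is itself a substantial unproven claim, not a routine extension. Second, the Appendix shows that the algebraic degree of $\Psi^d$ for $d\leq g-2$ is highly sensitive to the Brill--Noether type of $\widetilde{C}$: it equals $1$ for a general cover when $d<g/2$ and $2^d$ when $\widetilde{C}$ is hyperelliptic. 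Since many non-isomorphic curves tropicalize to the same metric graph, your fiberwise matching would force the algebraic degree to be constant on the whole tropicalization fiber --- a strong rigidity statement that would need independent justification and that interacts with the well-known failure of tropical Brill--Noether loci to determine their algebraic counterparts. Your proposed sanity checks on dumbbells and chains of loops are the right place to start, but as written the proposal identifies the conjecture's difficulties rather than resolving them.
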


We stress that the tropical and algebraic results presented in this paper are derived via entirely different techniques, and are independent of each other. 

\subsection{Degenerations of abelian varieties} 
Polyhedral decompositions of real tori, such as the ones described above, suggest an interesting connection with degenerations of abelian varieties and compactifications of their moduli spaces. 

The Jacobian of a nodal curve is a semi-abelian variety that is not proper in general. There are numerous compactifications constructed by various authors that depend on a choice of degree and an ample line bundle (e.g. \cite{Esteves_Compactification, Simpson_Compactification}). In degree $g$, these constructions coincide \cite{Caporaso_CompactifiedPicard}, and the strata in the compactification are in bijection with certain orientations on the dual graph of the curve \cite[Theorem 3.2.8]{Christ_PhD}. In fact, the same strata are in an order reversing bijection with the cells in the ABKS decomposition of the tropical Jacobian \cite[Theorem 4.3.4]{Caporaso_ICM}. More generally, each Simpson and Esteves compactified Jacobian of $C$ can be constructed from a polyhedral decomposition of the tropical Jacobian of the dual graph of $C$ \cite[Theorem 1.1]{ChristPayneShen_CompactifiedJacobians}.  An analogous statement in degree $g$ holds uniformly over the moduli space of curves \cite{AAPT_UniversalTropicalJacobian}. 

The situation is more subtle for Prym varieties. Given an admissible double cover $\widetilde{C}\to C$ of nodal curves, the identity component of the kernel of the norm map is, again, a non-proper semi-abelian variety. There are various approaches for compactifying the Prym variety (e.g. \cite{ABH_PrymDegenerations, CGHR_Prym}). However, unlike the case of Jacobians, the Prym--Torelli map $\mathcal{R}_g\to A_{g-1}$ from the moduli space of \'etale double covers to the moduli space of abelian varieties does not extend to the boundary for any reasonable toroidal compactification of $A_{g-1}$ \cite{Vologodsky_Prym, FriedmanSmith_Prym}. 

We therefore ask the following. 
 
\begin{question} Given an admissible double cover $\widetilde{C}\to C$ with tropicalization $\tGa\to \Ga$, do the cells of the semi-canonical decomposition of the tropical Prym variety $\Prym(\tGa/\Ga)$ described in Theorem~\ref{thm:C} correspond to the boundary strata of an appropriate compactification of the Prym variety $\Prym(\widetilde{C}/C)$?
\end{question}
\noindent A positive answer would suggest a path to a natural compactification of the moduli space of abelian varieties such that the map $\mathcal{R}_g\to A_{g-1}$ extends to the boundary. 

\subsection*{Acknowledgments} We would like to thank Matthew Baker, Samuel Grushevsky, Sam Payne, and Dhruv Ranganathan for useful discussions, and David Jensen, Martin Ulirsch, and Chi-Ho Yuen for comments on an older version of the paper. We are very thankful to Victor Reiner for pointing out the history of the Kirchhoff--Prym formula in the context of critical groups and signed graphs. We are deeply grateful to Sebastian Casalaina-Martin for a comprehensive Appendix dedicated to the algebraic Abel--Prym map.

\section{Preliminaries}\label{sec:preliminaries}

In this section, we review the necessary material about graphs, metric graphs, tropical ppavs, Jacobians, Prym varieties, and polyhedral spaces. The only new material is found in Section~\ref{sec:PrymGroups}, where we define the Prym group of a free double cover of graphs. Throughout this paper, we consider both finite and metric graphs, which we distinguish by using Latin and Greek letters, respectively. Graphs are allowed to have loops and multi-edges but not legs, and we do not consider the more general setting of graphs with vertex weights. All graphs are assumed to be connected unless stated otherwise.

\subsection{Graphs and free double covers} We denote the vertex and edge sets of a finite graph $G$ by respectively $V(G)$ and $E(G)$, and its \emph{genus} by $g(G)=|E(G)|-|V(G)|+1$. An \emph{orientation} of a graph $G$ is a choice of direction for each edge, allowing us to define {\it source} and \emph{target} maps $s,t:E(G)\to V(G)$. For a vertex $v\in V(G)$, the \emph{tangent space} $T_vG$ is the set of edges emanating from  $v$, and the \emph{valency} is $\val(v)=\#T_vG$ (where each loop at $v$ counts twice towards the valency). A \emph{metric graph} $\Ga$ is the compact metric space obtained from a finite graph $G$ by assigning positive lengths $\ell:E(G)\to \RR_{>0}$ to its edges, and identifying each edge $e\in E(G)$ with a closed interval of length $\ell(e)$. The pair $(G,\ell)$ is called a {\it model} of $\Ga$, and we define $g(\Ga)=g(G)$. A metric graph has infinitely many models, obtained by arbitrarily subdividing edges, but the genus $g(\Ga)$ does not depend on the choice of model.

The only maps of finite graphs that we consider in our paper are \emph{free double covers} $p:\tG\to G$. Such a map consists of a pair of surjective 2-to-1 maps $p:V(\tG)\to V(G)$ and $p:E(\tG)\to E(G)$ that preserve adjacency, and such that the map is an isomorphism in the neighborhood of every vertex of $\tG$. Specifically, for any pair of vertices $\tv$ and $v$ with $p(\tv)=v$, and for each edge $e\in E(G)$ attached to $v$, there is a unique edge $\te\in E(G)$ attached to $\tv$ that maps to $e$. We say that $p:\tG\to G$ is \emph{oriented} if $\tG$ and $G$ are oriented graphs, and if the map $p$ preserves the orientation. There is a naturally defined \emph{involution} $\iota:\tG\to \tG$ on the source graph that exchanges the two sheets of the cover. It is easy to see that if $G$ has genus $g$, then any connected double cover $\tG$ of $G$ has genus $2g-1$.

\begin{remark} If $p:\tG\to G$ is a free double cover and $e\in E(G)$ is a loop at $v$, then the preimage of $e$ is either a pair of loops, one at each of the two vertices in $p^{-1}(v)$, or a pair of edges connecting the two vertices in $p^{-1}(v)$ (oriented in the opposite directions if $e$ is oriented).

\end{remark}

A free double cover of metric graphs $\pi:\tGa\to \Ga$ is a free double cover $p:\tG\to G$ of appropriate models $(\tG,\ell)$ and $(G,\ell)$ of respectively $\tGa$ and $\Ga$ that preserves edge length, so that $\ell(p(\te))=\ell(\te)$ for all $\te\in E(\tGa)$. A free double cover is the same as a finite harmonic morphism of global degree two and local degree one everywhere, and we do not consider the more general case of unramified harmonic morphisms of degree two studied in~\cite{JensenLen_thetachars} and~\cite{Len_Ulirsch_Skeletons}. From a topological viewpoint, free double covers are the same as normal covering spaces with Galois group $\ZZ/2\ZZ$.

We consistently use the following construction, due to~\cite{Waller_DoubleCovers}, to describe a double cover $p:\tG\to G$ of a graph $G$ of genus $g$.

\begin{mainconstruction} \label{con:A} 
Let $G$ be a graph of genus $g$. Fix a spanning tree $T\subset G$ and a subset $S\subset \{e_0,\ldots,e_{g-1}\} $ of the edges in the complement of $T$. Let $\tT^+$ and $\tT^-$ be two copies of $T$, and for a vertex $v\in V(T)=V(G)$ denote $\tv^{\pm}$ the corresponding vertices in $\tT^{\pm}$. We define the graph $\tG$ as 
$$
\tG=\tT^+\cup \tT^- \cup \{\te_0^{\pm},\ldots,\te_{g-1}^{\pm}\}.
$$
The map $p:\tG\to G$ sends $\tT^{\pm}$ isomorphically to $T$ and $\te_i^{\pm}$ to $e_i$. For $e_i\in S$, each of the two edges $\te^{\pm}_i$ above it has one vertex on $\tT^+$ and one on $\tT^-$, while for $e_i\notin S$ both vertices of $\te^{\pm}_i$ lie on the tree $\tT^{\pm}$.  It is clear that if $G$ is connected, then $\tG$ is connected if and only if $S$ is nonempty. In the latter case, we may and will assume that $e_0\in S$, and then $\tT=\tT^+\cup \tT^-\cup \{\te_0^+\}$ is a spanning tree for $\tG$. We furthermore always assume that the starting and ending vertices of $\te^+_0$ lie respectively on $\tT^+$ and $\tT^-$, and conversely for $\te^-_0$:
$$
s(\te^{\pm}_0)=\widetilde{s(e_0)}^{\pm},\quad t(\te^{\pm}_0)=\widetilde{t(e_0)}^{\mp}.
$$
We do not make the same assumptions about the lifts of the remaining edges $e_i\in S$. 

The set of connected free double covers of $G$ is thus identified with the set of nonempty subsets of $\{e_0,\ldots,e_{g-1}\}$. Alternatively, the fundamental cycle construction defines a basis for $H_1(G,\ZZ)$ corresponding to the edges $e_i$ (the $i$-th basis element is the unique cycle supported on $T\cup\{e_i\}$ and containing $+e_i$). The set of nonempty subsets of $\{e_0,\ldots,e_{g-1}\}$ is then identified with the set of nonzero elements of $\Hom(H_1(G,\ZZ),\ZZ/2\ZZ)=H^1(G,\ZZ/2\ZZ)$ (a subset is identified with its indicator function), and the latter is canonically identified with the set of connected free double covers of $G$ by covering space theory.

\end{mainconstruction}

\begin{remark} Let $p:\tG\to G$ be a free double cover corresponding to a tree $T\subset G$ and a subset $S\subset E(G)\backslash E(T)$, and let $G'\subset G$ be a subgraph. Then the preimage $p^{-1}(G')$ is connected (equivalently, the restricted cover $p|_{p^{-1}(G')}:p^{-1}(G')\to G'$ is a nontrivial free double cover) if and only if there is a cycle on $G'$ that contains an odd number of edges from $S$. 
\label{rem:oddS}
\end{remark}

\subsection{Chip-firing and linear equivalence}

We now briefly recall the basic notions of divisor theory for finite and metric graphs (see~\cite[Section 1]{BakerNorine_RiemannRoch} and~\cite[Section 2]{LimPaynePotashnik_BrillNoether} respectively for details).

Let $G$ be a finite graph. The \emph{divisor group} $\Div(G)$ of $G$ is the free abelian group on $V(G)$, and the \emph{degree} of a divisor is the sum of its coefficients:
$$
\Div(G)=\left\{\sum a_v v:a_v\in \ZZ\right\},\quad \deg \sum a_v v=\sum a_v.
$$
A divisor $D=\sum a_vv$ is called \emph{effective} if all $a_v\geq 0$, and we denote the set of divisors of degree $d$ by $\Div^d(G)$.

Let $n=|V(G)|$ be the number of vertices, and let $Q$ and $A$ be the $n\times n$ \emph{valency} and \emph{adjacency} matrices:
\begin{equation}
Q_{uv}=\delta_{uv}\val(u),\quad A_{uv}=|\{\text{edges between $u$ and $v$}\}|.
\label{eq:QA}
\end{equation}
The \emph{Laplacian} $L=Q-A$ of $G$ is a symmetric degenerate matrix whose rows and columns sum to zero.
Given a vertex $v$, the divisor obtained via \emph{chip-firing from} $v$ is 
$$
D_v=-\sum_{u\in V(G)}L_{uv}u.
$$
Such a divisor has degree zero, hence the set of \emph{principal divisors} $\Prin(G)$, which are defined as the image of the chip-firing map
$$
\ZZ^{V(G)}\to \ZZ^{V(G)}=\Div(G),\quad a\mapsto -La,
$$
lies inside $\Div^0(G)$. The \emph{Picard group} and \emph{Jacobian} of $G$ are defined as 
$$
\Pic(G)=\Div(G)/\Prin(G),\quad \Jac(G)=\Div^0(G)/\Prin(G).
$$
Since any principal divisor has degree zero, the degree function descends to $\Pic(G)$, and we denote $\Pic^k(G)$ the set of equivalence classes of degree $k$ divisors, so that $\Jac(G)=\Pic^0(G)$. The group $\Pic(G)$ is infinite, but $\Jac(G)$ is a finite group whose order is equal to the absolute value of any cofactor of the Laplacian $L$. \emph{Kirchhoff's matrix tree theorem} states that $|\Jac(G)|$ is equal to the number of spanning trees of $G$ (see~\cite[Theorem 6.2]{BakerShokrieh_Trees}).

The Picard variety of a metric graph $\Ga$ of genus $g$ is defined as follows (see~\cite{BakerFaber_tropicalAbelJacobi}). A \emph{divisor} on a metric graph $\Ga$ is a finite linear combination of the form
$$
D = a_1 p_1 + a_2 p_2 +\cdots + a_k p_k,
$$
where $a_i\in\ZZ$ and $p_i$ can be any point of $\Ga$, and $\deg D=a_1+\cdots+a_k$. We denote by $\Div(\Ga)$ the divisor group and by $\Div^k(\Ga)$ the set of divisors of degree $k$. A \emph{rational function} $M$ on $\Ga$ is a piecewise-linear real-valued function with integer slopes. The principal divisor $\ddiv (M)$  associated to $M$ is the degree zero divisor whose value at each point $p\in \Ga$ is the sum of the incoming slopes of $M$ at $p$. It is clear that $\ddiv(M+N)=\ddiv(M)+\ddiv(N)$ and $\ddiv(-M)=-\ddiv(M)$, so the principal divisors $\Prin(\Ga)$ form a subgroup of $\Div^0(\Ga)$, and the degree function descends to the quotient:
$$
\Pic(\Ga)=\Div(\Ga)/\Prin(\Ga),\quad \Pic^k(\Ga)=\{[D]\in \Pic(\Ga):\deg D=k\}.
$$
The \emph{Picard variety} $\Pic^0(\Ga)$ is a real torus of dimension $g$ and is isomorphic to the Jacobian variety of $\Ga$, which we review in the next section, while each $\Pic^k(\Ga)$ is a torsor over $\Pic^0(\Ga)$.

\subsection{Tropical abelian varieties}

The Jacobian variety of a metric graph $\Ga$ is a \emph{tropical principally polarized abelian variety} (tropical ppav for short). We review the theory of tropical ppavs, following are~\cite{FosterRabinoffShokriehSoto_tropicaltheta} and~\cite{Len_Ulirsch_Skeletons}, though we have found it convenient to slightly modify the main definitions (see Remark~\ref{rem:tppavs}). In brief, a tropical ppav is a real torus $\Si$ whose universal cover is equipped with a distinguished lattice (used to define integral local coordinates on $\Si$, and in general distinct from the lattice defining the torus itself), and an inner product.

Let $\La$ and $\La'$ be finitely generated free abelian groups of the same rank, and let $[\cdot,\cdot]:\La'\times \La\to \RR$ be a nondegenerate pairing. The triple $(\La,\La',[\cdot,\cdot])$ defines a \emph{real torus with integral structure} $\Si=\Hom(\La,\RR)/\La'$, where the "integral structure" refers to the lattice $\Hom(\La,\ZZ)\subset \Hom(\La,\RR)$, and where $\La'$ is embedded in $\Hom(\La,\RR)$ via the assignment $\la'\mapsto [\la',\cdot]$. The transposed data $(\La',\La,[\cdot,\cdot]^t)$ define the \emph{dual torus} $\Si'=\Hom(\La',\RR)/\La$. 

Let $\Si_1=(\La_1,\La'_1,[\cdot,\cdot]_1)$ and $\Si_2=(\La_2,\La'_2,[\cdot,\cdot]_2)$ be two real tori with integral structure, and let $f_*:\La_1'\to \La_2'$ and $f^*:\La_2\to \La_1$ be a pair of maps satisfying
\begin{equation}
    [\la'_1,f^*(\la_2)]_1=[f_*(\la'_1),\la_2]_2
\label{eq:integral}
\end{equation}
for all $\la'_1\in \La'_1$ and $\la_2\in \La_2$. The map $f^*$ defines a dual map $\overline{f}:\Hom(\La_1,\RR)\to \Hom (\La_2,\RR)$, and condition~\eqref{eq:integral} implies that $\overline{f}(\La'_1)\subset \La'_2$ (in fact, $\overline{f}|_{\La'_1}=f_*$). Hence the pair $(f_*,f^*)$ defines a \emph{homomorphism} $f:\Si_1\to \Si_2$ of real tori with integral structures. The transposed pair $(f^*,f_*)$ defines the \emph{dual homomorphism} $f':\Si'_2\to \Si'_1$.

Let $f=(f_*,f^*):\Si_1\to \Si_2$ be a homomorphism of real tori with integral structures $\Si_i=(\La_i,\La'_i,[\cdot,\cdot]_i)$. We can naturally associate two real tori to $f$: the connected component of the identity of the kernel of $f$, denoted $(\Ker f)_0$, and the cokernel $\Coker f$. It is easy to see that $(\Ker f)_0$ and $\Coker f$ also have integral structures, and the natural maps $i:(\Ker f)_0\to \Si_1$ and $p:\Si_2\to \Coker f$ are homomorphisms of real tori with integral structure.

Indeed, let $K=(\Coker f^*)^{tf}$ be the quotient of $\Coker f^*$ by its torsion subgroup (equivalently, the quotient of $\La_1$ by the saturation of $\Im f^*$), and let $K'=\Ker f_*$. Then $\Hom(K,\RR)$ is naturally identified with the kernel of the map $\Hom(\La_1,\RR)\to \Hom (\La_2,\RR)$ dual to $f^*$, and therefore $(\Ker f)_0=(K,K',[\cdot,\cdot]_K)$, where $[\cdot,\cdot]_K:K'\times K\to \RR$ is the pairing induced by $[\cdot,\cdot]_1$. We note that this pairing is well-defined: given $\la'_1\in K'$ and $\la_2\in \La_2$, Equation~\eqref{eq:integral} implies that
$$
[\la'_1,f^*(\la_2)]_1=[f_*(\la'_1),\la_2]_2=[0,\la_2]_2=0.
$$
Therefore, for $\la'\in K'$ and $\la\in K$, the pairing $[\la',\la]_K=[\la',\la]_1$ does not depend on a choice of representative for $\la\in K$. The natural maps $i^*:\La_1\to K$ and $i_*:K'\to \La_1'$ define $(\Ker f)_0$ as an integral subtorus of $\Si_1$. Similarly, $\Coker f=(C,C',[\cdot,\cdot]_C)$, where $C=\Ker f^*$, $C'=(\Coker f_*)^{tf}$, the pairing $[\cdot,\cdot]_C$ is induced by $[\cdot,\cdot]_2$, and $p$ is given by the natural maps $p_*:\La'_2\to C'$ and $p^*:C\to \La_2$. We note that a morphism $f$ of real tori with integral structure has finite kernel if and only if $K$ and $K'$ are trivial, in other words if $f_*$ is injective (equivalently, if $\Im f^*$ has finite index in $\La_1$).

Let $\Si=(\La,\La',[\cdot,\cdot])$ be a real torus with integral structure. A \emph{polarization} on $\Si$ is a map $\xi:\La'\to \La$ (necessarily injective) with the property that the induced bilinear form
$$
(\cdot,\cdot):\La'\times \La'\to \RR,\quad (\la',\mu')=[\la',\xi(\mu')]
$$
is symmetric and positive definite. Given a polarization $\xi$ on $\Si$, the pair $(\xi,\xi)$ defines a homomorphism $\eta:\Si\to \Si'$ to the dual, whose finite kernel is identified with $\La/\Im \xi$. The pair $(\Si,\xi)$ is called a \emph{tropical polarized abelian variety}. The map $\eta$ is an isomorphism if and only if $\xi$ is an isomorphism, in which case we say that the polarization $\xi$ is \emph{principal}. 

Let $\Si=(\La,\La',[\cdot,\cdot])$ be a $g$-dimensional tropical polarized abelian variety. The associated bilinear form $(\cdot,\cdot)$ on $\La'$ extends to an inner product on the universal cover $V=\Hom(\La,\RR)$, which we also denote $(\cdot,\cdot)$, and hence to a translation-invariant Riemannian metric on $\Si$. Let $C\subset \Si$ be a parallelotope framed by vectors $v_1,\ldots,v_g\in V$, then the volume of $C$ is equal to the square root $\sqrt{\det(v_i,v_j)}$ of the Gramian determinant of the $v_i$. In particular, if $\la'_1,\ldots,\la'_g$ is a basis of $\La'$, then
$$
\Vol^2(\Si)=\det(\la'_i,\la'_j).
$$

Finally, let $f:\Si_1\to \Si_2$ be a homomorphism of real tori with integral structures given by $f^*:\La_2\to \La_1$ and $f_*:\La'_1\to \La'_2$, and assume that $f$ has finite kernel (equivalently, $f_*$ is injective). Given a polarization $\xi_2:\La'_2\to \La_2$ on $\Si_2$ with associated bilinear form $(\cdot,\cdot)_2$, we define the \emph{induced polarization} $\xi_1:\La'_1\to \La_1$ by $\xi_1=f^*\circ \xi_2\circ f_*$. This is indeed a polarization, because by~\eqref{eq:integral} the associated bilinear form $(\cdot,\cdot)_1$ on $\La'_1$ is given by
$$
(\la'_1,\mu'_1)_1=[\la'_1,\xi_1(\mu'_1)]_1=[\la'_1,f^*(\xi_2(f_*(\mu'_1)))]_2=
[f_*(\la'_1),\xi_2(f_*(\mu'_1))]_2=(f_*(\la'_1),f_*(\mu'_1))_2,
$$
so it is symmetric and positive definite because $f_*$ is injective. Hence, in particular, an integral subtorus $i:\Pi\to \Si$ of a tropical polarized abelian variety $(\Si,\xi)$ has an induced polarization, which we denote $i^*\xi$. We note that the polarization induced by a principal polarization is not necessarily itself principal.

\begin{remark} In~\cite{Len_Ulirsch_Skeletons}, a real torus with integral structure is defined as a torus $\Si=N_{\RR}/\La$ with a distinguished lattice $N\subset N_R$ in the universal cover, and a morphism $f:\Si_1\to \Si_2$ as a map $\overline{f}:N_{1,\RR}\to N_{2,\RR}$ satisfying $\overline{f}(\La_1)\subset \La_2$ and induced by a $\ZZ$-linear map $N_1\to N_2$. It is easy to see that this definition is equivalent to ours. 
\label{rem:tppavs}
\end{remark}

\subsection{The Jacobian of a metric graph} 

We now construct the Jacobian variety $\Jac(\Ga)$ of a metric graph $\Ga$ of genus $g$ as a tropical ppav, following~\cite{BakerFaber_tropicalAbelJacobi} and~\cite{Len_Ulirsch_Skeletons}. We first pick an oriented model $G$ of $\Ga$ and consider the corresponding simplicial homology groups. Let $A$ be either $\ZZ$ or $\RR$, and let $C_0(G,A)=A^{V(G)}$ and $C_1(G,A)=A^{E(G)}$ be respectively the \emph{simplicial $0$-chain and $1$-chain groups} of $G$ with coefficients in $A$. The source and target maps $s,t:E(G)\to V(G)$ induce a boundary map
$$
d_A:C_1(G,A)\to C_0(G,A), \quad \sum_{e\in E(G)}a_e e\mapsto
\sum_{e\in E(G)}a_e[t(e)-s(e)], 
$$
and the \emph{first simplicial homology group} of $G$ with coefficients in $A$ is $H_1(G,A)=\Ker d_A$. We also consider the group of \emph{$A$-valued harmonic $1$-forms} $\Om(G,A)$ on $G$, which is a subgroup of the free $A$-module with basis $\{de:e\in E(G)\}$:
$$
\Om(G,A)=\left\{\sum_{e\in E(G)} \omega_e de:\sum_{e:t(e)=v}\omega_e=\sum_{e:s(e)=v}\omega_e\mbox{ for all }v\in V(G)\right\}.
$$
We note that mathematically $H_1(G,A)$ and $\Om(G,A)$ are the same object, but it is convenient to distinguish them, both for historical purposes and for clarity of exposition.

We now define an \emph{integration pairing}
$$
[\cdot,\cdot]:C_1(G,A) \times \Om(G,A) \to \RR
$$
by
$$
[\gamma, \omega]=\int_{\gamma}\omega=\sum_{e\in E(G)}\gamma_e \omega_e \ell(e),\quad
\gamma=\sum_{e\in E(G)}\gamma_ee,\quad \omega=\sum_{e\in E(G)}\omega_ede.
$$
By Lemma 2.1 in~\cite{BakerFaber_tropicalAbelJacobi}, the integration pairing restricts to a perfect pairing on $H_1(G,A)\times \Om(G,A)$. 

Let $G'$ be the model of $\Ga$ obtained by subdividing the edge $e\in E(G)$ into two edges $e_1$ and $e_2$, oriented in the same way as $e$, with $\ell(e_1)+\ell(e_2)=\ell(e)$. The natural embedding $C_1(G,A)\to C_1(G',A)$ sending $e$ to $e_1+e_2$ restricts to an isomorphism $H_1(G,A)\to H_1(G',A)$. Similarly, the groups $\Om(G,A)$ and $\Om(G',A)$ are naturally isomorphic, and these isomorphisms preserve the integration pairing. Hence we can define $\Om(\Ga,A)=\Om(G,A)$ and $H_1(\Ga,A)=H_1(G,A)$ for any model $G$, and by a $1$-chain, or \emph{path}, on $\Ga$ we mean a $1$-chain on any model of $\Ga$.

We now let $\La=\Om(\Ga,\ZZ)$ and $\La'=H_1(\Ga,\ZZ)$, let $[\cdot,\cdot]:\La'\times \La\to \RR$ be the integration pairing, and let $\xi:H_1(\Ga,\ZZ)\to \Om(\Ga,\ZZ)$ be the natural isomorphism sending the $1$-cycle $\sum a_e e$ to the $1$-form $\sum a_e de$. We denote $\Om^*(\Ga)=\Hom(\Om(\Ga,\ZZ),\RR)$, and by the universal coefficient theorem the group $\Hom(H_1(\Ga,\ZZ),\RR)$ is canonically isomorphic to $H^1(\Ga,\RR)$. The \emph{Jacobian variety} and the \emph{Albanese variety} of $\Ga$ are the dual tropical ppavs
$$
\Jac(\Ga)=\Om(\Ga)^*/H_1(\Ga,\ZZ),\quad \Alb(\Ga)=H^1(\Ga,\RR)/\Om(\Ga,\ZZ). 
$$
The group $H_1(\Ga,\ZZ)$ carries an intersection form
\begin{equation}
\label{eq:intersectionform}
(\cdot,\cdot)=[\cdot,\xi(\cdot)]:H_1(\Ga,\ZZ)\times H_1(\Ga,\ZZ)\to \RR,\quad \left(\sum_{e\in E(G)}\ga_ee,\sum_{e\in E(G)}\delta_ee\right)=\sum_{e\in E(G)} \ga_e\delta_e \ell(e)
\end{equation}
that induces an inner product on $\Omega^*(\Ga)$.

Fix a point $q\in \Ga$, and for any $p\in \Ga$ choose a path $\ga(q,p)\in C_1(\Ga,\ZZ)$ from $q$ to $p$. Integrating along $\ga(q,p)$ defines an element of $\Om(\Ga)^*$, and choosing a different path $\ga'(q,p)$ defines the same element modulo $H_1(\Ga,\ZZ)\subset \Om(\Ga)^*$. Hence we have a well-defined \emph{Abel--Jacobi map} $\Phi_q:\Ga\to \Jac(\Ga)$ with base point $q$:
\begin{equation}
\Phi_q:\Ga\to \Jac(\Ga),\quad p\mapsto\left(\omega\mapsto \int_{\ga(q,p)}\omega\right).
\label{eq:AJ}
\end{equation}
The map $\Phi_q$ extends by linearity to $\Div(\Ga)$, and its restriction to $\Div^0(\Ga)$ does not depend on the choice of base point $q$. The tropical analogue of the Abel--Jacobi theorem (see~\cite{MikhalkinZharkov}, Theorem 6.3) states that $\Phi_q$ descends to a canonical isomorphism $\Pic^0(\Ga)\simeq \Jac(\Ga)$. Since any $\Pic^k(\Ga)$ is a torsor over $\Pic^0(\Ga)$, we can define $\Vol(\Pic^k(\Ga))=\Vol(\Jac(\Ga))$.

Finally, we recall the principal results~\cite{ABKS_Canonical}, which concern the tropical Jacobi inversion problem. Consider the degree $g$ Abel--Jacobi map
$$
\Phi:\Sym^g(\Ga)\to \Pic^g(\Ga),\quad \Phi(p_1,\ldots,p_g)=p_1+\cdots+p_g.
$$
A choice of model $G$ for $\Ga$ defines a cellular decomposition
$$
\Sym^g(\Ga)=\bigcup_{F\in \Sym^g(E(G))}C(F),
$$
where for a multiset $F=\{e_1,\ldots,e_g\}\in \Sym^g(E(G))$ of $g$ edges of $G$ the cell $C(F)$ consists of divisors supported on $F$:
$$
C(F)=\{p_1+\cdots+p_g:p_i\in e_i\}.
$$
We say that $F$ is a \emph{break set} if all $e_i$ are distinct and $G\backslash F$ is a tree, and the set of \emph{break divisors} is the union of the cells $C(F)$ over all break sets $F$.

The map $\Phi$ is affine linear on each cell $C(F)$, and has maximal rank precisely when $F$ is a break set. Specifically, the following is true:

\begin{enumerate}
    \item If $F=\{e_1,\ldots,e_g\}$ is a break set, then the restriction of $\Phi$ to $C(F)$ is injective, and
    \begin{equation}
        \Vol(\Phi(C(F)))=\frac{w(F)}{\Vol(\Jac(\Ga))},\quad w(F)=\Vol (C(F))=\ell(e_1)\cdots \ell(e_g).
    \label{eq:volumeofcell}
    \end{equation}
    \item If $F$ is not a break set, then the restriction of $\Phi$ to $C(F)$ does not have maximal rank, and $\Vol(\Phi(C(F)))=0$.
\end{enumerate}

Furthermore, the map $\Phi$ has a unique continuous section whose image is the set of break divisors. Hence the images of the break cells $C(F)$ cover $\Pic^g(\Ga)$ with no overlaps in the interior of cells, and summing together their volumes gives $\Vol(\Jac(\Ga))=\Vol(\Pic^g(\Ga))$:

\begin{theorem}[Theorem 1.5 of~\cite{ABKS_Canonical}] The volume of the Jacobian variety of a metric graph $\Ga$ of genus $g$ is given by
\begin{equation}
\label{eq:ABKSformula}
\Vol^2(\Jac(\Ga))=\sum_{F\subset E(\Ga)}w(F),
\end{equation}
where the sum is taken over $g$-element subsets $F\subset E(\Ga)$ such that $\Ga\backslash F$ is a tree.
\end{theorem}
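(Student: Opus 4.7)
The plan is to read off the volume formula directly from the cellular decomposition of $\Sym^g(\Ga)$ and the canonical section of the Abel--Jacobi map, using the facts about $\Phi|_{C(F)}$ already collected above the theorem. The only new input needed beyond what has been recalled is the Mikhalkin--Zharkov canonical representability theorem, which then reduces the proof to an additive volume computation.

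First I would invoke the Mikhalkin--Zharkov theorem: $\Phi$ has a unique continuous section $s:\Pic^g(\Ga)\to \Sym^g(\Ga)$ whose image is the set of break divisors. This gives two crucial geometric consequences. Existence of the section says that every class in $\Pic^g(\Ga)$ has at least one break representative, so the union of images $\Phi(C(F))$ over break sets $F$ covers $\Pic^g(\Ga)$. Uniqueness of the section says that no class can have two distinct break representatives supported in the interiors of break cells, so the images $\Phi(C(F))$ for distinct break sets have pairwise disjoint interiors in $\Pic^g(\Ga)$. Together these imply that the break cells tile $\Pic^g(\Ga)$ up to a measure-zero set of codimension-one boundaries.

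Next I would compute the volume by summing over cells. Non-break cells $C(F)$ contribute nothing because $\Phi|_{C(F)}$ fails to have maximal rank, so its image is of positive codimension and $\Vol(\Phi(C(F)))=0$. Each break cell contributes $\Vol(\Phi(C(F)))=w(F)/\Vol(\Jac(\Ga))$ by Equation~\eqref{eq:volumeofcell}. Combining these and using $\Vol(\Pic^g(\Ga))=\Vol(\Jac(\Ga))$ coming from the torsor structure, I get
$$
\Vol(\Jac(\Ga))=\Vol(\Pic^g(\Ga))=\sum_{F\text{ break}} \Vol(\Phi(C(F)))=\frac{1}{\Vol(\Jac(\Ga))}\sum_{F\text{ break}} w(F),
$$
and multiplying through by $\Vol(\Jac(\Ga))$ produces Equation~\eqref{eq:ABKSformula}.

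The substantive work sits entirely in two inputs that have been declared above the theorem and are thus legitimate to cite: Equation~\eqref{eq:volumeofcell}, which computes the Jacobian determinant of $\Phi|_{C(F)}$ for a break set $F$ as the ratio $w(F)/\Vol(\Jac(\Ga))$ via the pairing between fundamental cycles and edge-length coordinates; and the Mikhalkin--Zharkov canonical section. I therefore expect the main conceptual obstacle in any independent proof of the theorem to be the derivation of \eqref{eq:volumeofcell}: identifying the pushforward of the Lebesgue measure on $C(F)$ with the Riemannian volume on $\Jac(\Ga)$ in terms of the Gram matrix of the fundamental cycles. Granted that input, the argument above is a clean additivity over a tiling, and it has the appealing feature that the factor $1/\Vol(\Jac(\Ga))$ in \eqref{eq:volumeofcell} is exactly what produces the square on the left-hand side of \eqref{eq:ABKSformula}, mirroring the relationship between the Prym version to be proved later in the paper.
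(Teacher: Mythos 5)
Your proposal is correct and follows essentially the same route the paper takes: it recalls the cellwise volume formula~\eqref{eq:volumeofcell} together with the Mikhalkin--Zharkov/ABKS canonical section, and then sums the volumes of the break cells tiling $\Pic^g(\Ga)$, exactly as in the paragraph preceding the theorem. The factor $1/\Vol(\Jac(\Ga))$ producing the square on the left-hand side is handled correctly.
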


\begin{remark} This result can be interpreted as saying that $\Phi$ is a \emph{harmonic morphism of polyhedral spaces of degree 1}, where we define the local degree of $\Phi$ on a cell $C(F)$ to be $1$ if $F$ is a break set and $0$ otherwise. Indeed, the harmonicity condition ensures that such a map has a unique continuous section, since each cell of $\Pic^g(\Ga)$ has a unique preimage in $\Sym^g(\Ga)$ and these preimages fit together along codimension one cells. Formula~\eqref{eq:volumeofcell} then implies that the map $\Phi$ has a common volume dilation factor $1/\Vol(\Jac(\Ga))$ on all non-contracted cells.
\label{rem:ABKSdeg1}
\end{remark}

\begin{remark} We also note that, from the point of view of the Riemannian geometry of $\Jac(\Ga)$, the edge lengths on $\Ga$ are measured in units of $[\mbox{length}]^2$, not $[\mbox{length}]$. This is already clear from Formula~\eqref{eq:intersectionform} for the intersection form. Hence, for example, if $\Ga$ is a circle of length $L$ (in other words consists of a single loop of length $L$ attached to a vertex), then $\Ga$ is canonically isomorphic to $\Pic^1(\Ga)$, but the volume of $\Jac(\Ga)$ is $\sqrt{L}$, rather than $L$.

\label{rem:wrongunits}
\end{remark}

\subsection{Prym groups}
\label{sec:PrymGroups}
We now discuss the Prym group of a free double cover of finite graphs. Unlike the case of metric graphs (which we treat in Section~\ref{subsec:Prymvarieties}), finite groups don't have a distinguished connected component of the identity. We therefore require a notion of parity on elements of the kernel of the norm map.

Let $p:\tG\to G$ be a free double cover of graphs. The induced maps $\Nm:\Div(\tG)\to \Div(G)$ and $\iota:\Div(\tG)\to \Div(\tG)$ given by
$$
\Nm\left(\sum a_v v\right)=\sum a_v p(v),\quad \iota\left(\sum a_v v\right)=\sum a_v \iota(v)
$$
preserve degree and linear equivalence, and descend to give a surjective map $\Nm:\Jac(\tG)\to \Jac(G)$ and an isomorphism $\iota:\Jac(\tG)\to \Jac(\tG)$.

A divisor in the kernel $D\in \Ker \Nm\subset \Div(\tG)$ has degree zero and can be uniquely represented as $D=E-\iota(E)$, where $E$ is an effective divisor and the supports of $E$ and $\iota(E)$ are disjoint. We define the \emph{parity} of $D$ as
$$
\epsilon(D)=\deg E\bmod 2.
$$
It turns out that parity respects addition and linear equivalence, and hence gives a surjective homomorphism from $\Ker \Nm\subset \Jac(\tG)$ to $\ZZ/2\ZZ$: 

\begin{proposition} Let $D_1,D_2\in \Ker \Nm\subset \Div^0(\tG)$.

\begin{enumerate}
    \item $\epsilon(D_1+D_2)=\epsilon(D_1)+\epsilon(D_2)$.
    \item If $D_1\simeq D_2$ then $\epsilon(D_1)=\epsilon(D_2)$.
\end{enumerate}

\end{proposition}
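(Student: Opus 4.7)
The plan is to derive a cleaner formula for $\epsilon$ that makes (1) immediate and reduces (2) to a parity count. Fix a section $S \subset V(\tG)$ of the $2$-to-$1$ vertex map, so $V(\tG) = S \sqcup \iota(S)$. For $D \in \Ker \Nm$, the condition $\Nm(D) = 0$ forces $D(\iota v) = -D(v)$, so in the decomposition $D = E - \iota(E)$ with disjoint supports we have $E(v) + E(\iota v) = |D(v)|$ for every pair $\{v, \iota v\}$. Summing over $S$ yields
\[
\deg E \;=\; \sum_{v \in S} |D(v)| \;\equiv\; \sum_{v \in S} D(v) \pmod{2}.
\]
The right side is linear in $D$ and independent of the section $S$ (swapping $v \leftrightarrow \iota v$ in $S$ alters the sum by $2 D(v)$), which proves (1).

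For (2), linearity reduces the task to showing $\epsilon$ vanishes on $\Prin(\tG) \cap \Ker \Nm$. Let $D = -L \widetilde{a}$ lie there. Since $\iota$ is a graph automorphism, $L$ commutes with $\iota$, so the condition $D(\iota v) + D(v) = 0$ becomes $L(\widetilde{a} + \widetilde{a} \circ \iota) = 0$; connectivity of $\tG$ then forces $\widetilde{a}(v) + \widetilde{a}(\iota v) = c$ for some constant $c \in \ZZ$. Substituting $D = -L\widetilde{a}$ into the formula and introducing $\ell(u) := \sum_{v \in S} L_{uv}$, the $\iota$-symmetry gives $\ell(\iota u) = -\ell(u)$; regrouping the sum by pairs $\{u, \iota u\}$ together with $\widetilde{a}(u) - \widetilde{a}(\iota u) \equiv c \pmod{2}$ collapses the computation to
\[
\epsilon(D) \;\equiv\; c \sum_{u \in S} \ell(u) \;=\; c \cdot \big|E_{\tG}(S, \iota S)\big| \pmod{2},
\]
where $E_{\tG}(S, \iota S)$ denotes the set of edges of $\tG$ with one endpoint in each of $S$ and $\iota S$.

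The remaining step, and the essential combinatorial point, is that $|E_{\tG}(S, \iota S)|$ is even. Since $\iota$ is a fixed-point-free involution on $V(\tG)$ swapping $S$ with $\iota S$, it pairs edges within $S$ with edges within $\iota S$, yielding $|E(\tG)| = 2|E_{\tG}(S,S)| + |E_{\tG}(S, \iota S)|$; and $|E(\tG)| = 2|E(G)|$ since $p$ is $2$-to-$1$ on edges. Hence the cross-edge count is even, so $\epsilon(D) \equiv 0 \pmod{2}$, completing (2). The main care required is in the bookkeeping of the $\ell(u)$ computation, particularly when $\tG$ has loops (where $L_{uu}$ and $\val(u)$ both pick up loop contributions), but the involution symmetry at every step makes the final parity conclusion robust.
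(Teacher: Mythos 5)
Your argument is correct, and while part (1) and the opening reduction of part (2) coincide with the paper's (the paper likewise observes $\epsilon(D)\equiv\sum_{v} D(\tv^+)\bmod 2$ for a choice of lift of each vertex, and likewise deduces from $\Nm(-L\widetilde a)=0$ and connectivity that $\widetilde a(v)+\widetilde a(\iota v)=c$ is constant), you finish part (2) by a genuinely different route. The paper decomposes the principal divisor explicitly as $cD^+ +\sum_v a_v(D_{\tv^+}-D_{\tv^-})$, where $D^+$ is the divisor obtained by firing every vertex of one sheet, and verifies evenness of each generator by a local computation phrased in terms of Construction~A: the positive part of $D_{\tv^+}-D_{\tv^-}$ at a vertex $u$ is $\max(2a_u,2b_u)$, and $D^+$ is computed in closed form as a visibly even divisor supported over the flipped edges. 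You instead run a single global Laplacian computation, using the symmetry of $L$ and $\ell(\iota u)=-\ell(u)$ to collapse $\epsilon(-L\widetilde a)$ to $c\cdot|E_{\tG}(S,\iota S)|\bmod 2$ for a vertex section $S$, and then kill this by the double count $|E(\tG)|=2|E(G)|=2|E_{\tG}(S,S)|+|E_{\tG}(S,\iota S)|$ (valid since $\iota$ is fixed-point free on edges). Your version buys independence from any choice of spanning tree and replaces the paper's case analysis (including the loop bookkeeping, which you correctly note washes out because loops contribute nothing to either $L_{uu}$ net or to the cross-edge count) with one clean parity identity; the paper's version buys an explicit description of the generators of $\Prin(\tG)\cap\Ker\Nm$, which is mildly more informative but not needed elsewhere. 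One cosmetic caution: your $S$ is a set of vertices, whereas the paper reserves $S$ for the set of flipped edges in Construction~A, so the notation should be changed if this were merged into the text.
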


\begin{proof} Suppose that $p:\tG\to G$ is defined by a spanning tree $T\subset G$ and a nonempty subset $S\subset E(G)\backslash E(T)$, as in Construction~\ref{con:A}. Every divisor $D\in\Ker \Nm\subset \Div(\tG)$ is of the form
$$
D=\sum_{v\in V(G)}(a_{\tv^+}\tv^++a_{\tv^-}\tv^-),
$$
where $a_{\tv^+}+a_{\tv^-}=0$ for each $v\in V(G)$. It follows that if $D=E-\iota(E)$ then $\deg E=\sum |a_{\tv^+}|$, hence
$$
\epsilon(D)=\sum |a_{\tv^+}|\bmod 2=\sum a_{\tv^+}\bmod 2,
$$
which is clearly preserved by addition.

To complete the proof, we need to show that any principal divisor in $\Ker \Nm\subset \Div(\tG)$ is even. Consider an arbitrary principal divisor
\[
D=\sum_{v\in V(G)}(c_{\tv^+}D_{\tv^+}+c_{\tv^-}D_{\tv^-})
\]
on $\tG$. Its norm is $\Nm(D)=\sum (c_{\tv^+}+c_{\tv^-})D_v\in \Div(G)$, which is the trivial divisor if and only if $c_{\tv^+}+c_{\tv^-}=c$ for a fixed $c\in \ZZ$ and for all $v\in V(G)$. Therefore, if $\Nm(D)=0$ in $\Div(G)$, then setting $a_v=c_{\tv^+}-c=-c_{\tv^-}$ we see that 
\[
D=cD^+ + \sum_{v\in V(G)} a_v (D_{\tv^+} - D_{\tv^-}),
\]
where $D^+$ the principal divisor obtained by firing each vertex $\tv^+$ of the top sheet once, and $a_v\in\ZZ$. We now show that each summand above is even, so $D$ is even as well by the first part of the proof.

First, we consider divisors of the form $D_{\tv^+}-D_{\tv^-}$ for  $v\in V(G)$. Suppose that the double cover $p$ is described by Construction~\ref{con:A}. For any vertex $u\in V(G)$, denote $a_u$ and $b_u$ the number of edges between $u$ and $v$ in $E(G)\backslash S$ and $S$ respectively. Then 
\[
(D_{\tv^+} - D_{\tv^-})(\tu^\pm) = \pm(a_u-b_u),
\]
and 
\[
(D_{\tv^+} - D_{\tv^-}) (\tv^\pm)= -\sum_{u\neq v} \mp{}(a_u + b_u).
\]
It follows that the contribution from each vertex $u$ to the positive part of $D_{\tv^+} - D_{\tv^-}$ is $|a_u-b_u| + a_u + b_u = \max(2 a_u, 2 b_u)$, which is even. 

As for  $D^+$, a direct calculation shows that 
\[
D^+=\sum_{v\in V(G)}D_{\tv^+}=\sum_{e\in S} \left(\widetilde{s(e)}^-+\widetilde{t(e)}^--\widetilde{s(e)}^+-\widetilde{t(e)}^+\right),
\]
hence $D^+$ is even, and the proof is complete.

\end{proof}

\begin{definition} The {\it Prym group} $\Prym(\tG/G)\subset \Jac(\tG)$ of a free double cover $p:\tG\to G$ is the subgroup of even divisors in $\Ker \Nm$.
\label{def:Prym}
\end{definition}

It is clear that the order of the Prym group is equal to 
$$
|\Prym(\tG/G)|=\frac{1}{2}|\Ker \Nm|=\frac{|\Jac(\tG)|}{2|\Jac(G)|},
$$
and one of the principal results of our paper is a combinatorial formula~\eqref{eq:orderofPrym} for $|\Prym(\tG/G)|$. For now, we illustrate with an example.

\begin{figure}
    \centering
    \begin{tikzpicture}

\begin{scope}[shift={(0,0)}]

\draw[fill](0,0) circle(.8mm) node[below]{$v_1$};
\draw[fill](2,0) circle(.8mm) node[below]{$v_2$};
\draw[fill](4,0) circle(.8mm) node[below]{$v_3$};
\draw[fill](6,0) circle(.8mm) node[below]{$v_4$};

\draw[thick] (0,0) .. controls (0,0.5) and (2,0.5) .. (2,0) node[midway,above] {$e_1$};
\draw[thick] (0,0) -- (2,0) node[midway,below] {$e_3$};
\draw[thick] (2,0) -- (4,0) node[midway,below] {$e_4$};
\draw[thick] (4,0) -- (6,0) node[midway,below] {$e_5$};
\draw[thick] (4,0) .. controls (4,0.5) and (6,0.5) .. (6,0) node[midway,above] {$e_2$};

\end{scope}

\begin{scope}[shift={(0,1.5)}]

\draw[fill](0,0) circle(.8mm) node[below]{$\tv_1^-$};
\draw[fill](2,0) circle(.8mm) node[below]{$\tv_2^-$};
\draw[fill](4,0) circle(.8mm) node[below]{$\tv_3^-$};
\draw[fill](6,0) circle(.8mm) node[below]{$\tv_4^-$};

\draw[thick] (0,0) -- (2,0);
\draw[thick] (2,0) -- (4,0);
\draw[thick] (4,0) -- (6,0);

\draw[fill](0,1.5) circle(.8mm) node[above]{$\tv_1^+$};
\draw[fill](2,1.5) circle(.8mm) node[above]{$\tv_2^+$};
\draw[fill](4,1.5) circle(.8mm) node[above]{$\tv_3^+$};
\draw[fill](6,1.5) circle(.8mm) node[above]{$\tv_4^+$};

\draw[thick] (0,1.5) -- (2,1.5);
\draw[thick] (2,1.5) -- (4,1.5);
\draw[thick] (4,1.5) -- (6,1.5);

\draw[thick] (0,0) -- (2,1.5);
\draw[thick] (0,1.5) -- (2,0);

\draw[thick] (4,0) -- (6,1.5);
\draw[thick] (4,1.5) -- (6,0);

\end{scope}

\end{tikzpicture}
    \caption{An example of a free double cover}
    \label{fig:doublecover1}
\end{figure}

\begin{example} \label{ex:doublecover1} Consider the free double cover $p:\tG\to G$ shown in Fig.~\ref{fig:doublecover1}. In terms of Construction~\ref{con:A}, we can describe it by choosing $T\subset G$ to be the tree containing $e_3$, $e_4$, and $e_5$, and setting $S=\{e_1,e_2\}$. Using Kirchhoff's theorem, we find that $|\Jac (G')|=64$ and $|\Jac(G)|=4$, therefore $\Ker \Nm$ and $\Prym(\tG/G)$ have orders $16$ and $8$, respectively. The group $\Ker \Nm$ is spanned by the divisors $D_i=\tv_i^+-\tv_i^-$, where $i=1,2,3,4$, and an exhaustive calculation using Dhar's burning algorithm gives a complete set of relations on the $D_i$:
$$
2D_1=0,\quad 8D_2=0,\quad D_4=D_1+4D_2,\quad D_3=3D_2.
$$
It follows that $\Ker \Nm\simeq \ZZ/2\ZZ\oplus \ZZ/8\ZZ$ with generators $D_1$ and $D_2$, and hence $\Prym(\tG/G)\simeq \ZZ/8\ZZ$ with generator $D_1+D_2$.

We note that the Abel--Prym map $\tG=\Sym^1(\tG)\to \Prym^1(\tG/G)$ sending $\tv^{\pm}_i$ to $\pm D_i$ is not surjective: both sets have eight elements, but the images of $\tv^{\pm}_1$ are equal, as well as those of $\tv^{\pm}_4$.
\end{example}

\subsection{Prym varieties}\label{subsec:Prymvarieties} Finally, we recall the definition of the Prym variety of a free double cover $\pi:\tGa\to \Ga$ of metric graphs (see~\cite{JensenLen_thetachars} and~\cite{Len_Ulirsch_Skeletons}). As in the finite case, the cover $\pi$ induces a surjective \emph{norm} map 
$$
\Nm:\Pic^0(\tGa)\to \Pic^0(\Ga),\quad \Nm\left(\sum  a_i \tp_i\right) = \sum a_i\pi(\tp_i),
$$
and a corresponding involution $\iota:\Pic^0(\tGa)\to \Pic^0(\tGa)$.

The kernel $\Ker \Nm$ consists of divisors having a representative of the form $E-\iota(E)$ for some effective divisor $E$ on $\tGa$. Indeed, suppose that $\widetilde{D}$ is a divisor on $\widetilde\Gamma$ such that $\Nm(\widetilde{D})\simeq 0$. Then $\Nm(\widetilde{D}) +\ddiv{f} = 0$ for some piecewise linear function $f$. Defining $\tilde f(x) = f(\pi(x))$, we see that $\widetilde{D}$ is equivalent to a divisor whose pushforward is the zero divisor on the nose. Furthermore, the parity of $E$ is well-defined, and $\Ker \Nm$ has two connected components corresponding to the parity of $E$~\cite[Proposition 6.1]{JensenLen_thetachars} (note that, in the more general case when $\pi$ is a dilated unramified double cover, $\Ker \Nm$ has only one connected component).

\begin{definition}\label{def:metricPrym} The \emph{Prym variety} $\Prym(\tGa/\Ga)\subset \Pic^0(\tGa)$ of the free double cover $\pi:\tGa\to \Ga$ of metric graphs is the connected component of the identity of $\Ker \Nm$. 
\end{definition}

The Prym variety $\Prym(\tGa/\Ga)$ has the structure of a tropical ppav, which we now describe. Denote $\widetilde{\La}=\Om(\tGa,\ZZ)$, $\widetilde{\La}'=H_1(\tGa,\ZZ)$, $\La=\Om(\Ga,\ZZ)$ and $\La'=H_1(\Ga,\ZZ)$. Choose an oriented model $p:\tG\to G$ for $\pi$, and consider the pushforward and pullback maps $\pi_*:H_1(\tGa,\ZZ)\to H_1(\Ga,\ZZ)$ and $\pi^*:\Om(\Ga,\ZZ)\to \Om(\tGa,\ZZ)$ defined by
$$
\pi_*\left[\sum_{\te\in E(\tG)} a_{\te} \te\right]=\sum_{\te\in E(\tG)} a_{\te} \pi(\te),\quad \pi^* \left[\sum_{e\in E(G)}a_e de\right]= \sum_{e\in E(G)}a_e (d\te^++d\te^-).
$$
It is easy to verify that the maps $\pi_*$ and $\pi^*$ satisfy Equation~\eqref{eq:integral} with respect to the integration pairings on $\tGa$ and $\Ga$. Therefore the pair $(\pi_*,\pi^*)$ defines a homomorphism $\Nm:\Jac(\tGa)\to \Jac(\Ga)$ of real tori with integral structure (by Proposition 2.2.3 in~\cite{Len_Ulirsch_Skeletons}, this homomorphism is identified with the norm homomorphism $\Nm:\Pic^0(\tGa)\to \Pic^0(\Ga)$ under the Abel--Jacobi isomorphism, justifying the notation). Hence $\Prym(\tGa/\Ga)$ is in fact the real torus with integral structure $(\Ker\Nm)_0=(K,K',[\cdot,\cdot]_K)$, where $K=(\Coker \pi^*)^{tf}$, $K'=\Ker \pi_*$, and $[\cdot,\cdot]_K$ is the pairing induced by the integration pairing on $\tGa$. Alternatively, we can describe $\Prym(\tGa/\Ga)$ as the quotient
$$
\Prym(\tGa/\Ga)=\frac{\Ker \overline{\pi}:\Om^*(\tGa)\to \Om^*(\Ga)}{\Ker \pi_*:H_1(\tGa,\ZZ)\to H^1(\Ga,\ZZ)},
$$
where $\overline{\pi}$ is the map dual to $\pi^*$.

The polarization $\widetilde{\xi}:H_1(\tGa,\ZZ)\to \Om(\tGa,\ZZ)$ on $\Jac(\tGa)$ induces a polarization $i^*\widetilde{\xi}:K'\to K$ on $\Prym(\tGa/\Ga)$, and Theorem 2.2.7 in~\cite{Len_Ulirsch_Skeletons} states that there exists a \emph{principal} polarization $\psi:K'\to K$ on $\Prym(\tGa/\Ga)$ such that $i^*\widetilde{\xi}=2\psi$. Hence $\Prym(\tGa/\Ga)$ is a tropical ppav. We note that the inner product $(\cdot,\cdot)_P$ on $\Prym(\tGa/\Ga)$ induced by the principal polarization $\psi$ is half of the restriction of the inner product $(\cdot,\cdot)_{\tGa}$ from $\Jac(\tGa)$. In other words for $\ga, \de \in \Ker \pi_*$ we have
\begin{equation}
(\ga,\de)_P=[\ga,\psi(\de)]=\frac{1}{2}[\ga,\widetilde{\xi}(\de)]=\frac{1}{2}(\ga,\de)_{\tGa}=\frac{1}{2}\sum_{\te\in E(\tGa)}\ga_{\te}\de_{\te}\ell(\te),\quad \ga=\sum_{\te\in E(\tGa)}\ga_{\te}\te,\quad
\de=\sum_{\te\in E(\tGa)}\de_{\te}\te,
\label{eq:Prymproduct}
\end{equation}
and similarly for the induced product on $\Ker \overline{\pi}$. When discussing the metric properties of $\Prym(\tGa/\Ga)$, such as its volume, we always employ the inner product $(\cdot,\cdot)_P$ induced by the principal polarization. 

We use a set of explicit coordinates on the torus $\Prym(\tGa/\Ga)$, or more accurately on its universal cover $\Ker \overline{\pi}$. Choose a basis
$$
\tga_j=\sum_{\te\in E(\tG)} \tga_{j,\te}\te,\quad j=1,\ldots,g-1
$$
for $\Ker \pi_*:H_1(\tGa,\ZZ)\to H_1(\Ga,\ZZ)$. The principal polarization $\psi=\frac{1}{2}\widetilde{\xi}$ gives a corresponding basis of the second lattice $(\Coker \pi^*)^{tf}$:
$$
\om_j=\psi(\tga_j)=\frac{1}{2}\sum_{\te\in E(\tG)} \tga_{j,\te}d\te,\quad j=1,\ldots,g-1.
$$
Let $\om^*_j$ denote the basis of $\Ker \overline{\pi}:\Om^*(\tGa)\to \Om^*(\Ga)$ dual to the $\om_j$, so that $\om^*_j(\om_k)=\delta_{jk}$, then elements of $\Prym(\tGa/\Ga)$ can be given (locally uniquely) as linear combinations of the $\om^*_j$.

We compute for future reference the volume of the unit cube $C(\om^*_1,\ldots,\om^*_{g-1})$ in the coordinate system defined by the $\om^*_j$. We know that $\Vol(\Prym(\tGa/\Ga))=\sqrt{\det G}$, where $G_{ij}=(\tga_i,\tga_j)_P$ is the Gramian matrix of the basis $\tga_j$. The $\tga_j$, viewed as elements of $\Ker \overline{\pi}$, are themselves a basis, so we can write $\om^*_i=\sum_j A_{ij}\tga_j$ for some matrix $A_{ij}$. Pairing with $\om_j$ and using that $[\tga_i,\om_j]=G_{ij}$, we see that $A$ is in fact the inverse matrix of $G$. Hence we see that
\begin{equation}
\Vol(C(\om^*_1,\ldots,\om^*_{g-1}))=\det (\om^*_i,\om^*_j)=\det G^{-1} \det(\tga_i,\tga_j)\det G^{-1}=\frac{1}{\det G}=\frac{1}{\Vol(\Prym(\tGa/\Ga))}.
\label{eq:volumeofunitcube}
\end{equation}
In particular, this volume does not depend on the choice of basis $\tga_j$.

\begin{remark}\label{rem:Pryms}
The definition of the Prym group for a free double cover of finite graphs is consistent with the definition for metric graphs in the following sense. Let $p:\tG\to G$ be a free double cover of finite graphs, and let $\pi:\tGa\to\Ga$ be the corresponding double cover of metric graphs, where $\tGa$ and $\Ga$ are obtained from respectively $\tG$ and $G$ by setting all edge lengths to $1$. Then $\Jac(\tG)$ is naturally a subgroup of $\Jac(\tGa)$, consisting of divisors supported at the vertices, and $\Prym(\tG/G)=\Jac(\tG)\cap \Prym(\tGa/\Ga)$. 
\end{remark}

\subsection{Polyhedral spaces and harmonic morphisms}

The spaces $\Sym^d(\Ga)$, $\Jac(\Ga)$, and $\Prym(\tGa/\Ga)$ are examples of \emph{rational polyhedral spaces}, which are topological spaces locally modeled on rational polyhedral sets in $\RR^n$. A rational polyhedral space comes equipped with a structure sheaf, pulled back from the sheaf of affine $\ZZ$-linear functions on the embedded polyhedra. We shall not require the general theory of rational polyhedral spaces, in particular we shall use only the polyhedral decomposition and not the sheaf of affine functions. See \cite{mikhalkin2014tropical}, \cite{gross2019tautological} for details.

A rational polyhedral space $P$ is a finite union of polyhedra, which we call \emph{cells}. We only consider compact polyhedral spaces. The intersection of any two cells is either empty or a face of each. A polyhedral space $P$ is \emph{equidimensional of dimension $n$} if each maximal cell of $P$ (with respect to inclusion) has dimension $n$, and is \emph{connected through codimension one} if the complement in $P$ of all cells of codimension two is connected. A map $f:P\to Q$ of polyhedral spaces is locally given by affine $\ZZ$-linear transformations, and is required to map each cell of $P$ surjectively onto a cell of $Q$. We say that $f$ \emph{contracts} a cell $C$ of $P$ if $\dim(f(C))<\dim(C)$. 

We use an ad hoc definition of harmonic morphisms of polyhedral spaces, modelled on the corresponding definition for metric graphs.

\begin{definition}\label{def:harmonic}[cf.~Definition 2.5 in~\cite{LenRanganathan_EllipticCurves}] Let $f:P\to Q$ be a map of equidimensional polyhedral spaces of the same dimension, and let $\deg_f$ be a non-negative integer-valued function defined on the top-dimensional cells of $P$. Let $C$ be a codimension 1 cell of $P$ mapping surjectively onto a codimension one cell $D$ of $Q$. We say that $f$ is \emph{harmonic} at $C$ (with respect to the degree function $\deg_f$) if the following condition holds: for any codimension zero cell $N$ of $Q$ adjacent to $D$, the sum
\begin{equation}
\deg_f(C)=\sum_{M\subset f^{-1}(N),\, M\supset C} \deg_f (M)
\label{eq:harmonicity}
\end{equation}
of the degrees $\deg_f M$ over all codimension zero cells $M$ of $P$ adjacent to $C$ and mapping to $N$ is the same, in other words does not depend on the choice of $N$. We say that $f$ is \emph{harmonic} if $f$ is harmonic at every codimension one cell of $P$, and in addition if $f(C)=0$ on a codimension zero cell $C$ if and only if $f$ contracts $C$.
\end{definition}

Given a harmonic morphism $f:P\to Q$, Equation~\eqref{eq:harmonicity} extends the degree function $\deg_f$ to codimension one cells of $P$. If $Q$ is connected through codimension one, we can similarly define the degree on cells of any codimension, and hence on all of $P$ (note, however, that for a cell $C$ of positive codimension, $\deg_f(C)=0$ does not imply that $C$ is contracted). The function $\deg_f$ is locally constant in fibers: given $p\in P$ and an open neighborhood $V\ni f(p)$, there exists an open neighborhood $U\ni p$ such that $f(U)\subset V$, and such that for any $q\in V$ the sum of the degrees over all points of $f^{-1}(q)\cap U$ is the same (in particular, this sum is finite). It follows that a harmonic morphism to a target connected through codimension one is surjective, and has a well-defined \emph{global degree}, which is the sum of the degrees over all points of any fiber. 

The structure of a rational polyhedral space on a tropical abelian variety, such as $\Jac(\Ga)$ and $\Prym(\tGa/\Ga)$, is induced by the integral structure: an affine linear function is $\ZZ$-linear if it has integer slopes with respect to the embedded lattice. The correct definition of a rational polyhedral structure on $\Sym^d(\Ga)$, however, requires some care. The space $\Sym^d(\Ga)$ has a natural cellular decomposition, with top-dimensional cells $C(F)$ indexed by $d$-tuples $F\subset E(G)$ of edges of a suitably chosen model $G$ of $\Ga$ (see Eq.~\eqref{eq:AP}). If the $d$-tuple $F=\{e_1,\ldots,e_d\}$ contains neither loops nor repeated edges, then $C(F)$ is the parallelotope obtained by taking the Cartesian product of the $e_i$ inside $\RR^d$. If $F$ contains loop edges, then the corresponding cell will have self-gluings along some of its boundary cells. A more serious issue arises if $F$ has repeated edges; in this case the cell $C(F)$ is a quotient of a parallelotope by a finite permutation action, which requires introducing additional cells at the appropriate diagonals. 

A complete description of the polyhedral structure on $\Sym^d(\Ga)$ is given in the paper~\cite{brandt2018symmetric}. However, neither of the aforementioned issues arise in our paper. First, we can always choose a loopless model for any tropical curve. Second, and more importantly, all the calculations in this paper concern only those cells $C(F)$ for which $F$ has no repeated edges. Specifically, we are interested in the structure of the Abel--Prym map $\Psi^d:\Sym(\tGa)\to \Prym(\tGa/\Ga)$ associated to a double cover $\tGa\to \Ga$, and one of our first results (Thm.~\ref{thm:APlocal} part (1)) states that $\Psi^d$ contracts all those cells $C(F)$ where $F$ has repeated edges. Hence, all top-dimensional cells of the symmetric product $\Sym^d(\Ga)$ can be assumed to be parallelotopes.

\section{Kirchhoff's theorem for the Prym group and the Prym variety}

In this section, we give combinatorial formulas for the order~\eqref{eq:orderofPrym} of the Prym group of a free double cover $p:\tG\to G$ of finite graphs, and the volume~\eqref{eq:Prymvolume} of the Prym variety of a free double cover $\pi:\tGa\to \Ga$ of metric graphs. 

Formula~\eqref{eq:orderofPrym} had already been obtained by Zaslavsky (see Theorem 8A.4 in~\cite{zaslavsky1982signed}). Specifically, a free double cover $p:\tG\to G$ induces the structure of a \emph{signed graph} on $G$: defining the cover $p$ in terms of Construction~\ref{con:A} with respect to a spanning tree $T\subset G$, we attach a negative sign $-$ to each edge $e\in S\subset E(G)\backslash E(T)$ and a positive sign $+$ to all other edges. Zaslavsky then defines the \emph{signed Laplacian matrix} of $G$ and shows that its determinant is given by~\eqref{eq:orderofPrym} (note that the signed Laplacian is non-singular, unlike the ordinary Laplacian). Reiner and Tseng specifically interpret the determinant of the signed Laplacian as the order of the Prym group $\Prym(\tG/G)$ (see Proposition 9.9 in~\cite{reiner2014critical}).

We give an alternative proof of~\eqref{eq:orderofPrym} using the Ihara zeta function. Given a free double cover $p:\tG\to G$, the orders of $\Jac(\tG)$ and $\Jac(G)$ can be computed from the corresponding zeta functions $\zeta(\tG,s)$ and $\zeta(G,s)$ using Northshield's class number formula~\cite{northshield1998note}. Hence the ratio $|\Prym(\tG/G)|=|\Jac(\tG)|/2|\Jac(G)|$ is given by the ratio of the zeta functions. This is equal to the Artin--Ihara $L$-function of the cover and can be explicitly computed from the corresponding determinantal formula, derived by Stark and Terras (see~\cite{stark1996zeta} and~\cite{stark2000zeta}).

The volume formula~\eqref{eq:Prymvolume} is new, to the best of our knowledge, and is derived from~\eqref{eq:orderofPrym} by a scaling argument.

\subsection{The Ihara zeta function and the Artin--Ihara $L$-function}

The Ihara zeta function $\zeta(s,G)$ of a finite graph $G$ is the graph-theoretic analogue of the Dedekind zeta function of a number field and is defined as an Euler product over certain equivalence classes of closed paths on $G$. We recall its definition and properties (see~\cite{terras2010zeta} for an elementary treatment).

Let $G$ be a graph with $n=\#(V(G))$ vertices and $m=\#(E(G))$ edges. A \emph{path} $P$ of \emph{length} $k=\ell(P)$ is a sequence $P=e_1\cdots e_k$ of oriented edges of $G$ such that $t(e_i)=s(e_{i+1})$ for $i=1,\ldots,k-1$. We say that a path $P$ is \emph{closed} if $t(e_k)=s(e_1)$ and \emph{reduced} if $e_{i+1}\neq \oe_i$ for $i=1,\ldots,k-1$ and $e_1\neq \oe_k$. We can define positive integer powers of closed paths by concatenation, and a closed reduced path $P$ is called \emph{primitive} if there does not exist a closed path $Q$ such that $P=Q^k$ for some $k\geq 2$. We consider two reduced paths to be \emph{equivalent} if they differ by a choice of starting point, i.e. we set $e_1\cdots e_k\sim e_j\cdots e_k\cdot e_1\cdots e_{j-1}$ for all $j=1,\ldots k$. A \emph{prime} $\frakp$ of $G$ is an equivalence class of primitive paths, and has a well-defined length $\ell(\frakp)$. We note that a primitive path and the same path traversed in the opposite direction represent distinct primes. 

The \emph{Ihara zeta function} $\zeta(s,G)$ of a graph $G$ is the product
$$
\zeta(s,G)=\prod_{\frakp}(1-s^{\ell(\frakp)})^{-1}
$$
over all primes $\frakp$ of $G$, where $s$ is a complex variable. This product is usually infinite, converges for sufficiently small $s$, and extends to rational function.

The \emph{three-term determinant formula}, due to Bass~\cite{bass1992ihara} (see also~\cite{terras2010zeta}), expresses the reciprocal $\zeta(s,G)^{-1}$ as an explicit polynomial
$$
\zeta(s,G)^{-1}=(1-s^2)^{g-1}\det(I_n-As+(Q-I_n)s^2),
$$
where $Q$ and $A$ are the valency and adjacency matrices (see~\eqref{eq:QA}), and $g=m-n+1$ is the genus of $G$. It is clear from this formula that $\zeta(s,G)^{-1}$ vanishes at $s=1$ to order at least $g$, because for $s=1$ the matrix inside the determinant is equal to the Laplacian $L$ of $G$ and $\det L=0$. In fact, the order of vanishing is equal to $g$, and Northshield ~\cite{northshield1998note} shows that the leading Taylor coefficient computes the complexity, i.e.~the order of the Jacobian of $G$:
\begin{equation}
\zeta(s,G)^{-1}=(-1)^{g-1}2^g(g-1)|\Jac(G)|(s-1)^g+O((s-1)^{g+1}).
\label{eq:Northshield}
\end{equation}
This result may be viewed as a graph-theoretic analogue of the class number formula.

The analogy with number theory was further reinforced by Stark and Terras, who developed (see~\cite{stark1996zeta} and~\cite{stark2000zeta}) a theory of $L$-functions of Galois covers of graphs, as follows. Let $p:\tG\to G$ be a free Galois cover of graphs with Galois group $K$ (we do not define these, since we only consider free double covers, which are Galois covers with $K=\ZZ/2\ZZ$), and fix a representation $\rho$ of $K$. Given a prime $\frakp$ of $G$, choose a representative $P$ with starting vertex $v\in V(G)$, and choose a vertex $\widetilde{v}\in V(\tG)$ lying over $v$. The path $P$ lifts to a unique path $\widetilde{P}$ in $\tG$ starting at $\widetilde{v}$ and mapping to $P$, and the terminal vertex of $\widetilde{P}$ also maps to $v$. The \emph{Frobenius element} $F(P,\tG/G)\in K$ is the unique element of the Galois group mapping $\widetilde{v}$ to the terminal vertex of $\widetilde{P}$.
The \emph{Artin--Ihara $L$-function} is now defined as the product
$$
L(s,\rho,\tG/G)=\prod_{\frakp} \det(1-\rho(F(P,\tG/G))s^{\ell(\frakp)})^{-1}
$$
taken over the primes $\frakp$ of $G$, where for each prime $\frakp$ we pick an arbitrary representative $P$ (Frobenius elements corresponding to different representatives of $\frakp$ are conjugate, so the determinant is well-defined).

Similarly to the zeta function, the product defining the $L$-function converges to a rational function and is given by a determinant formula. Pick a spanning tree $T\subset G$ and index its preimages in $\tG$, called the \emph{sheets} of the cover, by the elements of $K$. Given an edge $e\in E(G)$, the \emph{Frobenius element} $F(e)\in K$ is equal to $h^{-1}g$, where $h$ and $g$ are respectively the indices of the sheets of the source and the target of $e$. Let $d$ be the degree of $\rho$, and define the $nd\times nd$ \emph{Artinized valency} and \emph{Artinized adjacency} matrices as 
$$
Q_{\rho}=Q\otimes I_d,\quad (A_{\rho})_{uv}=\sum\rho(F(e)),
$$
where in the right hand side we sum over all edges $e$ between $u$ and $v$. The three-term determinant formula for the $L$-function states that 
\begin{equation}
L(s,\rho,\tG/G)^{-1}=(1-s^2)^{(g-1)d}\det(I_{nd}-A_{\rho}s+(Q_{\rho}-I_{nd})s^2).
\label{eq:L3term}
\end{equation}

Finally, we relate the zeta and $L$-functions associated to a free Galois cover $p:\tG\to G$ with Galois group $K$. First of all, the zeta functions of $\tG$ and $G$ are equal to the $L$-function evaluated at respectively the right regular and trivial representations $\rho_K$ and $1_K$:
$$
\zeta(s,\tG)=L(s,\rho_K,\tG/G),\quad \zeta(s,G)=L(s,1_K,\tG/G).
$$
Furthermore, for a reducible representation $\rho=\rho_1\oplus \rho_2$ the $L$-function factors as
$$
L(s,\rho,\tG/G)=L(s,\rho_1,\tG/G)L(s,\rho_2,\tG/G).
$$
It follows that the zeta function of $\tG$ has a factorization
\begin{equation}
\zeta(s,\tG)=\zeta(s,G)\prod_{\rho}L(s,\rho,\tG/G)^{d(\rho)},
\label{eq:zetafactorization}
\end{equation}
where the product is taken over the distinct nontrivial irreducible representations of $K$.

\subsection{The order of the Prym group}

\label{subsec:Prymorder}

We now specialize to the case where $K=\ZZ/2\ZZ$ in order to
compute the order of the Prym group of a free double cover $p:\tG\to G$ of finite graphs. By~\eqref{eq:Northshield}, the leading Taylor coefficients of the zeta functions $\zeta(s,\tG)^{-1}$ and $\zeta(s,G)^{-1}$ at $s=1$ compute respectively the orders $|\Jac(\tG)|$ and $|\Jac(G)|$. Since $\zeta^{-1}(s,\tG)$ is the product of $\zeta^{-1}(s,G)$ with the inverse of the $L$-function evaluated at the nontrivial representation of $\ZZ/2\ZZ$, the leading Taylor coefficient of the latter computes the order of the Prym. 

By the results of~\cite{ABKS_Canonical}, the Jacobian group of a graph $G$ of genus $g$ (and, by extension, the Jacobian variety of a metric graph) admits a non-canonical combinatorial description in terms of certain $g$-element subsets of $E(G)$, specifically the complements of spanning trees. We now give an analogous definition for $(g-1)$-element subsets of $E(G)$, which, as we shall see, enumerate the elements of $\Prym(\tG/G)$, and control the geometry of the Prym varieties of double covers of metric graphs.

\begin{definition} Let $G$ be a graph of genus $g$, and let $p:\tG\to G$ be a connected free double cover. A subset $F\subset E(G)$ of $g-1$ edges of $G$ is called a \emph{genus one decomposition of rank} $r$ if the graph  $G\backslash F=G_0\cup\cdots\cup G_{r-1}$ has $r$ connected components, each of which has genus one. We say that a genus one decomposition $F$ is \emph{odd} if the preimage of each $G_k$ in $\tG$ is connected.

\end{definition}

We note that when removing edges from a graph we never remove vertices, even isolated ones. A simple counting argument shows that if $F\subset E(G)$ is a subset such that each connected component of $G\backslash F$ has genus one, then $F$ consists of $g-1$ edges, and a genus one decomposition cannot have rank greater than $g$.

A genus one graph has two free double covers: the disconnected trivial cover and a unique nontrivial connected cover. Hence we can equivalently require that the restriction of the cover $p$ to each $G_k$ is a nontrivial free double cover. If the cover $p$ is described by Construction~\ref{con:A} with respect to a choice of spanning tree $T\subset G$ and a nonempty subset $S\subset E(G)\backslash E(T)$, then a genus one decomposition $F\subset E(G)$ is odd if and only if each $G_k$ has an odd number of edges from $S$ on its unique cycle (see Remark~\ref{rem:oddS}).

\begin{theorem} Let $G$ be a graph of genus $g$, and let $p:\tG\to G$ be the connected free double cover determined by $T\subset G$ and $S\subset E(G)\backslash E(T)$. The order of the Prym group $\Prym(\tG/G)$ is equal to
\begin{equation}
|\Prym(\tG/G)|=\frac{1}{2}|\Ker \Nm|=\sum_{r=1}^g 4^{r-1}C_r,
\label{eq:orderofPrym}
\end{equation}
where $C_r$ is the number of odd genus one decompositions of $G$ of rank $r$.
\label{thm:discretePrym}
\end{theorem}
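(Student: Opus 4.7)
The plan is to execute the Ihara zeta function strategy sketched above the theorem. Let $\rho$ denote the unique nontrivial one-dimensional character of $K = \ZZ/2\ZZ$, sending the nontrivial element to $-1$. With the notation of Construction~\ref{con:A}, the Frobenius element $F(e) \in K$ is nontrivial precisely when $e \in S$, so $A_\rho$ is the signed adjacency matrix for the signing $\sigma\colon E(G) \to \{\pm 1\}$ given by $\sigma(e) = -1$ for $e \in S$ and $\sigma(e) = +1$ otherwise, while $Q_\rho = Q$.

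The first step is to combine Bass's three-term formula~\eqref{eq:L3term}, the factorization~\eqref{eq:zetafactorization}, and Northshield's class number formula~\eqref{eq:Northshield}. From $\zeta(s,\tG) = \zeta(s,G)L(s,\rho,\tG/G)$ and Northshield applied to $\tG$ (of genus $2g-1$) and $G$ (of genus $g$), a direct calculation yields the leading behavior
\[
L(s,\rho,\tG/G)^{-1} = (-1)^{g-1} 2^g \cdot \frac{|\Jac(\tG)|}{|\Jac(G)|}(s-1)^{g-1} + O((s-1)^g).
\]
On the other hand, expanding $(1-s^2)^{g-1} = (-1)^{g-1} 2^{g-1}(s-1)^{g-1} + O((s-1)^g)$ and evaluating $\det(I - A_\rho s + (Q-I)s^2)|_{s=1} = \det(Q - A_\rho)$, the three-term formula gives
\[
L(s,\rho,\tG/G)^{-1} = (-1)^{g-1} 2^{g-1}\det(Q - A_\rho)(s-1)^{g-1} + O((s-1)^g).
\]
Equating leading coefficients and using $|\Jac(\tG)|/|\Jac(G)| = |\Ker\Nm| = 2|\Prym(\tG/G)|$ gives the key identity
\[
\det(Q - A_\rho) = 4\,|\Prym(\tG/G)|.
\]

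The second step is a combinatorial expansion of $\det(Q - A_\rho)$. Fix an orientation of $G$ and define a signed incidence matrix $B \in \ZZ^{V(G) \times E(G)}$ by $B_{t(e),e} = 1$ and $B_{s(e),e} = -\sigma(e)$. A direct computation gives $BB^T = Q - A_\rho$, and the Cauchy--Binet formula therefore expresses
\[
\det(Q - A_\rho) = \sum_{\tau \subset E(G),\, |\tau| = n}(\det B_\tau)^2,
\]
where $n = |V(G)|$. A minor-by-minor analysis shows that $\det B_\tau = 0$ whenever the spanning subgraph $(V(G),\tau)$ contains a balanced cycle (one with an even number of $S$-edges), while $|\det B_\tau| = 2^{r(\tau)}$ when every connected component of $(V(G),\tau)$ is unicyclic with an unbalanced cycle, where $r(\tau)$ denotes the number of components. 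Such $\tau$ has $n$ edges distributed among $n$ vertices with exactly one cycle per component, so its complement $F = E(G)\setminus\tau$ consists of $g-1$ edges and is precisely an odd genus one decomposition of rank $r(F) = r(\tau)$. Hence $\det(Q - A_\rho) = \sum_{r=1}^g 4^r\, C_r$, and dividing by $4$ completes the proof.

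The main obstacle lies in the minor evaluation: one must show that a balanced cycle produces a linear dependence among the corresponding columns of $B$ (forcing the minor to vanish), while an unbalanced cycle contributes a factor of $\pm 2$ (reflecting a nontrivial obstruction to orienting the cycle consistently with the signing). This is essentially Zaslavsky's signed matrix tree theorem~\cite{zaslavsky1982signed}, and carrying it out from scratch requires careful bookkeeping of orientations and loop conventions, though once in place the combinatorial identity follows mechanically from Cauchy--Binet.
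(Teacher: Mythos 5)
Your proposal is correct and follows essentially the same route as the paper: extract $\det(Q_\rho-A_\rho)=4\,|\Prym(\tG/G)|$ by comparing leading Taylor coefficients of the three-term determinant formula for $L(s,\rho,\tG/G)$ against Northshield's class number formula, then expand the signed Laplacian via a $BB^{T}$ factorization and Cauchy--Binet, with balanced cycles killing minors and unbalanced unicyclic components each contributing $\pm 2$. The minor evaluation you defer to Zaslavsky is likewise only sketched in the paper, with a reference to~\cite{zaslavsky1982signed} and~\cite{reiner2014critical}.
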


\begin{proof} Denote $n=|V(G)|$ and $m=|E(G)|=n+g-1$. According to~\eqref{eq:zetafactorization}, the zeta function of $\tG$ is the product of the zeta function of $G$ and the $L$-function of the cover $\tG/G$ evaluated at the nontrivial representation $\rho$ of the Galois group $\ZZ/2\ZZ$:
$$
\zeta(s,\tG)^{-1}=\zeta(s,G)^{-1}L(s,\tG/G,\rho)^{-1}.
$$
The class number formula~\eqref{eq:Northshield} gives the leading Taylor coefficients at $s=1$:
$$
\zeta(s,\tG)^{-1}=2^{2g-1} (2g-2)|\Jac(\tG)|(s-1)^{2g-1}+O\left((s-1)^{2g}\right),
$$
$$
\zeta(s,G)^{-1}=(-1)^{g-1}2^g (g-1)|\Jac(G)|(s-1)^g+O\left((s-1)^{g+1}\right).
$$
The leading coefficient of the $L$-function is found directly from~\eqref{eq:L3term} (note that, unlike in formula~\eqref{eq:Northshield}, the determinant does not vanish at $s=1$):
$$
L(s,\rho,\tG/G)^{-1}=(-1)^{g-1}2^{g-1} \det(Q_{\rho}-A_{\rho}) (s-1)^{g-1}+O\left((s-1)^g\right). 
$$
Therefore, comparing the expansions of $L(s,\rho,\tG/G)^{-1}$ with $\zeta(s,\tG)^{-1}/\zeta(s,G)^{-1}$, we see that
$$
|\Prym(\tG/G)|=\frac{|\Jac(\tG)|}{2|\Jac(G)|}=\frac{1}{4}\det (Q_{\rho}-A_\rho).
$$
We now calculate this $n\times n$ determinant. First of all, $Q_{\rho}=Q$ since $\rho$ is one-dimensional. The Frobenius element $F(e)$ of an edge $e\in E(G)$ is the nontrivial element of $\ZZ/2\ZZ$, and hence $\rho(F(e))=-1$, if and only if $e\in S$. Putting this together, we see that the matrix $Q_{\rho}-A_{\rho}$ has the following form: 
$$
(Q_\rho-A_\rho)_{uv}=\left\{\begin{array}{cc}|\{\text{edges from $u$ to $v$ in $S$}\}|-
|\{\text{edges from $u$ to $v$ not in $S$}\}|, & u\neq v, \\
4|\{\text{loops at $u$ in $S$}\}|+|\{\text{non-loops at $u$}\}|,& u=v.
\end{array}\right.
$$
The matrix $Q_{\rho}-A_{\rho}$ turns out to be equal to the \emph{signed Laplacian matrix} of the graph $G$ (see Proposition 9.5 in~\cite{reiner2014critical}), and its determinant is computed using a standard argument involving an appropriate factorization and the Cauchy--Binet formula. We only give a sketch of these calculations, since they are not new (see Proposition 9.9 in~\emph{loc.~cit.}). 

Pick an orientation on $G$. We factorize the signed Laplacian as $Q_\rho-A_\rho=B_S(G){}^tB_S(G)$, where 
$$
(B_S(G))_{ve}=\left\{\begin{array}{cc} 1, & \text{$t(e)=v$ and $s(e)\neq v$, or $s(e)=v$, $t(e)\neq v$, and $e\in S$,}\\
-1, & \text{$s(e)=v$, $t(e)\neq v$, and $e\notin S$,}\\
2, & \text{$s(e)=t(e)=v$ and $e\in S$,}\\
 0, & \text{otherwise}.\end{array}\right.
$$
is the $n\times m$ \emph{$S$-twisted adjacency matrix} $B_S(G)$ of the graph $G$, whose rows and columns are indexed by respectively $V(G)$ and $E(G)$. By the Cauchy--Binet formula, we have
\begin{equation}
    |\Prym(\tG/G)|=\frac{1}{4}\det (Q_{\rho}-A_\rho)=\frac{1}{4}\sum_{F\subset E(G),\,|F|=g-1} \det B_S(G\backslash F)^2.
\label{eq:CauchyBinet}
\end{equation}
Here the sum is taken over all subsets $F$ of $E(G)$ consisting of $m-n=g-1$ elements, and $B_S(G\backslash F)$ is the matrix obtained from $B_S(G)$ by deleting the columns  corresponding to the edges that are in $F$, or, equivalently, the $S$-twisted adjacency matrix of the graph $G\backslash F$.

Let $F\subset E(G)$ be such a subset, and let $G\backslash F=G_0\cup\cdots\cup G_{r-1}$ be the decomposition of $G$ into connected components. The matrix $B_S(G\backslash F)$ is block-diagonal, with blocks $B_S(G_k)$ corresponding to the $G_k$. A block-diagonal matrix has nonzero determinant only if all blocks are square, meaning that $g(G_k)=1$ for all $k$, in which case
\begin{equation}
\det B_S(G\backslash F)^2=\prod_{k=0}^{r-1} \det B_S(G_k)^2.
\label{eq:product}
\end{equation}
The quantity $\det B_S(G_k)^2$ for a genus one graph $G_k$ is computed by induction on the extremal edges (if any), and turns out to be equal to $4$ if the unique cycle of $G_k$ has an odd number of edges from $S$, and $0$ if the number is even. Hence, only odd genus one decompositions contribute to the sum~\eqref{eq:CauchyBinet}, and the contribution of a decomposition of rank $r$ is equal to $4^r$. This completes the proof. 

\end{proof}

\begin{example} Consider the free double cover $p:\tG\to G$ shown in Figure~\ref{fig:doublecover1}. Here $g-1=1$, and it is easy to see that any edge of $G$ is an odd genus one decomposition. The edges $e_1$, $e_2$, $e_4$, and $e_5$ are decompositions of rank one, while $e_3$ is a decomposition of rank two. Hence by~\eqref{eq:orderofPrym}
$$
|\Prym(\tG/G)|=4+1\cdot 4=8,
$$
which agrees with the calculations in~Example~\ref{ex:doublecover1}.

\end{example}

\subsection{The volume of the tropical Prym variety} In this section, we prove a weighted version of Theorem~\ref{thm:discretePrym} that gives the volume of the Prym variety of a free double cover of metric graphs.
Let $\pi:\tGa\to \Ga$ be such a cover, where $\tGa$ and $\Ga$ have genera $2g-1$ and $g-1$, respectively. Choose a model $G$ for $\Ga$. Similarly to the discrete case, an \emph{odd genus one decomposition} $F$ of $\Ga$ of \emph{rank} $r(F)$ (with respect to the choice of model $G$) is a subset $F\subset E(G)$ of (necessarily) $g-1$ edges of $G$ such that $E(G)\backslash F$ consists of $r(F)$ connected components of genus one, each having a connected preimage in $\tGa$. 
For such an $F$, we denote by $w(F)$ the product of the lengths of the edges in $F$.

\begin{theorem} The volume of the Prym variety of a free double cover $\pi:\tGa\to \Ga$ of metric graphs is given by
\begin{equation}
\Vol^2(\Prym(\tGa/\Ga))=\sum_{F\subset E(\Ga)} 4^{r(F)-1}w(F),
\label{eq:Prymvolume}
\end{equation}
where the sum is taken over all odd genus one decompositions $F$ of $\Ga$.
\label{thm:Prymvolume}
\end{theorem}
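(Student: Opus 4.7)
The plan is to reduce the theorem to the finite-graph formula of Theorem~\ref{thm:discretePrym} via a subdivision argument, exploiting that both sides of \eqref{eq:Prymvolume} are polynomials in the edge lengths.

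Fix a model $G$ of $\Ga$ and choose a $\ZZ$-basis $\tga_1, \ldots, \tga_{g-1}$ of $\Ker\pi_* \subset H_1(\tGa, \ZZ)$. Then $\Vol^2(\Prym(\tGa/\Ga)) = \det\bigl((\tga_i, \tga_j)_P\bigr)$, and since the Prym inner product \eqref{eq:Prymproduct} is linear in the edge lengths $\ell(\te)$ with $\ell(\te) = \ell(\pi(\te))$ for our free cover, both sides of \eqref{eq:Prymvolume} are polynomials of degree at most $g-1$ in the $\ell(e)$, $e \in E(G)$. It therefore suffices to verify the formula for positive integer edge lengths. Next I would show that the right-hand side is invariant under edge subdivisions: if $e \in E(G)$ of length $L$ is subdivided into $e_1, e_2$ with $\ell(e_1) + \ell(e_2) = L$, the genus one decompositions $F'$ of the refined model $G'$ fall into three classes. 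Those with $e_1, e_2 \notin F'$ biject with decompositions $F$ of $G$ avoiding $e$, preserving weight and rank. Those containing exactly one of $e_1, e_2$ biject with decompositions $F \ni e$, and the two choices contribute weights $\ell(e_i) w(F)/L$ that sum to $w(F)$. Those containing both create an isolated genus-zero component at the new subdivision vertex, and hence do not form genus one decompositions. Combined with the model-independence of the left-hand side, we may thus subdivide any integer-length graph into one with all edges of length one, in which case $w(F) = 1$ and the right-hand side equals $\sum_{r=1}^g 4^{r-1} C_r = |\Prym(\tG/G)|$ by Theorem~\ref{thm:discretePrym}.

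The main remaining step is to prove $\Vol^2(\Prym(\tGa/\Ga)) = |\Prym(\tG/G)|$ in the unit-length case. The key observation is that $\pi^* H_1(G, \ZZ)$ and $\Ker\pi_*$ are orthogonal sublattices of $H_1(\tG, \ZZ)$ under the unit intersection form: for $\ga = \sum_e c_e e$ and $\tga \in \Ker\pi_*$,
\[
(\pi^*\ga, \tga)_{\tG} = \sum_{e \in E(G)} c_e\bigl(\tga_{\te^+} + \tga_{\te^-}\bigr) = (\ga, \pi_*\tga)_G = 0.
\]
Hence the Gramian of $\pi^* H_1(G,\ZZ) \oplus \Ker\pi_*$ is block-diagonal. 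Using the identity $(\pi^*\ga_i,\pi^*\ga_j)_{\tG} = 2(\ga_i,\ga_j)_G$ and the matrix-tree theorem $\det((\ga_i,\ga_j)_G) = |\Jac(G)|$, its determinant is $2^g|\Jac(G)| \cdot \det\bigl((\tga_i,\tga_j)_{\tG}\bigr)$. On the other hand, the standard relation between Gramians of full-rank sublattices gives the same determinant as $[H_1(\tG,\ZZ):\pi^*H_1(G,\ZZ) \oplus \Ker\pi_*]^2 \cdot |\Jac(\tG)|$. I claim this index equals $2^{g-1}$: the pushforward $\pi_*$ induces an isomorphism $H_1(\tG,\ZZ)/(\pi^*H_1(G,\ZZ) \oplus \Ker\pi_*) \cong \Im\pi_*/2H_1(G,\ZZ)$, and since $\Im\pi_* = \Ker\chi$ has index two in $H_1(G,\ZZ)$ (where $\chi \in H^1(G, \ZZ/2)$ classifies the cover), this quotient is the kernel of the induced $\bar\chi: H_1(G,\ZZ)/2H_1(G,\ZZ) \to \ZZ/2$, of order $2^{g-1}$. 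Equating the two expressions yields $\det\bigl((\tga_i,\tga_j)_{\tG}\bigr) = 2^{g-1}|\Prym(\tG/G)|$, and multiplying by $(1/2)^{g-1}$ for the Prym inner product gives $\Vol^2(\Prym(\tGa/\Ga)) = |\Prym(\tG/G)|$.

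The main obstacle is the lattice index computation, which hinges on the non-obvious fact from covering space theory that $\pi_*: H_1(\tG, \ZZ) \to H_1(G, \ZZ)$ is \emph{not} surjective: its image has index two in $H_1(G, \ZZ)$ because a cycle in $G$ admits a lift to $\tG$ only when its $\chi$-class vanishes. Combined with the transfer identity $\pi_*\pi^* = 2$, this is precisely what makes the direct-sum $\pi^*H_1(G,\ZZ) \oplus \Ker\pi_*$ fail to saturate inside $H_1(\tG,\ZZ)$, producing the crucial factor of $2^{g-1}$ that ties the Prym volume to the order of the Prym group.
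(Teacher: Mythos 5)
Your proof is correct, and its overall skeleton matches the paper's: reduce by homogeneity to integer and then (via subdivision invariance of the right-hand side) to unit edge lengths, and invoke the discrete Kirchhoff--Prym formula of Theorem~\ref{thm:discretePrym}. Where you genuinely diverge is in the key intermediate identity $\Vol^2(\Prym(\tGa/\Ga))=\Vol^2(\Jac(\tGa))/\bigl(2\Vol^2(\Jac(\Ga))\bigr)$, which the paper isolates as Proposition~\ref{prop:3volumes} and proves for arbitrary edge lengths by writing down the explicit bases $\tcalB'_1=p^*(\calB')$ and $\tcalB'_2$ of Construction~\ref{con:B} and computing a block-triangular change-of-basis matrix of determinant $\pm 2^{g-1}$. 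You obtain the same factor as the lattice index $[H_1(\tG,\ZZ):\pi^*H_1(G,\ZZ)\oplus\Ker\pi_*]=2^{g-1}$, deduced from the covering-space fact that $\Im\pi_*=\Ker\chi$ has index two in $H_1(G,\ZZ)$ together with $\pi_*\pi^*=2$; the orthogonality of the two summands is exactly the adjointness relation~\eqref{eq:integral} applied to $\iota$-invariant versus anti-invariant cycles, so the Gramian factorizes just as in the paper. Your route is basis-free and arguably more conceptual (the $2^{g-1}$ has a clear covering-theoretic meaning), at the cost of requiring the standard index-squared relation between Gramians of full-rank sublattices and the non-surjectivity of $\pi_*$, both of which you correctly identify and justify; the paper's explicit bases, by contrast, are reused later in the harmonicity analysis of the Abel--Prym map, which is why it takes the longer computational path.
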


\begin{remark} The right hand side of formula~\eqref{eq:Prymvolume} is defined with respect to a choice of model $G$ for $\Ga$. Let $G'$ be the model obtained from $G$ by subdividing an edge $e\in E(G)$ into edges $e'_1$ and $e'_2$, so that $\ell(e)=\ell(e'_1)+\ell(e'_2)$. If $e\in F$ for some odd genus one decomposition $F$ of $G$, then $(F\backslash \{e\})\cup\{e'_1\}$ and $(F\backslash \{e\})\cup\{e'_2\}$ are odd genus one decompositions of $G'$ of the same rank as $F$, and vice versa. It follows that the right hand side is invariant under edge subdivision, and hence does not depend on the choice of model for $\Ga$. We also note that $\Vol^2(\Prym(\tGa/\Ga))$ is computed with respect to the intrinsic principal polarization on $\Prym(\tGa/\Ga)$, which is half of the restriction of the principal polarization on $\Jac(\tGa)$. 

\end{remark}

We first establish the relationship between the volumes of the three tropical ppavs $\Jac(\tGa)$, $\Jac(\Ga)$, and $\Prym(\tGa/\Ga)$. To compute the last of the three volumes, we define (building on Construction~\ref{con:A}) an explicit basis for the kernel of the pushforward map $\pi_*:H_1(\tGa,/\ZZ)\to H_1(\Ga,\ZZ)$, which we also use later.

Let $G$ be a graph. Introduce the $\ZZ$-valued bilinear pairing
\begin{equation}
\label{eq:thirdpairing}
\langle\cdot,\cdot\rangle:C_1(G,\ZZ)\times C_1(G,\ZZ)\to \ZZ,\quad \left\langle \sum_{e\in E(G)} a_e e,\sum_{e\in E(G)} b_e e\right\rangle=\sum_{e\in E(G)}a_eb_e.
\end{equation}
We note that this pairing does not take edge lengths into account, and is not to be confused with the integration pairing $(\cdot,\cdot)$ on a metric graph.

\begin{mainconstruction} Let $\pi:\tGa\to \Ga$ be a connected free double cover of metric graphs. Choose an oriented model $p:\tG\to G$, and suppose that the cover $p$ is given by Construction~\ref{con:A} with respect to a spanning tree $T\subset G$ and a nonempty subset $S\subset E(G)\backslash E(T)=\{e_0,\ldots,e_{g-1}\}$ containing $e_0$. In this construction, we define an explicit basis of the kernel of the pushforward map $p_*:H_1(\tG,\ZZ)\to H_1(G,\ZZ)$, as well as bases for $H_1(\tG,\ZZ)$ and $H_1(G,\ZZ)$. We use these bases to compute Gramian determinants, hence we view them to be unordered sets. 

We first construct a basis for $H_1(G,\ZZ)$. Let $\ga_i\in H_1(G,\ZZ)$ for  $i=0,\ldots,g-1$ 
denote the unique cycle of $T\cup \{e_i\}$ such that $\langle \ga_i,e_i\rangle=1$. It is a standard fact that
$$
\calB=\{\ga_0,\ldots,\ga_{g-1}\}
$$
is a basis of $H_1(G,\ZZ)$, and furthermore any $\ga\in H_1(G,\ZZ)$ can be explicitly decomposed in terms of $\calB$ as follows:
$$
\ga=\langle\ga,e_1\rangle \ga_1+\cdots+\langle \ga,e_g\rangle \ga_g.
$$
Similarly, let $\tga_0\in H_1(\tG,\ZZ)$ and $\tga_i^{\pm}\in H_1(\tG,\ZZ)$ for $i=1,\ldots,g-1$ denote the unique cycle of respectively $\tT\cup \{\te^-_0\}$ and $\tT\cup \{\te_i^{\pm}\}$ such that respectively $\langle \tga_0,\te^-_0\rangle=1$ and $\langle \tga^{\pm}_i,\te^{\pm}_i\rangle=1$ for $i=1,\ldots,g-1$. Then
$$
\tcalB=\{\tga_0,\tga_1^{\pm},\ldots,\tga_{g-1}^{\pm}\}
$$
is a basis of $H_1(\tG,\ZZ)$, and we similarly have 
$$
\tga=\langle \tga,\te^-_0\rangle\tga_0+\langle\tga,\te^+_1\rangle \tga^+_1+\cdots+\langle \tga,\te^+_{g-1}\rangle \tga^+_{g-1}+\langle\tga,\te^-_1\rangle \tga^-_1+\cdots+\langle \tga,\te^-_{g-1}\rangle \tga^-_{g-1}
$$
for any $\tga\in H_1(\tG,\ZZ)$.

We now compute the action of the pushforward map $p_*:H_1(\tG,\ZZ)\to H_1(G,\ZZ)$ and the involution map $\iota_*:H_1(\tG,\ZZ)\to H_1(\tG,\ZZ)$ on the basis $\tcalB$. The cycle $\tga_0$ starts at the vertex $s(\te_0^-)=\widetilde{s(e_0)}^-$ on the lower sheet $\tT^-$, then proceeds via $+\te_0^-$ to the vertex $t(\te_0^-)=\widetilde{t(e_0)}^+$ on the upper sheet $\tT^+$, then to $\widetilde{s(e_0)}^+$ via a unique path in $\tT^+$, then back to $t(\te_0^+)=\widetilde{t(e_0)}^-$ on $T^-$ via $+\te_0^+$, and then back to $\widetilde{s(e_0)}^-$ via a unique path in $\tT^-$. In other words,
$$
\tga_0=\te_0^++\te_0^-+\mbox{edges of }\tT^{\pm},\quad \iota_*(\tga_0)=\te_0^++\te_0^-+\mbox{edges of }\tT^{\pm},\quad
p_*(\tga_0)=2e_0+\mbox{edges of }T,
$$
therefore computing the intersection numbers with $\tcalB$ and $\calB$ we see that 
$$
\iota_*(\tga_0)=\tga_0,\quad p_*(\tga_0)=2\ga_0.
$$

Now consider the cycle $\tga_i^+$ for $e_i\in S\backslash\{e_0\}$. We introduce the index
$$
\si_i=\left\{\begin{array}{cc} +1,& s(\te_i^+)=s(e_i)^+,\\  -1, & s(\te_i^+)=s(e_i)^-.\end{array}\right.
$$
If $\si_i=1$, then the cycle $\tga_i^+$ starts at $s(\te_i^+)=\widetilde{s(e_i)}^+$ on $\tT^+$, then moves to $t(\te_i^+)=\widetilde{t(e_i)}^-$ on $\tT^-$ via $\te_i^+$, and then back to $\widetilde{s(e_i)}^+$ on $\tT^+$ via a unique path in $\tT$. This path crosses from $\tT^-$ to $\tT^+$, and hence must contain the edge $-\te_0^+$. If $\si_i=-1$, then $\tga_i^+$ crosses from $\tT^+$ to $\tT^-$, and hence contains $\te_0^+$. 
Similarly, we calculate that the cycle $\tga_i^-$ contains the edge $\si_i\te_0^+$. In other words, for $e_i\in S\backslash\{e_0\}$ we have
$$
\tga_i^{\pm}=\te_i^{\pm}\mp \si_i\te_0^++\mbox{edges of }\tT^{\pm},\quad
\iota_*(\tga_i^{\pm})=\te_i^{\mp}\mp \si_i\te_0^-+\mbox{edges of }\tT^{\pm},\quad
p_*(\tga^{\pm}_i)=e_i\mp \si_ie_0+\mbox{edges of }T,
$$
and hence computing the intersection numbers we see that
$$
\iota_*(\tga_i^{\pm})=\tga_i^{\mp}\mp \si_i\tga_0,\quad
p_*(\tga^{\pm}_i)=\ga_i\mp \si_i\ga_0,\quad e_i\in S\backslash\{e_0\}.
$$
Finally, for $e_i\notin S$ the cycle $\ga_i^{\pm}$ is contained in $\tT^{\pm}\cup \{\te_i^{\pm}\}$ and hence does not contain the edge $\te_0^+$. It follows that 
$$
\iota_*(\tga_i^{\pm})=\te_i^{\mp}+\mbox{edges of }\tT^{\pm},\quad p_*(\tga_i^{\pm})=e_i+\mbox{edges of }T,\quad e_i\notin S,
$$
and therefore
$$
\iota_*(\tga_i^{\pm})=\tga_i^{\mp},\quad p_*(\tga_i^{\pm})=\ga_i,\quad e_i\notin S.
$$
It is now clear that
\begin{equation}
\tcalB'_2=\{\tga_i^+-\iota_*(\tga_i^+)\}_{i=1}^{g-1}=
\{\tga_i^+-\tga_i^-+\si_i\tga_0\}_{e_i\in S\backslash\{e_0\}}\cup \{\tga_i^+-\tga_i^-\}_{e_i\notin S}
\label{eq:Prymbasis}
\end{equation}
is a basis for $\Ker p_*$.

In Example~\ref{example:big}, we explicitly construct this basis for a double cover $\pi:\tGa\to \Ga$ with $g=3$.
\label{con:B}

\end{mainconstruction}

We now establish the relationship between the volumes of our three tropical ppavs.

\begin{proposition} Let $\pi:\tGa\to \Ga$ be a free double cover of metric graphs. Then the volumes of $\Jac(\tGa)$, $\Jac(\Ga)$, and $\Prym(\tGa/\Ga)$ are related as
$$
\Vol^2(\Prym(\tGa/\Ga))=\frac{\Vol^2(\Jac(\tGa))}{2\Vol^2(\Jac(\Ga))},
$$
where the volume of each tropical ppav is calculated using its intrinsic principal polarization.
\label{prop:3volumes}
\end{proposition}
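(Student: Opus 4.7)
The plan is to realize $\Prym(\tGa/\Ga)$ as the orthogonal complement of a natural integral pullback inside $\Jac(\tGa)$ with respect to the intersection form, and then compare covolumes. I would introduce the integral pullback
\[
\pi^{!}\colon H_1(\Ga,\ZZ) \to H_1(\tGa,\ZZ), \qquad \pi^{!}\!\left(\sum_{e} a_e e\right) = \sum_e a_e(\te^+ + \te^-),
\]
which is a cycle-to-cycle map since each edge of $\Ga$ is covered by exactly two oriented lifts, and which satisfies $\pi_*\circ \pi^{!} = 2\cdot\mathrm{id}$. A direct expansion using $\ell(\te^{\pm})=\ell(e)$ yields $(\pi^{!}\ga,\pi^{!}\de)_{\tGa}=2(\ga,\de)_{\Ga}$, and for any $\tga = \sum c_{\te}\te$ in $\Ker\pi_*$ and $\ga=\sum a_e e$ in $H_1(\Ga,\ZZ)$ the pairing $(\tga,\pi^{!}\ga)_{\tGa} = \sum_e a_e\ell(e)(c_{\te^+}+c_{\te^-})$ vanishes because $\tga$ pushes forward to zero. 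Hence $\Ker\pi_*$ and $\pi^{!}H_1(\Ga,\RR)$ are orthogonal under the intersection form on $\tGa$, and by dimension count they span all of $H_1(\tGa,\RR)$.

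Next, consider the full-rank sublattice $L = \Ker\pi_* \oplus \pi^{!}(H_1(\Ga,\ZZ)) \subset H_1(\tGa,\ZZ)$; the internal sum is direct since $\pi^{!}(\ga)\in\Ker\pi_*$ forces $2\ga=\pi_*\pi^{!}(\ga)=0$. Choosing a basis of $L$ adapted to the two summands, its Gramian is block-diagonal, so
\[
\det\Gram(L) = \det\Gram(\Ker\pi_*)\cdot\det\Gram(\pi^{!}(H_1(\Ga,\ZZ))) = 2^{g-1}\Vol^2(\Prym(\tGa/\Ga))\cdot 2^g\Vol^2(\Jac(\Ga)).
\]
Here the factor $2^{g-1}$ on the Prym side records the fact that the intrinsic principal Prym polarization is half the restriction of $(\cdot,\cdot)_{\tGa}$, as recalled in~\eqref{eq:Prymproduct}, while the factor $2^g$ on the Jacobian side comes from $(\pi^{!}\ga,\pi^{!}\de)_{\tGa}=2(\ga,\de)_{\Ga}$.

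It remains to compute $[H_1(\tGa,\ZZ):L]$. Both $L$ and $H_1(\tGa,\ZZ)$ contain $\Ker\pi_*$ and project via $\pi_*$ into $H_1(\Ga,\ZZ)$, with $\pi_*(L) = \pi_*\pi^{!}(H_1(\Ga,\ZZ)) = 2H_1(\Ga,\ZZ)$ and $\pi_*(H_1(\tGa,\ZZ))=\Im\pi_*$. The crucial input is that $\Im\pi_*$ has index $2$ in $H_1(\Ga,\ZZ)$: the connected free double cover $\pi$ is classified by a nonzero element of $H^1(\Ga,\ZZ/2\ZZ)=\Hom(H_1(\Ga,\ZZ),\ZZ/2\ZZ)$ via monodromy, and its kernel is precisely $\Im\pi_*$; alternatively, this index can be read off from the explicit action of $\pi_*$ on the basis $\tcalB$ recorded in Construction~\ref{con:B}. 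Consequently,
\[
[H_1(\tGa,\ZZ):L]=[\Im\pi_*:2H_1(\Ga,\ZZ)]=\frac{2^g}{2}=2^{g-1}.
\]
Substituting this index into the standard identity $\det\Gram(L)=[H_1(\tGa,\ZZ):L]^2\cdot\Vol^2(\Jac(\tGa))$ and solving for $\Vol^2(\Prym(\tGa/\Ga))$ yields the claim. The main subtlety lies in bookkeeping the three powers of $2$ that enter the computation, but the conceptual heart of the argument is simply the orthogonality $\Ker\pi_*\perp\pi^{!}H_1(\Ga,\RR)$ under $(\cdot,\cdot)_{\tGa}$.
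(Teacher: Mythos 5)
Your proof is correct and follows essentially the same route as the paper: both decompose $H_1(\tGa,\QQ)$ into the pullback lattice $\pi^{!}H_1(\Ga,\ZZ)$ (the span of the paper's $\tcalB'_1$) and $\Ker\pi_*$ (the span of $\tcalB'_2$), exploit their orthogonality under the intersection form, and track the same three powers of $2$ coming from the half-polarization on the Prym, the doubling of the pairing under pullback, and the index of the sublattice. The only difference is presentational: you compute the index $[H_1(\tGa,\ZZ):L]=2^{g-1}$ invariantly via the index-two cokernel of $\pi_*$, whereas the paper obtains the same quantity as the determinant $\pm 2^{g-1}$ of an explicit change-of-basis matrix from $\tcalB$ to $\tcalB'$.
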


\begin{proof} We first introduce the following alternative basis $\calB'$ for $H_1(G,\ZZ)$:
\begin{equation}\label{eq:basisTarget}
\calB'=\{\ga_0\}\cup\{\ga_i-\si_i\ga_0\}_{e_i\in S\backslash \{e_0\} }\cup \{\ga_i\}_{e_i\notin S}. 
\end{equation}
We now compute the pullback $\tcalB'_1$ of $\calB'$ to $H_1(\tG,\ZZ)$ via the map
$$
p^*:H_1(G,\ZZ)\to H_1(\tG,\ZZ),\quad \sum_{e\in E(G)} a_e e\mapsto \sum_{e\in E(G)} a_e (\te^++\te^-).
$$
Since $\ga_i$ consists of $+e_i$ and edges of $T$, we have
$$
p^*(\ga_i)=\te^+_i+\te^-_i+\mbox{edges of }T^{\pm}
$$
for $i=0,\ldots,g-1$. Computing intersection numbers as before, we see that
$$
p^*(\ga_0)=\tga_0,\quad p^*(\ga_i)=\tga^+_i+\tga^-_i,\quad i=1,\ldots,g-1. 
$$
Hence
\[
\tcalB'_1=p^*(\calB')=\{\tga_0\}\cup \{\tga_i^++\tga_i^--\si_i\tga_0\}_{e_i\in S\backslash\{e_0\}}\cup \{\tga_i^++\tga_i^-\}_{e_i\notin S}.
\]
Let $(\cdot,\cdot)_{\tG}$ and $(\cdot,\cdot)_G$ denote the intersection pairings~\eqref{eq:intersectionform} on $H_1(\tG,\ZZ)$ and $H_1(G,\ZZ)$, respectively, and let $(\cdot,\cdot)_P=\frac{1}{2}(\cdot,\cdot)_{\tG}$ denote the intersection pairing on $\Ker p_*$ corresponding to the principal polarization on $\Prym(\tGa/\Ga)$. We add the corresponding subscripts to each Gramian determinant, in order to keep track of the inner product that is used to compute it. Thus the volumes of $\Jac(G)$ and $\Prym(\tGa/\Ga)$ are given by
$$
\Vol^2(\Jac(\Ga))=\Gram_G(\calB'),\quad \Vol^2(\Prym(\tGa/\Ga))=\Gram_P(\tcalB'_2).
$$

We now consider the set $\tcalB'=\tcalB'_1\cup \tcalB'_2$, which is a basis for the vector space $H_1(\tGa,\QQ)$. By appropriately ordering the basis elements, the change-of-basis matrix from $\tcalB$ to $\tcalB'$ becomes block-triangular with a $1$ in the top left corner and a block 
$\begin{pmatrix}
1 & 1\\
-1 & 1
\end{pmatrix}$
for each edge $e_i, i=1,2\ldots,g-1$. Its determinant is therefore $\pm 2^{g-1}$, and it follows that
\[
\Vol^2(\Jac(\tGa))=\Gram_{\tG}(\tcalB)=2^{2-2g}\Gram_{\tG}(\tcalB').
\]
We now compute $\Gram(\tcalB')$ using its block structure. First, we note that $\iota_*(\tga'_1)=\tga'_1$ for all $\tga'_1\in \tcalB'_1$ and $\iota_*(\tga'_2)=-\tga'_2$ for all $\tga'_2\in \tcalB'_2$. Since $\iota_*$ preserves the pairing $(\cdot,\cdot)_{\tG}$, it follows that $(\tga'_1,\tga'_2)_{\tG}=0$ for all $\tga'_1\in \tcalB'_1$ and all $\tga'_2\in \tcalB'_2$, therefore
$$
\Gram_{\tG}(\tcalB')=\Gram_{\tG}(\tcalB'_1)\Gram_{\tG}(\tcalB'_2).
$$
Since $(\cdot,\cdot)_P=\frac{1}{2}(\cdot,\cdot)_{\tG}$, it is clear that
$$
\Gram_{\tG}(\tcalB'_2)=2^{g-1}\Gram_P(\tcalB'_2).
$$
Finally, for any $\ga_1,\ga_2\in H_1(G,\ZZ)$ we have $(p^*(\ga_1),p^*(\ga_2))_{\tG}=2(\ga_1,\ga_2)_G$, and therefore
$$
\Gram_{\tG}(\tcalB'_1)=2^g\Gram_G(\calB'),
$$
because $\tcalB'_1$ is the pullback of $\calB'$. Putting all this together, we have
$$
\frac{\Vol^2(\Jac(\tGa))}{2\Vol^2(\Jac(\Ga))}=\frac{2^{2-2g}\Gram_{\tG}(\tcalB')}{2\Gram_G(\calB')}=
\frac{2^{1-2g}\Gram_{\tG}(\tcalB'_1)\Gram_{\tG}(\tcalB'_2)}{\Gram_G(\calB')}=\Gram_P(\tcalB'_2),
$$
which is equal to $\Vol^2(\Prym(\tGa/\Ga))$, as required.

\end{proof}

The proof of Theorem~\ref{thm:Prymvolume} now follows from Theorem~\ref{thm:discretePrym} and Equation~\eqref{eq:ABKSformula} by an elementary scaling argument.

\begin{proof}[Proof of Theorem~\ref{thm:Prymvolume}] The right hand side of~\eqref{eq:Prymvolume} is a homogeneous degree $g-1$ polynomial in the edge lengths of $\Ga$, and so is the left hand side, being the determinant of a $(g-1)\times (g-1)$ Gramian matrix. Hence, it is sufficient to prove Equation~\eqref{eq:Prymvolume} in the case when $\Ga$, and hence $\tGa$, have integer edge lengths. Choose a model $p:\tG\to G$ for $\pi$ such that each edge of $\tG$ and $G$ has length one. In this case $\Vol(F)=1$ for any set of edges, hence by Kirchhoff's theorem and~\eqref{eq:ABKSformula} we have
$$
\Vol^2(\Jac(\tGa))=|\Jac(\tG)|,\quad \Vol^2(\Jac(\Ga))=|\Jac(G)|.
$$
It follows by Proposition~\ref{prop:3volumes} that
$$
\Vol^2(\Prym(\tGa/\Ga))=\frac{\Vol^2(\Jac(\tGa))}{2\Vol^2(\Jac(\Ga))}=\frac{|\Jac(\tG)|}{2|\Jac(G)|}=|\Prym(\tG/G)|.
$$
But $|\Prym(\tG/G)|$ can be computed using~\eqref{eq:orderofPrym}, which agrees with the right hand side of~\eqref{eq:Prymvolume} when all edge lengths are equal to one. This completes the proof.

\end{proof}

\begin{example} Let $\Ga$ be the genus two dumbbell graph, with loops $e_1$ and $e_2$ of lengths $x_1$ and $x_2$, connected by a bridge $e_3$ of length $x_3$. The unique spanning tree of $\Ga$ consists of the edge $e_3$. The graph $\Ga$ has two topologically distinct connected free double covers $\pi_1:\tGa_1\to \Ga$ and $\pi_2:\tGa_2\to \Ga$, corresponding to flipping the edges $S_1=\{e_1,e_2\}$ and $S_2=\{e_1\}$ (see Figure~\ref{fig:twocovers1}).

\begin{figure}[ht]
\begin{tikzpicture}

\node at (-1.5,0) {$\Ga$};
\draw[thick,blue] (-0.5,0) circle(.5);
\draw[fill](0,0) circle(.08);
\draw[thick] (0,0) -- (1,0);
\draw[thick,blue] (1.5,0) circle(.5);
\draw[fill](1,0) circle(.08);
\node at (-0.7,0) {$e_1$};
\node at (0.5,0.2) {$e_3$};
\node at (2.3,0) {$e_2$};

\begin{scope}[shift={(0,1.5)}]

\draw[thick] (0,0) -- (1,0);
\draw[thick] (0,2) -- (1,2);
\draw[thick,blue] (0,0) -- (0,2);
\draw[thick,blue] (0,0) .. controls (-1.3,0) and (-1.3,2) .. (0,2);
\draw[thick,blue] (1,0) -- (1,2);
\draw[thick,blue] (1,0) .. controls (2.3,0) and (2.3,2) .. (1,2);

\node at (-1.5,1){$\tGa_1$};
\node at (-0.7,1) {$\te^+_1$};
\node at (0.3,1) {$\te^-_1$};
\node at (0.5,0.3) {$\te^-_3$};
\node at (0.5,2.3) {$\te^+_3$};
\node at (1.3,1) {$\te^+_2$};
\node at (2.3,1) {$\te^-_2$};

\draw[fill](0,0) circle(.08);
\draw[fill](1,0) circle(.08);
\draw[fill](0,2) circle(.08);
\draw[fill](1,2) circle(.08);

\end{scope}

\begin{scope}[shift={(5,0)}]
\node at (-1.5,0) {$\Ga$};
\draw[thick,blue] (-0.5,0) circle(.5);
\draw[fill](0,0) circle(.08);
\draw[thick] (0,0) -- (1,0);
\draw[thick] (1.5,0) circle(.5);
\draw[fill](1,0) circle(.08);
\node at (-0.7,0) {$e_1$};
\node at (0.5,0.2) {$e_3$};
\node at (2.3,0) {$e_2$};

\begin{scope}[shift={(0,1.5)}]

\draw[thick] (0,0) -- (1,0);
\draw[thick] (0,2) -- (1,2);
\draw[thick,blue] (0,0) -- (0,2);
\draw[thick,blue] (0,0) .. controls (-1.3,0) and (-1.3,2) .. (0,2);
\draw[thick] (1.5,0) circle(.5);
\draw[thick] (1.5,2) circle(.5);

\node at (-1.5,1){$\tGa_2$};
\node at (-0.7,1) {$\te^+_1$};
\node at (0.3,1) {$\te^-_1$};
\node at (0.5,0.3) {$\te^-_3$};
\node at (0.5,2.3) {$\te^+_3$};
\node at (2.3,0) {$\te^-_2$};
\node at (2.3,2) {$\te^+_2$};

\draw[fill](0,0) circle(.08);
\draw[fill](1,0) circle(.08);
\draw[fill](0,2) circle(.08);
\draw[fill](1,2) circle(.08);

\end{scope}

\end{scope}

\end{tikzpicture}

\caption{Two free double covers of the dumbbell graph. Flipped edges are blue.}
\label{fig:twocovers1}

\end{figure}

For the cover $\pi_1$, the odd genus one decompositions are $\{e_1\}$ and $\{e_2\}$ of rank one, and $\{e_3\}$ of rank two. For $\pi_2$, the only odd genus one decomposition is $\{e_2\}$ of rank one. Hence Theorem~\ref{thm:Prymvolume} states that
$$
\Vol^2(\Prym(\tGa_1/\Ga))=x_1+x_2+4x_3,\quad \Vol^2(\Prym(\tGa_2/\Ga))=x_2.
$$
Note that in each case the Prym variety is a circle, and the square of its volume is its circumference (see Remark~\ref{rem:wrongunits}). 

\label{ex:twocovers1}
\end{example}

\section{The local structure of the Abel--Prym map}\label{sec:AbelPrym}

In the remaining two chapters, we provide a geometrization of the volume formula~\eqref{eq:Prymvolume} for the Prym variety of a free double cover of graphs, in the spirit of the analogous formula~\eqref{eq:ABKSformula} for the volume of the Jacobian variety of a metric graph derived in~\cite{ABKS_Canonical}.

Let $\pi:\tGa\to \Ga$ be a free double cover of metric graphs and let $\iota:\tGa\to \tGa$ be the associated involution. For any integer $d$, we denote $\Prym^{[d]}(\tGa/\Ga)$ the connected component of the kernel of the pushforward map $\Nm:\Jac(\tGa)\to\Jac(\Ga)$ having the same parity as $d$, so that $\Prym^{[d]}(\tGa/\Ga)=\Prym(\tGa/\Ga)$ if $d$ is even, and $\Prym^{[d]}(\tGa/\Ga)$ is the odd connected component if $d$ is odd. In this section, we study the \emph{Abel--Prym map}
\begin{equation}
\Psi^d:\Sym^d(\tGa)\to \Prym^{[d]}(\tGa/\Ga),\quad \Psi^d(\tD)=\tD-\iota(\tD),
\label{eq:AP}
\end{equation}
for $d\leq g-1$. The space $\Sym^d(\tGa)$ has a natural cellular structure, with cells enumerated by the edges and vertices of $\tGa$ supporting the divisor. The restriction of $\Psi^d$ to each cell is an affine linear map, and we determine the cells on which $\Psi^d$ has maximal rank. 

Choose an oriented model $p:\tG\to G$ for $\pi$ such that $\tG$ and $G$ have no loops. Let $0\leq k\leq d$, let $\tF=\{\tf_1,\ldots,\tf_k\}$ be a multiset of $k$ edges of $\tG$, and let $\tZ$ be an effective divisor of degree $d-k$ supported on $V(\tG)$. Denote by $C^k(\tF,\tZ)$ the $k$-dimensional set of effective divisors on $\tGa$ of the form $\tD=\tP_1+\cdots+\tP_k+\tZ$, where each $\tP_i$ lies on $\tf_i$. Any effective degree $d$ divisor on $\tGa$ can be split up in such a way (uniquely if each point lies in the interior of an edge), hence we have a cellular decomposition
\begin{equation}
\Sym^d(\tGa)=\bigcup_{k=0}^d \bigcup_{\tF,\tZ} C^k(\tF,\tZ),
\label{eq:Sympolyhedral}
\end{equation}
where the union is taken over all $\tF\in \Sym^k(E(\tG))$ and $\tZ\in \Sym^{d-k}(V(\tG))$. 

The principal result of this section describes the cells $C^k(\tF,\tZ)$ that are not contracted by the Abel--Prym map $\Psi^d$. It is clear that the divisor $\tZ$ plays no role in this question, hence we assume that $k=d$, $\tZ=0$, and only consider the top-dimensional cells, which we denote
$$
C(\tF)=C^d(\tF,0)=\{\tP_1+\cdots+\tP_d:\tP_i\in\tf_i\}\subset \Sym^d(\tGa),\quad \tF=\{\tf_1,\ldots,\tf_d\}\in \Sym^{d}(E(\tG)).
$$

\begin{theorem} Let $\pi:\tGa\to \Ga$ be a free double cover of metric graphs with oriented loopless model $p:\tG\to G$, and let $\Psi^d:\Sym^d(\tGa)\to \Prym^{[d]}(\tGa/\Ga)$ be the degree $d$ Abel--Prym map, where $1\leq d\leq g-1$. Let $\tF=\{\tf_1,\ldots,\tf_d\}\subset E(\tG)$ be a multiset of edges of $\tG$, let $C(\tF)\subset \Sym^d(\tGa)$ be the corresponding top-dimensional cell, and denote $F=\{f_1,\ldots,f_d\}$, where $f_i=p(\tf_i)$. 

\begin{enumerate}
    \item If the edges in $F$ are not distinct (in particular, if the edges in $\tF$ are not distinct), then $\Psi^d$ contracts $C(\tF)$.

    \item If the edges in $F$ are distinct, then $\Psi^d$  does not contract $C(\tF)$ if and only if the preimage under $p$ of each connected component of $G\backslash F$ is connected. 
\end{enumerate}

\label{thm:APlocal}
\end{theorem}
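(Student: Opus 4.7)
The plan is to compute the differential of $\Psi^d$ at a generic interior point of $C(\tF)$ in terms of evaluation functionals on harmonic $1$-forms, and then translate the rank condition into a dimension count on a subgraph of $\tGa$. At such a point, introduce local coordinates $(t_1,\ldots,t_d)$ where $t_i\in(0,\ell(\tf_i))$ parametrizes $\tP_i$ along $\tf_i$ starting from $s(\tf_i)$. Using the Abel--Jacobi description $\Jac(\tGa)=\Om^*(\tGa)/H_1(\tGa,\ZZ)$ from~\eqref{eq:AJ}, moving $\tP_i$ at unit speed changes $\Phi^d(\tD)$ by the evaluation functional $\omega\mapsto\omega_{\tf_i}$ in $\Om^*(\tGa)$. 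Since Construction~\ref{con:A} produces a model in which $\iota$ is orientation-preserving, $\iota(\tP_i)$ simultaneously moves along $\iota(\tf_i)$ at unit speed, so the differential of $\Psi^d=\Phi^d-\Phi^d\circ\iota$ in the $t_i$-direction is the functional $v_i:\omega\mapsto\omega_{\tf_i}-\omega_{\iota(\tf_i)}$. A direct check using $(\pi^*\omega)_{\tf_i}=\omega_{f_i}=(\pi^*\omega)_{\iota(\tf_i)}$ shows $\overline{\pi}(v_i)=0$, so $v_i$ lies in the tangent space $\Ker\overline{\pi}\otimes\RR$ to $\Prym(\tGa/\Ga)$, and $\Psi^d$ contracts $C(\tF)$ precisely when $v_1,\ldots,v_d$ are linearly dependent.

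Part (1) is then immediate case analysis: if $p(\tf_i)=p(\tf_j)$ with $i\neq j$, then either $\tf_i=\tf_j$ (giving $v_i=v_j$) or $\tf_j=\iota(\tf_i)$ (giving $v_j=-v_i$), so the $v_i$ are dependent in either case.

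For part (2), assume the $f_i=p(\tf_i)$ are distinct. The key step is to dualize: because $\iota$ is orientation-preserving, any $\omega\in\Om(\tGa,\RR)^{-\iota}$ satisfies $\omega_{\iota(\te)}=-\omega_{\te}$, so the natural pairing of $v_i$ with $\omega$ equals $2\omega_{\tf_i}$. Consequently $v_1,\ldots,v_d$ are linearly independent iff the map
\[
\phi:\Om(\tGa,\RR)^{-\iota}\longrightarrow\RR^d,\qquad\omega\longmapsto(\omega_{\tf_i})_{i=1}^d,
\]
is surjective, equivalently iff $\dim\ker\phi=(g-1)-d$. Since an anti-invariant $\omega$ vanishing on $\tf_i$ also vanishes on $\iota(\tf_i)$, the kernel is naturally identified with $\Om(\tGa',\RR)^{-\iota}$, where $\tGa':=\tGa\setminus p^{-1}(F)$.

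The remaining step is the dimension count of $\Om(\tGa',\RR)^{-\iota}$. Decompose $\Ga':=\Ga\setminus F$ into connected components $\Ga_0,\ldots,\Ga_{r-1}$. Over each $\Ga_k$ the preimage $\pi^{-1}(\Ga_k)$ is either a connected free double cover (and Riemann--Hurwitz together with the $\pi^*$-decomposition shows that its anti-invariant part of harmonic forms has dimension $g(\Ga_k)-1$) or the trivial disconnected cover (where the anti-invariant part $\{(\omega,-\omega)\}$ has dimension $g(\Ga_k)$). Summing and using $\sum_k g(\Ga_k)=|E(\Ga')|-|V(\Ga')|+r=g-1-d+r$ gives $\dim\ker\phi=(g-1-d)+(r-C)$, where $C$ is the number of components of $\Ga'$ with connected preimage. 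Surjectivity of $\phi$ is then equivalent to $C=r$, which is exactly the criterion in (2). I expect the main obstacle to be the orientation bookkeeping needed to make $\iota$ preserve orientations, so that the identification $\ker\phi\cong\Om(\tGa',\RR)^{-\iota}$ and the per-component Riemann--Hurwitz computation both go through cleanly.
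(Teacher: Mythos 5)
Your proposal is correct, and for part (2) it takes a genuinely different route from the paper. For part (1) the paper exhibits an explicit piecewise linear function realizing a linear equivalence between $\Psi^d(\tD)$ and $\Psi^d(\tD')$ for nearby divisors $\tD,\tD'$; your observation that the tangent vectors satisfy $v_i=\pm v_j$ whenever $p(\tf_i)=p(\tf_j)$ reaches the same conclusion (rank deficiency of the affine map on the cell) more quickly. For part (2) the paper works on the homology side: it builds an explicit basis of $\Ker \pi_*$ adapted to $F$ (Constructions~\ref{con:B} and~\ref{con:C}), shows the resulting matrix of $\Psi^d$ is lower triangular with nonzero diagonal when all preimages are connected, and, in the disconnected case, produces $g-d$ independent kernel cycles supported away from $p^{-1}(F)$ that force rows of zeroes. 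You instead dualize to anti-invariant harmonic $1$-forms, identify non-contraction with surjectivity of the evaluation map $\omega\mapsto(\omega_{\tf_i})$ on $\Om(\tGa,\RR)^{-\iota}$, and compute $\dim\Om(\tGa\setminus p^{-1}(F),\RR)^{-\iota}$ component by component; the count $\sum_k g(\Ga_k)=g-1-d+r$ then makes both implications fall out of a single identity. Your argument is cleaner and uniform, and all the ingredients check out (the decomposition $\Om(\tGa,\RR)=\pi^*\Om(\Ga,\RR)\oplus\Om(\tGa,\RR)^{-\iota}$ uses only that $2$ is invertible, and $\iota$ preserves orientation once the model is oriented). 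What the paper's heavier construction buys is not visible in this theorem alone: the explicit triangular matrix, with $r-1$ diagonal entries equal to $2$, is reused to compute the local degree $\deg_{\Psi}(\tF)=2^{r-1}$ in Corollary~\ref{cor:degreeformula} and again throughout the harmonicity proof, whereas your rank argument does not by itself produce the determinant.
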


The proof of the first part of the theorem is quite elementary: for any $\tD\in C(\tF)$ we construct a nearby divisor $\tD'$ such that $\Psi^d(\tD)=\tD-\iota(\tD)$ is linearly equivalent to $\Psi^d(\tD')=\tD'-\iota(\tD')$ via an explicit rational function. To prove the second part, we calculate the matrix of $\Psi^d$ with respect to a convenient basis, and compute its rank. This part, and the necessary constructions, will occupy the greater part of this section.

\begin{proof}[Proof of Theorem~\ref{thm:APlocal}, part (1)] Let $\tF=\{\tf_1,\ldots,\tf_d\}$ be a multiset such that not all $f_i=p(\tf_i)$ are distinct. Without loss of generality we assume that $f_1=f_2$, which means that either $\tf_1=\tf_2$ or $\tf_1=\iota(\tf_2)$. Let $\tD=\tP_1+\cdots+\tP_d$ be a point of $C(\tF)$, where each $\tP_i$ lies in the interior of $\tf_i$.

If $\tf_1=\tf_2$, we can assume that either $\tP_1=\tP_2$, or that the direction from $\tP_1$ to $\tP_2$ is positive with respect to the orientation. Denote $\tD'=\tP'_1+\tP'_2+\tP_3+\cdots+\tP_d$, where $\tP'_1$ and $\tP'_2$ are obtained by moving $\tP_1$ and $\tP_2$ a small distance of $\varepsilon>0$ in respectively the negative and the positive directions along $\tf_1=\tf_2$. Then the divisor
$$
\Psi^d(\tD)-\Psi^d(\tD')=\tD-\iota(\tD)-\tD'+\iota(\tD')=\tP_1+\tP_2-\tP'_1-\tP'_2-\iota(\tP_1)-\iota(\tP_2)+\iota(\tP'_1)+\iota(\tP'_2)
$$
is the principal divisor of a piecewise linear function on $\tGa$. Indeed, consider the function $M:\tGa\to \RR$ having the following slopes on the edges of $\tGa$:
\begin{itemize}
    \item On $\tf_1=\tf_2$, $M$ has slope zero to the left of $\tP_1'$, slope $+1$ on $[\tP'_1,\tP_1]$, slope zero on $[\tP_1,\tP_2]$, slope $-1$ on $[\tP_2,\tP'_2]$, and slope zero to the right of $\tP'_2$.
    
    \item On $\iota(\tf_1)=\iota(\tf_2)$, $M$ has slope zero to the left of $\iota(\tP_1)$, slope $-1$ on $[\iota(\tP'_1),\iota(\tP_1)]$, slope zero on $[\iota(\tP_1),\iota(\tP_2)]$, slope $+1$ on $[\iota(\tP_2),\iota(\tP'_2)]$, and slope zero to the right of $\iota(\tP'_2)$.
    \item The function $M$ has zero slope on all other edges of $\tGa$.
\end{itemize}
The net changes of $M$ along $\tf_1=\tf_2$ and $\iota(\tf_1)=\iota(\tf_2)$ cancel out, hence the function $M$ is continuous, and it is clear that $\Psi^d(\tD)-\Psi^d(\tD')$ is the divisor of $M$. Therefore, $\Psi^d$ is not locally injective at $\tD$.

The case $\tf_1=\iota(\tf_2)$ is similar. We consider $\tD'=\tP'_1+\tP'_2+\tP_3+\cdots+\tP_d$, where $\tP'_1$ and $\tP'_2$ are obtained by moving $\tP_1$ and $\tP_2$ a small distance of $\varepsilon>0$ in the same direction along respectively $\tf_1$ and $\tf_2=\iota(\tf_1)$. It is easy to check that $\Psi^d(\tD)-\Psi^d(\tD')$ is a principal divisor, hence $\Psi^d$ is not locally injective at $\tD$.

\end{proof}

To prove part (2) of Theorem~\ref{thm:APlocal}, we give an explicit description of the Abel--Prym map $\Psi^d$ on a cell $C(\tF)$. We first consider the case $d=1$. Fix a base point $q\in \tGa$, and for each point $p\in \tGa$ fix a path $\ga(q,p)$ from $q$ to $p$. The Abel--Prym map $\Psi^1$ is a difference of Abel--Jacobi maps~\eqref{eq:AJ}:
$$
\Psi^1:\tGa\to \Prym^{[1]}(\tGa/\Ga)\subset \Jac(\tGa),\quad \Psi^1(p)(\om)=\int_{\ga(q,p)}\om-\int_{\ga(q,\iota(p))}\om.
$$
It is more convenient to work with the even component $\Prym(\tGa/\Ga)$. The odd component $\Prym^{[1]}(\tGa/\Ga)$ is a torsor over $\Prym(\tGa/\Ga)$, and we can pass to the even component by translating by any element of the odd component, for example the element $\Psi^1(\iota(q))=\int_{\ga(q,\iota(q))}$. We can further assume that the path $\ga(q,\iota(p))$ in the formula above consists of $\ga(q,\iota(q))$ followed by the path $\iota_*(\ga(q,p))$. Putting this together, we obtain the translated Abel--Prym map, which we also denote $\Psi^1$ by abuse of notation:
\begin{equation}
\Psi^1:\tGa\to \Prym(\tGa/\Ga),\quad \Psi^1(p)(\om)=\int_{\ga(q,p)}\om-\int_{\iota_*(\ga(q,p))}\om.
\label{eq:APdeg1}
\end{equation}
Choose a basis $\tga_1,\ldots,\tga_{g-1}$ for $\Ker \pi_*:H_1(\tGa,\ZZ)\to H_1(\Ga,\ZZ)$. As explained in Subsection~\ref{subsec:Prymvarieties}, the functionals $\om^*_j\in \Om^*(\tGa)$ dual to $\om_j=\psi(\tga_j)$ define a coordinate system on $\Prym(\tGa/\Ga)$, so we can write 
$$
\Psi^1(p)=\int_{\ga(q,p)}-\int_{\iota_*(\ga(q,p))}=\sum_{j=1}^{g-1}a_j(p)\om_j^*,
$$
where we find the coefficients $a_j(p)$ by pairing with $\om_j$:
$$
a_j(p)=\int_{\ga(q,p)}\om_j-\int_{\iota_*(\ga(q,p))}\om_j.
$$
We now assume that $p$ lies on the interior of an edge $\tf\in E(\tGa)$, which we identify using the orientation with the segment $(0,\ell(\tf))$. Pick $x_1,x_2\in (0,\ell(\tf))$ such that $x_1<x_2$, then we see that
$$
a_j(x_2)-a_j(x_1)=\int_{\ga(q,x_2)}\om_j-\int_{\iota_*(\ga(q,x_2))}\om_j-\int_{\ga(q,x_1)}\om_j+\int_{\iota_*(\ga(q,x_1))}\om_j=\int_{\ga(x_1,x_2)}\om_j-\int_{\iota_*(\ga(x_1,x_2))}\om_j.
$$
We can assume that $\ga(x_1,x_2)$ is the segment $[x_1,x_2]\subset (0,\ell(\tf))$. The integral of $\om_j$ over $\ga(x_1,x_2)$ is equal to the length $x_2-x_1$ multiplied by the coefficient with which $\tf$ occurs in $\om_j$, which is $\frac{1}{2}\langle\tga_j,\tf\rangle$ (the $\frac{1}{2}$ coefficient comes from using the principal polarization of the Prym). Similarly, the integral of $\om_j$ over $\iota_*(\ga(x_1,x_2))$ is equal to $\frac{1}{2}(x_2-x_1)\langle\tga_j,\iota(\tf)\rangle$, and therefore
$$
a_j(x_2)-a_j(x_1)=\frac{1}{2}(x_2-x_1) \langle \tga_j,\tf-\iota(\tf)\rangle,
$$
where $\langle\cdot,\cdot\rangle$ is the edge pairing~\eqref{eq:thirdpairing}. It follows that, with respect to the coordinate vectors $\om^*_j$ defined by the basis $\tga_j$, the restriction of the Abel--Prym map~\eqref{eq:APdeg1} to an edge $\tf=[0,\ell(\tf)]$ is an affine $\ZZ$-linear map of the form
$$
\Psi^1(x)=\frac{1}{2}\sum_{j=1}^{g-1}\langle \tga_j,\tf-\iota(\tf)\rangle \om^*_jx+C,
$$
where $C$ is a constant vector.

This formula readily generalizes to any degree. Let $\tF=\{\tf_1,\ldots,\tf_d\}$ be a set of distinct edges of $\tG$. We identify the cell $C(\tF)$ with the parallelotope $[0,\ell(\tf_1)]\times\cdots\times [0,\ell(\tf_d)]$, where the point $(x_1,\ldots,x_d)$ corresponds to the divisor $\tD=\tP_1+\cdots+\tP_d$, where $\tP_i$ lies on $\tf_i$ at a distance of $x_i$ from the starting vertex. Translating by any odd Prym divisor and moving to the even component if $d$ is odd, we see that the restriction of the Abel--Prym map~\eqref{eq:AP} to the cell $C(\tF)$ is affine $\ZZ$-linear:
$$
\Psi^d(x_1,\ldots,x_d)=\sum_{i=1}^d\sum_{j=1}^{g-1}(\Psi^d)_{ji}\om^*_jx_i+C\in \Prym(\tGa/\Ga),
$$
where $(\Psi^d)_{ji}$ is the $(g-1)\times d$ matrix
\begin{equation}
(\Psi^d)_{ji}=\frac{1}{2}\left\langle \tga_j, \tf_i-\iota(\tf_i)\right\rangle,
\label{eq:dPsi}
\end{equation}
and $C$ is some constant vector.

To prove part (2) of Theorem~\ref{thm:APlocal}, we compute the rank of $(\Psi^d)_{ji}$ with respect to a carefully chosen basis $\tga_j$ of $\Ker \pi_*$. Specifically, the "if" and "only if" statements will require slightly different choices of basis.

\begin{proof}[Proof of Theorem~\ref{thm:APlocal}, part (2)] We consider the decomposition of $G\backslash F$ into connected components, which we enumerate as follows:
$$
G\backslash F=G_0\cup \cdots\cup G_{r-1}.
$$
Before proceeding, we derive a relationship between the genera $g_k=|E(G_k)|-|V(G_k)|+1$ of the components $G_k$: 
\begin{equation}
g_0+\cdots+g_{r-1}=\sum_{k=0}^{r-1}|E(G_k)|-\sum_{k=0}^{r-1}|V(G_k)|+r=|E(G)|-d-|V(G)|+r=g+r-d-1.
\label{eq:genuscount}
\end{equation}

We now consider the two possibilities.

\medskip \noindent {\bf The preimage of one of the connected components is disconnected.} Equivalently, we assume that the restriction of the cover $p$ to one of the connected components is isomorphic to the trivial free double cover. Let $G_k$ be a connected component of genus $g_k$. If $p^{-1}(G_k)$ is connected, then Construction~\ref{con:B}, applied to the cover $p|_{p^{-1}(G_k)}:p^{-1}(G_k)\to G_k$, produces $g_k-1$ linearly independent cycles $\ga_{kl}\in H_1(\tG,\ZZ)$ that are supported on $p^{-1}(G_k)$ and that lie in $\Ker p_*$. However, if the restriction of $p$ to, say, $G_k$ is trivial, then we can find $g_k$ such cycles, by applying $(\Id-\iota)_*$ to the lifts of a linearly independent collection of cycles on $G_k$. In this case, it follows from~\eqref{eq:genuscount} that there are at least
$$
(g_0-1)+(g_1-1)+\ldots+g_k+\ldots+(g_{r-1}-1)=g-d
$$
linearly independent cycles $\ga_{kl}\in H_1(\tG,\ZZ)$, lying in the kernel of $p_*$ and supported on
$$
p^{-1}(G_0)\cup \ldots\cup p^{-1}(G_{r-1})=
\tG\backslash (\tF\cup \iota(\tF)).
$$
Any such cycle $\ga_{kl}$ pairs trivially with each $\tf_i$ and $\iota(\tf_i)$. Therefore, by completing these cycles to a basis of $\Ker p_*$ (passing to $\QQ$-coefficients if necessary), we see that the matrix~\eqref{eq:dPsi} of $\Psi^d$ with respect to this basis has at least $g-d$ rows of zeroes, hence has rank less than $d$ and contracts $C(\tF)$.

\medskip \noindent {\bf The preimage of each connected component is connected.} Equivalently, the restriction of $p$ to each connected component is a nontrivial free double cover. This implies that $g_k\geq 1$ for each $k$, since any free double cover of a tree is trivial.

We show that the matrix~\eqref{eq:dPsi} has rank $d$ with respect to an explicit choice of basis $\tga_i$ of $\Ker \pi_*$. The construction of this basis is somewhat involved, and will be used again in the proof of Theorem~\ref{thm:APharmonic}, so we typeset it separately.

\begin{mainconstruction}\label{con:C} Let $\pi:\tGa\to \Ga$ be a connected free double cover of metric graphs of genera $2g-1$ and $g$, respectively, and let $p:\tG\to G$ be an oriented model. Let $\tF=\{\tf_1,\ldots,\tf_d\}\subset E(\tG)$ be a set of $d$ edges so that the edges $f_i=p(\tf_i)$ are distinct, and denote $F=p(\tF)$. Let 
$$
G\backslash F=G_0\cup \cdots\cup G_{r-1}
$$
be the decomposition of $G\backslash F$ into connected components, and further assume that $p^{-1}(G_k)$ is connected for each $k$. In this Construction, we elaborate on Constructions~\ref{con:A} and~\ref{con:B} by carefully choosing a spanning tree $T\subset G$ and a corresponding basis $\tga_1,\ldots,\tga_{g-1}$ for $\Ker \pi_*:H_1(\tG,\ZZ)\to H_1(G,\ZZ)$, with respect to which the matrix of the Abel--Prym map on the cell $C(\tF)$ has a convenient triangular structure. This construction involves a number of choices and relabelings, so we break it down into steps. At the same time, we will contract the portions of the graphs $\tG$ and $G$ that are irrelevant to our intersection calculations.

\emph{Contracting the $G_k$ and choosing the spanning tree $T\subset G$.} Choose a spanning tree $T_k$ for each connected component $G_k$. Denote by $G^c$ the graph obtained from $G$ by contracting each subtree $T_k$ to a separate vertex $v_k$. Specifically, the vertex set of $G^c$ is $V(G^c)=\{v_0,\ldots,v_{r-1}\}$, and the edge set of $G^c$ is the set of edges $G$ that are not in any $T_k$. There is a natural contraction map $(\cdot)^c:G\to G^c$; this map sends each vertex and edge of $T_k$ to $v_k$, and sends each edge of $G$ that is not in any $T_k$ to the corresponding edge of $G^c$. The contracted graph $G^c$ has the same genus $g$ as $G$, hence it has $g+r-1$ edges, namely the edges $F=\{f_1,\ldots,f_d\}$ and $g+r-d-1$ loops corresponding to the uncontracted edges of the $G_k$. Choose a spanning tree $T^c$ for the contracted graph $G^c$; the $r-1$ edges of $T^c$ are a subset of the edges $\{f_1,\ldots,f_d\}$.

Before proceeding, we relabel the edges $\tf_i$ and $f_i=p(\tf_i)$ so that $E(T^c)=\{f_1,\ldots,f_{r-1}\}$. We then choose $v_0$ as the root vertex of $T_c$, and further relabel and reorient the edges $f_1,\ldots,f_{r-1}$ away from $v_0$. Specifically, we require that, along the unique path in $T^c$ starting at $v_0$ and ending at any other vertex, the edges are oriented in the direction of the path and appear in increasing order. Finally, we relabel the vertices $v_1,\ldots,v_{r-1}$ so that $t(f_j)=v_j$ for $j=1,\ldots,r-1$; this implies that $s(f_j)=v_{\alpha(j)}$ for some index $\alpha(j)<j$.

We now form a spanning tree $T$ for $G$ by joining the subtrees $T_k$ with the edges of $T^c$ (viewed as edges of $G$):
$$
T=T_0\cup\cdots \cup T_{r-1}\cup \{f_1,\ldots,f_{r-1}\}.
$$
We denote the complementary edges of $T$ in $G$ by $E(G)\backslash E(T)=\{e_0,e_1,\ldots,e_{g-1}\}$, and we explain below how the $e_j$ are chosen.

\emph{Choosing $e_0$ and contracting the preimages of the $G_k$.} We now describe the cover $p$ using the spanning tree $T$ and Construction~\ref{con:A}. The tree $T$ has two disjoint lifts $\tT^{\pm}$ to $\tG$, and we denote $\tT^{\pm}_k=\tT^{\pm}\cap p^{-1}(T_k)$ the corresponding lifts of the $T_k$. For each $i=1,\ldots,d$, each of the trees $\tT^{\pm}$ contains exactly one of the two edges $\tf_i$ and $\iota(\tf_i)$. The cover $p:f^{-1}(G_0)\to G_0$ is not trivial, so we can pick an edge $e_0\in E(G_0)\backslash E(T_0)$ having a lift $\te_0=\te_0^+$ that connects $\tT_0^+$ and $\tT_0^-$. Then
$$
\tT=\tT^+\cup \tT^-\cup\{\te_0^+\}
$$
is a spanning tree for $\tG$. We note that, by our labeling convention, for $k=1,\ldots,r-1$ the target vertex $t(\tf_k)$ lies on either $\tT^+_k$ or $\tT^-_k$, while $t(\iota(\tf_k))$ lies on the other subtree. 

We now perform a contraction on the graph $\tG$ that is consistent with the contraction $(\cdot)^c:G\to G^c$ defined above. The tree $\tT^+_0\cup\tT^-_0\cup\{\te_0^+\}$ is a spanning tree for the preimage $p^{-1}(G_0)$, while for each $k=1,\ldots,r-1$ the two disjoint trees $\tT^+_k$ and $\tT^-_k$ form a spanning forest for $p^{-1}(G_k)$. Let $\tG^c$ denote the graph obtained from $\tG$ by contracting $\tT^+_0\cup\tT^-_0\cup\{\te_0^+\}$ to a vertex $\tv_0$, and contracting each $\tT^{\pm}_k$ to a separate vertex $\tv^{\pm}_k$. We denote the contraction map by $(\cdot)^c:\tG\to \tG^c$, and for a non-contracted edge $\te\in E(\tG)$ (i.e. for any edge neither in $\tT^+_0\cup\tT^-_0\cup\{\te_0^+\}$ nor in any $\tT^{\pm}_k$) we denote $(\te)^c=\te$ by abuse of notation. The double cover $p:\tG\to G$ descends to a map $p:\tG^c\to G^c$ (which is almost a double cover of graphs, except that $v_0$ and $e_0$ each have a single preimage), and the projections commute with the contractions. The contraction $\tT^c$ of $\tT$ is a spanning tree for $\tG^c$, having vertex and edge sets
$$
V(\tT^c)=V(\tG^c)=\{\tv_0,\tv^{\pm}_1,\ldots,\tv^{\pm}_{r-1}\},\quad E(\tT^c)=\{\tf_1,\ldots,\tf_{r-1},\iota(\tf_1),\ldots,\iota(\tf_{r-1})\}.
$$
The tree $\tT^c$ can be viewed as two copies of $T^c$ joined at the common vertex $\tv_0$.

\emph{Labeling the complementary edges.} Finally, we label the complementary edges
$$
E(G)\backslash E(T)=E(G_0)\backslash E(T_0)\cup\cdots\cup E(G_{r-1})\backslash E(T_{r-1})\cup\{f_r,\ldots,f_d\}=
\{e_0,\ldots,e_{g-1}\}.
$$
Each of the $e_j$ is either an edge of a subgraph $G_k$ that does not lie on the spanning tree $T_k$, or is one of the $f_k$. Since the restriction of the double cover $p:\tG\to G$ to each of the $G_k$ is nontrivial, for each $k=1,\ldots,r-1$ we can choose an edge in $E(G_k)\backslash E(T_k)$ whose preimages cross $\tT^{\pm}_k$, and we label this edge $e_k$. We pick the preimage $\te_k=\te_k^+\in E(\tG)\backslash E(\tT)$ in such a way that the source vertex $s(\te_k)$ lies on the same subtree $\tT^+_k$ or $\tT^-_k$ as the target vertex $t(\tf_k)$. Furthermore, for $k=r,\ldots,d$, we denote $e_k=f_k$ and $\te_k=\te^+_k=\tf_k$. The remaining edges $e_k$ for $k=d+1,\ldots,g-1$ and their preimages $\te^{\pm}_k$ are labeled arbitrarily. We note that in this case, $e_k\in S$ for $k=0,\ldots,r-1$.

To help follow the intersection calculations in the following proofs, we summarize the structure of the graphs $\tG^c$ and $G^c$ and the map $p:\tG^c\to G^c$:

\begin{enumerate}
    \item The vertices of the graphs $\tG^c$ and $G^c$ are
$$
V(\tG^c)=\{\tv_0,\tv^{\pm}_1,\ldots,\tv^{\pm}_{r-1}\},\quad V(G^c)=\{v_0,v_1,\ldots,v_{r-1}\}.
$$
The map $p$ sends $\tv_0$ to $v_0$ and $\tv^{\pm}_k$ to $v_k$.
    
    \item The edges of the graph $G^c$ are as follows:
$$
E(G^c)=\{f_1,\ldots,f_{r-1},e_0,\ldots,e_{r-1},e_r=f_r,\ldots,e_d=f_d,e_{d+1},\ldots,e_{g-1}\}.
$$
The $r-1$ edges $\{f_1,\ldots,f_{r-1}\}$ are the edges of the spanning tree $T_c$; these are oriented in increasing order away from the root vertex $v_0$, so that $t(f_j)=v_j$ for $j=1,\ldots,r-1$. The remaining $g$ edges $\{e_0,\ldots,e_{g-1}\}$ are split into three groups: for $j=0,\ldots,r-1$ the edge $e_j$ is a loop at $v_j$, for $j=r,\ldots,d$ the edge $e_j=f_j$ may or may not be a loop, and for $j=d+1,\ldots,g-1$ each remaining $e_j$ is a loop at one of the $v_k$.

\item The edges of the graph $\tG^c$ are as follows. For $j=1,\ldots,r-1$, each $f_j$ has two preimages labeled $\tf_j$ and $\iota(\tf_j)$; these $2r-2$ edges form the spanning tree $\tT^c$. Furthermore, for $j=1,\ldots,r-1$ the target vertex $t(\tf_j)$ is one of the two vertices $\tv_j^{\pm}$. The edge $e_0$ has a unique preimage $\te^-_0$, while for $j=1,\ldots,g-1$ each $e_j$ has two preimages $\te_j^+=\te_j$ and $\te_j^-=\iota(\te_j)$. For $j=1,\ldots,r-1$ the edges $\te_j$ and $\iota(\te_j)$ (lying over the loop $e_j$) connect $\tv_j^+$ and $\tv_j^-$ in opposite directions, in such a way that $s(\te_j)=t(\tf_j)$. For $j=r,\ldots,d$ we have $\te_j=\tf_j$, and for each $j=d+1,\ldots,g-1$ the two edges $\te_j$ and $\iota(\te_j)$ over $e_j$ are either parallel edges between $\tv^+_k$ and $\tv^-_k$ for some $k$, or form a pair of loops.

\end{enumerate}

We now employ Construction~\ref{con:B} to produce a basis $\tga_1,\ldots,\tga_{g-1}$ of $\Ker \pi_*:H_1(\tG,\ZZ)\to H_1(G,\ZZ)$, with respect to the chosen spanning tree $T\subset G$. Let $\tga_0$ and $\tga_j^{\pm}$ for $j=1,\ldots,g-1$ be the unique cycle of respectively $\tT\cup\{\te^-_0\}$ and $\tT\cup\{\te^{\pm}_j\}$ such that $\langle \tga_0,\te^-_0\rangle=1$ and $\langle \tga^{\pm}_j,\te^{\pm}_j\rangle=1$ for $j=1,\ldots,g-1$. Then the cycles
\begin{equation}
\tga_j=\tga_j^+-\iota_*(\tga_j^+)=\left\{\begin{array}{cc}\tga^+_j-\tga^-_j+\si_j\tga_0, & e_j\in S, \\ \tga^+_j-\tga^-_j, & e_j\notin S,\end{array}\right.,\quad j=1,\ldots,g-1
\label{eq:mainbasis}
\end{equation}
form a basis for $\Ker \pi_*$.

In Example~\ref{example:big}, we deploy this construction for a specific cover $\pi:\tGa\to \Ga$ with $g=3$.

\end{mainconstruction}

We now return to the proof. We need to compute the rank of the matrix~\eqref{eq:dPsi} with respect to the basis~\eqref{eq:mainbasis}:
$$
(\Psi^d)_{ji}=\frac{1}{2}\left\langle \tga_j, \tf_i-\iota(\tf_i)\right\rangle=\frac{1}{2}\left\langle \tga^+_j-\iota_*(\tga^+_j), \tf_i-\iota(\tf_i)\right\rangle=\left\langle \tga^+_j, \tf_i -\iota(\tf_i)\right\rangle.
$$
To compute the intersection numbers $\langle \tga^+_j,\tf_i\rangle$ and $\langle \tga^+_j,\iota(\tf_i)\rangle$, we pass to the contracted graph $\tG^c$. First of all, we note that none of edges $\tf_i$ or $\iota(\tf_i)$ on $\tG$ are contracted. Therefore, for any cycle $\tga$ on $\tG$, its intersection with $\tf_i$ or $\iota(\tf_i)$ can be computed on the contracted graph $\tG^c$:
$$
\langle \tga,\tf_i\rangle=\langle \tga^c,\tf_i\rangle,\quad \langle \tga, \iota(\tf_i)\rangle=\langle \tga^c,\iota(\tf_i)\rangle,\quad i=1,\ldots,d.
$$
Furthermore, we observe that, since $\tga^+_j$ is the unique cycle on $\tT\cup \{\te_j\}$ such that $\langle \tga^+_j,\te_j\rangle=1$, the contracted cycle $(\tga_j)^c$ is the unique cycle on $\tT^c\cup \{\te_j\}$ such that $\langle (\tga^+_j)^c,\te_j\rangle=1$.

We first look at the cycles $(\tga^+_j)^c$ for $j=1,\ldots,r-1$. The edge $e_j\in E(G^c)$ is a loop at $v_j$. Its lift $\te_j\in E(\tG^c)$ starts at $t(\tf_j)$, which is one of the two vertices $\tv^{\pm}_j$ (say $\tv^+_j$), and ends at the other vertex $\tv^-_j$ (by our labeling convention $\tv^-_j=t(\iota(\tf_j))$). The contracted cycle $(\tga^+_j)^c$ is the unique cycle of the graph $\tT^c\cup\{\te_j\}$ containing $+\te_j$: it starts at $\tv^+_j$, proceeds to $\tv^-_j$ via $\te_j$, then to $\tv_0$ via the unique path on $\tT^c$ from $\tv^+_j$ (the first edge of this path being $-\iota(\tf_j)$), and then from $\tv_0$ to $\tv^-_j$ via a unique path (the last edge of this path being $+\tf_j$). By the ordering convention that we chose for $T^c$ and hence $\tT^c$, the only edges that can occur on this (other that the generating edge $\te_j$, which does not lie on $\tT^c$) are $\tf_i$ and $\iota(\tf_i)$ with $i\leq j$. Furthermore, $\tf_j$ and $\iota(\tf_j)$ occur, as we have seen, with coefficients $+1$ and $-1$, respectively. It follows that for $j=1,\ldots,r-1$ we have
$$
(\Psi^d)_{ji}=\left\langle \tga^+_j, \tf_i -\iota(\tf_i)\right\rangle=\langle (\tga^+_j)^c,\tf_i-\iota(\tf_i)\rangle=\left\{\begin{array}{cc} 0\mbox{ or }\pm 2,& i<j, \\ 2, & i=j, \\ 0, & i>j. \end{array}\right.
$$

We now calculate the intersection numbers $\langle\tga^+_j,\tf_i-\iota(\tf_i)\rangle$ for $j=r,\ldots,d$. Recall that we have chosen $\te_j=\tf_j$, so the cycle $(\tga^+_j)^c$ is the unique cycle on $\tT^c\cup \{\tf_j\}$ containing $+\tf_j$. By our ordering convention, the edges of the tree $\tT^c$ are $\tf_i$ and $\iota(\tf_i)$ for $1\leq i\leq r-1$. Hence $(\tga^+_j)^c$ intersects $\tf_j$ with multiplicity $+1$, and does not intersect any $\tf_i$ or $\iota(\tf_i)$ with $i\geq r$. Therefore, for $j\geq r$ we have
$$
(\Psi^d)_{ji}=\left\langle \tga^+_j, \tf_i -\iota(\tf_i)\right\rangle=\langle (\tga^+_j)^c,\tf_i-\iota(\tf_i)\rangle=
\left\{
\begin{array}{cc} 0,\pm 1,\mbox{ or }\pm 2,& i\leq r-1, \\ 1, & i=j, \\ 0, & i\geq r,i\neq j. \end{array}\right.
$$
Putting everything together, we see that the $d\times d$ minor of $(\Psi^d)_{ji}$ corresponding to the partial basis $\tga_1,\ldots,\tga_d$ is a lower-triangular matrix, whose first $r-1$ diagonal entries are 2, and the remaining equal to 1. Hence, $\Psi^d$ has rank $d$.

\end{proof}

We now restrict our attention to the Abel--Prym map in degree $d=g-1$, which we denote $\Psi$:
$$
\Psi:\Sym^{g-1}(\tGa)\to \Prym^{[g-1]}(\tGa/\Ga),\quad \Psi(\tD)=\tD-\iota(\tD),
$$
In this case the source has the same dimension as the target, and we can compute the determinant of the matrix~\eqref{eq:dPsi} of $\Psi$ on any top-dimensional cell $C(\tF)$. This determinant depends on a choice of basis $\tga_1,\ldots,\tga_{g-1}$ for $\Ker \pi_*$, but only up to sign, hence the quantity
\begin{equation}
\deg_{\Psi} (\tF)=|\det \Psi(\tD)|,\quad \tD\in C(\tF),
\label{eq:degPsi}
\end{equation}
which we call the \emph{degree} of $\Psi$ on $C(\tF)$, is well-defined.

We now compute the degree of $\Psi$ on the top-dimensional cells of $\Sym^{g-1}(\tGa)$. We recall from Sec.~\ref{subsec:Prymorder} that, given a connected free double cover $p:\tG\to G$ of a graph $G$ of genus $g$, a subset $F\subset E(G)$ of $g-1$ edges of $G$ is called an \emph{odd genus one decomposition of rank $r$} if $G\backslash F$ consists of $r$ connected components of genus one, and each of them has connected preimage in $\tG$.

\begin{corollary} Let $\pi:\tGa\to \Ga$ be a free double cover with model $p:\tG\to G$, let $\Psi:\Sym^{g-1}(\tGa)\to \Prym^{[g-1]}(\tGa/\Ga)$ be the Abel--Prym map, let $C(\tF)=C^{g-1}(\tF,0)\subset \Sym^{g-1}(\tGa)$ be a top-dimensional cell corresponding to the multiset $\tF=\{\tf_1,\ldots,\tf_{g-1}\}\subset E(\tG)$, and let $F=p(\tF)$. Then $\deg_{\Psi}(\tF)$ is equal to
\begin{equation}
\deg_{\Psi}(\tF)=\left\{\begin{array}{cc} 2^{r-1}, & \mbox{edges of }F\mbox{ are distinct and form an odd genus one decomposition of rank }r, \\
0 & \mbox{otherwise.}
\end{array}\right.
\label{eq:degPsi2}
\end{equation}
In particular, the volume of the image of $C(\tF)$ in $\Prym^{[g-1]}(\tGa/\Ga)$ is equal to
\begin{equation}
\Vol(\Psi(C(\tF)))=\frac{2^{r(\tF)-1}w(F)}{\Vol(\Prym(\tGa/\Ga))},\quad w(F)=w(\tF)=\ell(\tf_1)\cdots \ell(\tf_{g-1})
\label{eq:volumeofPrymcell}
\end{equation}
if $F$ is an odd genus one decomposition of rank $r(\tF)$, and zero otherwise.
\label{cor:degreeformula}
\end{corollary}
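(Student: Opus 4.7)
The plan is to harvest Theorem~\ref{thm:APlocal} and, in particular, the explicit triangular matrix that appeared in its proof.

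First I would dispose of the cases where $\deg_{\Psi}(\tF)=0$. If the edges of $F$ are not distinct, Theorem~\ref{thm:APlocal}(1) says that $\Psi$ contracts $C(\tF)$, so the matrix $(\Psi)_{ji}$ is not of full rank and its determinant vanishes. Assume the edges of $F$ are distinct. If some connected component of $G\backslash F$ has disconnected preimage, then by Theorem~\ref{thm:APlocal}(2) the cell $C(\tF)$ is again contracted and the determinant vanishes. So it suffices to handle the case where each connected component $G_k$ of $G\backslash F$ has connected preimage. For $d=g-1$, Equation~\eqref{eq:genuscount} gives $g_0+\cdots+g_{r-1}=r$; combined with the observation that no free double cover of a tree is nontrivial (forcing $g_k\geq 1$ for every $k$), this forces $g_k=1$ for all $k$. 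Thus this case is precisely the case in which $F$ is an odd genus one decomposition of rank $r$.

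Next I would extract the determinant from the proof of Theorem~\ref{thm:APlocal}(2). In the basis $\tga_1,\ldots,\tga_{g-1}$ of $\Ker \pi_*$ supplied by Construction~\ref{con:C}, the $(g-1)\times(g-1)$ matrix $(\Psi)_{ji}$ was shown to be lower triangular, with diagonal entries equal to $2$ for $j=1,\ldots,r-1$ and equal to $1$ for $j=r,\ldots,g-1$. Therefore
\[
\deg_{\Psi}(\tF)=|\det (\Psi)_{ji}|=2^{r-1},
\]
which is the first assertion of the corollary. This step is where the only genuine work happens, but it has already been carried out in the proof of Theorem~\ref{thm:APlocal}; no new calculation is needed.

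Finally I would deduce the volume formula~\eqref{eq:volumeofPrymcell}. The cell $C(\tF)$ is identified with the parallelotope $[0,\ell(\tf_1)]\times\cdots\times [0,\ell(\tf_{g-1})]$ of Euclidean volume $w(\tF)=w(F)$, and the restriction of $\Psi$ to $C(\tF)$ is an affine $\ZZ$-linear map whose matrix in the coordinates dual to $\om_j$ has absolute determinant $2^{r-1}$. Hence $\Psi(C(\tF))$ is a parallelotope whose volume \emph{with respect to the coordinates $\om_j^*$} equals $2^{r-1}w(F)$. To convert this into the intrinsic volume on $\Prym(\tGa/\Ga)$ induced by the principal polarization, I use Equation~\eqref{eq:volumeofunitcube}, which states that the unit cube $C(\om_1^*,\ldots,\om_{g-1}^*)$ has intrinsic volume $1/\Vol(\Prym(\tGa/\Ga))$. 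Multiplying yields
\[
\Vol(\Psi(C(\tF)))=\frac{2^{r(\tF)-1}w(F)}{\Vol(\Prym(\tGa/\Ga))},
\]
as claimed. The main subtlety, though not really an obstacle, is the normalization: since we are computing volumes on the Prym with respect to the intrinsic principal polarization (which is half of the one induced from $\Jac(\tGa)$), one must be careful to use the coordinate system $\om_j^*$ defined in Subsection~\ref{subsec:Prymvarieties} rather than a coordinate system inherited from the ambient Jacobian; Equation~\eqref{eq:volumeofunitcube} is the exact tool for this conversion.
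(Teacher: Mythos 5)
Your proposal is correct and follows essentially the same route as the paper's own proof: reduce to the case analysis of Theorem~\ref{thm:APlocal}, use the genus count~\eqref{eq:genuscount} to identify the non-contracted cells with odd genus one decompositions, read off the determinant $2^{r-1}$ from the lower-triangular matrix of Construction~\ref{con:C}, and convert to intrinsic volume via Equation~\eqref{eq:volumeofunitcube}. No gaps.
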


\begin{proof} This follows directly from the proof of Theorem~\ref{thm:APlocal}.  If the edges of $F=p(\tF)$ are not all distinct, then $\Psi$ contracts the cell $C(\tF)$ and hence $\det \Psi=0$ on $C(\tF)$. If $F$ consists of distinct edges, let $G\backslash F=G_0\cup \cdots\cup G_{r-1}$ be the decomposition into connected components. By~\eqref{eq:genuscount} we have that $g_0+\cdots+g_{r-1}=r$, hence either $g_k=0$ for some $k$, or all $g_k=1$. In the former case $G_k$ is a tree, so the restriction of the cover $p$ to $G_k$ is trivial and hence $\det \Psi=0$ on $\tF$. In the latter case, $\Psi$ has rank $d=g-1$ if and only if the restriction of $p$ to each $G_k$ is nontrivial, which is true precisely when $F$ is an odd genus one decomposition. Furthermore, the matrix of $\Psi$ with respect to the basis~\eqref{eq:mainbasis} is lower triangular, with the first $r-1$ diagonal entries equal to 2, and the remaining equal to 1. Hence $\det \Psi=2^{r-1}$ on $C(\tF)$, as required.

To prove~\eqref{cor:degreeformula}, it is sufficient to note that $C(\tF)$ is a parallelotope with volume $w(\tF)$, and that $\Vol(\Prym(\tGa/\Ga))^{-1}$ is the volume of the unit cube in the coordinate system on $\Prym(\tGa/\Ga)$ that we used to compute the matrix of $\Psi$ (see Equation~\eqref{eq:volumeofunitcube}). 
\end{proof}

\section{Harmonicity of the Abel--Prym map}

In this section, we consider the degree $g-1$ Abel--Prym map $\Psi:\Sym^{g-1}(\tGa)\to \Prym^{[g-1]}(\tGa/\Ga)$ associated to a free double cover $\pi:\tGa\to \Ga$. The cellular decomposition of $\Sym^{g-1}(\tGa)$ induces a decomposition of $\Prym^{[g-1]}(\tGa/\Ga)$ (which is locally modelled on $\RR^{g-1}$). Pulling this decomposition back to $\Sym^{g-1}(\tGa/\Ga)$ and refining cells as needed, the Abel--Prym map $\Psi$ is a map of polyhedral spaces. We show that $\Psi$ is a harmonic map of polyhedral spaces of global degree $2^{g-1}$, with respect to the degree function~\eqref{eq:degPsi2}.

\begin{theorem} Let $\pi:\tGa\to \Ga$ be a free double cover of metric graphs. Then the Abel--Prym map
$$
\Psi:\Sym^{g-1}(\tGa)\to \Prym^{[g-1]}(\tGa/\Ga),\quad \Psi(\tD)=\tD-\iota(\tD)
$$
is a harmonic map of polyhedral spaces of global degree $2^{g-1}$, with respect to the degree function $\deg_{\Psi}$ defined on the codimension zero cells of $\Sym^{g-1}(\tGa)$ by~\eqref{eq:degPsi2}.

\label{thm:APharmonic}
\end{theorem}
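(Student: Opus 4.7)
The plan is to verify the two defining conditions of Definition~\ref{def:harmonic}: harmonicity of $\Psi$ at every codimension one cell of $\Sym^{g-1}(\tGa)$, and identification of the global degree as $2^{g-1}$. After refining the polyhedral structure of $\Sym^{g-1}(\tGa)$ so that $\Psi$ is cellular, the codimension one cells are the $C^{g-2}(\tF,\tv)$ with $|\tF|=g-2$ and $\tv\in V(\tG)$, and the adjacent codimension zero cells are the $C(\tF\cup\{\te\})$ as $\te$ ranges over edges of $\tG$ incident to $\tv$.

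For the harmonicity, the derivative formula~\eqref{eq:dPsi} reduces the balancing condition at $(\tF,\tv)$ to the identity
\begin{equation*}
\sum_{\te\ni\tv}\deg_\Psi(\tF\cup\{\te\})\cdot\mathrm{sgn}\langle\gamma,\te-\iota(\te)\rangle=0,
\end{equation*}
where $\gamma\in\Ker\pi_*$ is any cycle pairing trivially with each $\tf_i-\iota(\tf_i)$, i.e., $\gamma_{\tf_i}=0$ for $i=1,\ldots,g-2$ (so $\gamma$ represents a normal to the image of the codimension one cell). The key structural observation is that, by the genus count~\eqref{eq:genuscount}, only those $\tF$ for which $G\setminus F$ (with $F=p(\tF)$) consists of a single genus-two component $H_*$ containing $v=p(\tv)$, together with satellite components of genus 1, yield nontrivial contributions. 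The condition $\gamma_{\tf_i}=0$ combined with $\gamma\in\Ker\pi_*$ forces $\gamma$ to be supported on $\widetilde H_*=p^{-1}(H_*)$, because each genus-one satellite has trivial Prym lattice (a nontrivial free double cover of a genus-one graph is again genus one, so its Prym kernel is zero-dimensional). Thus $\gamma$ lies in the Prym lattice of the restricted free double cover $\widetilde H_*\to H_*$, and the sign $\mathrm{sgn}\langle\gamma,\te-\iota(\te)\rangle$ coincides with the sign attached to $\te$ by the degree-one Abel--Prym map $\Psi_*:\widetilde H_*\to\Prym^{[1]}(\widetilde H_*/H_*)$ at $\tv$. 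Combined with the factorization $\deg_\Psi(\tF\cup\{\te\})=2^{r-1}\deg_{\Psi_*}(\te)$, where $r$ is the number of components of $G\setminus F$ (a constant in $\te$), this reduces the general balancing to the $g=2$ case of the theorem. The base case is established by a direct finite verification on the list of connected free double covers of genus-two graphs, and is the principal obstacle.

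Once harmonicity is in place, $\Psi$ admits a well-defined global degree $N$ since $\Prym^{[g-1]}(\tGa/\Ga)$ is connected through codimension one (being a real torus). Integrating via Corollary~\ref{cor:degreeformula},
\begin{equation*}
N\cdot\Vol\bigl(\Prym^{[g-1]}(\tGa/\Ga)\bigr)=\sum_{\tF}\deg_\Psi(\tF)\cdot\Vol(\Psi(C(\tF)))=\sum_{\tF}\frac{4^{r(\tF)-1}w(\tF)}{\Vol(\Prym(\tGa/\Ga))}.
\end{equation*}
Each odd genus one decomposition $F\subset E(G)$ lifts to exactly $2^{g-1}$ distinct multisets $\tF\subset E(\tG)$, each with $w(\tF)=w(F)$ and $r(\tF)=r(F)$, so Theorem~\ref{thm:Prymvolume} yields $\sum_{\tF}4^{r(\tF)-1}w(\tF)=2^{g-1}\Vol^2(\Prym(\tGa/\Ga))$. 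Combined with $\Vol(\Prym^{[g-1]})=\Vol(\Prym)$, this gives $N=2^{g-1}$.
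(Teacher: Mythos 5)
Your second half — integrating the local degrees of Corollary~\ref{cor:degreeformula} over $\Prym^{[g-1]}(\tGa/\Ga)$, using the $2^{g-1}$ lifts of each odd genus one decomposition, and comparing with Theorem~\ref{thm:Prymvolume} to solve for $N$ — is the paper's argument for the global degree, and it is correct. Your first half takes a genuinely different route from the paper's proof of Proposition~\ref{prop:localharmonicity}: where the paper verifies balancing by a six-case computation of the relevant column of~\eqref{eq:dPsi} (cases organized by the position of $v$ relative to the cycles of the genus-one pieces), you reduce to the degree-one Abel--Prym map of the genus-two core $\widetilde H_*$. The structural steps of that reduction are sound: the genus count~\eqref{eq:genuscount} does force a unique genus-two component containing $v$ whenever some adjacent cell is uncontracted; $\gamma$ is supported on $p^{-1}(H_*)$ because a genus-one satellite with connected preimage contributes nothing to $\Ker\pi_*$; and $\deg_\Psi(\tF\cup\{\te\})=2^{r-1}\deg_{\Psi_*}(\te)$ is consistent with the rank count. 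Two things are elided that should be said: the cofactor expansion of $\det\Psi(\tF\cup\{\te\})$ along the $\te$-column, which is what identifies each determinant with a fixed multiple of $\tfrac12\langle\gamma,\te-\iota(\te)\rangle$ and thereby licenses your displayed balancing identity; and the fact that the refined decomposition has additional codimension one cells interior to the parallelotopes $C(\tF)$, where harmonicity holds trivially because $\Psi$ is affine linear across them.

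The genuine gap is the base case, which you flag as ``the principal obstacle'' and then do not prove. A ``direct finite verification on the list of connected free double covers of genus-two graphs'' is not available as stated: genus-two graphs form an infinite family (arbitrarily many vertices and attached trees), and the balancing is asserted at an arbitrary vertex $\tv$ of $p^{-1}(H_*)$, so reducing to finitely many cases would require a further contraction-invariance argument you do not give. As written the proof does not close. The fix, however, is one line and requires no enumeration: for $\gamma\in\Ker\pi_*$ one has $\langle\gamma,\te-\iota(\te)\rangle=2\gamma_{\te}$, and after orienting every edge at $\tv$ outward, the identity $\sum_{\te\ni\tv}\gamma_{\te}=0$ is exactly the condition $d\gamma=0$ defining a $1$-cycle at the vertex $\tv$ (edges lying over $F$ contribute nothing since $\gamma$ vanishes there). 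Hence the signed sum $\sum_{\te\ni\tv}\deg_{\Psi_*}(\te)\operatorname{sgn}\langle\gamma,\te-\iota(\te)\rangle$ vanishes automatically. Inserting this observation — or applying it directly to the cofactor vector without first passing through $H_*$ — turns your outline into a complete proof of Proposition~\ref{prop:localharmonicity} that is considerably shorter than the paper's case analysis.
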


The proof consists of two parts. First, we show that $\Psi$ is harmonic at each codimension one cell of $\Sym^{g-1}(\tGa/\Ga)$, and hence has a well-defined global degree $d$ because the polyhedral space $\Prym(\tGa/\Ga)$ is connected through codimension one. We then show that $d=2^{g-1}$. The proof of the first part is a somewhat involved calculation. We separate this result into a Proposition, and give its proof after the proof of the main Theorem~\ref{thm:APharmonic}. 

\begin{proposition} The degree $g-1$ Abel--Prym map
$$
\Psi:\Sym^{g-1}(\tGa)\to \Prym^{[g-1]}(\tGa/\Ga),\quad \Psi(\tD)=\tD-\iota(\tD)
$$
is harmonic at each codimension one cell of $\Sym^{g-1}(\tGa)$.

\label{prop:localharmonicity}
\end{proposition}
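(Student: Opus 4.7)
The plan is to reduce the harmonicity condition at each codimension one cell of $\Sym^{g-1}(\tGa)$ to a single linear ``vertex balancing'' identity in the tangent space of $\Prym^{[g-1]}(\tGa/\Ga)$, which itself follows from the cycle condition $\partial\tga_j=0$ for the basis of $\Ker\pi_*$ built in Construction~\ref{con:B}. Fix a codimension one cell $C=C^{g-2}(\tF,\tv)$ of $\Sym^{g-1}(\tGa)$, working with an oriented loopless model $p:\tG\to G$. By Theorem~\ref{thm:APlocal}(1), cells with repeated edges are contracted and contribute zero degree, so I restrict attention to $\tF$ consisting of $g-2$ distinct edges; then the top-dimensional cells whose closure contains $C$ are precisely those of the form $C(\tF\cup\{\te\})$ for $\te$ incident to $\tv$. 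If $\Psi|_C$ does not have maximal rank $g-2$, Theorem~\ref{thm:APlocal}(2) contracts every adjacent top-dimensional cell and harmonicity is vacuous; I may therefore assume $\Psi|_C$ has rank $g-2$, so that $D:=\Psi(C)$ is an honest codimension one cell of $\Prym^{[g-1]}(\tGa/\Ga)$.

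Formula~\eqref{eq:dPsi} expresses the restriction of $\Psi$ to $C(\tF\cup\{\te\})$ as an affine linear map whose columns are the vectors $v_{\tf}:=\tfrac12\bigl(\langle\tga_j,\tf-\iota(\tf)\rangle\bigr)_{j=1}^{g-1}$, and Corollary~\ref{cor:degreeformula} gives $\deg_\Psi(\tF\cup\{\te\})=|\det(v_{\tf_1},\dots,v_{\tf_{g-2}},v_{\te})|$. Introduce the linear functional $\phi(u):=\det(v_{\tf_1},\dots,v_{\tf_{g-2}},u)$ on $\RR^{g-1}$; by construction $\ker\phi$ is the tangent space of $D$, and the sign of $\phi$ on vectors transverse to $D$ detects which of the two half-spaces at $D$ the vector points into. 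With $\epsilon(\te,\tv):=+1$ if $s(\te)=\tv$ and $-1$ if $t(\te)=\tv$, the outward tangent direction along $\te$ at $\tv$ is $u_{\te,\tv}:=\epsilon(\te,\tv)v_{\te}$, so $|\phi(u_{\te,\tv})|=\deg_\Psi(\tF\cup\{\te\})$ and the side of $D$ into which $C(\tF\cup\{\te\})$ opens is recorded by $\mathrm{sign}(\phi(u_{\te,\tv}))$. Edges $\te\in\tF\cup\iota(\tF)$ (for which $v_{\te}$ lies in $\ker\phi$, hence the corresponding adjacent cell is contracted) cause no trouble: their contribution to both sides of the harmonicity identity is zero.

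The decisive input is the vertex balancing identity $\sum_{\te\ni\tv}u_{\te,\tv}=0$ in $\RR^{g-1}$. Writing $\tga_j=\sum_e a_e e$, its $j$-th coordinate equals $\tfrac12\sum_{\te\ni\tv}\epsilon(\te,\tv)\langle\tga_j,\te\rangle-\tfrac12\sum_{\te\ni\tv}\epsilon(\te,\tv)\langle\tga_j,\iota(\te)\rangle$; the first sum is $\partial\tga_j$ evaluated at $\tv$ and so vanishes, while the second reindexes via $\te'=\iota(\te)$ (using that $\iota$ preserves orientation, so $\epsilon(\te,\tv)=\epsilon(\iota(\te),\iota(\tv))$) to the corresponding balance at $\iota(\tv)$, which also vanishes. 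Applying $\phi$ to $\sum u_{\te,\tv}=0$ and using $\phi(u_{\te,\tv})=\mathrm{sign}(\phi(u_{\te,\tv}))\cdot\deg_\Psi(\tF\cup\{\te\})$ yields
\[
\sum_{\te\ni\tv}\mathrm{sign}(\phi(u_{\te,\tv}))\,\deg_\Psi(\tF\cup\{\te\})\;=\;0,
\]
which is exactly the equality of degree sums over the two sides of $D$. The main obstacle is essentially bookkeeping: tracking orientation and sign conventions carefully, and confirming that near a relative-interior point of $D$ the polyhedral structure of $\Prym^{[g-1]}(\tGa/\Ga)$ really has only two codimension zero cells meeting along $D$, so that the two-sided balance above is the harmonicity condition of Definition~\ref{def:harmonic}.
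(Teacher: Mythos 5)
Your proof is correct, and it takes a genuinely different --- and considerably shorter --- route than the paper's. The paper proves harmonicity by an exhaustive case analysis: it fixes the carefully ordered basis of $\Ker \pi_*$ from Construction~\ref{con:C}, reduces to computing a single corner entry of each matrix $\Psi(\tF')$, and then checks the balancing separately in half a dozen geometric configurations (both endpoints of the removed edge on one component of $G\backslash F$ or on two, the root vertex on or off the unique cycle, the lifted endpoint on the same sheet or the other), each time verifying identities such as $2^{r}=2^{r-1}+2^{r-1}$ by hand. You instead observe that the entire balancing condition is the image under the linear functional $\phi=\det(v_{\tf_1},\dots,v_{\tf_{g-2}},\cdot)$ of the single vector identity $\sum_{\te\ni\tv}\epsilon(\te,\tv)\,v_{\te}=0$, which holds coordinate by coordinate because each $\tga_j$ is a cycle --- the $\langle\tga_j,\te\rangle$ terms vanish by the cycle condition at $\tv$ and the $\langle\tga_j,\iota(\te)\rangle$ terms by the cycle condition at $\iota(\tv)$, using that $\iota$ preserves the orientation of the oriented model. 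I checked the conventions: the outward direction into $C(\tF\cup\{\te\})$ is indeed $\epsilon(\te,\tv)v_{\te}$, edges $\te\in\tF\cup\iota(\tF)$ give contracted cells with $v_{\te}\in\ker\phi$ and contribute nothing, and the rank-deficient case is vacuous since then every adjacent top-dimensional cell is contracted. What your argument buys is brevity and conceptual clarity (it is the standard push-forward-of-a-balanced-complex argument, requiring no information about odd genus one decompositions and no choice of an adapted spanning tree); what the paper's approach buys is the explicit local classification of Figure~\ref{fig:localharmonic} --- exactly which adjacent cells survive and how the degrees $2^{r-1}$ and $2^{r}$ distribute over the two sides --- which your proof does not produce but which the statement does not require. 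The only point worth adding to your write-up is the (trivial) case of codimension one cells of the refined decomposition lying in the interior of a top-dimensional cell $C(\tF)$, where $\Psi$ is affine on a neighborhood and harmonicity is automatic; the paper dispatches this in one sentence.
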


\begin{proof}[Proof of Theorem~\ref{thm:APharmonic}] By Proposition~\ref{prop:localharmonicity}, the Abel--Prym map has a certain global degree $d$. It may be possible to directly show that $d=2^{g-1}$, by somehow counting the preimages in a single fiber of $\Psi$, but we employ a different method. Namely, we use the harmonicity of the map $\Psi$ to give an alternative calculation of the volume of the Prym variety, in terms of the unknown global degree $d$ (similarly to how the volume of $\Jac(\Ga)$ is computed in~\cite{ABKS_Canonical}). However, we have already computed the volume of the Prym variety in Theorem~\ref{thm:Prymvolume}, using an entirely different method. Comparing the two formulas, we find that in fact $d=2^{g-1}$.

Let $M_i$ for $i=1,\ldots,N$ be the codimension zero cells of $\Prym^{[g-1]}(\tGa/\Ga)$, and let $\tM_{ij}$ for $j=1,\ldots,k_i$ be the codimension zero cells of $\Sym^{g-1}(\tGa)$ mapping surjectively to $M_i$. The cells $\tM_{ij}$ are obtained by refining the natural cellular decomposition of $\Sym^{g-1}(\tGa)$, in other words each $\tM_{ij}$ is a subset of a cell $C(\tF_{ij})$, where $\tF_{ij}\subset E(\tGa)$ is a subset of edges such that $p(\tF_{ij})$ is an odd genus one decomposition of $\Ga$ of some rank $r_{ij}=r(p(\tF_{ij}))$. Equation~\eqref{eq:volumeofPrymcell} gives the volume dilation factor of $\Psi$ on the cell $C(\tF_{ij})$, and hence on $\tM_{ij}$. Therefore
$$
\Vol(M_i)=\frac{2^{r_{ij}-1}\Vol(\tM_{ij})}{\Vol(\Prym(\tGa/\Ga))} 
$$
for all $i$ and $j$. On the other hand, the harmonicity condition implies that for each $i$ we have
$$
d=\sum_{j=1}^{k_i} \deg_{\Psi}(\tM_{ij})=\sum_{j=1}^{k_i} 2^{r_{ij}-1}.
$$
Putting this together, we can write
$$
\Vol(M_i)= \frac{1}{d}\sum_{j=1}^{k_i} 2^{r_{ij}-1}\cdot \Vol(M_i)=\frac{1}{d\cdot\Vol(\Prym(\tGa/\Ga))}\sum_{j=1}^{k_i}4^{r_{ij}-1}\Vol(\tM_{ij}).
$$
The sum of the volumes of the $M_i$ is the volume of the Prym variety:
$$
\Vol(\Prym(\tGa/\Ga))=\Vol(\Prym^{[g-1]}(\tGa/\Ga))=\sum_{i=1}^N\Vol(M_i)=
\frac{1}{d\cdot\Vol(\Prym(\tGa/\Ga))}\sum_{i,j}4^{r_{ij}-1}\Vol(\tM_{ij}).
$$
On the other hand, corresponding to each odd genus one decomposition $F$ of $\Ga$ there are $2^{g-1}$ subsets $\tF\subset E(\tGa)$ such that $p(\tF)=F$, because each decomposition has exactly $g-1$ edges. The volume $\Vol(C(\tF))$ of each of these cells is equal to $w(F)$. Each cell $C(\tF)$ corresponding to an odd genus one decomposition $F=p(\tF)$ is a disjoint union of some of the $M_{ij}$, and each $M_{ij}$ lies in some $C(\tF)$. Hence in fact the sum in the right hand side can be written as 
$$
\sum_{i,j}4^{r_{ij}-1}\Vol(M_{ij})=\sum_{\tF\subset E(\tGa)} 4^{r(p(\tF))-1}\Vol (\tF)=
2^{g-1}\sum_{F\subset E(\Ga)} 4^{r(F)-1}w(F),
$$
where the last sum is over all odd genus one decompositions $F$ of $\Ga$. Therefore,
$$
\Vol(\Prym(\tGa/\Ga))=\frac{2^{g-1}}{d\cdot\Vol(\Prym(\tGa/\Ga))}\sum_{F\subset E(\Ga)} 4^{r(F)-1}w(F).
$$
Comparing this formula with~\eqref{eq:Prymvolume}, we see that $d=2^{g-1}$.

\end{proof}

\begin{remark} Given a Prym divisor class represented by $\Psi(\tD)=\tD-\iota(\tD)$, the degree of $\Psi$ at $\tD\in \Sym^{g-1}(\tGa)$ in general depends on the choice of representative (in other words, the degree of $\Psi$ is not constant in fibers). We give an example of a free double cover $\pi:\tGa\to \Ga$ and effective divisors $\tD_1$ and $\tD_2$ on the source, such that $\tD_1-\iota(\tD_1)\simeq \tD_2-\iota(\tD_2)$, but the degrees of $\Psi$ at $\tD_1$ and $\tD_2$ are different. 

\begin{figure}
    \centering
    \includegraphics{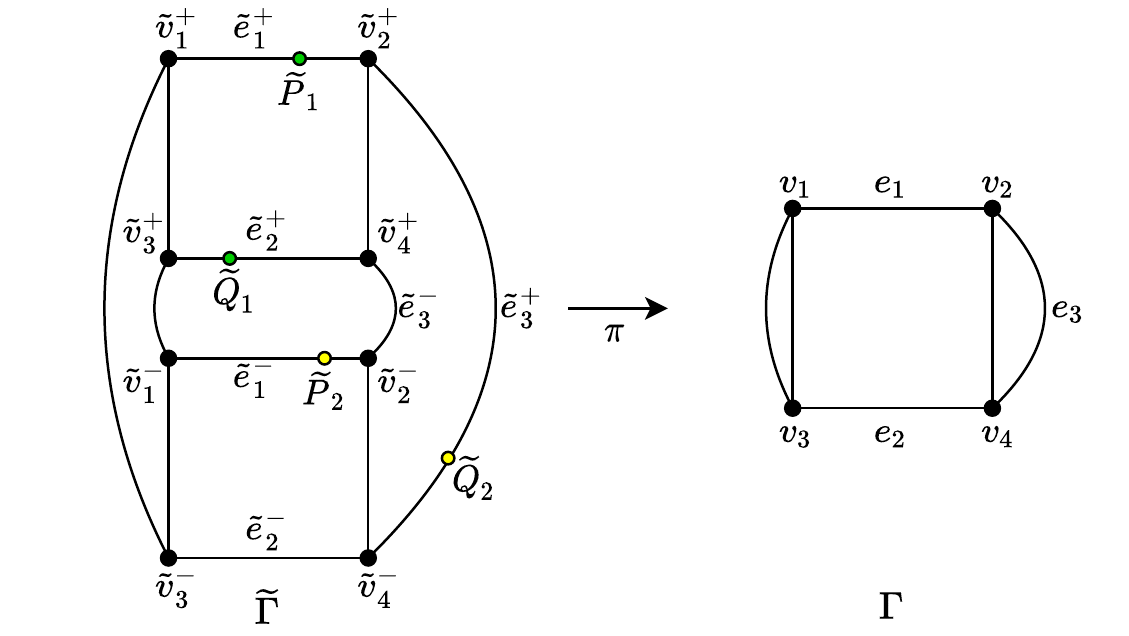}
    \caption{A double cover with a Prym divisor with representatives of distinct degrees.}
    \label{fig:irregularCover}
\end{figure}

Consider the free double cover $\pi:\tGa\to \Ga$ shown on Figure~\ref{fig:irregularCover}. The curves $\tGa$ and $\Ga$ have genera $5$ and $3$, respectively. The edge $\te_3$ has length at least $3$, while all other edges have length $1$. Fix real numbers $0<y<x<1$. Let $\tP_1$, $\tQ_1$, $\tP_2$, and $\tQ_2$ be the points on the edges $\te_1^+$, $\te_2^+$, $\te^-_1$, and $\te_3^+$, respectively, located at the following distances from the corresponding end vertices:
$$
d(\tv_1^+,\tP_1)=x,\quad d(\tv_3^+,\tQ_1)=y,\quad d(\tv_2^-,\tP_2)=x-y,\quad d(\tv_2^+,\tQ_2)=1+2y.
$$
Let $\tD_1=\tP_1+\tQ_1$ and $\tD_2=\tP_2+\tQ_2$. The divisor $(\tD_1-\iota \tD_1) - (\tD_2-\iota \tD_2)$ is seen to be equivalent to 0 by repeatedly applying Dhar's burning algorithm \cite[Section 5.1]{BakerShokrieh_Trees} at each point where the divisor has a negative number of chips. As a consequence,    $\tD_1-\iota \tD_1$ and $\tD_2-\iota \tD_2$ are linearly equivalent. However, $\pi(\tD_1)$ is supported on the odd genus one decomposition $\{\te_1,\te_2\}$ of rank $2$, while $\pi(\tD_2)$ is supported on the odd genus one decomposition $\{\te_1,\te_3\}$ of rank $1$, so the degrees of $\Psi$ at $\tD_1$ and $\tD_2$ are distinct.

By varying $x$ and $y$, we obtain two polyhedral cells in $\Sym^2(\tGa)$ having the same image in $\Prym(\tGa/\Ga)$: the subset $C_1=\{(x-y,1+2y):0<y<x<1\}$ of $\te^-_1\times \te^+_3$ and the subset $C_2=\{(x,y):0<y<x<1\}$ of $\te^+_1\times \te^+_2$. The volumes of $C_1$ and $C_2$ are equal to $1$ and $1/2$, respectively, which agrees with the fact that the degree of $\Psi$, equal to $1$ on $C_1$ and $2$ on $C_2$, is the volume dilation factor. We also observe that the global degree of $\Psi$ is equal to $2^{g-1}=4$. Therefore, Theorem~\ref{thm:C} implies that there is a third divisor $\tD_3$ (effective of degree two) such that $\tD_1-\iota \tD_1 \simeq \tD_3-\iota \tD_3$, and such that $\Gamma\setminus\pi(\tD_3)$ consists of  a single connected component.

\end{remark}

Before giving the proof of Proposition~\ref{prop:localharmonicity}, we describe the structure of the Abel--Prym map for the covers of a genus two dumbbell graph.

\begin{example} Consider the two covers $\pi_1:\tGa_1\to \Ga$ and $\pi_2:\tGa_2\to \Ga$ of the dumbbell graph $\Ga$ described in Example~\ref{ex:twocovers1}. In this case $g-1=1$, and the Abel--Prym maps $\Psi_1:\tGa_1\to \Prym(\tGa_1/\Ga)$ and $\Psi_2:\tGa_2\to \Prym(\tGa_2/\Ga)$ are harmonic morphisms of metric graphs of degree two, which we now describe.

With respect to the cover $\pi_1$, each edge of $\Ga$ is an odd genus one decomposition, hence $\Psi_1$ does not contract any edges. The edges $\te^{\pm}_1$ and $\te^{\pm}_2$ are mapped onto edges $f_1$ and $f_2$, respectively. The degree of $\Psi_1$ on these edges is equal to one, hence the lengths of $f_1$ and $f_2$ are $x_1$ and $x_2$, respectively. Each of the two edges $\te^{\pm}_3$ is mapped onto an edge $f^{\pm}_3$, of length $2x_3$ because the degree of $\Psi_1$ is equal to two. Hence $\Prym(\tGa_1/\Ga)$ is a circle of circumference $x_1+x_2+4x_3$, as we have already seen in Example~\ref{ex:twocovers1}.

The map $\Psi_2$, on the other hand, contracts the edges $\te^{\pm}_1$ and $\te^{\pm}_3$ because $\{e_1\}$ and $\{e_3\}$ are not genus one decompositions, and maps $\te^{\pm}_2$ to a unique loop edge $f_2$ of $\Prym(\tGa_2/\Ga)$ of length $x_2$. The morphisms $\Psi_1$ and $\Psi_2$ are given in Figure~\ref{fig:twocovers2}.

\begin{figure}

\begin{tikzpicture}

\draw[thick] (0,0) -- (1,0);
\draw[thick] (0,2) -- (1,2);
\draw[thick] (0,0) -- (0,2);
\draw[thick] (0,0) .. controls (-1.3,0) and (-1.3,2) .. (0,2);
\draw[thick] (1,0) -- (1,2);
\draw[thick] (1,0) .. controls (2.3,0) and (2.3,2) .. (1,2);

\node at (-1.5,1){$\tGa_1$};
\node at (-0.7,1) {$\te^+_1$};
\node at (0.3,1) {$\te^-_1$};
\node at (0.5,0.3) {$\te^-_3$};
\node at (0.5,2.3) {$\te^+_3$};
\node at (1.3,1) {$\te^+_2$};
\node at (2.3,1) {$\te^-_2$};

\draw[fill](0,0) circle(.08);
\draw[fill](1,0) circle(.08);
\draw[fill](0,2) circle(.08);
\draw[fill](1,2) circle(.08);

\begin{scope}[shift={(0,-3)}]

\draw[thick] (0,0) -- (1,0) -- (1,2) -- (0,2) -- (0,0);
\draw[fill](0,0) circle(.08);
\draw[fill](1,0) circle(.08);
\draw[fill](0,2) circle(.08);
\draw[fill](1,2) circle(.08);
\node at (0.2,1) {$f_1$};
\node at (0.5,2.3) {$f^+_3$};
\node at (0.5,0.3) {$f^-_3$};
\node at (1.2,1) {$f_2$};

\node at (-1.5,1) {$\Prym(\tGa_1/\Ga)$};

\end{scope}

\begin{scope}[shift={(6,0)}]

\draw[thick] (0,0) -- (1,0);
\draw[thick] (0,2) -- (1,2);
\draw[thick] (0,0) -- (0,2);
\draw[thick] (0,0) .. controls (-1.3,0) and (-1.3,2) .. (0,2);
\draw[thick] (1.5,0) circle(.5);
\draw[thick] (1.5,2) circle(.5);

\node at (-1.5,1){$\tGa_2$};
\node at (-0.7,1) {$\te^+_1$};
\node at (0.3,1) {$\te^-_1$};
\node at (0.5,0.3) {$\te^-_3$};
\node at (0.5,2.3) {$\te^+_3$};
\node at (2.3,0) {$\te^-_2$};
\node at (2.3,2) {$\te^+_2$};

\draw[fill](0,0) circle(.08);
\draw[fill](1,0) circle(.08);
\draw[fill](0,2) circle(.08);
\draw[fill](1,2) circle(.08);

\end{scope}

\begin{scope}[shift={(6.5,-3)}]

\draw[thick] (1,1) circle(.5);
\draw[fill](0.5,1) circle(.08);
\node at (1.7,1) {$f_2$};

\node at (-2,1) {$\Prym(\tGa_2/\Ga)$};

\end{scope}

\end{tikzpicture}

\caption{Abel--Prym maps corresponding to the covers in Example~\ref{ex:twocovers1}}
\label{fig:twocovers2}
\end{figure}

\end{example}

\begin{proof}[Proof of Proposition~\ref{prop:localharmonicity}] Let $\tC$ be a codimension one cell of $\Sym^{g-1}(\tGa)$ such that its image $C=\Psi(\tC)$ is a codimension one cell in $\Prym(\tGa/\Ga)$. Since $\Prym(\tGa/\Ga)$ is a torus, it locally looks like $\RR^{g-1}$, and we can think of the cell $C$ as lying in a hyperplane $H_0\subset \RR^{g-1}$, with respect to an appropriate local coordinate system. There are exactly two codimension zero cells $M^{\pm}$ attached to $C$, each contained in a corresponding half-space of $\RR^{g-1}$, which we also denote $M^{\pm}$ by abuse of notation. To show that $\Psi$ is harmonic at $\tC$, we need to show that the sum of $|\det\Psi|$ over the codimension zero cells of $\Sym^{g-1}(\tGa/\Ga)$ mapping to $M^+$ is the same as the sum over those mapping to $M^-$, in which case this sum is the degree of $\Psi$ on $\tC$. 

If $\Psi$ contracts every codimension zero cell of $\Sym^{g-1}(\tGa)$ attached to $\tC$, then the harmonicity condition is trivially verified, and we set $\deg_{\Psi}(\tC)=0$. Hence we assume that $\Psi$ does not contract some codimension zero cell attached to $\tC$. By Corollary~\ref{cor:degreeformula}, this cell is a subset of $C(\tF)$, where $F=p(\tF)$ is an odd genus one decomposition of $G$. If $\tC$ lies in the interior of $C(\tF)$, then $\Sym^{g-1}(\tGa)$ also locally looks like $\RR^n$ in a neighborhood of $\tC$, the map $\Psi$ is simply an affine linear map near $\tC$, and therefore harmonic (such cells $\tC$ do not occur in the standard polyhedral decomposition~\eqref{eq:Sympolyhedral} of $\Sym^{g-1}(\tGa/\Ga)$, but may occur in the refined decomposition induced by the map $\Psi$). 

We therefore assume that $\tC$ lies on the boundary of a cell $C(\tF)$, where $\tF=\{\tf_1,\ldots,\tf_{g-1}\}$ is a set of edges of $\tG$ such that $F=\{f_1,\ldots,f_{g-1}\}$, $f_i=p(\tf_i)$ is an odd genus one decomposition of $G$ of rank $r$. To simplify notation, we assume that in fact $\tC$ is a codimension one cell of $C(\tF)$ with respect to the standard polyhedral decomposition~\eqref{eq:Sympolyhedral} of $\Sym^{g-1}(\tGa/\Ga)$. In other words, we assume that $\tC=C^{g-2}(\tF\backslash\{\tf_a\},\tv)$ for some $a\in \{1,\ldots,g-1\}$, and where $\tv=s(\tf_a)$ is the starting vertex of $\tf_a$ with respect to an appropriate orientation (we shall later specify which edge $\tf_a$ we pick, in order to make our notation consistent with Construction~\ref{con:C}). 

The top dimensional cells of $\Sym^{g-1}(\tGa)$ that are adjacent to $\tC$ have the form $C(\tF')$, where $\tF'=(\tF\backslash \{\tf_a\})\cup\{\tf'\}$ and where $\tf'$ is any edge rooted at $\tv$. We assume that all edges $\tf'$ are oriented in such a way that $s(\tf')=\tv$. By Corollary~\ref{cor:degreeformula}, $\Psi$ does not contract $C(\tF')$ if and only if $p(\tF')$ is an odd genus one decomposition of $G$. To prove harmonicity, we need to show that the sum of $|\det \Psi|$ on those cells $C(\tF')$ mapping to $M^+$ is equal to the sum of those that map to $M^-$. By Corollary~\ref{cor:degreeformula}, the value of $|\det \Psi|$ on a non-contracted cell $C(\tF')$ is a power of two. In fact, as we shall see, adjacent to any cell $\tC$ there are either two, three, or four non-contracted cells $C(\tF)$, with the degrees distributed as shown on Figure~\ref{fig:localharmonic} (plus an arbitrary number of contracted cells).

\begin{figure}[h]
    \centering
    \begin{tabular}{ccc}

\begin{tikzpicture}
\begin{scope}[shift={(0,1.5)}]
\draw[thick] (2,1) -- (1,1) -- (0,0) -- (4,0) -- (5,1) -- (3,1);
\draw[thick,dashed] (3,1) -- (2,1);
\draw[thick] (2,0) -- (3,1);
\draw[thick] (2,0) -- (2,1.5) -- (3,2.5) -- (3,1);
\node at (1.5,0.5) {$2^k$};
\node at (3.5,0.5) {$2^k$};

\end{scope}

\draw[thick] (0,0) -- (4,0) -- (5,1) -- (1,1) -- (0,0);
\draw[thick] (2,0) -- (3,1);

\end{tikzpicture}
& 
\begin{tikzpicture}
\begin{scope}[shift={(0,1.5)}]
\draw[thick] (2,1) -- (1,1) -- (0,0) -- (2,0);
\draw[thick,dashed] (3,1) -- (2,1);
\draw[thick] (2,0) -- (3,1);
\draw[thick] (2,0) -- (2,1.5) -- (3,2.5) -- (3,1);
\node at (1.5,0.5) {$2^{k+1}$};

\draw[thick] (2,0) -- (4,0.4) -- (5,1.4) -- (3,1);
\draw[thick] (2,0) -- (4,-0.4) -- (5,0.6) -- (4.5,0.7);
\draw[thick,dashed] (4.5,0.7) -- (3,1);
\node at (3.2,0.6) {$2^k$};
\node at (3.7,0.1) {$2^k$};

\end{scope}

\draw[thick] (0,0) -- (4,0) -- (5,1) -- (1,1) -- (0,0);
\draw[thick] (2,0) -- (3,1);
\end{tikzpicture}
&
\begin{tikzpicture}
\begin{scope}[shift={(0,1.5)}]
\draw[thick,dashed] (3,1) -- (2,1.2);
\draw[thick] (2,1.2) -- (1,1.4) -- (0,0.4) -- (2,0);
\draw[thick] (2,0) -- (3,1);
\draw[thick] (2,0) -- (2,1.5) -- (3,2.5) -- (3,1);

\draw[thick,dashed] (3,1) -- (1,0.6) -- (0.6,0.2);
\draw[thick] (0.6,0.2) -- (0,-0.4) -- (2,0);

\draw[thick] (2,0) -- (4,0.4) -- (5,1.4) -- (3,1);
\draw[thick] (2,0) -- (4,-0.4) -- (5,0.6) -- (4.5,0.7);
\draw[thick,dashed] (4.5,0.7) -- (3,1);
\node at (3.2,0.6) {$2^k$};
\node at (3.7,0.1) {$2^k$};

\node at (1,1) {$2^k$};
\node at (0.7,0) {$2^k$};

\end{scope}

\draw[thick] (0,0) -- (4,0) -- (5,1) -- (1,1) -- (0,0);
\draw[thick] (2,0) -- (3,1);
\end{tikzpicture}
\\
Two non-contracted cells & Three non-contracted cells & Four non-contracted cells
\end{tabular}
    \caption{The Abel--Prym map near a non-contracted codimension one cell.}
    \label{fig:localharmonic}
\end{figure}
\begin{center}

\end{center}

We calculate the matrix of $\Psi$ (or, rather, some of its entries) on each cell $C(\tF')$ with respect to an appropriate coordinate system, in the same way that we proved part (2) of Theorem~\ref{thm:APlocal}. First, we choose local coordinates on $\Sym^{g-1}(\tGa/\Ga)$. As before, we identify $C(\tF)$ with the parallelotope $[0,\ell(\tf_1)]\times \cdots\times [0,\ell(\tf_{g-1})]$ lying in the half-space $H^+=\{x:x_a\geq 0\}\subset \RR^{g-1}$. Under this identification, the cell $\tC$ lies in the hyperplane $H^0=\{x:x_a=0\}$, and the corresponding cells of $\Prym(\tGa/\Ga)$ are $C=\Psi(\tC)\subset \Psi(H^0)$ and $M^{\pm}=\Psi(H^{\pm})$, where $H^-=\{x:x_a\leq 0\}\subset \RR^{g-1}$. Similarly, we think of each of the other $C(\tF')$ as lying in its own $H^+$.

To construct coordinates on $\Prym(\tGa/\Ga)$, we apply Construction~\ref{con:C} to the set $\tF$. The output is a basis $\tga_1,\ldots,\tga_{g-1}$ of $\Ker \pi_*:H_1(\tGa,\ZZ)\to H_1(\Ga,\ZZ)$ given by Equation~\eqref{eq:mainbasis}. As explained in Subsection~\ref{subsec:Prymvarieties}, the basis $\tga_1,\ldots,\tga_{g-1}$ defines a coordinate system on $\Prym(\tGa/\Ga)$, with respect to which the map $\Psi$ on the cell $C(\tF)$ is affine linear, and the $(g-1)\times (g-1)$ matrix of the linear part is given by Equation~\eqref{eq:dPsi}:
$$
\Psi(\tF)_{ji}=\frac{1}{2}\langle \tga_j,\tf_i-\iota(\tf_i)\rangle=
\frac{1}{2}\langle \tga^+_j-\iota_*(\tga^+_j),\tf_i-\iota(\tf_i)\rangle=
\langle \tga^+_j,\tf_i-\iota(\tf_i)\rangle
$$
We recall that we showed in Theorem~\ref{thm:APlocal} and Corollary~\ref{cor:degreeformula} that $\Psi(\tF)_{ij}$ is a lower triangular matrix with determinant $2^{r-1}$, where $r$ is the rank of $\tF$. 

Now let $C(\tF')$ be another codimension zero cell of $\Sym^{g-1}(\tGa/\Ga)$ adjacent to $\tC$, so $\tF'=(\tF\backslash \{\tf_a\})\cup\{\tf'\}$, where $\tf'$ is an edge rooted at $\tv$ other than $f_a$. We calculate the matrix of $\Psi$ on $C(\tF')$ using the same basis $\tga_1,\ldots,\tga_{g-1}$ (in other words, we do not recalculate the basis by replacing $\tF$ with $\tF'$ in Construction~\ref{con:C}). The resulting matrix differs from $\Psi(\tF)_{ji}$ by a single column only:
\begin{equation}
\Psi(\tF')_{ji}=\left\{\begin{array}{cc} \Psi(\tF)_{ji}, & i\neq a, \\
\langle \tga^+_j,\tf'-\iota(\tf')\rangle, & i=a.
\end{array}\right.
\label{eq:Psiprime}
\end{equation}

To check the harmonicity of $\Psi$ around $\tC$, it suffices to compute the determinants $\det \Psi(\tF')$ for all $\tF'$. Indeed, the Abel--Prym map $\Psi$ contracts the cell $C(\tF')$ if and only if $\det \Psi(\tF')=0$. Furthermore, $C(\tF')$ maps to $M^+$ if $\det \Psi(\tF')>0$ and to $M^-$ if $\det \Psi(\tF')<0$, and to prove harmonicity we need to check that the positive determinants exactly cancel the negative determinants.

The set $F$ is an odd genus one decomposition of $G$ of some rank $r$, and we denote 
$$
G\backslash F=G_0\cup \cdots\cup G_{r-1}
$$
the decomposition into connected components. Each $G_k$ has genus one, and each $p^{-1}(G_k)$ is connected. We denote
$$
\tu=t(\tf_a), \quad v=p(\tv)=s(f_a), \quad u=p(\tu)=t(f_a).
$$

There are two separate cases that we need to consider: either both endpoints $u$ and $v$ of the edge $f_a=p(\tf_a)$ that we are removing lie on one connected component, or the edge $f_a$ connects two different components.

\medskip \noindent {\bf Both endpoints of the edge $f_a$ lie on a single connected component of $G\backslash F$.} Without loss of generality, we assume that $f_a$ is rooted on the component $G_0$. The edge $f_a$ is a loop on the contracted graph $G^c$ rooted at the vertex $v_0$. Since a loop cannot be part of a spanning tree, we can further assume without loss of generality that $\tf_a=\tf_{g-1}$. We observe that, on the contracted graph $\tG^c$, the edge $\tf_{g-1}$ is a loop rooted at $\tv_0$. The contraction of the cycle $\tga^+_{g-1}$ is the unique cycle containing $+\tf_{g-1}$, but since this is already a loop we see that $(\tga^+_{g-1})^c=\tf_{g-1}$.

It follows that the intersection of $\tga^+_{g-1}$ with all other edges $\tf_i$ and $\iota(\tf_i)$ for $i=1,\ldots,g-2$ is zero. Hence the matrix $\Psi_{ji}$ is block upper triangular, having a $(g-2)\times (g-2)$ lower triangular block with determinant $2^{r-1}$ in the upper left corner, and a 1 in the lower right corner. Therefore, the images of the subspaces $H^{\pm}$ and the hyperplane $H^0$ are
$$
M^+=\Psi(H^+)=\{y:y_{g-1}\geq 0\}, \quad M^-=\Psi(H^-)=\{y:y_{g-1}\leq 0\}, \quad  \Psi(H^0)=\{y:y_{g-1}=0\}.
$$
Now let $\tf'$ be an edge at $\tv$, so that $\tF'=\{\tf_1,\ldots,\tf_{g-2},\tf'\}$ defines a cell $C(\tF')$ adjacent to $C(\tF)$ via $C'$. The matrix $\Psi(\tF')_{ji}$ is given by~\eqref{eq:Psiprime}, and is obtained from the matrix $\Psi(\tF)$ by replacing the last column. Hence it is also block upper triangular, and to compute $\det \Psi(\tF')$ it suffices to find the new entry
\begin{equation}
\Psi(\tF')_{g-1,g-1}=\langle \tga^+_{g-1},\tf'-\iota(\tf')\rangle
\label{eq:Psig-1}
\end{equation}
in the lower right corner. Furthermore, the sign of this entry determines the sign of $\det \Psi(\tF')$ and hence the image cell $\Psi(C(\tF'))$ of $\Prym(\tGa/\Ga)$: if the entry is positive, then $\Psi$ maps $C(\tF')$ to the same half-space $M^+$ as $C(\tF)$, while if it is negative then $\Psi(C(\tF'))\subset M^-$, and if it is zero then $C(\tF')$ is contracted. 

There are several possibilities to consider, depending on the relative positions of $v=s(f_{g-1})$ and $u=t(f_{g-1})$ on the component $G_0$. Let $\ga(G_0)$ denote the unique cycle on $G_0$ (oriented in any direction), then any vertex of $G_0$ has a unique (possibly trivial) shortest path to $\ga(G_0)$. For two distinct vertices $v_1,v_2\in V(G_0)$, we write $v_1<v_2$ if the unique path from $v_2$ to $\ga(G_0)$ passes through $v_1$; this defines a partial order on $V(G_0)$. 

\begin{enumerate}
    \item \label{case:A1} The vertex $v$ does not lie on $\ga(G_0)$, and $v\nless u$. In other words, $v$ lies on a tree attached to $\ga(G_0)$, and $u$ does not lie higher up on the same tree. 
    
    Let $g_1$ be the unique edge rooted at $v$ that points in the direction of the cycle $\ga(G_0)$. Since the unique path from $u$ to $\ga(G_0)$ avoids $v$, the graph $G_0'=G_0\cup\{f_{g-1}\}\backslash \{g_1\}$ is connected, has genus one, and has connected preimage, since the unique cycle of $G_0'$ is $\ga(G_0)$. Therefore,  $F_1=\{f_1,\ldots,f_{g-2},g_1\}$ is an odd genus one decomposition of $G$, of the same length $r$ as $F$. For any other edge $e'$ rooted at $v$, removing it disconnects the corresponding branch of the tree  from $G_0$, and attaching $f_{g-1}$ does not reconnect this branch. Hence $G\backslash \{f_1,\ldots,f_{g-2}, e'\}$ has a connected component of genus zero, and $\{f_1,\ldots,f_{g-2}, e'\}$ is not a genus one decomposition. The graph $G_0$ and its preimage $\pi^{-1}(G_0)$ are shown on Figure~\ref{fig:wcA1}.
    
    \begin{figure}
    \centering
    \includegraphics{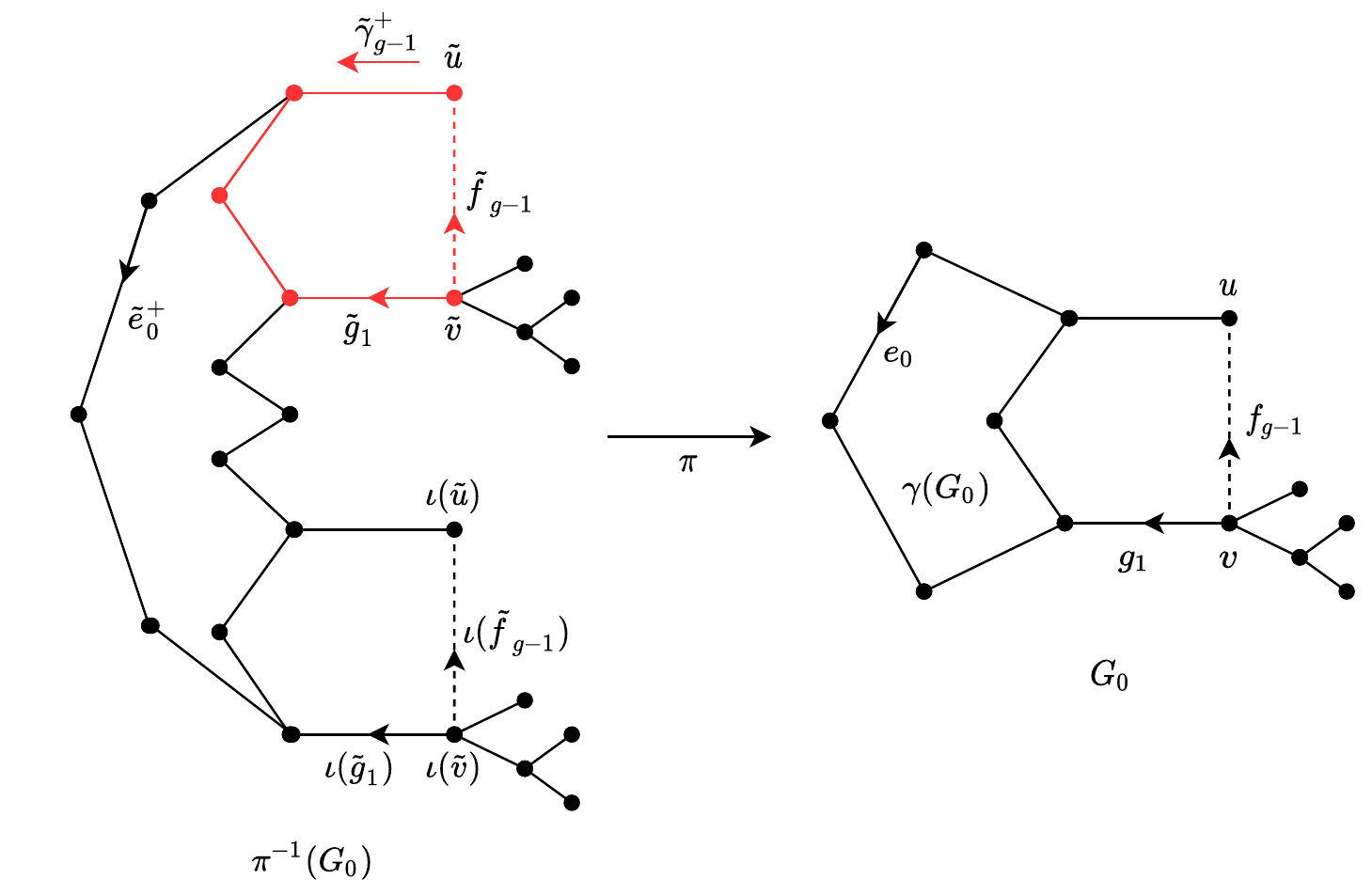}
    \caption{The cycle $\tga^+_{g-1}$ in Case (\ref{case:A1})}
    \label{fig:wcA1}
\end{figure}
    
    We see that the only cell $C(\tF')$ adjacent to $C(\tF)$ through $C'$ on which $|\det \Psi|$ is nonzero corresponds to $\tF'=\tF_1=\tF\cup \{\tg_1\}\backslash \{\tf_{g-1}\}$, where $\tg_1$ is the unique edge rooted at $\tv$ that maps to $g_1$. Furthermore, $F$ and $F_1=p(\tF_1)$ have the same rank $r$, hence the value of $|\det \Psi|$ on the two cells $C(\tF)$ and $C(\tF_1)$ is equal to $2^{r-1}$, so to prove harmonicity we only need to show that $\Psi$ maps $C(\tF_1)$ to the half-space $M^-$. As explained above, it suffices to compute the last diagonal entry~\eqref{eq:Psig-1} of $\Psi(\tF_1)$, where $\tf'=\tg_1$.

    The cycle $\tga^+_{g-1}$ is the unique cycle of the graph $\tT\cup \{\tf_{g-1}\}$ containing $+\tf_{g-1}$. It starts at the vertex $\tv=s(\tf_{g-1})$, proceeds to $\tu=t(\tf_{g-1})$ via $+\tf_{g-1}$, and then from $\tu$ back to $\tv$ via the unique path in the tree $\tT$. This path actually lies in the spanning tree $\tT_0^+\cup \tT_0^-\cup \{\te^+_0\}$ of $p^{-1}(G_0)$. The last edge of the path is $\tg_1$, oriented in the opposite direction since we've assumed that $s(\tg_1)=\tv$, hence $\langle \tga^+_{g-1},\tg_1\rangle=-1$. In addition, the path does not contain $\iota(\tg_1)$. It follows that
    $$
\Psi(\tF_1)_{g-1,g-1}=\langle\tga^+_{g-1},\tg_1-\iota(\tg_1)\rangle=-1.
$$
Therefore $\Psi$ maps the cell $C(\tF_1)$ to the half-space $M^-$, hence $\Psi$ is harmonic.

    \item \label{case:A2} The vertex $v$ does not lie on $\ga(G_0)$, and $v<u$. As before, let $g_1$ denote the unique edge at $v$ pointing towards $\ga(G_0)$, and let $g_2$ be the unique edge rooted at $v$ which lies on the path from $v$ to $u$ (this path, when reversed, is part of the unique path from $u$ to $\ga(G_0)$). Attaching $f_{g-1}$ to $G_0$ produces a graph of genus two. Any edge $e'$ rooted at $v$ other than $f_{g-1}$, $g_1$, or $g_2$ is the starting edge of a separate branch of $G_0\cup \{f_{g-1}\}$, so removing $e'$ creates a genus zero connected component. Hence the only genus one decompositions of the form $(F\backslash \{f'\})\cup \{f_{g-1}\}$ are $F_1=\{f_1,\ldots,f_{g-2},g_1\}$, $F_2=\{f_1,\ldots,f_{g-2},g_2\}$, and $F$ itself. The decompositions $F$ and $F_2$ have length $r$, while $F_1$ has length $r+1$, because the edge $g_1$ is a bridge edge of $G_0\cup \{f_{g-1}\}$, and removing it produces two genus one connected components. 
    
    We now consider the edges $\tg_1$, $\tg_2$, and $\tf_{g-1}$ on $\tG$, lying above $g_1$, $g_2$, and $f_{g-1}$ and rooted at $\tv=s(\tf_{g-1})$. Denote $\tF_1=\{\tf_1,\ldots,\tf_{g-2},\tg_1\}$ and $\tF_2=\{\tf_1,\ldots,\tf_{g-2},\tg_2\}$. The edges $g_1$ and $g_2$ lie on the same tree attached to the cycle $\ga(G_0)$ as the vertex $v$, and the lift of a tree is a tree. Hence the endpoints of the edges $\tg_1$ and $\tg_2$ both lie on the same subtree $\tT_0^{\pm}$ of $p^{-1}(G_0)$ as $\tv$, and we assume without loss of generality that this component is $\tT_0^+$. For $t(\tf_{g-1})$, however, there are two sub-possibilities, as shown on Figure~\ref{fig:wcA2}.
    
    \begin{figure}
    \centering
    \includegraphics{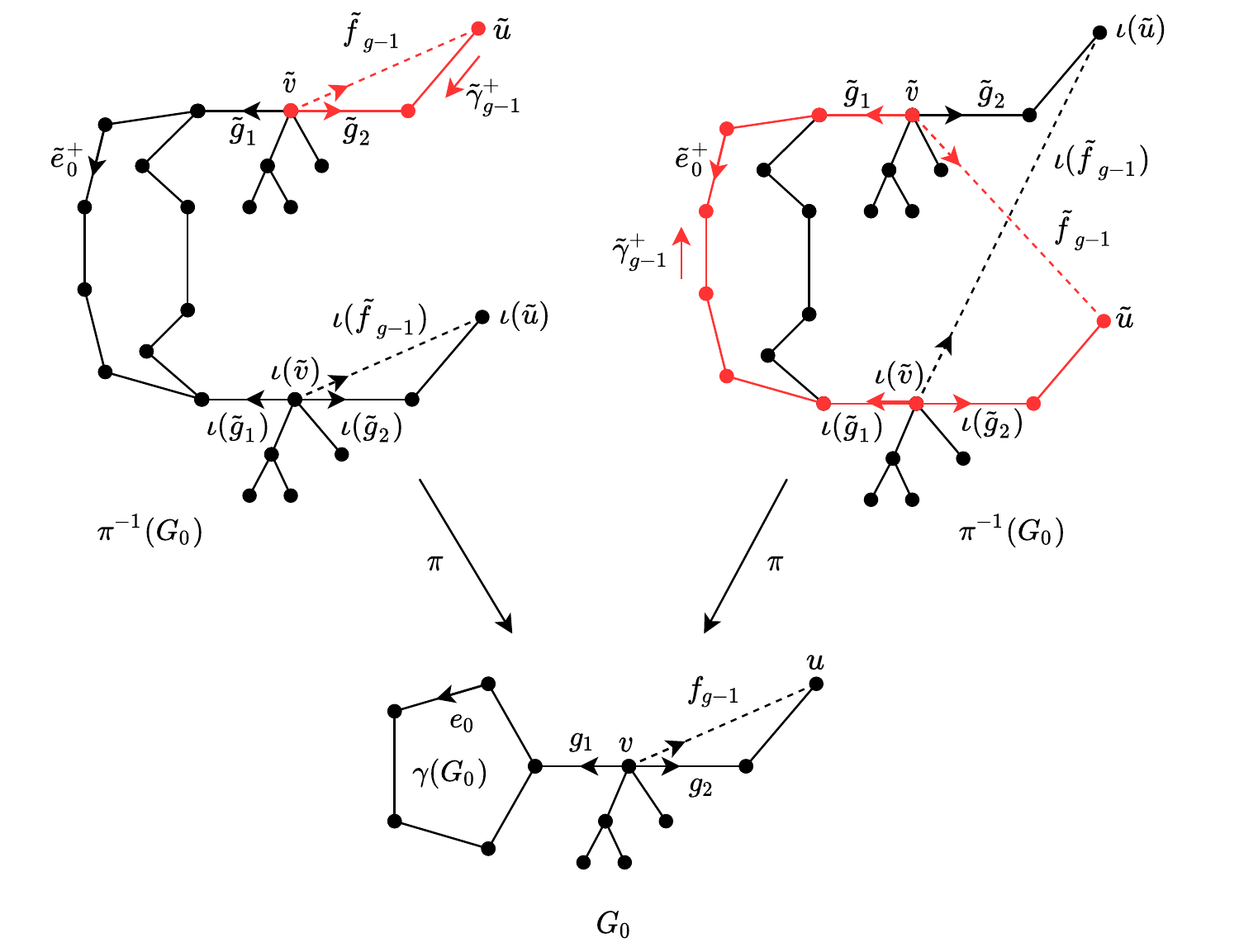}
    \caption{The cycle $\tga^+_{g-1}$ in the two sub-cases of Case (\ref{case:A2})}
    \label{fig:wcA2}
\end{figure}
    
    \begin{enumerate}
        \item The target vertex $\tu=t(f_{g-1})$ lies on $\tT_0^+$. In this case, the unique cycle $\tga^+_{g-1}$ of the graph $\tT\cup \{\tf_{g-1}\}$ actually lies on $\tT_0^+\cup\{\tf_{g-1}\}$: it starts at $\tv$, proceeds to $\tu$ via $+\tf_{g-1}$, and then returns to $\tv$ via the unique path that ends with the edge $-\tg_2$. It follows that $\langle \tga^+_{g-1},g_2\rangle =-1$ and $\langle \tga^+_{g-1},\iota(g_2)\rangle =0$. Furthermore, the cycle $\tga^+_{g-1}$ does not intersect the edges $\tg_1$ and $\iota(\tg_1)$. Hence we can compute the last diagonal entries of the upper-triangular matrices $\Psi(\tF_1)$ and $\Psi(\tF_2)$:
        $$
        \Psi(\tF_1)_{g-1,g-1}=\langle\tga^+_{g-1},\tg_1-\iota(\tg_1)\rangle=0,\quad \Psi(\tF_2)_{g-1,g-1}=\langle\tga^+_{g-1},\tg_2-\iota(\tg_2)\rangle=-1. 
        $$
        It follows that $|\det \Psi(\tF_1)|=0$, hence the cell $C(\tF_1)$ is contracted. Also, $\Psi$ maps the cell $C(\tF_2)$ to the opposite half-space $M^-$ as $C(\tF)$, but with the same determinant, since $\tF$ and $\tF_2$ have the same rank $r$. Hence $\Psi$ is harmonic.
        
        \item \label{case:2b} The target vertex $\tu=t(f_{g-1})$ lies on $\tT_0^-$. In this case, the cycle $\tga^+_{g-1}$ starts at $\tv$, proceeds to $\tu$ via $\tf_{g-1}$, and proceeds to $\iota(\tv)$ via a unique path that ends with the edge $-\iota(\tg_2)$. From there the path returns from $\iota(\tv)$ to $\tv$ via the unique path that passes through the edge $\te^+_0$ that links the two trees $\tT^{\pm}_0$; this path starts with the edge $\iota(\tg_1)$ and ends with $-\tg_1$. Summarizing, we see that
        $$
        \langle \tga^+_{g-1},\tg_1 \rangle=-1,\quad \langle \tga^+_{g-1},\iota(\tg_1) \rangle=1,\quad \langle \tga^+_{g-1},\tg_2 \rangle=0,\quad \langle \tga^+_{g-1},\iota(\tg_1) \rangle=-1.
        $$
        Hence we calculate the final diagonal entries of $\Psi(\tF_1)$ and $\Psi(\tF_2)$:
        $$
        \Psi(\tF_1)_{g-1,g-1}=\langle\tga^+_{g-1},\tg_1-\iota(\tg_1)\rangle =-2,\quad
        \Psi(\tF_2)_{g-1,g-1}=\langle\tga^+_{g-1},\tg_2-\iota(\tg_2)\rangle =1.
        $$
        It follows that $\Psi$ maps $C(\tF_2)$ to the same half-space $M^+$ with the same determinant $|\det d \Psi(\tF_2)|=|\det d \Psi(\tF)|=2^{r-1}$, while $C(\tF_1)$ is mapped to the opposite space $M^-$ with determinant $|\det d \Psi(\tF_1)|=2^r$. Since $2^r=2^{r-1}+2^{r-1}$, the map $\Psi$ is harmonic.
    \end{enumerate}
    
    \item \label{case:A3} The vertex $v$ lies on $\ga(G_0)$, and $v\nless u$. Let $g_1$ and $g_2$ be the two edges of $G_0$ rooted at $v$ that lie on the cycle $\ga(G_0)$, then $F_1=\{f_1,\ldots,f_{g-2},g_1\}$ and $F_2=\{f_1,\ldots,f_{g-2},g_2\}$ are genus one decompositions of $G_0$ of the same rank $r$ as $F$, since removing $g_1$ or $g_2$ from $G_0\cup \{f_{g-1}\}$ gives a connected graph of genus one. Any edge $f'\in T_vG_0$ other than $g_1$ and $g_2$ is the starting edge of a separate tree which does not contain $u=t(f_{g-1})$, so $G_0\cup\{f_{g-1}\}\backslash \{f'\}$ has a genus zero connected component, and $\{f_1,\ldots,f_{g-2},f'\}$ is not a genus one decomposition. 
    
    Let $\tg_1$ and $\tg_2$ be the edges of $\tG$ at $\tv$ lying above $g_1$ and $g_2$, respectively, and denote $\tF_1=\{\tf_1,\ldots,\tf_{g-2},\tg_1\}$ and $\tF_2=\{\tf_1,\ldots,\tf_{g-2},\tg_2\}$. The preimage of the cycle $\ga(G_0)$ is the unique cycle $\ga(p^{-1}(G_0))$ of the genus one graph $p^{-1}(G_0)$. We orient this cycle so that it starts with the edge $\tg_1$, passes through $\iota(\tv)$, and ends with $-\tg_2$. Let $\tu'$ be the end vertex of the unique shortest path from $\tu$ to $\ga(p^{-1}(G_0))$; this vertex may be $\tu$ itself but cannot be $\tv$ or $\iota(\tv)$, since we have assumed that the shortest path from $u$ to $\ga(G_0)$ does not pass through $v$. We now assume without loss of generality that $\tu'$ lies on the same path from $\tv$ to $\iota(\tv)$ as $\tg_1$, otherwise exchange $\tg_1$ and $\tg_2$ (see Figure~\ref{fig:wcA3}). 
    
    \begin{figure}
    \centering
    \includegraphics{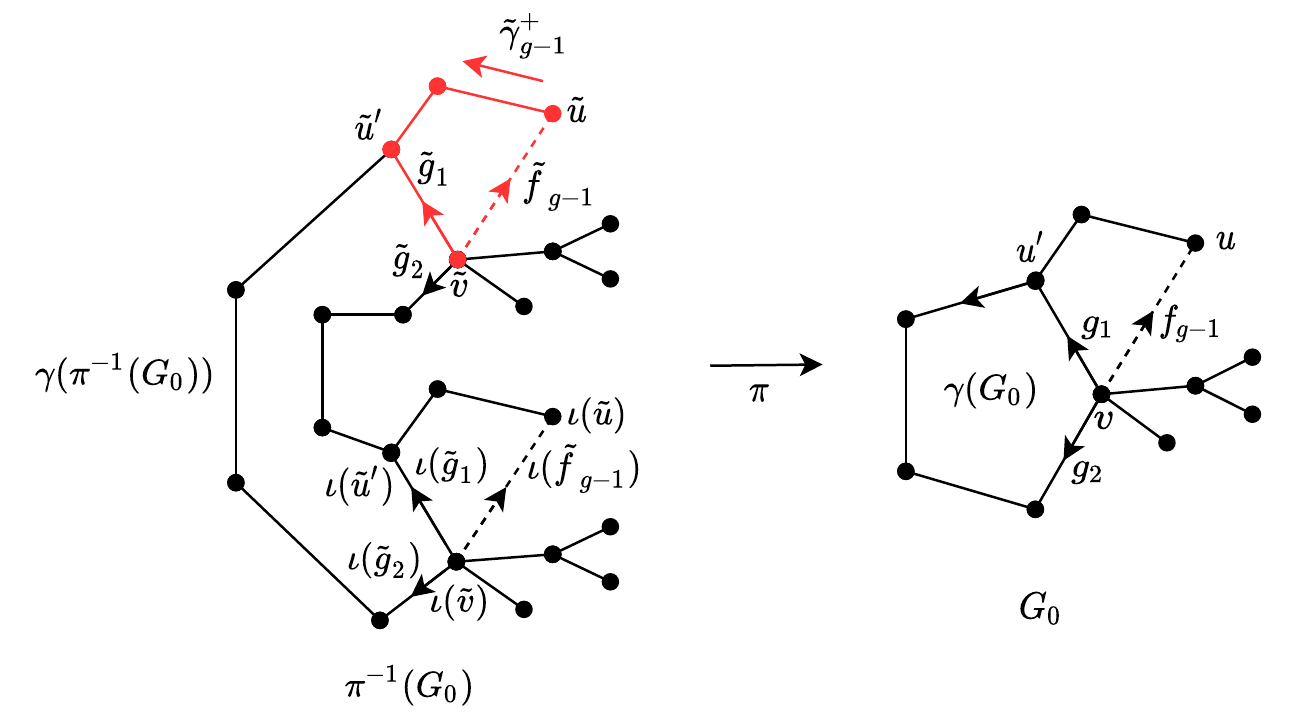}
    \caption{The cycle $\tga^+_{g-1}$ in Case (\ref{case:A3})}
    \label{fig:wcA3}
\end{figure}
    
    All cells adjacent to $\tC$ other than $C(\tF)$, $C(\tF_1)$, and $C(\tF_2)$ are contracted. For the last two, we need to compute the matrix entry~\eqref{eq:Psig-1}. We now calculate the relevant intersection numbers. The path $\tga^+_{g-1}$ starts at $\tv$, proceeds via $+\tf_{g-1}$ to $\tu$, then to $\tu'$, and then back to $\tv$ along a path lying in $\ga(p^{-1}(G_0))$ that ends with $-\tg_1$, and does not contain $\iota(\tg_1)$, $\tg_2$, or $\iota(\tg_2)$. It follows that
    $$
    \Psi(\tF_1)_{g-1,g-1}=\langle\tga^+_{g-1},\tg_1-\iota(\tg_1)\rangle =-1,\quad\Psi(\tF_2)_{g-1,g-1}=\langle\tga^+_{g-1},\tg_2-\iota(\tg_2)\rangle =0.
    $$
    Therefore $\Psi$ maps $C(\tF_1)$ to the opposite side $M^-$ as $C(\tF)$, but with the same determinant $|\det \Psi|=2^{r-1}$. On the other hand, $C(\tF_2)$ is contracted (this can also be seen by noting that the preimage of the graph $G_0\cup\{f_{g-1}\}\backslash \{g_2\}$ is disconnected). Hence $\Psi$ is harmonic. 
    
    \item \label{case:A4} Finally, we consider the possibility that $v$ lies on $\ga(G_0)$ and that $v<u$, in other words $u$ lies on a tree attached to $v$. In this case, there are three edges at $v$ that give genus one decompositions: the two edges $g_1$ and $g_2$ lying on the cycle $\ga(G_0)$, and the edge $g_3$ that starts the unique path from $v$ to $u$. All other edges $e'$ at $v$ support trees, and their removal from $G_0\cup \{f_{g-1}\}$ produces a connected component of genus zero. 
    
    For $i=1,2,3$ denote $\tg_i$ the lift of $g_i$ rooted at $\tv$, and denote $\tF_i=\{\tf_1,\ldots,\tf_{g-2},\tg_i\}$. As in Case~\ref{case:A2} above, there are two subcases, depending on whether the target vertex $\tu$ lies on the same tree $T^{\pm}_0$ as $\tv$ (say $T^+_0$), or on the other tree. The two possibilities are shown on Figure~\ref{fig:wcA4}.
    
        \begin{figure}
    \centering
    \includegraphics{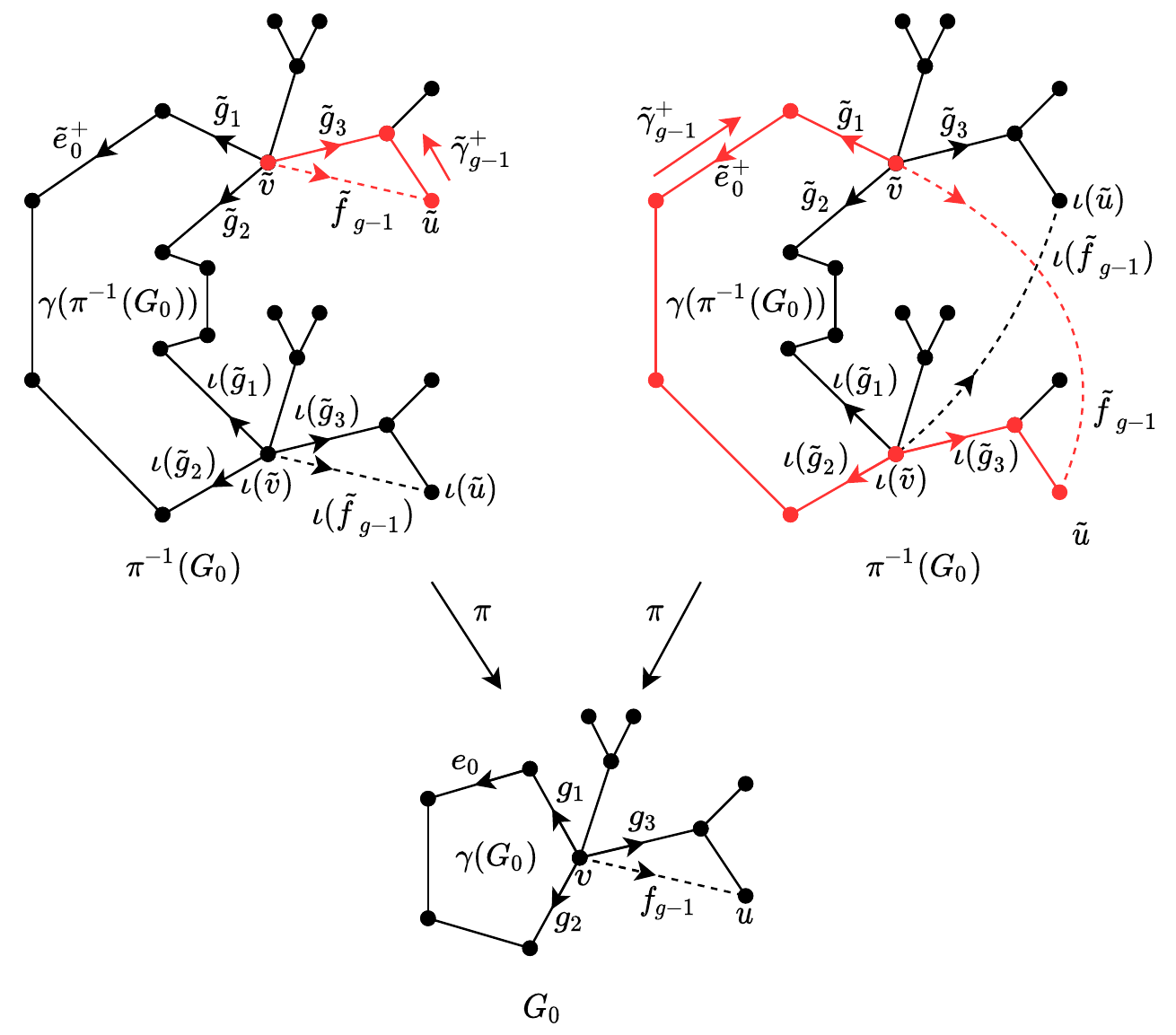}
    \caption{The cycle $\tga^+_{g-1}$ in the two sub-cases of Case (\ref{case:A4})}
    \label{fig:wcA4}
    \end{figure}

    \begin{enumerate}
        \item The vertex $\tu$ lies on $T^+_0$. In this case, any path on the graph $p^{-1}(G_0\cup \{f_{g-1}\})$ starting at $\tv$ and ending at $\iota(\tv)$ passes through the preimage $p^{-1}(\ga(G_0))$ of the unique cycle of $G_0$. Removing either $\{\tg_1,\iota(\tg_1)\}$ or  $\{\tg_2,\iota(\tg_2)\}$ from $p^{-1}(\ga(G_0))$ disconnects the cycle, and therefore the entire preimage graph $p^{-1}(G_0\cup \{f_{g-1}\})$. It follows that $F_1$ and $F_2$ are not odd genus one decompositions. To prove harmonicity, we need to compute $\Psi(\tF_3)_{g-1,g-1}$. The cycle $\tga^+_{g-1}$ starts at $\tv$, proceeds to $\tu$ via $\tf_{g-1}$, and then back to $\tv$ via a path in $T_0^+$ that ends in $-\tg_3$ and does not contain $\iota(\tg_3)$. It follows that
        $$
        \Psi(\tF_3)_{g-1,g-1}=\langle \tga^+_{g-1},\tg_3-\iota(\tg_3)\rangle =-1.
        $$
        Therefore $\Psi$ maps $C(\tF_3)$ to the opposite side $M^-$ as $C(\tF)$, but with the same determinant $2^{r-1}$. Hence $\Psi$ is harmonic.
        
        \item The vertex $\tu$ lies on $T^-_0$. In this case, all three genus one decompositions $F_1$, $F_2$, and $F_3$ are odd. There are two paths from $\tv$ to $\iota(\tv)$ along the cycle $p^{-1}(\ga(G_0))$, starting with edges $\tg_1$ and $\tg_2$. We assume without loss of generality that the path that contains the edge $\te^+_0$ (and hence lies in the spanning tree $\tT$) begins with $\tg_1$. In this case, the path $\tga^+_{g-1}$ begins at $\tv$, moves to $\tu$ via $\tf_{g-1}$, then to $\iota(\tv)$ via a path ending in $-\iota(\tg_3)$, and finally from $\iota(\tv)$ via the path (passing through $\te^+_0$) that starts with $\iota(\tg_2)$ and ends with $-\tg_1$. Hence $\tga^+_{g-1}$ contains $-\tg_1+\iota(\tg_2)-\iota(\tg_3)$ and does not contain the edges $\iota(\tg_1)$, $\tg_2$, or $\tg_3$, and therefore the diagonal entries are
        $$
        \Psi(\tF_1)_{g-1,g-1}=\langle\tga^+_{g-1},\tg_1-\iota(\tg_1)\rangle=-1,\quad
        \Psi(\tF_2)_{g-1,g-1}=\langle\tga^+_{g-1},\tg_2-\iota(\tg_2)\rangle=-1,
        $$
        $$
        \Psi(\tF_3)_{g-1,g-1}=\langle\tga^+_{g-1},\tg_3-\iota(\tg_3)\rangle=1.
        $$
        Therefore, $\Psi$ maps the two cells $C(\tF)$ and $C(\tF_3)$ to the half-space $M^+$ and the two cells $C(\tF_1)$ and $C(\tF_2)$ to the half-space $M^-$, all with the same determinant $2^{r-1}$. Hence $\Psi$ is harmonic.
        
    \end{enumerate}

\end{enumerate}

\medskip\noindent {\bf The endpoints of $f_a$ lie on different connected components of $G\backslash F$.} We assume without loss of generality that $v=s(f_a)$ lies on $G_0$ and that $u=t(f_a)$ lies on $G_1$. Furthermore, we assume that $f_a$ lies in the spanning tree $T^c$, and the ordering convention then implies that $f_a=f_1$ and $\tf_a=\tf_1$. Since the matrix $\Psi(\tF)$ is lower triangular, we see that $M^+=\Psi(H^+)=\{y:y_1\geq 0\}$ and $M^-=\Psi(H^-)=\{y:y_1\leq 0\}$.

Let $\tf'$ be an edge at $\tv$, and let $\tF'=\{\tf',\tf_2,\ldots,\tf_{g-1}\}$ define a cell $C(\tF')$ adjacent to $C(\tF)$ via $C'$. The matrix $\Psi(\tF')$ is obtained from the matrix $\Psi(\tF)$ by replacing the first column, so
we are only interested in the new entry $\Psi(\tF')_{11}=\langle\tga^+_1,\tf'-\iota(\tf')\rangle$ in the top left: if it is zero then $p(\tF')$ is not an odd genus one decomposition, and if it is nonzero then its sign determines whether $\Psi$ maps $C(\tF')$ to $M^+$ or $M^-$. 

The edge $f_1$ is a bridge edge of the graph $G_0\cup G_1\cup \{f_1\}$. We need to consider two possibilities:

\begin{enumerate}
    \item \label{case:B1} The vertex $v=s(f_1)$ does not lie on the unique cycle $\ga(G_0)$ of the graph $G_0$. There is a unique edge $g_1$ at $v$ pointing in the direction of $\ga(G_0)$, and the graph $G_0\cup G_1\cup \{f_1\}\backslash \{g_1\}$ has two connected components of genus one, namely $G_0\backslash \{g_1\}$ and $G_1$. Therefore, $F_1=\{g_1,f_2,\ldots,f_{g-1}\}$ is an odd genus one decomposition of the same length $r$ as $F$. Any other edge $f'$ at $v$ supports a tree rooted at $v$, hence removing $f'$ from $G_0\cup G_1\cup \{f\}$ separates a genus zero connected component, and the corresponding decomposition is not genus one (see Figure~\ref{fig:wcB1}).
    
        \begin{figure}
    \centering
    \includegraphics{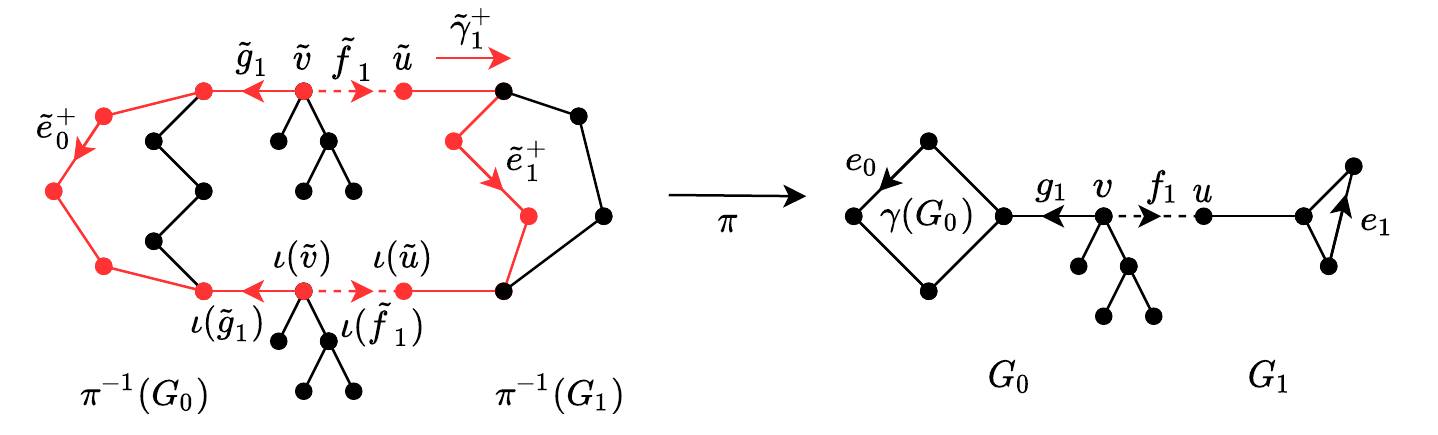}
    \caption{The cycle $\tga^+_1$ in Case (\ref{case:B1})}
    \label{fig:wcB1}
    \end{figure}

    Let $\tg_1$ denote the lift of $g_1$ at $\tv$, and denote $\tF_1=\{\tg_1,\tf_2,\ldots,\tf_{g-1}\}$. To show that $\Psi$ is harmonic, it remains to show that $\Psi$ maps the cell $C(\tF_1)$ to the opposite side $M^-$, in other words we need to show that the diagonal entry $\Psi(\tF_1)_{11}=\langle \tga^+_1,\tg_1-\iota(\tg_1)\rangle$ is negative.
    
    We have chosen an edge $e_1$ lying on the unique cycle $\ga(G_1)$ of $G_1$, and a lift $\te^+_1$ lying on the unique cycle of $p^{-1}(G_1)$, with the property that the path from $\tu=t(\tf)$ to $\iota(\tu)$ that passes through $\te^+_1$ has the same orientation as $\te^+_1$. Hence the path $\tga^+_1$ is constructed as follows: it starts at $\tv$, proceeds via $\tf_1$ to $\tu$, then via the aforementioned path to $\iota(\tu)$, then to $\iota(\tv)$ via $-\iota(\tf_1)$, and then from $\iota(\tv)$ to $\tv$ via the unique path in $p^{-1}(G_0)$ containing the edge $\te^+_0$. This path begins with $\iota(\tg_1)$ and ends with $-\tg_1$, hence
    $$
    \Psi(\tF_1)_{11}=\langle \tga^+_1,\tg_1-\iota(\tg_1)\rangle=-2.
    $$
    Therefore, $\Psi$ maps $C(\tF)$ and $C(\tF_1)$ to different sides of $\tC$ with the same determinant, so $\Psi$ is harmonic.
    
\begin{figure}[h]
    \centering
    \includegraphics{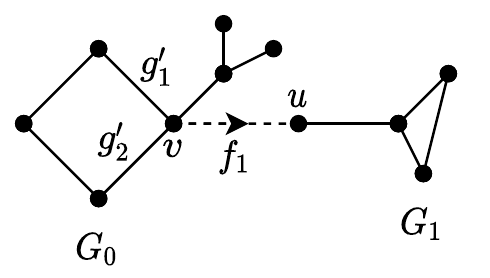}
    \caption{The configuration in Case (\ref{case:B2})}
    \label{fig:wcB2}
    \end{figure}
    \item \label{case:B2} The vertex $v=s(f_1)$ lies on the unique cycle $\ga(G_0)$ of $G_0$. It is easy to see that this case is in fact a relabeling of Case~\ref{case:2b} described above. Indeed, let $g'_1$ and $g'_2$ be the two edges at $v$ lying on $\ga(G_0)$. Replacing $f_1$, $g'_1$, and $g'_2$ with respectively $g_1$, $f_{g-1}$, and $g_2$, we obtain the same picture as in Case~\ref{case:2b} (see Figure~\ref{fig:wcB2}).

\end{enumerate}

This completes the proof of Proposition~\ref{prop:localharmonicity}.
\end{proof}

\begin{figure}[ht]
\begin{tikzpicture}

\node at (-2.5,0) {$\Ga$};
\draw[thick] (-1,0) circle(1);
\draw[fill](0,0) circle(.08);
\draw[thick] (0,0) -- (2,0);
\draw[fill](2,0) circle(.08);
\draw[thick] (3,0) circle(1);
\draw[fill](4,0) circle(.08);
\draw[thick] (4,0) -- (6,0);
\draw[fill](6,0) circle(.08);
\draw[thick] (7,0) circle(1);
\node at (-1.6,0) {$h_1$};
\node at (1,0.4) {$h_3$};
\node at (3,1.4) {$h_4$};
\node at (3,-0.6) {$h_5$};
\node at (8.4,0) {$h_7$};
\node at (5,0.4) {$h_6$};

\draw[thick](1.1,0.1) -- (0.9,0) -- (1.1,-0.1);
\draw[thick](3.1,1.1) -- (2.9,1) -- (3.1,0.9);
\draw[thick](2.9,-0.9) -- (3.1,-1) -- (2.9,-1.1);
\draw[thick](4.9,0.1) -- (5.1,0) -- (4.9,-0.1);
\draw[thick](-2.1,0.1) -- (-2,-0.1) -- (-1.9,0.1);
\draw[thick](7.9,0.1) -- (8,-0.1) -- (8.1,0.1);

\begin{scope}[shift={(0,2.5)}]

\draw[thick](-2.088,1.6) -- (-1.988,1.4) -- (-1.888,1.6);
\draw[thick](-0.1,1.4) -- (0,1.6) -- (0.1,1.4);

\draw[thick](8.088,1.6) -- (7.988,1.4) -- (7.888,1.6);
\draw[thick](5.9,1.4) -- (6,1.6) -- (6.1,1.4);

\draw[thick](2.528,0.972) -- (2.5,0.75) -- (2.694,0.861);

\draw[thick](2.528,2.028) -- (2.5,2.25) -- (2.694,2.139);

\draw[thick](1.1,0.1) -- (0.9,0) -- (1.1,-0.1);
\draw[thick](1.1,3.1) -- (0.9,3) -- (1.1,2.9);
\draw[thick](4.9,0.1) -- (5.1,0) -- (4.9,-0.1);
\draw[thick](4.9,3.1) -- (5.1,3) -- (4.9,2.9);
\draw[thick](2.9,0.1) -- (3.1,0) -- (2.9,-0.1);
\draw[thick](2.9,3.1) -- (3.1,3) -- (2.9,2.9);

\draw[thick] (0,0) -- (6,0);
\draw[thick] (0,3) -- (6,3);
\draw[thick] (0,0) -- (0,3);
\draw[thick] (0,0) .. controls (-2.65,0) and (-2.65,3) .. (0,3);
\draw[thick] (6,0) -- (6,3);
\draw[thick] (6,0) .. controls (8.65,0) and (8.65,3) .. (6,3);
\draw[thick] (2,0) -- (4,3);
\draw[thick] (4,0) -- (2,3);

\node at (-2.5,1.5){$\tGa$};
\node at (-1.6,1.5) {$\thh^+_1$};
\node at (0.4,1.5) {$\thh^-_1$};
\node at (1,0.5) {$\thh^-_3$};
\node at (1,3.5) {$\thh^+_3$};
\node at (3,0.5) {$\thh^-_5$};
\node at (3,3.5) {$\thh^+_5$};

\node at (3.8,1) {$\thh^-_4$};
\node at (3.8,1.9) {$\thh^+_4$};

\node at (5,0.5) {$\thh^-_6$};
\node at (5,3.5) {$\thh^+_6$};
\node at (6.4,1.5) {$\thh^+_7$};
\node at (8.4,1.5) {$\thh^-_7$};

\draw[fill](0,0) circle(.08);
\draw[fill](2,0) circle(.08);
\draw[fill](0,3) circle(.08);
\draw[fill](2,3) circle(.08);
\draw[fill](4,0) circle(.08);
\draw[fill](6,0) circle(.08);
\draw[fill](4,3) circle(.08);
\draw[fill](6,3) circle(.08);

\end{scope}

\end{tikzpicture}

\caption{Free double cover with $g=3$.}
\label{fig:bigexample}

\end{figure}

\begin{example} \label{example:big} We now illustrate Thm.~\ref{thm:APharmonic}, the structure of the Abel--Prym map, as well as Constructions~\ref{con:A},~\ref{con:B}, and~\ref{con:C}, using the free double cover $\pi:\tGa\to \Ga$ shown in Fig.~\ref{fig:bigexample}. We note that $g=3$, hence the Abel--Prym map
$$
\Psi:\Sym^2(\tGa)\to \Prym(\tGa/\Ga)
$$
maps to the identity connected component of the Prym variety. The graph $\tGa$ has no loops, so it is sufficient to use the minimal model $p:\tG\to G$, which is obtained by not subdividing any edges.

We begin by using Construction~\ref{con:C} to define a basis $\tga_1,\tga_2$ for $\Ker p_*:H_1(\tG,\ZZ)\to H_1(G,\ZZ)$. This basis defines a coordinate system on $\Prym (\tGa/\Ga)$. We will write down the matrix of $\Psi$ with respect to this coordinate system on each non-contracted cell of $\Sym^2(\tGa)$. First, we need to choose two edges $\tf_1,\tf_2\in E(\tG)$ mapping to distinct edges of $G$. It is convenient to set $\tf_1=\thh^+_3$ and $\tf_2=\thh^-_6$. Removing $f_1=h_3$ and $f_2=h_6$ decomposes $G$ into three connected components, which we denote as follows:
$$
G_0=\{h_4,h_5\}, \quad G_1=\{h_1\}, \quad G_2=\{h_7\}.
$$
For $G_0$ we choose the spanning tree $T_0=\{h_5\}$, while $G_1$ and $G_2$ have trivial spanning trees. The corresponding spanning tree $T$ of $G$ is
$$
T=\{h_3,h_5,h_6\}.
$$
To construct a spanning tree $\tT$ for $\tG$, we join the two lifts $\tT^{\pm}$ of $T$ with one of the lifts of $h_4$. We choose $\te^+_0=\thh^+_4$, so that
$$
\tT=\tT^+\cup\tT^-\cup\{\te^+_0\}=\{\thh^{\pm}_3,\thh^+_4,\thh^{\pm}_5,
\thh^{\pm}_6\}.
$$
In order to agree with the notation of Constructions~\ref{con:A}-\ref{con:C}, we denote $e_0=h_4$, $e_1=h_1$, $e_2=h_7$, $\te^+_1=\thh^+_1$, and $\te^+_2=\thh^+_7$ (recall that we require $t(\tf_k)=s(\te_k)$). 

According to Construction~\ref{con:B}, the cycles $\tga^+_1$ and $\tga^+_2$ are the unique cycles of $\tG$ containing the spanning tree $\tT$ and the edges $\te^+_1=\thh^+_1$ and $\te^+_2=\thh^+_7$, respectively. In other words, 
$$
\tga^+_1=\thh_1^+-\thh^-_3-\thh^+_4-\thh^+_5+\thh^+_3,\quad
\tga^+_2=\thh^+_7-\thh^+_6+\thh^+_4+\thh^-_5+\thh^-_6.
$$
The cycles $\tga^+_1-\iota_*(\tga^+_1)$ and $\tga^+_2-\iota_*(\tga^+_2)$ form a basis for $\Ker p_*$.

We now determine the matrix of the linear map $\Psi$ on each non-contracted cell of $\Sym^2(\tGa)$. There are 15 two-element subsets of $E(G)$, and all of them are ogods except for $\{h_1,h_3\}$ and $\{h_6,h_7\}$. Therefore, there are 52 cells of $\Sym^2(\tGa)$ on which $\Psi$ has maximal rank, corresponding to the 13 ogods. The matrix of $\Psi$ on a cell $\thh^+_i\times \thh^+_j$ is the $2\times 2$ matrix whose columns are obtained by intersecting respectively $\thh^+_i-\thh^-_i$ and $\thh^+_j-\thh^-_j$ with the cycles $\tga^+_1$ and $\tga^+_2$, and the matrix of $\Psi$ on the other three cells $\thh^{\pm}_i\times \thh^{\pm}_j$ is obtained by appropriately changing the signs of the columns.

The following table lists the 13 ogods of the graph $G$. For each ogod $F=\{h_i,h_j\}$, we provide the rank $r(F)$ (the number of connected components of $G\backslash F)$, the local degree $\deg \Psi=2^{r(F)-1}$ on each cell of $\Sym^2(\tGa)$ corresponding to the ogod, and the matrix of $\Psi$ on the cell $\thh^+_i\times \thh^-_i$ (the absolute value of its determinant is $\deg \Psi$). We also assign a color to each ogod.
$$
\begin{tabular}{c|c|c|c|c}
$F$               &  $r(F)$  &  $\deg \Psi|_{\thh^{\pm}_i\times \thh^{\pm}_j}$  & $\Psi|_{\thh^+_i\times \thh^+_j}$ & \mbox{color}\\
\hline $\{h_1,h_4\}$     & 1        & 1  &
$\left[\begin{array}{cc} 1 & -1 \\ 0 & 1 \end{array}\right]$ &
\crule[14]{0.8cm}{0.5cm}\\
\hline $\{h_1,h_5\}$     & 1        & 1  &
$\left[\begin{array}{cc} 1 & -1 \\ 0 & -1 \end{array}\right]$ & 
\crule[15]{0.8cm}{0.5cm}\\
\hline $\{h_1,h_6\}$     & 2        & 2  &
$\left[\begin{array}{cc} 1 & 0 \\ 0 & -2 \end{array}\right]$ &
\crule[16]{0.8cm}{0.5cm}\\
\hline $\{h_1,h_7\}$     & 1        & 1  &
$\left[\begin{array}{cc} 1 & 0 \\ 0 & 1 \end{array}\right]$ &
\crule[17]{0.8cm}{0.5cm}\\
\hline $\{h_3,h_4\}$     & 2        & 2  &
$\left[\begin{array}{cc} 2 & -1 \\ 0 & 1 \end{array}\right]$ &
\crule[34]{0.8cm}{0.5cm}\\
\hline $\{h_3,h_5\}$     & 2        & 2  &
$\left[\begin{array}{cc} 2 & -1 \\ 0 & -1 \end{array}\right]$ &
\crule[35]{0.8cm}{0.5cm}\\
\hline $\{h_3,h_6\}$     & 3        & 4  &
$\left[\begin{array}{cc} 2 & 0 \\ 0 & -2 \end{array}\right]$ &
\crule[36]{0.8cm}{0.5cm}\\
\hline $\{h_3,h_7\}$     & 2        & 2  &
$\left[\begin{array}{cc} 2 & 0 \\ 0 & 1 \end{array}\right]$ &
\crule[37]{0.8cm}{0.5cm}\\
\hline $\{h_4,h_5\}$     & 2        & 2  &
$\left[\begin{array}{cc} -1 & -1 \\ 1 & -1 \end{array}\right]$ &
\crule[45]{0.8cm}{0.5cm}\\
\hline $\{h_4,h_6\}$     & 2        & 2  &
$\left[\begin{array}{cc} -1 & 0 \\ 1 & -2 \end{array}\right]$ &
\crule[46]{0.8cm}{0.5cm}\\
\hline $\{h_4,h_7\}$     & 1        & 1  &
$\left[\begin{array}{cc} -1 & 0 \\ 1 & 1 \end{array}\right]$ &
\crule[47]{0.8cm}{0.5cm}\\
\hline $\{h_5,h_6\}$     & 2        & 2  &
$\left[\begin{array}{cc} -1 & 0 \\ -1 & -2 \end{array}\right]$ &
\crule[56]{0.8cm}{0.5cm}\\
\hline $\{h_5,h_7\}$     & 1        & 1  &
$\left[\begin{array}{cc} -1 & 0 \\ -1 & 1 \end{array}\right]$ &
\crule[57]{0.8cm}{0.5cm}\\
\end{tabular}
$$

We now explain how to construct the Abel--Prym map $\Psi:\Sym^2(\tGa)\to \Prym(\tGa/\Ga)$. We first choose a starting cell; it is convenient to begin with the cell $\tf_1\times \tf_2=\thh^+_3\times \thh^-_6$. According to our calculations, the matrix of $\Psi$ on this cell is
$$
\Psi|_{\thh^+_3\times \thh^-_6}=\left[\begin{array}{cc} 2 & 0 \\ 0 & 2 \end{array}\right];
$$
note that this matrix is lower-triangular with $r(\{h_3,h_6)\})-1=2$ diagonal entries equal to $2$, in accordance with Cor.~\ref{cor:degreeformula}. Viewing $\thh^+_3\times \thh^-_6$ as the rectangle $[0,\ell(h_3)]\times [0,\ell(h_6)]$ in $\RR^2$, we see that $\Psi(\thh^+_3\times \thh^-_6)$ is a rectangle with sides $2\ell(h_3)$ and $2\ell(h_6)$ in $\Prym(\tGa/\Ga)$ (with respect to the coordinate system defined by $\tga_1$ and $\tga_2$). We note that since $\deg\Psi=4$ on the cell $\thh^+_3\times \thh^-_6$, the Abel--Prym map is one-to-one over the interior of the cell; in other words, there are no other cells of $\Sym^2(\tGa)$ lying over $\Psi(\thh^+_3\times \thh^-_6)$.

We now look at the graph to locate the cells in $\Sym^2(\tGa)$ that are adjacent to $\thh^+_3\times \thh^-_6$, by changing one of the two edges to an adjacent edge. We then use the table to determine their images in $\Prym(\tGa/\Ga)$. For example, the terminal vertex of $\thh^+_3$ is the starting vertex of $\thh^+_1$ and the terminal vertex of $\thh^-_1$. Therefore, the images of the two cells $\thh^+_1\times \thh^-_6$ and $\thh^-_1\times \thh^-_6$ are glued to the right side of the rectangle $\Psi(\thh^+_3\times \thh^-_6)$. According to the table, each of the images $\Psi(\thh^+_1\times \thh^-_6)$ and $\Psi(\thh^+_1\times \thh^-_6)$ is a rectangle with sides $\ell(h_1)$ and $2\ell(h_6)$; this implies that these images coincide. Similarly, along the top edge of the rectangle $\Psi(\thh^+_3\times \thh^-_6)$ we glue two identical rectangles $\Psi(\thh^+_3\times \thh^+_7)$ and $\Psi(\thh^+_3\times \thh^-_7)$. On the other hand, the two cells that are glued along the left side of the rectangle $\Psi(\thh^+_3\times \thh^-_6)$ are the non-congruent parallelograms $\Psi(\thh^+_5\times \thh^-_6)$ and $\Psi(\thh^-_4\times \thh^-_6)$, and similarly there are two non-congruent parallelograpms along the bottom side of $\Psi(\thh^+_3\times \thh^-_6)$.

We then proceed in the same way for all non-contracted cells. For example, the right sides of the two identical rectangles $\Psi(\thh^+_3\times \thh^+_7)$ and $\Psi(\thh^+_3\times \thh^-_7)$ are attached to the left side of the rectangle $\Psi(\thh^-_3\times \thh^-_6)$, and so on. The result is a collection of overlapping parallelograms $\Psi(\thh^{\pm}_i\times \thh^{\pm}_j)$ tiling the torus $\Prym(\tGa/\Ga)$. Each parallelogram tile comes equipped with the degree of $\Psi$ (equal to 1, 2, or 4), and the total degree of all tiles above each point is equal to 4, so any point not lying on the boundary of a tile lies in anywhere between one and four tiles. The structure of the overlaps strongly depends on the exact values of the edge lengths $\ell(h_i)$.

The result, for a specific choice of edge lengths, is shown on Fig.~\ref{fig:blender}. The top part is an exploded view of the non-contracted cells of $\Sym^2(\tGa)$. Each cell $\thh^{\pm}_i\times \thh^{\pm}_j$ is represented by a parallelogram and is colored according to the corresponding ogod $\{h_i,h_j\}$ (see table). The cells are arranged in several horizontal layers and are oriented and aligned according to their images in $\Prym(\tGa/\Ga)$. For example, the four large dark grey cells in the middle layer represent the four cells $\thh^{\pm}_3\times \thh^{\pm}_6$, which are the only cells of degree four. No attempt has been made to show the incidence relations between the cells in $\Sym^2(\tGa)$, since this complex is not embeddable in $\RR^3$, and the numerous cells that are contracted by $\Psi$ are not shown.

The multicolored parallelogram on the bottom layer is $\Prym(\tGa/\Ga)$ with the induced cellular decomposition, obtained by intersecting all the tiles. A cell corresponding to a single parallelogram has the same color,  while if parallelograms of different colors intersect, then we blend the corresponding colors. The central point of the parallelogram is the identity element of $\Prym(\tGa/\Ga)$.

\begin{figure}
    \centering
    \includegraphics[width=6in]{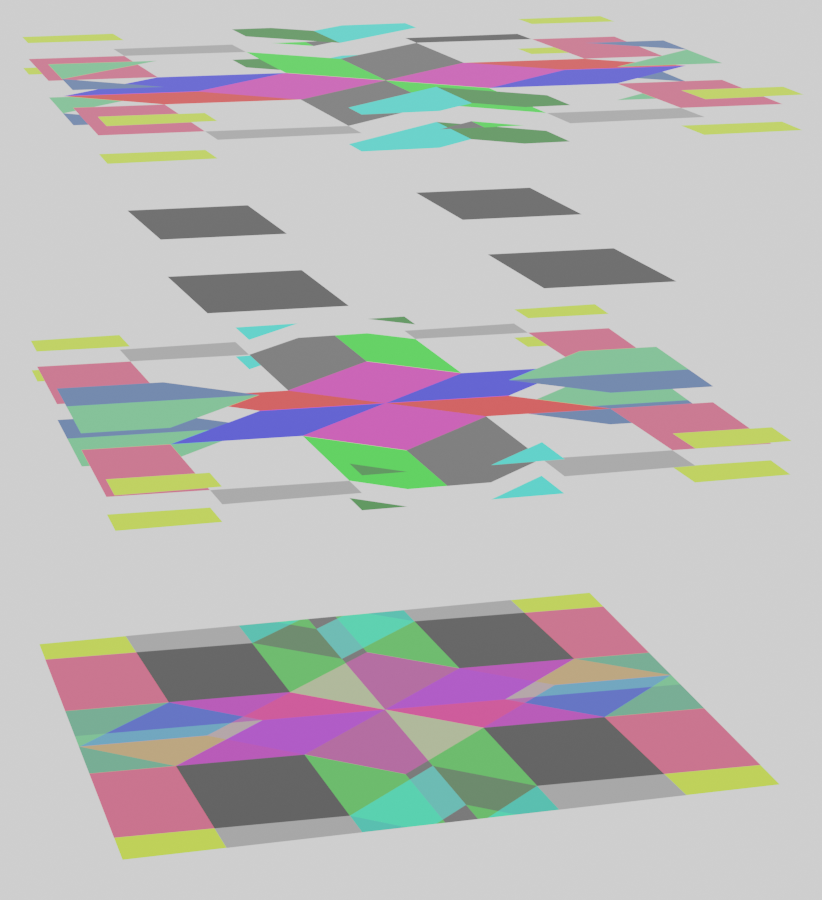}
    \caption{The structure of the Abel--Prym map $\Psi:\Sym^2(\tGa)\to \Prym(\tGa/\Ga)$ of the cover shown in Fig.~\ref{fig:bigexample}. The tesselated parallelogram on the bottom is $\Prym(\tGa/\Ga)$ with the cell decomposition induced by $\Psi$. The top and middle parts are an exploded view of the non-contracted cells of $\Sym^2(\tGa)$. Cells of $\Sym^2(\tGa)$ are colored according to ogod type, these colors are mixed in cells of $\Prym(\tGa/\Ga)$. Edge lengths are $\ell(h_1)=2.4$, $\ell(h_3)=0.8$, $\ell(h_4)=1$, $\ell(h_5)=1.4$, $\ell(h_6)=1.1$, $\ell(h_7)=1.4$.}
    \label{fig:blender}
\end{figure}

\end{example}


\appendix
\section{The algebraic Abel--Prym map (by Sebastian Casalaina-Martin)}

Let $\pi:\widetilde C\to C$ be a connected \'etale double cover of a smooth projective curve $C$ of genus $g\ge 2$ over an algebraically closed field $k$ of characteristic not equal to $2$, let $\iota:\widetilde C\to \widetilde C$ be the associated involution, and 
denote by $\Nm:J(\widetilde C)\to J(C)$ the norm map for $\pi$, where for a smooth projective curve $X$ over $k$ we denote by $J(X)=\operatorname{Pic}^0_{X/k}$ the Jacobian of $X$.  For any natural number $d$ the \emph{Abel--Prym map} in degree $d$ is defined to be the map 
$$
\delta_d: \widetilde C^{(d)}\longrightarrow \ker \Nm\subseteq J(\widetilde C),
$$
$$
\widetilde D\mapsto \mathcal O_{\widetilde C}(\widetilde D-\iota \widetilde D),
$$
where $\widetilde C^{(d)}$ is the $d$-fold symmetric product of the curve.  
The kernel of the norm map has two connected components, namely  the Prym variety $P=P(\widetilde C/C):=(\ker \Nm)^\circ$, the connected component of the identity, and the remaining component, which we will denote by $P'$; $P$ admits a principal polarization $\Xi$ with the property that if $\Theta_{\widetilde C}$ is the canonical principal polarization on $J(\widetilde C)$, then $\Theta_{\widetilde C}|_P = 2\cdot \Xi$ 
 (e.g., \cite[\S 6]{Mumford_Prym}).  
 The image of $\delta_d$ is contained in $P$ if $d$ is even and contained in $P'$ if $d$ is odd (e.g., \cite[Lem.~3.3, and p.159]{B77_schottky}).   

The Abel--Prym map in degree-$1$ has been studied quite extensively, and we recall this case in \S \ref{S:App-d=1}.  In particular, the map $\delta_1$ is a closed embedding if and only if $\widetilde C$ is not hyperelliptic, and has degree $2$ otherwise.    
The purpose of this appendix is to provide a proof of some basic facts regarding the Abel--Prym map for $d>1$.  We expect these are well known, but are not aware of a  reference in the literature.

\begin{proposition}[Corollary \ref{C:gen-fin}, Corollary \ref{C:deg}, and Proposition \ref{P:hype}]\label{P:App-main}
The Abel--Prym map $\delta_d$ is generically finite if and only if $d\le g-1$, and surjects onto $P$ (resp.~$P'$) if and only if  $d\ge  g-1$ and $d$ is even (resp.~$d$ is odd). 
  Moreover, $\deg \delta_{g-1}=2^{g-1}$, and if $\operatorname{char}(k)=0$, then for $d\le g-2$ we have  $\deg \delta_d=2^n\le 2^d$ for some integer $n\le d$, with equality holding if $\widetilde C$ is hyperelliptic.
\end{proposition}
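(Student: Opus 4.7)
The plan is to analyze $\delta_d$ through the factorization
\[
\delta_d\colon \widetilde{C}^{(d)}\xrightarrow{A_d} W_d(\widetilde{C})\subseteq \operatorname{Pic}^d(\widetilde{C})\xrightarrow{\phi} \ker \Nm,
\]
where $A_d$ is the Abel--Jacobi map, and $\phi(L) = L\otimes \iota^* L^{-1}$ is a group homomorphism (after translation). Since $\pi$ is \'etale, the fixed locus of $\iota^*$ on $J(\widetilde{C})$ is exactly $\pi^* J(C)$, of dimension $g$, so $\ker \phi = \pi^* J(C)$ and $\phi$ maps surjectively onto the $(g-1)$-dimensional abelian subvariety $P\subseteq J(\widetilde{C})$ (landing in $P$ or $P'$ according to the parity of $d$). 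Since $A_d$ is birational onto $W_d$ for every $d\leq g(\widetilde{C})-1 = 2g-2$, all degree and fiber questions for $\delta_d$ reduce to the restricted map $\phi\vert_{W_d}\colon W_d\to V_d := \phi(W_d)$.

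Generic finiteness and surjectivity then follow from dimension considerations. A fiber of $\phi\vert_{W_d}$ is the intersection inside $\operatorname{Pic}^d(\widetilde{C})$ of $W_d$ (dimension $d$) with a coset of $\pi^* J(C)$ (dimension $g$), with expected dimension $d-g+1$, which is $\leq 0$ precisely when $d\leq g-1$. I will promote this to actual finiteness by computing $d\delta_d$ at a generic $\widetilde{D} = p_1 + \cdots + p_d$ as the anti-invariant evaluation $H^0(\widetilde{C}, K_{\widetilde{C}})^{-} \to \bigoplus_i T^*_{p_i}\widetilde{C}$; via $\pi_*$ this identifies with the evaluation of $H^0(C, K_C\otimes\eta)$ at the points $\pi(p_i)$, where $\eta\in J(C)[2]$ is the $2$-torsion line bundle defining the cover, and base-point-freeness of $K_C\otimes\eta$ ensures maximal rank $d$ exactly for $d\leq g-1$. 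Surjectivity at $d = g-1$ is then automatic since the image $V_{g-1}$ is closed, irreducible, and of full dimension $g-1$ inside the correct parity component of $\ker\Nm$. For $d\geq g-1$ the additive identity $\delta_{d+1}(\widetilde{D}+p) = \delta_d(\widetilde{D}) + \delta_1(p)$ propagates surjectivity by induction (with the parity flipping correctly); for $d<g-1$ the image has dimension $\leq d < g-1$, ruling out surjectivity.

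The degree $\deg\delta_{g-1} = 2^{g-1}$ emerges as the intersection number $[W_{g-1}]\cdot [\pi^* J(C)]$ in $J(\widetilde{C})$. Poincar\'e's formula gives $[W_{g-1}] = \theta_{\widetilde{C}}^{\,g}/g!$, and the isogeny $\pi^*\colon J(C)\to\pi^* J(C)$ has degree $2$ with kernel $\{0,\eta\}$; from $\Nm\circ\pi^* = [2]_{J(C)}$ one deduces $(\pi^*)^*\theta_{\widetilde{C}} = 2\theta_C$, whence
\[
\deg\delta_{g-1} = \int_{\pi^* J(C)} \frac{\theta_{\widetilde{C}}^{\,g}}{g!} = \frac{1}{\deg \pi^*}\int_{J(C)} \frac{(2\theta_C)^g}{g!} = \frac{2^g}{2} = 2^{g-1}.
\]

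The most delicate step is the bound $\deg\delta_d = 2^n \leq 2^d$ for $d\leq g-2$ in characteristic zero. Any two preimages $\widetilde{E}_1,\widetilde{E}_2$ of $[D]\in V_d$ satisfy $[\widetilde{E}_1] - [\widetilde{E}_2]\in \pi^* J(C)$, so once a basepoint is fixed the fiber embeds into $J(C)/\langle\eta\rangle$; equivalently, all coincidences arise from $\iota$-invariant degree-$2d$ line bundles $\mathcal{O}(\widetilde{E}_1+\iota\widetilde{E}_2) = \pi^* L$ with $L\in\operatorname{Pic}^d(C)$. In the hyperelliptic case, the hyperelliptic involution $\sigma$ of $\widetilde{C}$ commutes with $\iota$, forcing $C$ hyperelliptic with $\pi^* L_0 = g^1_2$ for some $L_0\in\operatorname{Pic}^1(C)$; the relation $p+\sigma p\sim\pi^*L_0$ then gives $\delta_1(\iota\sigma p) = \delta_1(p)$, so for a generic $\widetilde{E}_0 = p_1+\cdots+p_d$ the $2^d$ divisors obtained by independently replacing each $p_i$ by $\iota\sigma p_i$ are distinct and all map to the same point, and a Brill--Noether count of effective $\pi^*L$ shows no further preimages occur. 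I plan to obtain the general upper bound $\deg\delta_d\leq 2^d$ by specializing the cover in a flat family to the hyperelliptic locus and applying upper semi-continuity of fiber cardinality for the proper generically finite morphism $\delta_d$; that $\deg\delta_d$ is a power of $2$ will follow from the $\mathbb{Z}/2\mathbb{Z}$-structure on the collection of $\iota$-invariant bundles $\pi^*L$ which control the coincidences. This combination of Brill--Noether analysis, specialization to the hyperelliptic locus, and $2$-torsion structure is the principal technical obstacle of the argument.
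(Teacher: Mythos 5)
Your overall architecture is sound, and two of the three components genuinely work. The generic-finiteness and surjectivity analysis via the differential (identified, after taking anti-invariants, with evaluation of $H^0(C,K_C\otimes\eta)$ at the points of $\Nm\widetilde D$) is essentially the computation in the paper, just phrased through the Serre-dual evaluation map rather than the coboundary $H^0(\mathcal O_{\widetilde D+\iota\widetilde D}(\widetilde D+\iota\widetilde D))\to H^1(\mathcal O_{\widetilde C})$. Your computation of $\deg\delta_{g-1}$ as the intersection number $[W_{g-1}]\cdot[\pi^*J(C)]=\tfrac{1}{2}\int_{J(C)}(2\theta_C)^g/g!=2^{g-1}$ is a legitimately different and rather clean route: the paper instead computes the push-forward class $(\delta_{d,\widetilde D_0})_*[\widetilde C^{(d)}]=2^d[\Xi]^{\rho-d}/(\rho-d)!$ by taking the $d$-fold Pontryagin power of the degree-one class $2[\Xi]^{\rho-1}/(\rho-1)!$. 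Your version needs $\ker(1-\iota^*)=\pi^*J(C)$ to be connected (true for a connected \'etale double cover), generic transversality (Kleiman, so characteristic $0$), and the fibers to avoid $W^1_{g-1}$ (a Martens-type dimension count); with those caveats it is correct. What the paper's Pontryagin computation buys, and what you lose by not doing it, is the class formula for \emph{all} $d\le g-1$, not just $d=g-1$ — and that is exactly what you need for the part of the statement where your argument breaks down.

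The genuine gap is the claim $\deg\delta_d=2^n\le 2^d$ for $d\le g-2$. Specialization to the hyperelliptic locus plus "upper semi-continuity of fiber cardinality" cannot give the upper bound: the degree of $\delta_d$ onto its image \emph{increases} under that specialization. Concretely, for a general cover with $d<g/2$ one has $\deg\delta_d=1$ (Corollary \ref{C:GenAP}), while at the hyperelliptic locus $\deg\delta_d=2^d$; the constant quantity in a family is the push-forward class $\deg(\delta_d)\cdot[\operatorname{Im}\delta_d]=2^d\,[\Xi]^{\rho-d}/(\rho-d)!$, and it is the multiplicity of the image class, not the degree, that drops at the special fiber. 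So semicontinuity runs in the wrong direction for your purpose, and in any case you have not established the upper bound $2^d$ even in the hyperelliptic case (your "Brill--Noether count of effective $\pi^*L$" is not carried out, and the fibers of $\delta_d$ carry no evident $(\mathbb Z/2\mathbb Z)$-torsor structure that would force the degree to be a power of $2$). The repair is short given what you already have: extend your class computation to all $d\le g-1$ via the Pontryagin product identity $\bigl([\Xi]^{\rho-1}/(\rho-1)!\bigr)^{\star d}=d!\,[\Xi]^{\rho-d}/(\rho-d)!$, obtaining $\deg(\delta_d)\cdot[\operatorname{Im}\delta_d]=2^d\,[\Xi]^{\rho-d}/(\rho-d)!$, and then invoke the fact that $[\Xi]^{\rho-d}/(\rho-d)!$ is a \emph{minimal} (indivisible) cohomology class: since $[\operatorname{Im}\delta_d]$ is integral, $\deg\delta_d$ must divide $2^d$, giving both the power-of-$2$ statement and the bound; the hyperelliptic equality then follows from your (correct) exhibition of $2^d$ distinct points $\widetilde D'$ obtained by replacing each $\tilde p_i$ by $\iota\sigma\tilde p_i$.
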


\begin{remark}[Degree bound in positive characteristic]
If $\operatorname{char}(k)=p>0$,   then  for $d\le g-2$ we show that   $\deg \delta_d=p^{m}2^n$ for some integers $m$ and $n$ with $n\le d$.  The reason for the uncontrolled power of $p$ in the formula is that we compute the degree via a cohomology class computation in $\ell$-adic cohomology, with $\ell\ne p$.   A similar computation in crystalline cohomology allows one to remove the powers of $p$.  
\end{remark}

For general covers one has:
 
\begin{proposition}[Corollary \ref{C:GenAP}]\label{P:GenAP}
Let $\widetilde C/C$ be a general cover.  Then $\deg \delta_d=1$ for   $d<g/2$.
\end{proposition}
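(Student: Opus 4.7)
The plan is to analyze a generic fiber of $\delta_d$. Two effective divisors $\widetilde D,\widetilde D_1\in\widetilde C^{(d)}$ satisfy $\delta_d(\widetilde D)=\delta_d(\widetilde D_1)$ if and only if $\mathcal O_{\widetilde C}(\widetilde D+\iota\widetilde D_1)\cong\mathcal O_{\widetilde C}(\widetilde D_1+\iota\widetilde D)$ in $\Pic^{2d}(\widetilde C)$, and I would split the analysis into two cases depending on whether these two effective divisors coincide.

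If $\widetilde D+\iota\widetilde D_1=\widetilde D_1+\iota\widetilde D$ as divisors (Case A), then $\widetilde D-\widetilde D_1$ is an $\iota$-invariant signed divisor on $\widetilde C$. Writing it as $A-B$ with $A,B$ effective having disjoint supports, the $\iota$-invariance together with the absence of fixed points of $\iota$ forces $A=\pi^*A'$ and $B=\pi^*B'$ for effective $A',B'$ on $C$. For a generic $\widetilde D$---meaning its support consists of $d$ points lying in distinct fibers of $\pi$---the containment $\pi^*A'\le\widetilde D$ forces $A'=0$, and similarly $B'=0$, so $\widetilde D_1=\widetilde D$.

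If instead the two divisors are distinct (Case B), then the line bundle $\mathcal M:=\mathcal O_{\widetilde C}(\widetilde D+\iota\widetilde D_1)$ is $\iota$-invariant and satisfies $h^0(\mathcal M)\ge 2$. By \'etale descent for the $\mathbb Z/2$-torsor $\pi$, every such $\mathcal M$ has the form $\mathcal M=\pi^*\mathcal N$ for some $\mathcal N\in\Pic^d(C)$, unique up to the twist $\mathcal N\leftrightarrow\mathcal N\otimes\eta$, where $\eta\in J(C)[2]$ is the 2-torsion defining the cover. The projection formula produces the $\iota$-eigenspace decomposition $H^0(\mathcal M)=H^0(\mathcal N)\oplus H^0(\mathcal N\otimes\eta)$, and since $\widetilde D+\iota\widetilde D_1$ is not $\iota$-invariant, the section defining it is not an eigensection, so both summands must be nonzero. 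Hence Case B forces both $\mathcal N$ and $\mathcal N\otimes\eta$ to be effective; equivalently $\eta\in W_d(C)-W_d(C)$.

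The decisive step is a parameter count on $\mathcal R_g$ ruling this out for general covers. The relative incidence
\[
I=\bigl\{(C,\eta,\mathcal N):\ \mathcal N\in W_d(C),\ \mathcal N\otimes\eta\in W_d(C)\bigr\}
\]
sits in the universal degree-$d$ Picard over $\mathcal R_g$, of total dimension $(3g-3)+g=4g-3$. Each of the two conditions has expected codimension $g-d$, so $\dim I\le 2g-3+2d$, which is strictly less than $\dim\mathcal R_g=3g-3$ precisely when $d<g/2$. Hence the projection $I\to\mathcal R_g$ is not dominant, Case B is impossible for a general cover, and combined with Case A this gives $\deg\delta_d=1$. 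The main obstacle is to ensure that the expected codimension is actually attained, since Brill--Noether loci may have excess dimension in special fibers; I would handle this by exhibiting a single $(C_0,\eta_0)\in\mathcal R_g$ for which $\eta_0\notin W_d(C_0)-W_d(C_0)$---a proper closed subvariety of $J(C_0)$ since $2d<g$---and then invoking upper semicontinuity of the fiber dimensions of $I\to\mathcal R_g$.
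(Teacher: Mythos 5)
Your Case A and your reduction of Case B to the statement ``for a general cover, $\eta$ is not in the image of the difference map $C^{(d)}\times C^{(d)}\to J(C)$'' track the paper's argument closely (this is exactly the hypothesis isolated in Corollary \ref{C:genFib}; the paper reaches it slightly more directly, by noting that $\widetilde D+\iota\widetilde D'\sim\widetilde D'+\iota\widetilde D$ with the two divisors distinct produces a $g^1_{2d}$ on $\widetilde C$, whereas you descend the $\iota$-invariant bundle to $C$ --- both reductions are correct). The genuine gap is in how you dispose of that genericity statement. Your dimension count on the incidence $I\subset\operatorname{Pic}^d$ over $\mathcal R_g$ assumes that the second condition $\mathcal N\otimes\eta\in W_d(C)$ cuts down the universal $W_d$ by its expected codimension $g-d$; but that is equivalent to the fiberwise intersection $W_d(C)\cap(W_d(C)\otimes\eta)$ being empty for general $(C,\eta)$, which is precisely the statement to be proved. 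Without it, the only a priori bound on the fiber of $I\to\mathcal R_g$ is $\dim\le d-1$ (the fiber is a proper closed subset of $W_d(C)$), giving $\dim I\le 3g-4+d$, which does not rule out dominance. So the count carries no independent content.

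Your proposed repair --- exhibit one $(C_0,\eta_0)$ with $\eta_0\notin W_d(C_0)-W_d(C_0)$ and conclude by properness/semicontinuity --- is a valid strategy, but the example is the entire difficulty and you have not produced it. Knowing that $W_d(C_0)-W_d(C_0)$ is a proper closed subvariety of $J(C_0)$ (dimension $\le 2d<g$) says nothing about whether its complement contains a \emph{nonzero $2$-torsion point}: the $2$-torsion is a finite set, and for instance on a hyperelliptic curve every $\eta$ of the relevant type already lies in $W_1-W_1$. The needed fact is equivalent to the assertion that a general Prym curve $\widetilde C$ carries no $g^1_e$ with $e<g$, which is the Prym--Brill--Noether theorem of Aprodu--Farkas invoked in the paper (\cite[Thm.~1.4]{AF}, with \cite{O14} in positive characteristic); it is a genuine theorem, not a transversality formality --- indeed the appendix's acknowledgments record that an earlier draft went wrong at exactly this point. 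To complete your argument you would either have to cite that result (at which point the dimension count is superfluous) or construct an explicit $(C_0,\eta_0)$, e.g.\ by a degeneration argument, which is real additional work.
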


 While $\delta_1$ is finite, in contrast, 
 for all $d\ge 2$ there exist  positive dimensional fibers of $\delta_d$.  
  Indeed, it suffices to show this for $d$ even since for all $d$,  fibers of $\delta_{d}$ can be included in fibers of $\delta_{d+1}$ using the observation that if  $\delta_{d}(\widetilde D) = \delta_{d}(\widetilde D')$, then $\delta_{d+1}(\widetilde D+\tilde p) = \delta_{d+1}(\widetilde D' + \tilde p)$ for any $\tilde p\in \widetilde C$.    For $d$ even, we note that the composition $C^{(d/2)}\stackrel{\pi^*}{\to} \widetilde C^{(d)}\stackrel{\delta_{d}}{\to} P\subseteq J(\widetilde C)$ has image $\mathcal O_{\widetilde C}$.

\medskip 
Sometimes in the presentation it will be  convenient to fix a divisor $\widetilde D_0\in \widetilde C^{(d)}$, and then consider the associated \emph{pointed Abel--Prym map}
$$
\delta_{d,\widetilde D_0}: \widetilde C^{(d)}\longrightarrow P \subseteq J(\widetilde C)
$$
$$
D\mapsto \mathcal O_{\widetilde C}(\widetilde D-\iota \widetilde D)\otimes \mathcal O_{\widetilde C}(\iota \widetilde D_0-\widetilde D_0)
$$
which simply differs from the canonical Abel--Prym map $\delta_d$  by translation by $\mathcal O_{\widetilde C}(\iota \widetilde D_0-\widetilde D_0)$, and has image contained in the Prym variety.  We emphasize for clarity that the Abel--Prym map and pointed Abel--Prym map defined here are different from the map obtained by restricting the Abel map in degree $d$  to a chosen translate of the Prym variety (e.g., \cite{B82_sous,SV01}).

\subsection*{Acknowledgements} The author would like to thank Angela Ortega and Roy Smith  for useful comments that pointed to  an error in an earlier draft regarding fibers of the Abel--Prym map.  He would also like to thank Gavril Farkas for pointing out an error in an earlier draft regarding Brill--Noether theory  for  connected \'etale double covers, and directing the author to \cite{AF, schwarz} where the correct statements are proven, providing the needed result to complete the proof of  Proposition \ref{P:GenAP}.

\subsection{The Abel--Prym map in degree-$1$}\label{S:App-d=1}
We recall the following well-known result:

\begin{proposition}\label{P:d1}
For any prime number $\ell\ne \operatorname{char}(k)$ and any point $\tilde p_0\in \widetilde C$, the class of the push forward of $\widetilde C$ by the pointed Abel--Prym map is  
\begin{equation}\label{E:Pd1}
(\delta_{1,\tilde p_0})_*[\widetilde C]=2\cdot \frac{[\Xi]^{\rho-1}}{(\rho-1)!} \in H^{2\rho -2}(P,\mathbb Z_\ell(\rho-1)),
\end{equation}
where $\rho=\dim P=g-1$.  
In addition, if $\widetilde C$ is not hyperelliptic, then the Abel--Prym map $\delta_1$ is an embedding, so that $[\delta_{1,\tilde p_0}(\widetilde C)]= 2\cdot \frac{[\Xi]^{\rho-1}}{(\rho-1)!} $.  
If $\widetilde C$ is hyperelliptic, then $\delta_1$ has degree $2$, and the image $\Sigma:=\delta_1(\widetilde C)\subseteq P'$ is a smooth hyperelliptic curve of genus $g-1$ so that setting $\Sigma_{\tilde p_0}:=\delta_{1,\tilde p_0}(\widetilde C)\subseteq P$, we have  $[\Sigma_{\tilde p_0}]= \frac{[\Xi]^{\rho-1}}{(\rho-1)!} $, and $(P,\Xi)$ is isomorphic to the principally polarized Jacobian $(J(\Sigma),\Theta_\Sigma)$. 
\end{proposition}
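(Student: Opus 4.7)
My plan is to factor the Abel--Prym map as
\[
\delta_{1,\tilde{p}_0} = \phi \circ \alpha_{\tilde{p}_0},
\]
where $\alpha_{\tilde{p}_0}: \widetilde{C} \to J(\widetilde{C})$ is the classical Abel--Jacobi map based at $\tilde{p}_0$, and $\phi := 1 - \iota^*: J(\widetilde{C}) \to J(\widetilde{C})$ is an endomorphism whose image is $P$, whose kernel equals $\pi^* J(C)$ up to $2$-torsion, and whose restriction $\phi|_P$ is multiplication by $2$ (since $\iota^*$ acts as $-1$ on $P$). This factorization reduces every claim in the proposition to classical statements about the Abel--Jacobi map combined with the eigenspace structure of $\iota^*$ on $J(\widetilde{C})$.

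For the cohomology class identity \eqref{E:Pd1}, I would begin from the classical Poincar\'e formula $(\alpha_{\tilde{p}_0})_*[\widetilde{C}] = [\Theta_{\widetilde{C}}]^{2g-2}/(2g-2)!$ in $H^*(J(\widetilde{C}); \mathbb{Q}_\ell)$ and compute the pushforward under $\phi_*$. The cleanest way to carry this out is to pass through the natural isogeny
\[
\mu: J(C) \times P \to J(\widetilde{C}), \quad (y,z) \mapsto \pi^* y + z,
\]
under which $\mu^* \Theta_{\widetilde{C}} \equiv p_1^*(2\Theta_C) + p_2^*(2\Xi)$ numerically, since the invariant and anti-invariant parts of $H^2(J(\widetilde{C}))$ with respect to $\iota^*$ are pulled back from $J(C)$ and $P$ respectively. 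Expanding $[\Theta_{\widetilde{C}}]^{2g-2}/(2g-2)!$ by the binomial theorem through this decomposition, the vanishing of $H^k(J(C))$ for $k>2g$ and $H^k(P)$ for $k>2g-2$ restricts the surviving monomials to two; applying $\phi_*$, which annihilates the $J(C)$-direction and acts as $[2]_{P,*}$ on the $P$-direction, leaves a single term that collapses to $2[\Xi]^{\rho-1}/(\rho-1)!$ after collecting the combinatorial factors and the powers of $2$. A Tate-twist bookkeeping records the answer in the integrally twisted group $H^{2\rho-2}(P; \mathbb{Z}_\ell(\rho-1))$.

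For the dichotomy between the embedding and degree-$2$ cases, I would argue as follows. The equality $\delta_1(x) = \delta_1(y)$ is equivalent to the linear equivalence $x + \iota(y) \sim y + \iota(x)$ of effective divisors of degree $2$ on $\widetilde{C}$; since $\iota$ is fixed-point-free, these two divisors coincide only when $x=y$, so any nontrivial fiber produces a $g^1_2$ and forces $\widetilde{C}$ to be hyperelliptic. Injectivity of the differential $d\delta_1$ is equivalent, by duality, to the base-point-freeness and point-separation of the \emph{Prym-canonical} linear system $|H^0(\widetilde{C}, \omega_{\widetilde{C}})^{-}|$ of $\iota^*$-anti-invariant holomorphic differentials; this is classical (see \cite{B77_schottky}) and holds precisely when $\widetilde{C}$ is not hyperelliptic. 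Combining the two settles the non-hyperelliptic case.

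In the hyperelliptic case, the hyperelliptic involution $\sigma$ commutes with $\iota$ by uniqueness of the $g^1_2$, so $\iota\sigma$ is again fixed-point-free and $\delta_1(x) = \delta_1(\iota\sigma(x))$, yielding $\deg\delta_1=2$; the quotient $\Sigma := \widetilde{C}/\langle \iota\sigma\rangle \cong \delta_1(\widetilde{C})$ is a smooth hyperelliptic curve of genus $g-1$. The isomorphism of principally polarized abelian varieties $(P,\Xi)\simeq(J(\Sigma),\Theta_\Sigma)$ is Mumford's structure theorem for Pryms of \'etale covers of hyperelliptic curves (\cite{Mumford_Prym}, \S 7), and the formula $[\Sigma_{\tilde{p}_0}] = [\Xi]^{\rho-1}/(\rho-1)!$ then reduces to the Poincar\'e formula for the Abel--Jacobi curve inside $J(\Sigma)\simeq P$. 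The main obstacle is keeping track of the various factors of $2$ in the cohomology class computation: they arise both from the polarization identity $\mu^* \Theta_{\widetilde{C}} \equiv p_1^*(2\Theta_C) + p_2^*(2\Xi)$ and from the action of $[2]_{P,*}$ on cohomology, and it is easy to lose control of constants without performing the K\"unneth expansion in full.
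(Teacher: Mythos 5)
Your route to the class formula \eqref{E:Pd1} is genuinely different from the paper's and it does work. The paper adapts the proof of Poincar\'e's formula directly inside $H^{*}(P,\ZZ_\ell)$, using $H^{2\rho-2}(P,\ZZ_\ell)=\bigwedge^{2\rho-2}H^1(P,\ZZ_\ell)$ and the symmetric polarization $\Xi$; you instead push the classical Poincar\'e formula on $J(\widetilde C)$ through $1-\iota^*$ via the isogeny $\mu:J(C)\times P\to J(\widetilde C)$. Carrying your computation out: the only K\"unneth monomial surviving $(p_2)_*$ is $k=g$, giving $\binom{2g-2}{g}(2\Theta_C)^g(2\Xi)^{g-2}/(2g-2)!=2^{2g-2}\,[\Xi]^{g-2}/(g-2)!$ after integrating over $J(C)$; then $[2]_{P,*}$ multiplies $H^{2\rho-2}(P)$ by $2^{2\rho-(2\rho-2)}=4$; and one must divide by $\deg\mu=2\cdot|\pi^*J(C)\cap P|=2^{2g-1}$, which uses the standard fact $|\pi^*J(C)\cap P|=2^{2g-2}$ from \cite{Mumford_Prym}. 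The powers of $2$ then collapse to exactly $2\,[\Xi]^{\rho-1}/(\rho-1)!$. You need to quote that order of $\pi^*J(C)\cap P$ explicitly --- without it the constant is not determined --- and you should note that the division by $\deg\mu$ takes place in $\QQ_\ell$-coefficients, which is harmless only because the final class is visibly integral. What your approach buys is that it reduces everything to the Jacobian Poincar\'e formula plus eigenspace bookkeeping; what the paper's approach buys is that it stays integral throughout and generalizes verbatim to $\delta_d$ (which the paper instead handles by Pontryagin powers).

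There is, however, a concrete error in your hyperelliptic case: the involution $\iota\sigma$ is \emph{not} fixed-point-free. This contradicts your own genus count --- a fixed-point-free involution on a curve of genus $2g-1$ has quotient of genus $g$, not $g-1$. Since $g(\Sigma)=g-1$, Riemann--Hurwitz forces $\iota\sigma$ to have exactly four fixed points; these are precisely the four ramification points of $\delta_1$ identified in Remark~\ref{R:RamHE}, namely $\pi^{-1}(\{p,p'\})$ where $\eta\cong\mathcal O_C(p'-p)$. The rest of your argument survives once this is corrected: the fibers of $\delta_1$ are still exactly the $\langle\iota\sigma\rangle$-orbits (your computation $x-\iota x\sim \iota\sigma x-\sigma x$ is fine), so $\deg\delta_1=2$ and $\widetilde C/\langle\iota\sigma\rangle$ is a smooth curve of genus $g-1$. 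A second, smaller gap: identifying $\delta_1(\widetilde C)$ with this quotient requires the induced map $\Sigma\to P'$ to be an embedding; your fiber analysis gives injectivity, but injectivity of its differential at the four branch points is not automatic. The paper sidesteps both the smoothness of the image and the isomorphism $(P,\Xi)\cong(J(\Sigma),\Theta_\Sigma)$ by deducing them from the minimality of the class $[\Sigma_{\tilde p_0}]=[\Xi]^{\rho-1}/(\rho-1)!$ via the Matsusaka--Ran criterion, whereas you invoke Mumford's structure theorem; the latter is a legitimate citation, but note that Mumford's statement for hyperelliptic $C$ gives a \emph{product} of hyperelliptic Jacobians in general, and you must single out the special case $\eta\cong\mathcal O_C(p'-p)$ in which one factor is trivial.
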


\begin{proof}
Computing the degree of $\delta_1$ is a basic computation from the definition; the details can be found in \cite[Prop.~12.5.2]{BL_CAV} where the arguments are made over $\mathbb C$, but which  hold over any algebraically closed field of characteristic not equal to $2$. 
The statements regarding the differential of $\delta_1$  can be found in \cite[Cor.~12.5.5--7]{BL_CAV} over $\mathbb C$; those arguments also hold over any algebraically closed field of characteristic not equal to $2$, and we also review this in \S \ref{S:GeomDiff}.  
  As the map $\delta_1$ is finite, computing the class of $(\delta_{1,\tilde p_0})_*[\widetilde C]$ is a standard argument using the fact that $H^{2\rho - 2}(P,\mathbb Z_\ell)= \bigwedge ^{2\rho -2}H^1(P,\mathbb Z_\ell)$, and facts about first Chern classes of symmetric polarizations on abelian varieties.  This is essentially the same argument that is used to prove Poincar\'e's formula in \cite[p.25]{ACGHI}, and the arguments there are easily adapted to the Abel--Prym map, and the positive characteristic case (see \cite[Lem.~3.2]{Mas76} over $\mathbb C$).  
  
  The fact that $\Sigma$ is smooth and that $(P,\Xi)$ is isomorphic to the principally polarized Jacobian $(J(\Sigma),\Theta_\Sigma)$  follows from the fact that  $[\Sigma_{\tilde p_0}]= \frac{[\Xi]^{\rho-1}}{(\rho-1)!} $, and the criterion of Matsusaka--Ran \cite{C84}. One can conclude that $\Sigma$ is hyperelliptic by considering the image of the  $g^1_2$ on $\widetilde C$ under the norm map for $\widetilde C\to \Sigma$.  
\end{proof}

\begin{remark}  If $\widetilde C$ is hyperellptic, then $C$ is hyperelliptic, as well (consider the image of the  $g^1_2$ on $\widetilde C$ under the norm map for $\pi$). 
Thus the conclusion in Proposition \ref{P:d1} that if $\widetilde C$ is hyperelliptic then $(P,\Xi)$ is a hyperelliptic Jacobian is a special case of a result of Mumford, which states that for any $\pi:\widetilde C\to C$ with $C$ hyperelliptic, the Prym variety is a product of hyperelliptic Jacobians \cite[p.346]{Mumford_Prym}.
\end{remark}

\begin{remark}\label{R:RamHE}  In the case where $\widetilde C$ is hyperelliptic, we can say more.  
From the definition, we have that $\delta_1(\tilde p)=\delta_1(\tilde p')$ for distinct points $\tilde p,\tilde p'\in \widetilde C$ if and only if $\tilde p+\iota(\tilde p')$ is in the (unique) $g^1_2$ on $\widetilde C$.  
By Riemann--Hurwitz, $\delta_1$ is ramified at four points.  
The ramification points $\tilde r \in \widetilde C$ of $\delta_1$ are distinguished by the fact that the $2$-torsion line bundle $\eta$ determining the cover $\pi$ satisfies  $\eta=\mathcal O_C(\pi(\tilde r)-p)$ for some $p\in C$.  Note that this forces $\pi(\tilde r)$ and $p$   to be  branch points for the hyperelliptic involution on $C$.  
In summary, if $\widetilde C$ is hyperelliptic, then $\eta\cong \mathcal O_C(p'-p)$ for some distinct ramification points $p,p'\in C$ for the $g^1_2$ on $C$, and the four ramification  points of $\delta_1$ are the points  $\pi^{-1}(\{p,p'\})$.  
The details can be found in \cite[\S 12.5]{BL_CAV}, where again the arguments hold in positive characterstic, as well; see also \S \ref{S:GeomDiff}.  
\end{remark}

\subsection{The differential of the Abel--Prym map}
We next show that the Abel--Prym map is generically finite for $d\le g-1$ by showing that the differential is generically injective in that range.

\begin{proposition}\label{P:DiffAP}
 For $d\le g-1$, 
the differential of $\delta_d$ is generically injective.  
For all $d$ the differential generically has rank equal to $\min (d,g-1)$.  
 The differential of $\delta_d$ is injective if and only if $d=1$ and $\widetilde C$ is not hyperelliptic. 
 
  More precisely, the differential $\delta_d$ is injective at $\widetilde D\in \widetilde C^{(d)}$ if and only if the support of $\widetilde D$ satisfies $\operatorname{Supp}(\widetilde D)\cap \operatorname{Supp}(\iota \widetilde D)=\emptyset$ and, setting $D=\Nm\widetilde D$, there does not exist an effective divisor $E\in C^{(d)}$ such that $\eta =\mathcal O_C(D-E)$, where $\eta$ is the $2$-torsion line bundle determining the cover $\widetilde C/C$.  
\end{proposition}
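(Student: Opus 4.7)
My plan is to compute the codifferential of $\delta_d$ at a point $\widetilde D\in \widetilde C^{(d)}$ in purely cohomological terms, and then extract all three assertions from a single criterion for when it is surjective. Since $\pi$ is \'etale, $\omega_{\widetilde C}=\pi^*\omega_C$, and the projection formula together with $\pi_*\mathcal O_{\widetilde C}=\mathcal O_C\oplus \eta^{-1}$ will diagonalize $\iota^*$ on $V:=H^0(\widetilde C,\omega_{\widetilde C})$:
\[
V^+\cong H^0(C,\omega_C),\qquad V^-\cong H^0(C,\omega_C\otimes\eta),
\]
of dimensions $g$ and $g-1$ respectively. The principal polarization identifies $T_0^*P=V^-$, while $T_{\widetilde D}^*\widetilde C^{(d)}\cong H^0(\widetilde D,\omega_{\widetilde C}|_{\widetilde D})$. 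Writing $\delta_d=(1-\iota^*)\circ \alpha_d$ and observing that $(1-\iota^*)$ on $T_0J(\widetilde C)$ is the projection onto the $(-1)$-eigenspace (up to a factor of $2$), the codifferential $(d\delta_d)^*_{\widetilde D}$ becomes the composition
\[
V^-\hookrightarrow V\xrightarrow{\mathrm{ev}} H^0(\widetilde D,\omega_{\widetilde C}|_{\widetilde D})
\]
of inclusion with evaluation at $\widetilde D$.

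The heart of the argument will be to analyze when this composition is surjective, which is equivalent to injectivity of $(d\delta_d)_{\widetilde D}$ and already forces $d\le g-1$. First, if $\operatorname{Supp}(\widetilde D)\cap \operatorname{Supp}(\iota\widetilde D)\ne\emptyset$, picking $\tilde p,\iota\tilde p$ in $\operatorname{Supp}(\widetilde D)$ and using $\iota^*\omega=-\omega$ for $\omega\in V^-$ will force the values at $\tilde p$ and $\iota\tilde p$ to be related by a sign under the identification via $d\iota$, so the image of $V^-$ lies in a proper subspace and surjectivity fails. Otherwise, when the supports are disjoint, any anti-invariant section of $\omega_{\widetilde C}(-\widetilde D)$ will automatically vanish on $\iota\widetilde D$, so the projection formula gives
\[
\ker\bigl(V^-\to H^0(\widetilde D,\omega_{\widetilde C}|_{\widetilde D})\bigr)\cong H^0\bigl(C,\omega_C(-D)\otimes\eta\bigr)
\]
with $D=\Nm\widetilde D$. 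Serre duality identifies this with $h^1(\mathcal O_C(D)\otimes\eta)$, and Riemann--Roch for $\mathcal O_C(D)\otimes\eta$ yields
\[
h^0\bigl(\omega_C(-D)\otimes\eta\bigr)=h^0\bigl(\mathcal O_C(D)\otimes\eta\bigr)+(g-1-d),
\]
so the codifferential is surjective precisely when $h^0(\mathcal O_C(D)\otimes\eta)=0$, i.e.\ when no effective $E\in C^{(d)}$ satisfies $\eta=\mathcal O_C(D-E)$. The point that will require the most care is the first (non-disjoint support) case, which needs a careful identification of cotangent spaces under $d\iota$ and must also handle non-reduced $\widetilde D$.

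From this pointwise criterion the remaining statements should fall out quickly. For $d\le g-1$ both conditions will hold on a dense open: disjointness of supports is obviously generic, and were $\mathcal O_C(D)\otimes\eta\in W_d\subset \operatorname{Pic}^d(C)$ for every $D$, I would obtain $W_d+\eta=W_d$ by a dimension count, contradicting the triviality of the stabilizer of $W_d$ in $J(C)$ (which reduces to the standard case of the theta divisor $W_{g-1}$); this gives generic injectivity of $(d\delta_d)$ and hence generic rank $d=\min(d,g-1)$. For $d>g-1$, the line bundle $\omega_C(-D)\otimes\eta$ has degree $2g-2-d\le g-2$, so $h^0=0$ for generic $D$ by the standard Riemann--Roch-plus-semicontinuity argument; the codifferential is then generically injective on $V^-$, yielding generic rank $g-1=\min(d,g-1)$. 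Finally, for the global injectivity claim: whenever $d\ge 2$ one may always produce $\widetilde D$ containing both $\tilde p$ and $\iota\tilde p$, violating the disjointness condition; for $d=1$ disjointness is automatic by freeness of $\iota$, and the second condition fails at some $\tilde p$ iff $\eta\cong \mathcal O_C(p-q)$ for some $q\ne p=\pi(\tilde p)$. Such a pair forces $\mathcal O_C(2p)\cong \mathcal O_C(2q)$, giving a $g^1_2$ on $C$ with $p,q$ Weierstrass; then splitting $\pi_*\pi^*\mathcal O_C(p)=\mathcal O_C(p)\oplus\bigl(\mathcal O_C(p)\otimes\eta\bigr)$ and noting that $\mathcal O_C(p)\otimes\eta\cong \mathcal O_C(2p-q)$ is effective will give $h^0(\pi^*\mathcal O_C(p))=2$, so $\widetilde C$ is hyperelliptic. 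The converse is exactly the content of Remark~\ref{R:RamHE}.
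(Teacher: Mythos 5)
Your proposal is correct and is, up to Serre duality, the same argument as the paper's: where the paper analyzes the differential as the composition of the addition map $\beta_d$ with the coboundary $H^0(\mathcal O_{\widetilde D+\iota\widetilde D}(\widetilde D+\iota\widetilde D))\to H^1(\mathcal O_{\widetilde C})$ and reduces, via $\pi_*\mathcal O_{\widetilde C}=\mathcal O_C\oplus\eta$, to the injectivity of the coboundary $\partial_{\eta(D)}$, you analyze the dual map --- evaluation of Prym-canonical forms $H^0(C,\omega_C\otimes\eta)\to H^0(\omega_{\widetilde C}|_{\widetilde D})$ --- and reduce by the same projection-formula splitting and Riemann--Roch to the identical criterion $h^0(C,\eta(D))=0$. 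Your treatment of the non-disjoint-support case (the image of $V^-$ landing in an antidiagonal, dual to the failure of injectivity of $\beta_d$), of the generic ranks, and of the $d=1$ hyperelliptic dichotomy all match the paper's reasoning, so there is no gap.
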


\begin{proof}
 The case $d=1$ is handled in Proposition \ref{P:d1}.
 Since there exist positive dimensional fibers of $\delta_d$ for all $d\ge 2$, the differential cannot be injective for $d\ge 2$.   For the statements on the generic rank, once we have shown that the differential of $\delta_{g-1}$ is generically injective, by a dimension count it is generically surjective, and one can easily check that this implies that $\delta_d$ surjects onto $P$ or $P'$ for all $d\ge g-1$.  Thus, for the generic rank of the differential of $\delta_d$,  it only remains to show that for $d\le g-1$, 
the differential of $\delta_d$ is generically injective.

For this, we  factor $\delta_d$ as follows:
$$
\xymatrix@C=3em{
\widetilde C^{(d)}\ar[r]^<>(0.5){1\times \iota}&\widetilde C^{(d)} \times \widetilde C^{(d)}\ar[r]^<>(0.5){{\alpha_d}\times {\alpha_d}}& \operatorname{Pic}^{d}_{\widetilde C/k}\times \operatorname{Pic}^{d}_{\widetilde C/k} \ar[r]^<>(0.5){-}& \operatorname{Pic}^0_{\widetilde C/k}
}
$$
where ${\alpha_d}:\widetilde C^{(d)}\to \operatorname{Pic}^d_{\widetilde C/k}$ is the Abel map $\widetilde D\mapsto \mathcal O_{\widetilde C}(\widetilde D)$.  Then at the  level of the differential, at a divisor $\widetilde D\in\widetilde C^{(d)}$, these are given by the maps
$$
\xymatrix@C=2.55em{
H^0(\mathcal O_{\widetilde D}(\widetilde D)) \ar[r]^<>(0.5){1\times \iota}& H^0(\mathcal O_{\widetilde D}(\widetilde D)) \times H^0(\mathcal O_{\iota \widetilde D}(\iota \widetilde D)) \ar[r]^<>(0.5){T{\alpha_d}\times T{\alpha_d}}& H^1(\mathcal O_{\widetilde C})\times H^1(\mathcal O_{\widetilde C}) \ar[r]^<>(0.5){-}& H^1(\mathcal O_{\widetilde C}).
}
$$
The composition of the maps above agrees  with the composition of the maps 
\begin{equation}\label{E:Diff2}
\xymatrix@C=2.55em{
H^0(\mathcal O_{\widetilde D}(\widetilde D)) \ar[r]^<>(0.5){1\times (-\iota)}& H^0(\mathcal O_{\widetilde D}(\widetilde D)) \times H^0(\mathcal O_{\iota \widetilde D}(\iota \widetilde D)) \ar[r]^<>(0.5){T{\alpha_d}\times T{\alpha_d}}& H^1(\mathcal O_{\widetilde C})\times H^1(\mathcal O_{\widetilde C}) \ar[r]^<>(0.5){+}& H^1(\mathcal O_{\widetilde C})
}
\end{equation}
which in turn factors as
\begin{equation}\label{E:Diff3}
\xymatrix@C=2.2em{
H^0(\mathcal O_{\widetilde D}(\widetilde D)) \ar[r]^<>(0.5){1\times (-\iota)}\ar@{=}[d]& H^0(\mathcal O_{\widetilde D}(\widetilde D)) \times H^0(\mathcal O_{\iota \widetilde D}(\iota \widetilde D)) \ar[r]^<>(0.5){T{\alpha_d}\times T{\alpha_d}} \ar[d]^+& H^1(\mathcal O_{\widetilde C})\times H^1(\mathcal O_{\widetilde C}) \ar[r]^<>(0.5){+}& H^1(\mathcal O_{\widetilde C}) \ar@{=}[d]\\
H^0(\mathcal O_{\widetilde D}(\widetilde D)) \ar[r]^<>(0.5){ \beta_d}& H^0( \mathcal O_{\widetilde D+\iota \widetilde D}(\widetilde D+\iota \widetilde D)) \ar[rr]^<>(0.5){T{\alpha_{2d}} }& & H^1(\mathcal O_{\widetilde C})\\
}
\end{equation}
where we denote by $+:H^0(\mathcal O_{\widetilde D}(\widetilde D)) \times H^0(\mathcal O_{\iota \widetilde D}(\iota \widetilde D)) \to H^0( \mathcal O_{\widetilde D+\iota \widetilde D}(\widetilde D+\iota \widetilde D))$ the differential of the map $+:\widetilde C^{(d)}\times \widetilde C^{(d)}\to \widetilde C^{(2d)}$ at the point $(\widetilde D,\iota \widetilde D)$, we set  $\beta_d:= +\circ (1\times (-\iota))$, and $T\alpha_{2d}$ is the differential of the 
 Abel map $\alpha_{2d}: \widetilde C^{(2d)}\to \operatorname{Pic}^{2d}_{\widetilde C/k}$ at the point $\widetilde D+\iota \widetilde D$.
Recall that $T\alpha_{2d}$ is  identified with the 
coboundary map 
\begin{equation}\label{E:DiffCo}
\xymatrix{
\partial_{\widetilde D+\iota \widetilde D}:H^0( \mathcal O_{\widetilde D+\iota \widetilde D}(\widetilde D+\iota \widetilde D))\ar[r] & H^1( \mathcal O_{\widetilde C})
}
\end{equation}
from the long exact sequence associated to the short exact sequence
\begin{equation}\label{E:SEStD}
0\to \mathcal O_{\widetilde C}\to \mathcal O_{\widetilde C}(\widetilde D+\iota \widetilde D)\to \mathcal O_{\widetilde D+\iota \widetilde D}(\widetilde D+\iota \widetilde D)\to 0.
\end{equation}

Returning now to showing that the differential of $\delta_d$ is generically injective, 
note that $\beta_d$ in \eqref{E:Diff3} is injective if and only if $\operatorname{Supp}(\widetilde D)\cap \operatorname{Supp}(\iota \widetilde D)=\emptyset$.  
  Thus under the assumption $\operatorname{Supp}(\widetilde D)\cap \operatorname{Supp}(\iota \widetilde D)=\emptyset$,  it suffices to show that the coboundary map $\partial_{\widetilde D+\iota \widetilde D}$ in \eqref{E:DiffCo} is injective for general $\widetilde D$ of degree $d\le g-1$.  
From the long exact sequence in cohomology, it is enough to show that $h^0(\widetilde C,\mathcal O_{\widetilde C}(\widetilde D+\iota \widetilde D))=1$. 
For this, set $D=\Nm(\widetilde D)$ so that we have  $\widetilde D+\iota \widetilde D=\pi^*D$.  Then using that $\pi_*\mathcal O_{\widetilde C}=\mathcal O_C\oplus \eta$, where $\eta^{\otimes 2}\cong \mathcal O_C$ is the $2$-torsion line bundle defining the cover, and the projection formula, 
we have $h^0(\widetilde C,\mathcal O_{\widetilde C}(\widetilde D+\iota \widetilde D))= h^0(C,\mathcal O_C(D))+h^0(C,\eta(D))$.  
Thus it suffices to show that 
 $h^0(C,\mathcal O_C(D))=1$ and $ h^0(C,\eta(D))=0$,  if $D$ is general of degree $d\le g-1$ (since if $\widetilde D$ is generic, then $D$ will be, too).  By Riemann--Roch, this is equivalent to showing that $h^0(C,K_C(-D))=g-d$, and $h^0(C,K_C(-D)\otimes \eta)=(g-1)-d$; if $D$ is general  of degree $d\le g-1$, then these conditions are satisfied.  This completes the proof regarding generic injectivity of the differential.

 For the final statement of the proposition, still under the assumption  $\operatorname{Supp}(\widetilde D)\cap \operatorname{Supp}(\iota \widetilde D)=\emptyset$, we observe that the coboundary map \eqref{E:DiffCo} is identified with the direct sum of the coboundary maps
$$
\xymatrix@C=3em{
H^0( \mathcal O_C(D)|_D)\oplus H^0(\eta(D))|_D)\ar[r]^<>(0.5){\partial_D\oplus \partial_{\eta(D)}} & H^1(\mathcal O_C)\oplus H^1(\eta)
}
$$
associated to the short exact sequence $0\to \mathcal O_C\to \mathcal O_C(D)\to \mathcal O_C(D)|_D\to 0$, and the short exact sequence obtained by tensoring with $\eta$.   For this, apply $\pi_*$ to \eqref{E:SEStD}, and use $R^1\pi_*\mathcal O_{\widetilde C}=0$ and the projection formula to obtain the direct sum of the short exact sequences together with the identification  $\pi_*\mathcal O_{\widetilde D+\iota \widetilde D}(\widetilde D+\iota \widetilde D)= \mathcal O_C(D)|_D\oplus \eta(D)|_D$. 
Since the image of $\beta_d$ in \eqref{E:Diff3} is equal to the anti-invariant part of the target, which is in turn identified with $H^0(\eta(D))|_D)$, one is reduced to checking the injectivity of $\partial_{\eta(D)}$.
  From the long exact sequence in cohomology, this coboundary map is injective if and only if  $0=h^0(C,\eta)= h^0(C,\eta(D))$. 
 This fails if and only if $\eta\cong \mathcal O_C(D-E)$ for some effective divisor $E\in C^{(d)}$, completing the proof.
\end{proof}

\begin{remark}
It is elementary to see from the definition of the Abel--Prym map, or from \eqref{E:Diff2}, that if $h^0(\widetilde C,\mathcal O_{\widetilde C}(\widetilde D))\ge 2$, then the differential of $\delta_d$ fails to be injective at $\widetilde D$.  Therefore Proposition \ref{P:DiffAP} implies that if $\operatorname{Supp}(\widetilde D)\cap \operatorname{Supp}(\iota \widetilde D)=\emptyset$ and  $h^0(\widetilde C,\mathcal O_{\widetilde C}(\widetilde D))\ge 2$, then  $h^0(C,\eta(D))>0$. Here we give an elementary proof of this fact.  
 Indeed, let $\tilde \sigma \in  H^0(\widetilde C,\mathcal O_{\widetilde C}(\widetilde D))$ be such that its divisor of zeros is $(\tilde \sigma)_0=\widetilde D$.  Then for general $\tilde \sigma' \in H^0(\widetilde C,\mathcal O_{\widetilde C}(\widetilde D))$, with divisor of zeros $(\tilde \sigma')_0=\widetilde D'$,  we have that the section  $\tilde \sigma \cdot \iota \tilde \sigma ' \in H^0(\widetilde C,\mathcal O_{\widetilde C}(\widetilde D+\iota \widetilde D))$, with divisor of zeros $(\tilde \sigma \cdot \iota \tilde \sigma ' )_0=\widetilde D+\iota \widetilde D'$, has support that is not equal to the support of $\iota \widetilde D+\widetilde D'$; here we are using that $\operatorname{Supp}(\widetilde D)\cap \operatorname{Supp}(\iota \widetilde D)=\emptyset$.      
 Therefore,  $\tilde \sigma\cdot \iota \tilde  \sigma'\ne \tilde \sigma' \cdot \iota \tilde \sigma$, since their associated divisors have different supports.  Consequently, $\tilde  \sigma \cdot \iota \tilde \sigma'- \tilde \sigma' \cdot \iota \tilde \sigma\ne 0$ gives a non-trivial element of $H^0(\widetilde C,\mathcal O_{\widetilde C}(\widetilde D+\iota \widetilde D))^-= H^0(C,\eta (D))$.
\end{remark}

\begin{corollary}\label{C:gen-fin}
The Abel--Prym map $\delta_d$ is generically finite if and only if $d\le g-1$, and surjects onto $P$ (resp.~$P'$) if and only if  $d\ge  g-1$ and $d$ is even (resp.~$d$ is odd).   
 \qed
\end{corollary}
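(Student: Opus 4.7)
The plan is to derive Corollary \ref{C:gen-fin} as an almost immediate consequence of Proposition \ref{P:DiffAP}, combined with elementary dimension considerations and the parity statement about the image of $\delta_d$ recalled in the opening of the appendix (namely, that $\delta_d(\widetilde C^{(d)}) \subseteq P$ when $d$ is even and $\subseteq P'$ when $d$ is odd).

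First I would handle generic finiteness. A morphism between irreducible varieties is generically finite if and only if its differential has generic rank equal to the dimension of the source. Since $\dim \widetilde C^{(d)} = d$, and Proposition \ref{P:DiffAP} computes the generic rank of the differential of $\delta_d$ to be $\min(d,g-1)$, generic finiteness is equivalent to $d = \min(d,g-1)$, which is exactly the condition $d \le g-1$. Conversely, for $d > g-1$, the generic rank $g-1 < d$ forces positive-dimensional generic fibers, so $\delta_d$ is not generically finite.

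Next I would address surjectivity. Because $\widetilde C^{(d)}$ is projective and irreducible, the image $\delta_d(\widetilde C^{(d)})$ is a closed irreducible subvariety of $\ker\Nm$ whose dimension is the generic rank of the differential, i.e.\ $\min(d, g-1)$. Since $\dim P = \dim P' = g-1$, the image can have the same dimension as the target component only when $d \ge g-1$. In that case, the image is a closed irreducible subvariety of dimension $g-1$ inside the irreducible component ($P$ if $d$ is even, $P'$ if $d$ is odd) containing it, hence equal to that component. When $d < g-1$, the image has strictly smaller dimension than $P$ (or $P'$), so $\delta_d$ is not surjective onto either component.

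There is essentially no obstacle beyond citing Proposition \ref{P:DiffAP} and being careful with the parity statement; the only subtlety is that one must invoke the already-established fact that $\ker\Nm$ has exactly two connected components $P$ and $P'$ and that the image of $\delta_d$ lies in the correct one depending on the parity of $d$, so that surjectivity of $\delta_d$ onto $P$ or $P'$ (rather than merely onto its connected component of the image) follows once dimensions agree.
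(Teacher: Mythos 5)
Your proposal is correct and follows essentially the same route as the paper: Corollary \ref{C:gen-fin} carries no separate proof there precisely because, as in your argument, it is read off from the generic-rank statement of Proposition \ref{P:DiffAP} together with a dimension count and the parity of the image recalled at the start of the appendix (the relevant sentence appears inside the proof of that proposition). One small caution: in positive characteristic the dimension of the image need not equal the generic rank of the differential (only the inequality $\dim \delta_d(\widetilde C^{(d)})\ge \operatorname{rank}$ holds), but the two places where you invoke the equality --- non-finiteness for $d>g-1$ and non-surjectivity for $d<g-1$ --- follow anyway from the trivial bound $\dim \delta_d(\widetilde C^{(d)})\le \min(d,\,g-1)$ coming from the dimensions of the source and of the target component.
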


\begin{corollary}\label{C:genFib} Suppose that $\eta$ is not in the image of the difference map $C^{(d)}\times C^{(d)}\to J(C)$, which requires $d<g/2$, and holds if in addition 
$\widetilde C/C$ is general.   Then the exceptional locus of $\delta_d$ is exactly the locus of $\widetilde D\in \widetilde C^{(d)}$ such that $\operatorname{Supp}(\widetilde D)\cap \operatorname{Supp}(\iota \widetilde D)\ne \emptyset$.  
\end{corollary}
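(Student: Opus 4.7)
The plan is to combine the local criterion from Proposition \ref{P:DiffAP} with a short sandwich argument identifying the locus where the differential fails to be injective with the exceptional locus (the locus of positive-dimensional fibers of $\delta_d$).

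First, I would invoke Proposition \ref{P:DiffAP}: the differential $T_{\widetilde D}\delta_d$ fails to be injective precisely when either (i) $\operatorname{Supp}(\widetilde D) \cap \operatorname{Supp}(\iota \widetilde D) \ne \emptyset$, or (ii) there exists an effective $E \in C^{(d)}$ with $\eta = \mathcal O_C(D - E)$, where $D = \Nm \widetilde D$. Alternative (ii), as $\widetilde D$ ranges over $\widetilde C^{(d)}$, is equivalent to $\eta$ lying in the image of the difference map $C^{(d)} \times C^{(d)} \to J(C)$: any realization $\eta = \mathcal O_C(D_0 - E_0)$ lifts to some $\widetilde D_0 \in \widetilde C^{(d)}$ with $\Nm \widetilde D_0 = D_0$ satisfying (ii), while any $\widetilde D$ satisfying (ii) exhibits such a decomposition. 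Under the standing hypothesis that $\eta$ lies outside this image, alternative (ii) is vacuous, so the non-injectivity locus of $T\delta_d$ coincides with the support-overlap locus.

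Next, I would promote this to an equality with the exceptional locus via two inclusions. Since $\pi$ is étale, the involution $\iota$ has no fixed points, so any $\widetilde D$ with overlapping support admits a decomposition $\widetilde D = \tilde q + \iota \tilde q + \widetilde D''$ with $\widetilde D'' \in \widetilde C^{(d-2)}$; as $\tilde q$ varies over $\widetilde C$, $\delta_d(\widetilde D) = \mathcal O_{\widetilde C}(\widetilde D'' - \iota \widetilde D'')$ remains fixed, producing a one-parameter family in the fiber through $\widetilde D$. Hence the support-overlap locus sits inside the exceptional locus. Conversely, at any point where $T\delta_d$ is injective, $\delta_d$ is unramified and fibers are locally finite, so the exceptional locus sits inside the non-injectivity locus. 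Combined with the first step, the three loci coincide.

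For the parenthetical remarks, the image of the difference map has dimension at most $2d$ in the $g$-dimensional target $J(C)$, so missing the nonzero $2$-torsion class $\eta$ forces $2d < g$; and for a general cover in $R_g$, a standard argument (the locus where $\eta$ lands in $W_d - W_d$ is a proper closed sublocus of $R_g$) ensures $\eta \notin W_d - W_d$. I expect the most delicate step to be the first sandwich inclusion, which relies on the explicit positive-dimensional family made possible by the fixed-point-free action of $\iota$; the translation of condition (ii) into difference-map membership, and the generality claims in the parenthetical, are essentially bookkeeping.
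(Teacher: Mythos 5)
Your characterization of the exceptional locus is correct and follows the same route as the paper: under the hypothesis that $\eta$ is not in the image of the difference map, Proposition \ref{P:DiffAP} reduces non-injectivity of the differential to overlap of supports; injectivity of the differential at $\widetilde D$ makes $\widetilde D$ isolated in its fiber, so the exceptional locus lies in the non-injectivity locus; and the explicit family $\pi^*p'+\widetilde D''$ (available because $\iota$ is fixed-point free, so overlapping support forces $\widetilde D\ge \tilde q+\iota\tilde q$) places every support-overlapping divisor on a positive-dimensional fiber. This is exactly the paper's sandwich argument.

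The gap is in the parenthetical claims. First, your justification that missing $\eta$ ``requires $d<g/2$'' --- that the image of the difference map has dimension at most $2d$ --- does not give the implication you need; what is required is that the difference map $C^{(d)}\times C^{(d)}\to J(C)$ is \emph{surjective} once $2d\ge g$, which is the content of \cite[Ex.~D-1, p.223]{ACGHI} and does not follow from a dimension bound alone. More seriously, the assertion that for a general cover ``the locus where $\eta$ lands in the image of the difference map is a proper closed sublocus'' is precisely what has to be proved, not a standard argument: $\eta$ ranges over the finite set $J(C)[2]$, so the fact that the difference variety has dimension $2d<g$ gives no control over whether it contains $\eta$, and one must actually exhibit a cover admitting no relation $\eta=\mathcal O_C(D-E)$ with $D,E$ effective of degree $d$. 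The paper does this by noting that $\pi^*\eta\cong\mathcal O_{\widetilde C}$ converts such a relation into $\pi^*D\sim\pi^*E$, i.e.\ a $g^1_{2d}$ on $\widetilde C$ with $2d<g$, which is excluded for general covers by the Prym--Brill--Noether theorem of \cite{AF}. That input is genuinely needed (the appendix's acknowledgements record that an earlier draft went wrong at exactly this point), so as written your proof of the genericity statement is incomplete.
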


\begin{proof}
First, we explain that if 
$\widetilde C/C$ is general and $d<g/2$, then 
$\eta$ is not in the image of the difference map $C^{(d)}\times C^{(d)}\to J(C)$.  
Indeed, if 
 $\eta = \mathcal O_C(D-E)$ for some effective divisors $D$ and $E$ of degree $d$ on $C$, then since $\pi^*\eta\cong \mathcal O_{\widetilde C}$ (e.g., \cite[Lem.~p.332]{Mumford_Prym}), we would have $\pi^*D \sim \pi^*E$, so that in this case $\widetilde C$ would have a $g^1_{2d}$, which is not possible due to 
 \cite[Thm.~1.4]{AF}, which states that for a general cover $\widetilde C/C$, there is no $g^1_e$ on $\widetilde C$ for $e<g$ (in positive characteristic
 combine the proof of \cite[Thm.~1.4]{AF} given in \cite[Prop.~3.1]{AF} with \cite[Thm.~1.1]{O14}).

Now assuming that $\eta$ is not in the image of the difference map $C^{(d)}\times C^{(d)}\to J(C)$, which implies that $d<g/2\le g-1$ (e.g., \cite[Ex.~D-1, p.223]{ACGHI}; the argument holds in positive characteristic, as well), then Proposition \ref{P:DiffAP} implies that the exceptional locus is contained in the locus of $\widetilde D\in \widetilde C^{(d)}$ such that $\operatorname{Supp}(\widetilde D)\cap \operatorname{Supp}(\iota \widetilde D)\ne \emptyset$.  
On the other hand, it is easy to see that the exceptional locus of $\delta_d$ contains  the locus of $\widetilde D\in \widetilde C^{(d)}$ such that $\operatorname{Supp}(\widetilde D)\cap \operatorname{Supp}(\iota \widetilde D)\ne \emptyset$.  
Indeed, suppose  that $\widetilde D=\tilde p+\iota \tilde p+\widetilde E= \pi^*p +\widetilde E$ for some $\tilde p\in \widetilde C$, $p\in C$, and $\widetilde E\in \widetilde C^{(d-2)}$.  Then for all $p'\in C$ set $\widetilde D_{p'}=\pi^*p'+\widetilde E$, and we have $\delta_d(\widetilde D_{p'})= \delta_d(\widetilde D)$. 
\end{proof}

\subsubsection{Geometric interpretation of the differential of the Abel--Prym map}\label{S:GeomDiff}
The projectivized differential of $\delta_1$ factors as
$$
\xymatrix{
\widetilde C = \mathbb PT\widetilde C \ar[d]_\pi \ar@{-->}[r]^<>(0.5){\mathbb PT\delta_1}& \mathbb PTP=P\times \mathbb PT_0P\ar[d] \\
C \ar@{-->}[r]^<>(0.5){\phi_{K_C\otimes \eta}}& \mathbb PH^0(C,K_C\otimes \eta)^\vee = \mathbb PT_0P
}
$$
where $\eta$ is the $2$-torsion line bundle on $C$ determining the cover $\pi$, the bottom row is the Prym canonical map, given by the linear system $|K_C\otimes \eta|$, and the right vertical map is the projection onto the second factor.    
Moreover $\mathbb P T\delta_1$ is defined at $\tilde p\in \widetilde C$ if and only if $\phi_{K_C\otimes \eta}$ is defined at $p=\pi(\tilde p)$; i.e., $p$ is not a base point of $|K_C\otimes \eta|$.
One can find all of this in \cite[Prop.~12.5.2]{BL_CAV}  over $\mathbb C$; the arguments hold in positive characteristic, as well.
Indeed, factoring $\delta_1$ as $\widetilde C\stackrel{\alpha_1}{\to} \operatorname{Pic}^1_{\widetilde C/k}\stackrel{1-\iota}{\to}  P'\subseteq J(\widetilde C)$ where $\alpha_1$ is the Abel map, and taking the differential at a point $\tilde p\in \widetilde C$, one obtains the maps $H^0(\mathcal O_{\tilde p}(\tilde p))\stackrel{T\alpha_1}{\to} H^1(\mathcal O_{\widetilde C}) \stackrel{1-\iota}{\to} H^1(\mathcal O_{\widetilde C})^-\subseteq H^1(\mathcal O_{\widetilde C})$.  Identifying $H^1(\widetilde C,\mathcal O_{\widetilde C})=H^0(\widetilde C,K_{\widetilde C})^\vee$ and  $H^1(\widetilde C,\mathcal O_{\widetilde C})^-=H^0(C,K_C\otimes \eta)^\vee$, one uses the fact that the projectivized differential of the Abel map is the canonical map $\phi_{K_{\widetilde C}}$ on $\widetilde C$.

A Riemann--Roch computation gives that $|K_C\otimes \eta|$ has a base point if and only if $\eta=\mathcal O_C(p'-p)$ for some $p,p'\in C$, in which case $C$ is hyperelliptic and $p$ and $p'$ are the base points of $|K_C\otimes \eta|$ (e.g., \cite[Lem.~2.1(i)]{Cetal}). 
Note that  $\pi^*p'\sim \pi^*p$ gives a $g^1_2$ on $\widetilde C$.  
  In particular, this recovers the statements about the differential of $\delta_1$ in Proposition \ref{P:d1} and Remark \ref{R:RamHE}. 

As a consequence, if $K_C\otimes \eta$ is very ample, then  given $\widetilde D\in \widetilde C^{(d)}$, and setting $D=\Nm\widetilde D$,   the projectivization of  $(T_{\widetilde D}\delta_d)(T_{\widetilde D}\widetilde C^{(d)}) $  is identified with  the span of the scheme $\phi_{K_C\otimes \eta}(D)$ in $\mathbb PH^0(C,K_C\otimes \eta)^\vee$.  The same result holds more generally so long as the Prym canonical map is defined at the support of $D$ and the Prym canonical model of $C$ is smooth at the image of the support of $D$.  
Note that another Riemann--Roch computation gives that if $|K_C\otimes \eta|$ is base point free, then  $K_C\otimes \eta$ fails to be very ample if and only if $\eta \cong \mathcal O_C(p+q - p'-q')$ for some $p,q,p',q'\in C$, in which case $C$ is tetragonal,  and $p$ and $q$ are not separated by $|K_C\otimes \eta|$; as usual, when   $p=q$ we mean the differential drops rank at $p=q$  (e.g., \cite[Lem.~2.1(ii)]{Cetal}).

\subsection{Push-forward of the fundamental class under the Abel--Prym map}
The main result of this subsection is the following proposition:

\begin{proposition}\label{P:class-push} Let $\ell$ be a prime number not equal to $\operatorname{char}(k)$.  For $d\le g-1$, and taking $\widetilde D_0\in \widetilde C^{(d)}$,  the class of the push forward of the symmetric product under the pointed Abel--Prym map $\delta_{d,\widetilde D_0}$ is given by 
$$
( \delta_{d,\widetilde D_0})_*[\widetilde C^{(d)}] =  2^d\frac{[\Xi]^{\rho-d}}{(\rho-d)!}\in H^{2\rho-2d}(P,\mathbb Z_\ell(\rho-d)),
$$
where  $\rho=g-1=\dim P$.
\end{proposition}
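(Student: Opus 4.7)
The plan is to reduce the higher-degree case to the already-established degree-one formula of Proposition \ref{P:d1} via the classical Pontryagin-product argument used in the derivation of Poincar\'e's formula for $W_d \subset J(C)$ (cf.~\cite[p.~25]{ACGHI}).

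First, since translation acts trivially on cohomology, the class $(\delta_{d,\widetilde D_0})_*[\widetilde C^{(d)}]$ is independent of $\widetilde D_0$, so I would fix a base point $\tilde p_0 \in \widetilde C$ and take $\widetilde D_0 = d\tilde p_0$. With this normalization, the identity
\[
\delta_{d,\widetilde D_0}(\tilde p_1 + \cdots + \tilde p_d) \;=\; \sum_{i=1}^d \delta_{1,\tilde p_0}(\tilde p_i)
\]
places $\delta_{d,\widetilde D_0}$ into a commutative square
\[
\xymatrix{
\widetilde C^d \ar[r]^-{(\delta_{1,\tilde p_0})^d} \ar[d]_-{q} & P^d \ar[d]^-{\sigma_d} \\
\widetilde C^{(d)} \ar[r]^-{\delta_{d,\widetilde D_0}} & P,
}
\]
in which $q$ is the symmetrization map of degree $d!$ and $\sigma_d$ is the sum map. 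Using $q_*[\widetilde C^d] = d!\,[\widetilde C^{(d)}]$ together with the definition of the Pontryagin product $*$ on $H^{*}(P,\mathbb Z_\ell)$, I would deduce
\[
d! \cdot (\delta_{d,\widetilde D_0})_*[\widetilde C^{(d)}] \;=\; \bigl((\delta_{1,\tilde p_0})_*[\widetilde C]\bigr)^{*d}.
\]

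Next, I would substitute the degree-one formula $(\delta_{1,\tilde p_0})_*[\widetilde C] = 2\,[\Xi]^{\rho-1}/(\rho-1)!$ from Proposition \ref{P:d1}, reducing the problem to the $d$-fold Pontryagin power of $[\Xi]^{\rho-1}/(\rho-1)!$ in $H^{*}(P,\mathbb Z_\ell)$. The identity to invoke is
\[
\left(\frac{[\Xi]^{\rho-1}}{(\rho-1)!}\right)^{*d} \;=\; \frac{d!\,[\Xi]^{\rho-d}}{(\rho-d)!},
\]
which is the cohomological content of Poincar\'e's formula on the principally polarized abelian variety $(P,\Xi)$; I would verify it either by a direct computation in a symplectic basis of $H^1(P,\mathbb Z_\ell)$ in the style of \cite[p.~25]{ACGHI}, or, more conceptually, via the Fourier--Mukai transform on $(P,\Xi)$, which interchanges cup and Pontryagin products and carries $[\Xi]^k/k!$ to $\pm[\Xi]^{\rho-k}/(\rho-k)!$. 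Combining the two displays yields
\[
d! \cdot (\delta_{d,\widetilde D_0})_*[\widetilde C^{(d)}] \;=\; 2^d \cdot \frac{d!\,[\Xi]^{\rho-d}}{(\rho-d)!},
\]
and dividing by $d!$ gives the proposition. The hypothesis $d \le g-1 = \rho$ ensures that the target class is non-zero, and since the argument is entirely cohomological it applies verbatim in $\ell$-adic cohomology for any prime $\ell \neq \operatorname{char}(k)$.

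The only non-formal input is the Pontryagin-power identity above, and this is standard on any principally polarized abelian variety, with no additional subtlety in positive characteristic once $\ell$-adic Chern classes of symmetric line bundles are in place. I therefore do not anticipate a genuine obstacle; everything else is formal manipulation with push-forwards along the symmetrization and sum maps.
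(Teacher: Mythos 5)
Your proposal is correct and follows essentially the same route as the paper: factor $\widetilde C^d\to \widetilde C^{(d)}\to P$ through $\widetilde C^d\to P^{\times d}\to P$, identify the result with the $d$-fold Pontryagin power of $(\delta_{1,\tilde p_0})_*[\widetilde C]$, plug in the degree-one formula of Proposition \ref{P:d1}, and evaluate via the identity $\bigl([\Xi]^{\rho-1}/(\rho-1)!\bigr)^{\star d}=d!\,[\Xi]^{\rho-d}/(\rho-d)!$ before cancelling the $d!$ from the symmetrization map. The paper invokes exactly the same Pontryagin-power identity (citing Birkenhake--Lancaster--Lange and the Fourier--Mukai result of Deninger--Murre for positive characteristic), so there is no substantive difference.
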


While Proposition \ref{P:class-push} can be proven exactly as in  the $d=1$ case (i.e., as in the proof of \eqref{E:Pd1} in Proposition \ref{P:d1}), that computation is somewhat laborious, and we prefer to give an alternate proof using \eqref{E:Pd1} as the starting point.  
Similar computations can be found in \cite[\S 1]{B82_sous} and \cite{S89}.

For this, we  take  a brief detour. If  $X,Y\subseteq A$ are subvarieties of an abelian variety, and the map $a:X\times Y\to X+Y\subseteq A$ given by addition is generically finite, then it essentially follows from the definition of the Pontryagin product that in the Chow ring or in the cohomology ring:
$$
[X]\star [Y]= a_*[X\times Y]= \deg (a)[X+Y].
$$
We will want a slight generalization.  If we suppose that $f_X:X'\to X\subseteq A$ and $f_Y:Y'\to Y\subseteq A$ are generically finite surjective morphisms, 
and we set $a' = a\circ (f_X\times f_Y)$ to be the composition:
$$
\xymatrix@C=4em{
a':X'\times Y'\ar[r]^<>(0.5){f_X\times f_Y}& X\times Y\ar[r]^<>(0.5)a_<>(0.5)+& X+Y \subseteq A,
}
$$ 
then, still under the assumption that $a$ is generically finite,  we have
\begin{equation}\label{E:X'+Y'}
f_{X,*}[X']\star f_{Y,*}[Y']=a'_*[X'\times Y']= \deg(a')[X+Y].
\end{equation}
Indeed,  we have the following string of equalities: 
\begin{align*}
a'_*[X'\times Y']&=(\deg a')[X+Y]=(\deg (f_X\times f_Y))(\deg a) [X+Y]=(\deg f_X)( \deg f_Y)(\deg a) [X+Y]\\
&=(\deg f_X)(\deg f_Y)  [X]\star [Y] =  ((\deg f_X) [X'])\star ((\deg f_Y)[Y'])=f_{X,*}[X']\star f_{Y,*}[Y'].
\end{align*}

Finally, we will want to use the standard result that for a principally polarized abelian variety $(A,\Theta)$ of dimension $g$, in the Chow ring we have:
\begin{equation}\label{E:Pont1}
\frac{[\Theta]^{g-a}}{(g-a)!}\star \frac{[\Theta]^{g-b}}{(g-b)!}=\binom{a+b}{a}\frac{[\Theta]^{g-(a+b)}}{(g-(a+b))!},
\end{equation}
which we will use in the form
\begin{equation}\label{E:Pont2}
\left(\frac{[\Theta]^{g-1}}{(g-1)!}\right)^{\star d}=d!\frac{[\Theta]^{g-d}}{(g-d)!}.
\end{equation}
A reference for \eqref{E:Pont1}  over $\mathbb C$ is \cite[Cor.~16.5.8, p.538]{BL_CAV}, which uses as its starting point \cite[Thm., p.647]{B86_chow}, also proven over $\mathbb C$.
 However, 
 \cite[Thm.~2.19]{DM91_fourier}
 shows that Beauville's result holds over any algebraically closed field, and consequently, the arguments for \cite[Cor.~16.5.8, p.538]{BL_CAV} go through in positive characteristic, as well.  Of course,
 \eqref{E:Pont1} is elementary to prove in $\ell$-adic cohomology, and  this is, in fact, all we need.  

\begin{proof}[Proof of Proposition \ref{P:class-push}]
Let $\tilde p_0\in \widetilde C$, and set $\widetilde D_0=d\tilde p_0$.  From Corollary \ref{C:gen-fin} we know that $\delta_d$ is generically finite; therefore, 
if we factor the composition  $\delta_{d,\widetilde D_0}^\times:\widetilde C^d\stackrel{\Sym}{\to} \widetilde C^{(d)}\stackrel{\delta_{d,\widetilde D_0}}{\to} P $ as
$$
\xymatrix{
\delta^\times_{d,\widetilde D_0}:\widetilde C^d \ar[r]^<>(0.5){\delta_{1,\tilde p_0}^d}& P^{\times d}\ar[r]^+ & P
}
$$
then from the left hand side of  \eqref{E:X'+Y'},  \eqref{E:Pd1},  and \eqref{E:Pont2}, we have 
$$
(\delta^\times_{d,\widetilde D_0})_*[\widetilde C^d]= \left((\delta_{1,\tilde p_0})_*[\widetilde C]\right)^{\star^d}= \left(2\frac{[\Xi]^{\rho-1}}{(\rho-1)!}\right)^{\star^d} = 2^d d! \frac{[\Xi]^{\rho-d}}{(\rho-d)!}.
$$
On the other hand, we have 
$$
(\delta^\times_{d,\widetilde D_0})_*[\widetilde C^d]=  (\delta_{d,\widetilde D_0})_* \Sym_*[\widetilde C^d] =  d!(\delta_{d,\widetilde D_0})_* [\widetilde C^{(d)}],
$$
completing the proof.
\end{proof}

\subsection{The degree of the Abel--Prym map}

We start with the following consequence of Proposition \ref{P:class-push}:

\begin{corollary}\label{C:class-push} Let $\ell$ be a prime number not equal to $\operatorname{char}(k)$.
For $d\le g-1$, and taking $\widetilde D_0\in \widetilde C^{(d)}$,  the class $[\operatorname{Im} \delta_{d,\widetilde D_0}]$ of the image of the pointed Abel--Prym map  $ \delta_{d,\widetilde D_0}$ (as a set, or rather as an irreducible scheme, with the reduced induced scheme structure) is
$$
[\operatorname{Im} \delta_{d,\widetilde D_0}] = \frac{2^d}{\deg \delta_{d,\widetilde D_0}}\frac{[\Xi]^{\rho-d}}{(\rho-d)!}\in H^{2\rho-2d}(P,\mathbb Z_\ell (\rho-d)),
$$
where   $\rho=g-1=\dim P$. 
\end{corollary}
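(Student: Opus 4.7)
The plan is to observe that this corollary is essentially a direct consequence of Proposition~\ref{P:class-push} combined with the standard projection formula for push-forwards under generically finite morphisms. There is very little new content to establish; the main task is to verify that the hypotheses for the projection formula are met and to identify the image class correctly.

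First, I would invoke Corollary~\ref{C:gen-fin} to conclude that for $d \le g-1$ the Abel--Prym map $\delta_d$ is generically finite, and hence the pointed variant $\delta_{d,\widetilde D_0}$, which differs from $\delta_d$ only by a translation on the target, is also generically finite. Since $\widetilde C^{(d)}$ is irreducible, $\delta_{d,\widetilde D_0}$ factors as a generically finite surjection onto its scheme-theoretic image $\operatorname{Im}\delta_{d,\widetilde D_0}$, endowed with the reduced induced structure.

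Next, I would apply the standard fact from intersection theory: if $f: X \to Y$ is a proper, generically finite, surjective morphism of integral schemes, then $f_*[X] = \deg(f) \cdot [Y]$ in the Chow ring (and hence in $\ell$-adic cohomology under the cycle class map). Applying this with $f = \delta_{d,\widetilde D_0}$, $X = \widetilde C^{(d)}$, and $Y = \operatorname{Im}\delta_{d,\widetilde D_0}$, one obtains
$$
(\delta_{d,\widetilde D_0})_*[\widetilde C^{(d)}] = \deg(\delta_{d,\widetilde D_0}) \cdot [\operatorname{Im}\delta_{d,\widetilde D_0}].
$$
Substituting the formula for the left-hand side from Proposition~\ref{P:class-push} and dividing by $\deg(\delta_{d,\widetilde D_0})$ yields the claimed identity.

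There is no substantive obstacle here, since both the projection formula and Proposition~\ref{P:class-push} are already in hand; the only subtlety worth a brief remark is to confirm that the degree $\deg(\delta_{d,\widetilde D_0})$ is nonzero, which is exactly the generic finiteness ensured by Corollary~\ref{C:gen-fin} (so the division is legitimate). The resulting equality lives in $H^{2\rho-2d}(P, \mathbb{Z}_\ell(\rho-d))$, matching the codimension of the image, as required.
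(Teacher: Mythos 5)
Your proposal is correct and follows essentially the same route as the paper: invoke Corollary~\ref{C:gen-fin} for generic finiteness, apply $(\delta_{d,\widetilde D_0})_*[\widetilde C^{(d)}]=\deg(\delta_{d,\widetilde D_0})\,[\operatorname{Im}\delta_{d,\widetilde D_0}]$, and substitute Proposition~\ref{P:class-push}. The paper's proof is a one-line version of exactly this argument.
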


\begin{proof}
This follows from Proposition \ref{P:class-push} using the fact that $\delta_d$ is generically finite (Corollary \ref{C:gen-fin}) so that 
 $(\delta_{d,\widetilde D_0})_*[\widetilde C^{(d)}]= \deg (\delta_{d,\widetilde D_0})[\operatorname{Im} \delta_{d,\widetilde  D_0}]$.
\end{proof}

This gives the following corollary:

\begin{corollary}\label{C:deg}
We have  $\deg \delta_{g-1}=2^{g-1}$, and if $\operatorname{char}(k)=0$, then for $d\le g-2$ we have $\deg \delta_d=2^n\le 2^d$ for some integer $n\le d$.
If $\operatorname{char}(k)=p>0$,   then  for $d\le g-2$ we have $\deg \delta_d=p^{m}2^n$ for some integers $m$ and $n$ with $n\le d$. 
\end{corollary}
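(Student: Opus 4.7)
Plan. The case $d = g-1$ will follow directly from Corollary \ref{C:class-push}: when $d = g - 1 = \rho$ we have $[\Xi]^{\rho-d}/(\rho-d)! = 1 \in H^0(P, \mathbb{Z}_\ell)$, and since $\delta_{g-1}$ is generically finite by Corollary \ref{C:gen-fin} while $\dim \widetilde{C}^{(g-1)} = g-1 = \dim P$, the pointed Abel--Prym map surjects onto $P$, forcing $[\operatorname{Im}\delta_{g-1, \widetilde{D}_0}] = 1$. The formula then immediately gives $\deg \delta_{g-1} = 2^{g-1}$.

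For $d \le g - 2$, the plan is to apply Corollary \ref{C:class-push} to obtain the identity
\[
\deg \delta_d \cdot [\operatorname{Im}\delta_{d, \widetilde{D}_0}] = 2^d \alpha_d \quad \text{in } H^{2\rho - 2d}(P, \mathbb{Z}_\ell), \qquad \alpha_d := \frac{[\Xi]^{\rho - d}}{(\rho - d)!},
\]
and then to show that $\alpha_d$ is a \emph{primitive} class in $\ell$-adic cohomology, i.e., generates a $\mathbb{Z}_\ell$-direct summand. For this I would use the natural isomorphism $H^*(P, \mathbb{Z}_\ell) \cong \bigwedge^* H^1(P, \mathbb{Z}_\ell)$ and pick a symplectic basis $e_1, f_1, \ldots, e_\rho, f_\rho$ of $H^1(P, \mathbb{Z}_\ell)$ adapted to the unimodular Weil pairing induced by the principal polarization $\Xi$. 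Then $[\Xi] = \sum_{i=1}^{\rho} e_i \wedge f_i$, and a direct expansion gives
\[
\alpha_d = \sum_{\substack{I \subseteq \{1, \ldots, \rho\} \\ |I| = \rho - d}} \prod_{i \in I}(e_i \wedge f_i),
\]
which is a sum of distinct standard basis elements of $\bigwedge^{2(\rho - d)} H^1(P, \mathbb{Z}_\ell)$, hence primitive.

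Granting primitivity, the integral class $[\operatorname{Im}\delta_{d, \widetilde{D}_0}]$, being rationally proportional to $\alpha_d$, must equal $c_\ell \alpha_d$ for some $c_\ell \in \mathbb{Z}_\ell$. The displayed identity then reads $\deg \delta_d \cdot c_\ell = 2^d$ in $\mathbb{Z}_\ell$, and comparing $\ell$-adic valuations gives $v_\ell(\deg \delta_d) = 0$ for every prime $\ell \notin \{2, p\}$, and $v_2(\deg \delta_d) \le d$ (the choice $\ell = 2$ is permitted since $\operatorname{char}(k) \ne 2$). Since $\deg \delta_d$ is a positive integer, this forces $\deg \delta_d = 2^n p^m$ with $n \le d$ in positive characteristic $p$, and $\deg \delta_d = 2^n \le 2^d$ in characteristic zero.

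For equality in the hyperelliptic case, Proposition \ref{P:d1} identifies $(P, \Xi) \cong (J(\Sigma), \Theta_\Sigma)$ with $\Sigma := \delta_1(\widetilde{C})$ smooth of genus $g-1$, and factors $\delta_1$ through the degree-$2$ quotient $\nu: \widetilde{C} \to \Sigma$ followed by Abel--Jacobi. Consequently $\delta_d$ factors as $\widetilde{C}^{(d)} \xrightarrow{\nu^{(d)}} \Sigma^{(d)} \to P$, with $\nu^{(d)}$ of degree $2^d$ and the classical Abel map $\Sigma^{(d)} \to P$ birational onto its image $W_d(\Sigma)$ for $d \le g - 2 < g(\Sigma) = \rho$; hence $\deg \delta_d = 2^d$. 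The main obstacle I anticipate is the primitivity verification for $\alpha_d$ (which is ultimately a linear-algebra calculation); the more delicate point is organizing the argument $\ell$-adically rather than rationally, so as to correctly isolate the characteristic-$p$ contribution to $\deg \delta_d$.
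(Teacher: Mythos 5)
Your proposal is correct and follows the paper's own route essentially verbatim: the $d=g-1$ case comes from surjectivity plus Corollary \ref{C:class-push}, and the $d\le g-2$ case from the fact that $[\Xi]^{\rho-d}/(\rho-d)!$ is a minimal (primitive) class in $H^{2\rho-2d}(P,\mathbb Z_\ell(\rho-d))$ for every $\ell\ne\operatorname{char}(k)$. The only differences are cosmetic: you supply the symplectic-basis verification of primitivity that the paper takes as standard, and your closing paragraph on the hyperelliptic case addresses Proposition \ref{P:hype} (not part of this corollary) by factoring $\delta_d$ through $\Sigma^{(d)}$ rather than by the paper's direct count of $2^d$ points in a general fiber --- both of which are fine.
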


\begin{proof}
In the case where $d=g-1$, we know, in addition, the class of the image  $\operatorname{Im}(\delta_{g-1,\widetilde D_0})$; indeed,  $\delta_{g-1,\widetilde D_0}$ surjects on to $P$ (Corollary \ref{C:gen-fin}), so that $\operatorname{Im}(\delta_{g-1,\widetilde D_0})=P$.  The fact that $\deg \delta_{g-1}=2^{g-1}$ then follows immediately from Corollary \ref{C:class-push}, completing the proof. 

The case where $d\le g-2$ follows from the fact that 
 $\frac{[\Xi]^{\rho-d}}{(\rho-d)!}\in H^{2\rho-2d}(P,\mathbb Z_\ell (\rho-d))$ is a minimal cohomology class (i.e., it is not divisible by $\ell$), so that Corollary \ref{C:class-push} (considered for all primes $\ell\ne \operatorname{char}(k)$) implies that $\deg \delta_d$ must be a power of the characteristic exponent of $k$ times a power of $2$ that is at most $2^d$.  
\end{proof}

\subsection{The Abel--Prym map for hyperelliptic covers} 

We now discuss the degree of the Abel--Prym map in  the case where $\widetilde C$ is hyperelliptic.

\begin{proposition}\label{P:hype}
Assume that $\widetilde C$ is hyperelliptic.  Then for $d\le g-1$, we have $\deg \delta_d=2^d$.
\end{proposition}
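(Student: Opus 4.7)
The plan is to exploit the fact, supplied by Proposition \ref{P:d1}, that in the hyperelliptic case the degree-$1$ Abel--Prym map itself factors through a degree-$2$ morphism to a simpler curve. Specifically, Proposition \ref{P:d1} provides a smooth hyperelliptic curve $\Sigma$ of genus $g-1$, a degree-$2$ morphism $\pi_\Sigma:\widetilde C\to \Sigma$ (obtained by viewing $\delta_1$ as a map onto its image), and an isomorphism $(P,\Xi)\cong(J(\Sigma),\Theta_\Sigma)$ under which the shifted curve $\Sigma_{\tilde p_0}\subseteq P$ corresponds to a translate of the Abel image $\alpha_1^\Sigma(\Sigma)\subseteq J(\Sigma)$.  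In particular, after this identification, $\delta_{1,\tilde p_0}=\alpha_{1,\pi_\Sigma(\tilde p_0)}^\Sigma\circ \pi_\Sigma$.

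The first step would be to upgrade this to a factorization in arbitrary degree: for any $\widetilde D_0\in \widetilde C^{(d)}$ with $D_0:=\pi_\Sigma^{(d)}(\widetilde D_0)$, the pointed Abel--Prym map fits into a commutative diagram
\[
\delta_{d,\widetilde D_0}\;=\;\alpha_{d,D_0}^\Sigma\circ \pi_\Sigma^{(d)}:\;\widetilde C^{(d)}\xrightarrow{\pi_\Sigma^{(d)}}\Sigma^{(d)}\xrightarrow{\alpha_{d,D_0}^\Sigma} J(\Sigma)\cong P.
\]
This follows directly from the definition of $\delta_{d,\widetilde D_0}$ as the descent to the symmetric product of a sum of $d$ copies of the pointed degree-$1$ Abel--Prym map (one per coordinate of $\widetilde C^d$), combined with the degree-$1$ factorization recalled above.

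The second step would be to compute the generic degree of each factor separately.  The symmetric-product map $\pi_\Sigma^{(d)}$ has generic degree $2^d$: over a generic divisor $q_1+\cdots+q_d\in \Sigma^{(d)}$ with distinct unbranched $q_i$, the fiber consists of the $2^d$ effective divisors $\tilde q_1+\cdots+\tilde q_d\in \widetilde C^{(d)}$ obtained by making an independent choice of preimage $\tilde q_i\in \pi_\Sigma^{-1}(q_i)$ for each $i$.  The pointed Abel map $\alpha_{d,D_0}^\Sigma$ has generic degree $1$ onto its image $W_d^\Sigma\subseteq J(\Sigma)$ whenever $d\le g(\Sigma)=g-1$, since for a generic $L\in W_d^\Sigma$ we have $h^0(\Sigma,L)=1$ and so the linear system $|L|$ consists of a single divisor.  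Multiplying these contributions yields $\deg\delta_d=\deg \alpha_{d,D_0}^\Sigma\cdot \deg\pi_\Sigma^{(d)}=1\cdot 2^d=2^d$, as desired.

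The only subtlety worth flagging is that $\Sigma$ is itself hyperelliptic, so $\alpha_d^\Sigma$ acquires positive-dimensional fibers over divisor classes that contain a copy of the $g^1_2$ on $\Sigma$; however, these lie in a proper-dimensional subscheme of $W_d^\Sigma$ when $d\le g(\Sigma)$, and so do not affect the generic degree.  As a sanity check, this argument recovers $\deg \delta_1=2$ (Proposition \ref{P:d1}) at the bottom of the range and agrees with the characteristic-free equality $\deg\delta_{g-1}=2^{g-1}$ of Corollary \ref{C:deg} at the top.
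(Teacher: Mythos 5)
Your argument is correct, but it certifies the degree by a genuinely different mechanism than the paper. The paper splits the claim into two inequalities: the upper bound $\deg\delta_d\le 2^d$ is imported from Corollary \ref{C:deg} (i.e.\ from the cohomological push-forward computation of Proposition \ref{P:class-push} together with minimality of the class $[\Xi]^{\rho-d}/(\rho-d)!$), and the lower bound is obtained by writing down, for a general reduced $\widetilde D=\tilde p_1+\cdots+\tilde p_d$, the $2^d$ divisors $\widetilde D'$ with $\tilde p_i'\in\{\tilde p_i,\iota h(\tilde p_i)\}$ and checking they are distinct and lie in the same fiber. Your factorization $\delta_{d,\widetilde D_0}=\alpha^{\Sigma}_{d,D_0}\circ\pi_\Sigma^{(d)}$ (valid up to a harmless translation of the target) produces exactly that same set of $2^d$ divisors as the fiber of $\pi_\Sigma^{(d)}$, since the two preimages of a general point of $\Sigma$ under $\delta_1$ are $\tilde p$ and $\iota h(\tilde p)$; but instead of invoking Corollary \ref{C:deg} to see that nothing else lies in the fiber, you use the classical fact that the Abel map of $\Sigma$ is generically injective for $d\le g(\Sigma)=g-1$. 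What this buys is independence from the class computation of Proposition \ref{P:class-push}; the price is that you need the full strength of Proposition \ref{P:d1}, namely the Matsusaka--Ran identification of $(P,\Xi)$ with $(J(\Sigma),\Theta_\Sigma)$ under which $\Sigma_{\tilde p_0}$ is an Abel--Jacobi embedded copy of $\Sigma$ (this is what legitimizes replacing the ``sum the points of $\Sigma\subseteq P'$ in $J(\widetilde C)$'' map by the Abel map of $\Sigma$), whereas the paper's proof only uses the degree-$2$ statement for $\delta_1$. Your handling of the two potential pitfalls --- the positive-dimensional fibers of $\alpha^\Sigma_d$ over the hyperelliptic loci, and the need to take the general fiber away from the branch and diagonal loci --- is adequate, since both lie over proper closed subsets of $W_d^\Sigma$. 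Everything you use holds in any characteristic different from $2$, matching the paper's hypotheses.
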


\begin{proof} From Corollary \ref{C:deg} we only need to show that $\deg \delta_d\ge 2^d$.  
So fix  $\widetilde D=\tilde p_1+\cdots +\tilde p_d\in \widetilde C^{(d)}$ to be  general; in particular, such that the fiber of $\delta_d$ over $\delta_d(\widetilde D)$ is finite.   For  $\widetilde D'=\tilde p_1'+\cdots +\tilde p_d'\in \widetilde C^{(d)}$ we have    $\delta_d(\widetilde D')=\delta_d(\widetilde D)$ if and only if 
$
\tilde p_1 -\iota \tilde p_1 +\cdots +\tilde p_d -\iota\tilde p_d \sim \tilde p_1' -\iota \tilde p_1' +\cdots +\tilde p_d' -\iota\tilde p_d'
$
or equivalently, 
\begin{equation*}\label{E:HE1}
\tilde p_1 +\iota \tilde p_1' +\cdots +\tilde p_d +\iota\tilde p_d'\sim \tilde p_1' +\iota \tilde p_1 +\cdots +\tilde p_d' +\iota\tilde p_d.
\end{equation*}
Denoting by $h$ the hyperelliptic involution on $\widetilde C$, the assumption that $\widetilde D$ is general means that we can assume that the $\tilde p_i$ and $\iota h (\tilde p_j)$ are all distinct.  Therefore there are $2^d$ distinct choices of $\widetilde D'$  such that  $\delta_d(\widetilde D')=\delta_d(\widetilde D)$, 
determined by the $2^d$ choices of taking $\tilde p_i'$ either equal to $\tilde p_i$ or to  $\iota h(\tilde p_i)$; note that since the $g^1_2$ on $\widetilde C$ is unique, $\tilde p + h(\tilde p)$ is in the $g^1_2$ if and only if $\iota \tilde p +\iota h(\tilde p)$ is in the $g^1_2$.
\end{proof}

\subsection{The Abel--Prym map for general covers}
 
 We now prove that for a general cover $\widetilde C/C$, the degree of the Abel--Prym map is $1$ for $d<g/2$.

\begin{corollary}[The Abel--Prym map for general covers]\label{C:GenAP}
Let $\widetilde C/C$ be a general cover.  Then  $\deg \delta_d=1$ for   $d<g/2$.
\end{corollary}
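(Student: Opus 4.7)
The plan is to deduce $\deg \delta_d = 1$ from generic injectivity of $\delta_d$ on points, invoking the Brill--Noether theorem of Aprodu--Farkas (already used in the proof of Corollary \ref{C:genFib}): for a general \'etale double cover $\pi:\widetilde C \to C$, the source curve $\widetilde C$ carries no $g^1_e$ with $e<g$. Since $2d < g$, this means every effective divisor on $\widetilde C$ of degree $2d$ is the unique effective representative of its linear equivalence class.

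First I would fix a sufficiently general $\widetilde D = \tilde p_1 + \cdots + \tilde p_d \in \widetilde C^{(d)}$, so that the points $\tilde p_i$ are distinct, none is fixed by $\iota$, and $\operatorname{Supp}(\widetilde D) \cap \operatorname{Supp}(\iota \widetilde D) = \emptyset$. Given any $\widetilde D' \in \widetilde C^{(d)}$ with $\delta_d(\widetilde D') = \delta_d(\widetilde D)$, the defining linear equivalence rearranges to
$$
\widetilde D + \iota \widetilde D' \sim \widetilde D' + \iota \widetilde D,
$$
an identification of effective divisors of degree $2d < g$ on $\widetilde C$. By the Aprodu--Farkas hypothesis, both sides have $h^0 = 1$, so in fact $\widetilde D + \iota \widetilde D' = \widetilde D' + \iota \widetilde D$ as Weil divisors.

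Next I would extract $\widetilde D' = \widetilde D$ from this divisorial equality by a multiplicity comparison. Set $m_i = \operatorname{mult}_{\tilde p_i}(\widetilde D')$ and $n_i = \operatorname{mult}_{\iota \tilde p_i}(\widetilde D')$. Genericity of $\widetilde D$ ensures $\operatorname{mult}_{\tilde p_i}(\iota \widetilde D) = 0$ and $\operatorname{mult}_{\iota \tilde p_i}(\widetilde D) = 0$, so taking $\operatorname{mult}_{\tilde p_i}$ of both sides of the divisor equality yields $1 + n_i = m_i$, giving $m_i = n_i + 1 \geq 1$. Summing the contributions at the $2d$ points $\{\tilde p_i, \iota \tilde p_i\}$ gives
$$
\sum_{i=1}^{d}(m_i + n_i) = d + 2 \sum_{i=1}^d n_i \leq \deg \widetilde D' = d,
$$
forcing $n_i = 0$, $m_i = 1$, and no mass of $\widetilde D'$ elsewhere; hence $\widetilde D' = \widetilde D$.

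The conclusion is that $\delta_d$ is injective on a nonempty open subset of $\widetilde C^{(d)}$, and combined with the generic finiteness established in Corollary \ref{C:gen-fin}, this forces $\deg \delta_d = 1$. The only nontrivial input is the Aprodu--Farkas Brill--Noether statement for general double covers (with the positive characteristic extension via \cite{O14} as in Corollary \ref{C:genFib}); the rest is an elementary multiplicity count, and no obstacle beyond careful bookkeeping should arise.
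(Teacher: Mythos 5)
Your proposal is correct and follows essentially the same route as the paper's proof: both reduce to the linear equivalence $\widetilde D+\iota\widetilde D'\sim\widetilde D'+\iota\widetilde D$, rule out a $g^1_{2d}$ on $\widetilde C$ via the Aprodu--Farkas theorem to upgrade this to an equality of divisors, and then conclude $\widetilde D'=\widetilde D$ from the supports. Your multiplicity bookkeeping merely makes explicit the step the paper dispatches with ``considering the supports of the divisors.''
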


\begin{proof}
Let $\widetilde D\in \widetilde C^{(d)}$ be a general point; in particular assume that $\widetilde D$ is reduced and  $\operatorname{Supp}(\widetilde D)\cap \operatorname{Supp}(\iota \widetilde D)=\emptyset$.   Now suppose that $\widetilde D'\in \widetilde C^{(d)}$ and $\delta_d(\widetilde D)=\delta_d(\widetilde D')$.  Then we have
$\widetilde D+\iota \widetilde D'\sim \widetilde D'+\iota \widetilde D$.  
If $\widetilde D+\iota \widetilde D'= \widetilde D'+\iota \widetilde D$, then considering the supports of the divisors, we must have that $\widetilde D'=\widetilde D$.  Otherwise, $\widetilde C$ admits a $g^1_{2d}$.     
However, by \cite[Thm.~1.4]{AF}, for a general cover $\widetilde C/C$, there is no $g^1_e$ on $\widetilde C$ for $e<g$. Thus if $d<g/2$, then $\widetilde C$ does not admit a $g^1_{2d}$,  and so $\delta_d$ is generically injective. 
\end{proof}

\begin{remark}
The Donagi--Smith \cite[\S 3]{DS} approach to computing the degree  of a generically finite morphism by computing the local degree along a fiber,  together with the  General Position Theorem (e.g., \cite[p.109]{ACGHI} or \cite[Thm.~3.1]{EH92lin}) allows one to compute  $\deg \delta_d=1$ in characteristic $0$ for $\widetilde C/C$ general and $d\le g-2$. 
\end{remark}

\bibliographystyle{amsalpha}
\bibliography{biblio, bibliYano}{}

\end{document}